\DeclareMathOperator*\uplim{\overline{lim}}
\renewcommand{\limsup}{\uplim} 
    \DeclarePairedDelimiter\bra{\langle}{\rvert}
    \DeclarePairedDelimiter\ket{\lvert}{\rangle}
    \DeclarePairedDelimiterX\braket[2]{\langle}{\rangle}{#1\,\delimsize\vert\,\mathopen{}#2}
    			\def\namedlabel#1#2{\begingroup#2%
    			\def\@currentlabel{#2}%
    			\phantomsection\label{#1}\endgroup
    		}
\theoremstyle{plain}
    \newtheorem{theorem}{Theorem}[section]
    \newtheorem{corollary}[theorem]{Corollary}
    \newtheorem{proposition}[theorem]{Proposition} 
    \newtheorem{lemma}[theorem]{Lemma}
\theoremstyle{definition}
    \newtheorem{definition}[theorem]{Definition}
    \newtheorem{example}[theorem]{Example}
    \newtheorem{remark}[theorem]{Remark}
\numberwithin{equation}{section}
\newcommand{\etale}{\'{e}tale}
    \newcommand{\Etale}{\expandafter\MakeUppercase\expandafter{\etale}}    
\newcommand{\usc}{up\-per se\-mi-con\-ti\-nuous}
\newcommand{\uscBb}{\usc\ Ba\-nach bun\-dle}
\newcommand{\ib}{im\-prim\-i\-tiv\-ity bi\-mod\-u\-le}
\newcommand{\LCH}{lo\-cal\-ly com\-pact Haus\-dorff}
\newcommand{\igpd}{im\-prim\-i\-tiv\-ity groupoid}
\newcommand{\iFbdl}{im\-prim\-i\-tiv\-ity Fell bundle}
\newcommand{\SMEadj}{strongly Morita equivalent}
\newcommand{\SMEnoun}{strong Morita equivalence}
\newcommand{\psp}[2][]{principal #1#2-space} 
\newcommand{\demiequiv}[1][]{#1demi-equivalence} 
\newcommand{\cA}{\mathscr{A}}
\newcommand{\cB}{\mathscr{B}}
\newcommand{\cC}{\mathscr{C}}
\newcommand{\cE}{\mathscr{E}}
\newcommand{\cM}{\mathscr{M}}
\newcommand{\cN}{\mathscr{N}}
\newcommand{\cK}{\mathscr{K}}
\newcommand{\cV}{\mathscr{V}}
\newcommand{\cG}{\mathcal{G}}
\newcommand{\cH}{\mathcal{H}}
\newcommand{\cX}{\mathcal{X}}
\newcommand{\forwards}{\mathtt{fir}}
\newcommand{\backwards}{\mathtt{sec}}
\newcommand{\flip}{\mathtt{flip}}
\newcommand{\Flip}{\mathtt{Flip}}
\newcommand{\z}{^{\scalebox{.7}{$\scriptstyle (0)$}}} 
\newcommand{\inv}{^{\scalebox{.7}{$\scriptstyle -1$}}}
\newcommand{\comp}{^{\scalebox{.7}{$\scriptstyle (2)$}}}
\newcommand{\op}{^{\mathrm{op}}}
\newcommand{\cst}{\mathrm{C}^{*}}
\newcommand{\FBRel}{\mathbin{\mathtt{R}}} 
\DeclareMathOperator{\compacts}{\mathbb{K}}
\let\mbb\mathbb
\let\mf\mathfrak
\let\mbf\mathbf
\newcommand*\diff{\mathop{}\!\mathrm{d}}
\let\ipscriptstyle=\scriptscriptstyle
\def\lipsqueeze{{\mskip -2.0mu}}
\def\ripsqueeze{{\mskip -2.0mu}}
\def\ipcomma{\nobreak \mid\nobreak} 
\newbox\ipstrutbox
\newcommand{\norm}[1]{\left\| #1 \right\|}
\newcommand{\inner}[2]{\langle #1 \ipcomma #2 \rangle}
\newcommand{\linner}[4][]{
    {
    \tensor*[_{\ipscriptstyle
        #2
        }^{\hfill\ipscriptstyle
        #1}]{\langle #3 \ipcomma #4 \rangle}{}
    }
}
    \def\lip#1<#2,#3>{\linner{#1}{#2}{#3}}
\newcommand{\rinner}[4][]{
    {
    \tensor*{\langle #3
        \ipcomma
        #4 \rangle}{
       _{
        \ipscriptstyle
        #2
        }^{\ripsqueeze
        \ipscriptstyle
        #1}}
    }
}
    \def\rip#1<#2,#3>{\rinner{#1}{#2}{#3}}
\newcommand{\innercpct}[4][]{
\tensor*[_{\ipscriptstyle #2}^{\ipscriptstyle #1}]{\ket*{#3}}{}\bra*{#4}
}
\newcommand{\leoq}[4][]{\tensor*[_{\ipscriptstyle #2}^{\ipscriptstyle #1}]{\left\{ #3 \ipcomma #4\op \right\}}{}} 
\newcommand{\leoqempty}[2][]{\tensor*[_{\ipscriptstyle #2}^{\ipscriptstyle #1}]{\left\{ \mvisiblespace \ipcomma \mvisiblespace \right\}}{}}
\newcommand{\reoq}[4][]{\tensor*[]{\left\{ #2\op \ipcomma #3 \right\}}{_{\ipscriptstyle #4}^{\ipscriptstyle #1}}} 
\newcommand{\reoqempty}[2][]{\tensor*[]{\left\{ \mvisiblespace \ipcomma \mvisiblespace \right\}}{_{\ipscriptstyle #2}^{\ipscriptstyle #1}}}
    \newcommand\mvisiblespace[1][.7em]{%
        	\makebox[#1]{%
        		\kern.07em
        		\vrule height.3ex
        		\hrulefill
        		\vrule height.3ex
        		\kern.07em
        	}
        }
\newcommand{\bfp}[2]{\lipsqueeze\tensor*[_{\ipscriptstyle #1}]{\ast}{_{\ipscriptstyle #2}}\ripsqueeze} 
\newcommand\Item[1][]{%
  \ifx\relax#1\relax  \item \else \item[#1] \fi
  \abovedisplayskip=0pt\abovedisplayshortskip=0pt~\vspace*{-\baselineskip}}
\newcommand{\lact}{
        \mathchoice
              {\displaystyle\mathbin\smalltriangleright}
              {\textstyle\mathbin\smalltriangleright}
              {\scriptstyle\mathbin\smalltriangleright}
              {\scriptscriptstyle\mathbin\smalltriangleright}
        }
\newcommand{\ract}{
        \mathchoice
              {\displaystyle\mathbin\smalltriangleleft}
              {\textstyle\mathbin\smalltriangleleft}
              {\scriptstyle\mathbin\smalltriangleleft}
              {\scriptscriptstyle\mathbin\smalltriangleleft}
        }
        \newcommand{\lactB}{
                    \mathchoice
                          {\mathbin{\raisebox{-.9pt}{$\includegraphics[scale=1.3]{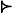}$}}}
                          {\mathbin{\raisebox{-.9pt}{$\includegraphics[scale=1.3]{semithickHleftB.pdf}$}}}
                          {\mathbin{\raisebox{-0.6pt}{$\includegraphics[scale=.8]{semithickHleftB.pdf}$}}}
                          {\mathbin{\raisebox{-.2pt}{$\includegraphics[scale=.6]{semithickHleftB.pdf}$}}}
                    }
        \newcommand\ractB {
                    \mathchoice
                          {\mathbin{\raisebox{-.9pt}{$\includegraphics[scale=1.3,angle=180,origin=c]{semithickHleftB.pdf}$}}}
                          {\mathbin{\raisebox{-.9pt}{$\includegraphics[scale=1.3,angle=180,origin=c]{semithickHleftB.pdf}$}}}
                          {\mathbin{\raisebox{-0.6pt}{$\includegraphics[scale=.8,angle=180,origin=c]{semithickHleftB.pdf}$}}}
                          {\mathbin{\raisebox{-.2pt}{$\includegraphics[scale=.6,angle=180,origin=c]{semithickHleftB.pdf}$}}}
                    }
\title{The \MakeUppercase{\iFbdl}} 
\author{Anna Duwenig}
\address{KU Leuven, Department of Mathematics, Leuven (Belgium)}
\email{anna.duwenig@kuleuven.be}
\date{\today}
\subjclass[2010]{46L55, 46L05, 22A22}
\keywords{Fell bundle, groupoid equivalence, upper semi-continuous Banach bundle}
\def \fix {\medskip \noindent $\blacktriangleright $\kern 12pt \textbf{For the remainder of the paper}, } 
\def \fixnow {\medskip \noindent $\blacktriangleright $\kern 12pt \textbf{For the moment}, }
\begin{document}

\allowdisplaybreaks

\maketitle

\begin{abstract}
	Given a full right-Hilbert $\cst$-module~$\mathbf{X}$ over a $\cst$-algebra $A$, the set $\compacts_{A}(\mathbf{X})$ of $A$-compact operators on~$\mathbf{X}$ is the (up to isomorphism) unique $\cst$-algebra that is \SMEadj\ to the coefficient algebra $A$ via $\mbf{X}$. As bimodule, $\compacts_{A}(\mathbf{X})$ can also be thought of as the balanced tensor product $\mathbf{X}\otimes_{A} \mathbf{X}\op$, and so the latter naturally becomes a $\cst$-algebra.
	We generalize both of these facts to the world of Fell bundles over groupoids:
	Suppose~$\cB$ is a  Fell bundle over a groupoid~$\cH$ and~$\cM$ an \uscBb\ over a \psp{$\cH$}~$X$. If~$\cM$ carries a right-action of~$\cB$ and a sufficiently nice~$\cB$-valued inner product, then its {\em \iFbdl} $\compacts_{\cB}(\cM)=\cM\otimes_{\cB} \cM\op$ is a Fell bundle over the \igpd\ of $X$, and it is the unique Fell bundle that is equivalent to~$\cB$ via~$\cM$. We show that $\compacts_{\cB}(\cM)$ generalizes the `higher order' compact operators of Abadie--Ferraro in the case of saturated bundles over groups, and that the theorem recovers results such as Kumjian's Stabilization trick.
\end{abstract}

\section{Introduction}

Suppose a groupoid~$\cH$ acts on a topological space $X$, meaning that we have a continuous open surjection  $\sigma\colon X\to \cH\z$ (the {\em anchor map} of the action) and a continuous map
\begin{equation}\label{eq:def bfp}
X\bfp{\sigma}{r}\cH
\coloneqq 
\{(x,h)\in X\times \cH : \sigma(x)=r_{\cH}(h)\}
\to X
,\quad
(x,h)\mapsto x \ract h.
\end{equation}
Let us further assume that~$X$ and~$\cH$ are \LCH, and that the action is free and proper (i.e.,~$X$ is a {\em \psp[(right) ]{$\cH$}}; \cite[p.\ 6]{MRW:Grpd}).
Out of  $X$, we can build a {\em left}~$\cH$-space $X\op$ as follows: as a topological space, it is just $X$, but we write its elements with a superscript-op to avoid confusion. Its left action uses the anchor map $\sigma\op\colon x\op \mapsto \sigma(x)$, and the action is given  by $h\lact x\op = (x\ract h\inv)\op$ for any $h$ with $s_{\cH}(h)=\sigma\op(x\op)$. Whenever $\sigma\op$ appears in a subscript, we will drop its superscript and simply write $\sigma$.
Now consider 
\[
    X\bfp{\sigma}{\sigma}X\op = \{(x,y\op)\in X\times X\op : \sigma(x)=\sigma\op(y\op)\}
    .
\]
With the subspace topology of the product topology, $X\bfp{\sigma}{\sigma} X\op$ is \LCH.
For any given $(x,y\op)\in X\bfp{\sigma}{\sigma}X\op$, consider the orbit 
\[
    [x,y\op]
    \coloneqq
    \{
        (x\ract h, h\inv \lact y\op)
        :
        h\in \sigma(x)\cH
    \}
\]
of $(x,y\op)$ under the `diagonal'~$\cH$-action on $X\bfp{\sigma}{\sigma}X\op$. Since 
$\cH$ is assumed to have open source map
and to
act properly on $X$,
it follows that the quotient $X\times_{\cH} X\op$ by this equivalence relation is also a \LCH\ space \cite[Proposition 2.18]{Wil2019}, and likewise is the quotient $X/\cH$.

We equip $X\times_{\cH} X\op$ with the structure of a groupoid with unit space $X/\cH$: its source and range maps are given by
\[
    s[x,y\op] =  [y, y\op] = r[y, z\op],
\]
and we can identify $[y,y\op]$ with $y\ract\cH\in X/\cH$. 
Given $\xi,\eta\in X\times_{\cH} X\op$ with $s(\xi)=r(\eta)$, we can find representatives $(x,y\op)$ and $(y, z\op)$ of $\xi$ respectively $\eta$ with matching component `in the middle', and then define their product by $\xi\eta = [x,z\op]$.\label{p:gpd structure on cG}

The groupoid $X\times_{\cH} X\op$ acts  on the left of $X$: we define the anchor map 
\begin{equation}\label{eq:anchor map of left G action on X}
\rho\colon X\to (X\times_{\cH} X\op)\z \cong X/\cH \quad \text{ by }
\quad\rho(x)=[x,x\op]
\triangleeq x\ract \cH.
\end{equation}
 Note that $\rho$ is 
 open \cite[Proposition 2.12.]{Wil2019} and continuous. If $(\xi,y)\in (X\times_{\cH} X\op)\bfp{s}{\rho}X$, then $s(\xi)=\rho(y)$ implies that there exists $x\in X$ such that $\xi=[x,y\op]$; this $x$ is unique by freeness of the~$\cH$-action on $X$. We can therefore define $\xi\lact y = x$, i.e.,
\begin{equation}\label{eq:left G action on X}
    X\times_{\cH} X\op\curvearrowright X\colon \qquad
    [x,y\op]\lact y = x.
\end{equation}

\begin{remark}\label{rmk:leoq}
    Suppose~$X$ is a $(\cG,\cH)$-groupoid equivalence (see \cite{MRW:Grpd}) with anchor maps $\sigma\colon X\to \cH\z$ and $\rho\colon X\to \cG\z$. 
    If $\sigma(x)=\sigma(y)$, then there exists a unique element $\leoq[X]{\cG}{x}{y}$ in~$\cG$ such that $x=\leoq[X]{\cG}{x}{y} \lact y$. Indeed, since $\sigma$ is assumed to induce a homeomorphism $\tilde{\sigma}\colon \cG\backslash X \to \cH\z$, the equality $\sigma(x)=\sigma(y)$ means exactly that $\cG\lact x=\cG\lact y$, and so freeness of the~$\cG$-action implies the existence of the unique $\leoq[X]{\cG}{x}{y}$ which transforms $y$ into $x$. Since the action is proper and $\rho$ is open, the surjective map $\leoqempty[X]{\cG}\colon X\bfp{\sigma}{\sigma}X\op\to \cG$ is continuous and open, and it factors through the quotient $X\times_{\cH} X$ of $X\bfp{\sigma}{\sigma}X\op$ by the diagonal~$\cH$-action, yielding a homeomorphism $X\times_{\cH} X\op\cong\cG$. 

    Likewise, if $\rho(x)=\rho(y)$, we write $\reoq[X]{x}{y}{\cH}$ for the unique element of~$\cH$ such that $x\ract \reoq[X]{x}{y}{\cH} =y$, and $\reoqempty[X]{\cH}\colon X\op\bfp{\rho}{\rho}X\to \cH$ induces a homeomorphism $X\op\times_{\cG} X\cong\cH$. A quick computation shows that the following equalities hold (wherever one side of the equation makes sense):
    \begin{align}\label{eq:leoq into reoq}
        \leoq[X]{\cG}{x}{y}\lact z
            &=
            x\ract \reoq[X]{y}{z}{\cH}
        \\
        \label{eq:leoq and reoq and action}
        \begin{split}
        \leoq[X]{\cG}{x}{(y\ract h\inv) }
        &=
        \leoq[X]{\cG}{x\ract h}{y}
        \\
        \reoq[X]{(g\inv \lact y)}{z}{\cH}
        &=
        \reoq[X]{y}{g \lact z}{\cH}
        \end{split}
    \end{align}
\end{remark}

\begin{example}\label{ex:X=H}
	Any groupoid~$\cH$ acts freely and properly on $X=\cH$; say, on the right. The anchor map is then given by $\sigma=s_{\cH}\colon X\to \cH\z$, and the associated \igpd\ $\cG=\cH\times_\cH \cH\op$ is isomorphic to~$\cH$ via $f\colon [h_{1},h_{2}\op]\mapsto h_{1}h_{2}\inv$. In particular,  the map $\leoqempty[X]{\cG}$ becomes the map $X\bfp{s}{s}X\op\to \cH, (h_{1},h_{2})\mapsto h_{1} h_{2}\inv$. This isomorphism further turns the anchor map $\rho\colon X\to \cG\z, h\mapsto [h,h\op],$ into the range map $r_{\cH}\colon X\to \cH\z$, and $\reoqempty[X]{\cH}\colon X\op\bfp{r}{r}X\to \cH$ is therefore given by $\reoq[X]{h_{1}}{h_{2}}{\cH} = h_{1}\inv h_{2}$. 
\end{example}

We arrive at a well-known result:
\begin{lemma}[motivation; {\cite[Lemma 2.45, Proposition 2.47]{Wil2019}}]\label{lem:thm for gpds}
     Suppose~$X$ is a \psp{$\cH$}. Then $X\times_{\cH} X\op$ is a \LCH\ groupoid with open source map that acts freely and properly on the left of~$X$ with anchor map $\rho\colon x\mapsto [x,x\op]$. With this structure,~$X$ is a $( X\times_{\cH} X\op,\cH)$-equivalence of groupoids. Moreover, if~$X$ is also a $(\cG,\cH)$-equivalence, then there exists a groupoid isomorphism $X\times_{\cH} X\op \to \cG$ that is uniquely determined by $[x,y\op]\mapsto \leoq[X]{\cG}{x}{y}$. 
\end{lemma}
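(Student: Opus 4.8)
The plan is to build on the structure already assembled above: the groupoid operations on $X\times_\cH X\op$, the left action \eqref{eq:left G action on X}, the anchor $\rho$, and the facts that $X\times_\cH X\op$ and $X/\cH$ are \LCH\ with $\rho$ continuous and open. What remains is to verify the groupoid axioms, to show that the left action is free and proper, to check the four conditions defining a groupoid equivalence, and finally to produce the isomorphism with a second equivalence $\cG$. The algebraic axioms are direct checks using the diagonal $\cH$-action that defines the classes: the product $[x,y\op][y,z\op]=[x,z\op]$ is well defined and associative, the $[y,y\op]$ are the units, and $[x,y\op]\inv=[y,x\op]$; continuity of multiplication and inversion is inherited from the quotient topology. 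Only openness of the source map $s$ warrants comment: since $s[x,y\op]=[y,y\op]=\rho(y)$, precomposing $s$ with the open quotient map $X\bfp{\sigma}{\sigma}X\op\to X\times_\cH X\op$ yields the composite of $\rho$ with the (open) projection to the second coordinate, whence $s$ is open.

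For the action, freeness is immediate: if $\xi\lact z=z$ with $\xi=[x,z\op]$, then $x=z$ by \eqref{eq:left G action on X}, so $\xi=[z,z\op]=\rho(z)$ is a unit. For properness I would show that the graph map
\[
    (X\times_\cH X\op)\bfp{s}{\rho}X\to X\times X,\qquad (\xi,z)\mapsto(\xi\lact z,\,z),
\]
is a closed embedding, hence proper: writing $\xi=[x,z\op]$ it becomes $(\xi,z)\mapsto(x,z)$, a homeomorphism onto the fibered product $X\bfp{\sigma}{\sigma}X$, which is closed in $X\times X$ because $\cH\z$ is Hausdorff and $\sigma$ is continuous. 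The right $\cH$-action is free and proper by hypothesis, and the two actions commute, since $\xi=[x,y\op]=[x\ract h,(y\ract h)\op]$ gives $(\xi\lact y)\ract h=x\ract h=\xi\lact(y\ract h)$. Finally $\rho$ induces the homeomorphism $X/\cH\cong(X\times_\cH X\op)\z$ by the very construction of the unit space, while the orbit $(X\times_\cH X\op)\lact x$ equals the $\sigma$-fiber $\{x':\sigma(x')=\sigma(x)\}$ — because $[x',x\op]\lact x=x'$ — so $\sigma$ descends to the required homeomorphism $(X\times_\cH X\op)\backslash X\cong\cH\z$. Together these establish that $X$ is an $(X\times_\cH X\op,\cH)$-equivalence.

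For the last assertion, the Remark already furnishes a homeomorphism $\phi\colon X\times_\cH X\op\to\cG$, $[x,y\op]\mapsto\leoq[X]{\cG}{x}{y}$, so it suffices to check that $\phi$ is a groupoid morphism. Multiplicativity reduces to the identity $\leoq[X]{\cG}{x}{y}\,\leoq[X]{\cG}{y}{z}=\leoq[X]{\cG}{x}{z}$, which follows by applying both sides to $z$ through \eqref{eq:left G action on X} and invoking freeness of the $\cG$-action; compatibility with units and inverses is analogous. The formula determines $\phi$ outright, and it is the only isomorphism intertwining the two left $X$-actions: if $g\lact x=x'$, then freeness of the $X\times_\cH X\op$-action pins down $\phi\inv(g)=[x',x\op]$.

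I expect the only genuine obstacle to be properness, and it dissolves once the graph map is recognized as a closed embedding onto $X\bfp{\sigma}{\sigma}X$; everything else is bookkeeping with representatives, routine given \eqref{eq:leoq into reoq}–\eqref{eq:leoq and reoq and action}.
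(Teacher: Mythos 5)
Your skeleton is sound and follows the same route as the argument the paper is pointing to (the paper itself gives no proof of this lemma: it assembles the groupoid structure, the action \eqref{eq:left G action on X}, and Remark~\ref{rmk:leoq} in the introduction and then cites \cite{Wil2019}). But there is a genuine gap, and it sits exactly where you declare the work finished: continuity of the left action \eqref{eq:left G action on X} is never established, and your properness argument silently presupposes it. The graph map $(\xi,z)\mapsto(\xi\lact z,z)$ is a homeomorphism onto $X\bfp{\sigma}{\sigma}X$ only if it is continuous, and its continuity \emph{is} continuity of the action; its inverse $(x,z)\mapsto([x,z\op],z)$ is indeed continuous, but a continuous bijection of \LCH\ spaces need not be a homeomorphism, so the forward direction does not come for free. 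Proving that $\xi_\lambda\lact z_\lambda\to\xi\lact z$ whenever $\xi_\lambda\to\xi$ and $z_\lambda\to z$ requires real input: lift $\xi_\lambda$ (after passing to a subnet) along the open quotient map $q\colon X\bfp{\sigma}{\sigma}X\op\to X\times_{\cH}X\op$ via Fell's criterion to representatives $(x_\lambda\ract h_\lambda,(z_\lambda\ract h_\lambda)\op)$ converging to $(x,z\op)$, use properness of the $\cH$-action \cite[Proposition 2.17]{Wil2019} to extract a convergent subnet of $(h_\lambda)$, use freeness and Hausdorffness to see its limit is a unit, and conclude $x_\lambda\to x$. This is the one step where openness of $\sigma$ and properness of the $\cH$-action are indispensable; it is not bookkeeping.

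The same omission affects your groupoid-axiom paragraph: continuity of multiplication on $X\times_{\cH}X\op$ is not simply ``inherited from the quotient topology,'' because multiplying $[x,y\op]$ and $[y',z\op]$ requires continuously re-choosing representatives so that the middle entries match, i.e.\ controlling the unique $h_\lambda$ with $y'_\lambda=y_\lambda\ract h_\lambda$, which needs the same Fell's-criterion-plus-properness argument (inversion, by contrast, does descend softly from the flip on $X\bfp{\sigma}{\sigma}X\op$). Your openness-of-source argument is correct and complete ($s\circ q=\rho\circ\mathrm{pr}_2$ with both factors open, and $q$ a continuous surjection), as are freeness, the commuting of the two actions, the identification of $X\times_{\cH}X\op$-orbits with $\sigma$-fibres, and the derivation of the isomorphism onto $\cG$ from Remark~\ref{rmk:leoq} together with freeness of the $\cG$-action. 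So: right strategy, but the technical heart of the lemma --- continuity of the quotient groupoid's multiplication and of the left action --- is missing, and once it is supplied your closed-embedding properness argument does go through.
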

Note that the above in particular states that, in the setting where $\cG=X\times_{\cH} X\op$, the element
\(
\leoq[X]{\cG}{x}{y}
\) of~$\cG$  for $x,y\in Xu$
is 
\(
[x,y\op]
\).

A reason why one might be interested in groupoid equivalences is the following result due to works by \citeauthor{MRW:1987:Equivalence}.
\begin{lemma}[{\cite{Wil:Haar}, \cite{MRW:1987:Equivalence}}]
    Suppose $\cG,\cH$ are \LCH\ groupoids, that $\{\lambda_u\}_{u\in \cH\z}$ is a Haar system on~$\cH$, and that there exists an equivalence~$X$ between~$\cG$ and~$\cH$ (with open anchor maps). Then~$\cG$ also allows a Haar system, and if say $\{\mu_v\}_{v\in \cG\z}$ is any such Haar system, then the groupoid $\cst$-algebras $\cst_{r}(\cG,\mu)$ and $\cst_{r}(\cH,\lambda)$ are \SMEadj\ via an \ib\ built as a completion of $C_c(X)$.
\end{lemma}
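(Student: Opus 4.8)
The plan is to handle the two assertions in turn, leaning on \cite{Wil:Haar} for the first and doing the real work on the second. For the existence of a Haar system on~$\cG$ I would not reprove it but invoke \cite{Wil:Haar}: the decisive hypotheses are that $\cH$ already carries $\{\lambda_u\}$ and that the anchor maps are open. Concretely one transports $\lambda$ along the free and proper right $\cH$-action on~$X$---pushing $\lambda_{\sigma(x)}$ forward under $h\mapsto x\ract h$ to a measure on the $\rho$-fibre through~$x$ (which is exactly the $\cH$-orbit, by Remark~\ref{rmk:leoq})---and then transfers the result through the homeomorphism $\cG\cong X\times_\cH X\op$ of Lemma~\ref{lem:thm for gpds} to a continuous, left-invariant system $\{\mu_v\}$ on~$\cG$; openness of $\sigma$ and $\rho$ is what makes the system continuous. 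Since the \SMEnoun\ class of the completed bimodule will not depend on which Haar system is used, I may fix any such~$\mu$ and proceed.

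For the \SMEnoun\ itself, the heart is to make $C_c(X)$ into a pre-\ib\ over $(C_c(\cG),C_c(\cH))$ and then complete. I would define commuting left and right actions by
\[
(\phi\cdot\xi)(x)=\int_{\cG}\phi(g)\,\xi(g\inv\lact x)\,d\mu_{\rho(x)}(g),
\qquad
(\xi\cdot\psi)(x)=\int_{\cH}\xi(x\ract h)\,\psi(h\inv)\,d\lambda_{\sigma(x)}(h),
\]
for $\phi\in C_c(\cG)$, $\psi\in C_c(\cH)$, $\xi\in C_c(X)$, together with two inner products, valued in $C_c(\cG)$ and $C_c(\cH)$ respectively, by
\[
{}_{C_c(\cG)}\langle\xi,\eta\rangle([x,y\op])=\int_{\cH}\xi(x\ract h)\,\overline{\eta(y\ract h)}\,d\lambda_{\sigma(x)}(h),
\]
\[
\langle\xi,\eta\rangle_{C_c(\cH)}(h)=\int_{\cG}\overline{\xi(g\inv\lact x)}\,\eta\big((g\inv\lact x)\ract h\big)\,d\mu_{\rho(x)}(g),
\]
the last formula for any~$x$ with $\sigma(x)=r_{\cH}(h)$. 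The first block of work is to see these are well defined: the integrands have compact support, and the two formulas are independent of the chosen representative $(x,y\op)$ of $[x,y\op]$, respectively of the auxiliary point~$x$. Both independences reduce to the invariance identities \eqref{eq:leoq into reoq}--\eqref{eq:leoq and reoq and action} together with left-invariance of the Haar systems, and continuity of the outputs again uses openness of the anchor maps.

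The second block is purely algebraic, and I expect it to be routine Fubini-and-change-of-variables bookkeeping: the two actions commute and are $*$-homomorphisms for the convolution products, each inner product is conjugate-symmetric and linear over its algebra, and---most importantly---the two are compatible, ${}_{C_c(\cG)}\langle\xi,\eta\rangle\cdot\zeta=\xi\cdot\langle\eta,\zeta\rangle_{C_c(\cH)}$. The genuinely analytic block, and the step I expect to be the main obstacle, is \emph{positivity}: that ${}_{C_c(\cG)}\langle\xi,\xi\rangle\ge 0$ in $\cst(\cG)$ and $\langle\xi,\xi\rangle_{C_c(\cH)}\ge 0$ in $\cst(\cH)$. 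Rather than estimate these directly, the clean route is to assemble the \emph{linking groupoid}~$\mathcal{L}$ on the unit space $\cG\z\sqcup\cH\z$, with $X$ and $X\op$ filling the off-diagonal parts; by the same transport argument $\mathcal{L}$ carries a Haar system built from $\mu$, $\lambda$ and the measures above, and $\cst(\cG)$, $\cst(\cH)$ are recovered as the two complementary corners $p\,\cst(\mathcal{L})\,p$ and $q\,\cst(\mathcal{L})\,q$. Positivity and all the module identities are then inherited from the ambient $\cst$-algebra $\cst(\mathcal{L})$, and $p\,\cst(\mathcal{L})\,q$ is precisely the completion of $C_c(X)$.

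It remains to record fullness and to complete. Density of the spans of the ranges of the two inner products in $C_c(\cG)$ and $C_c(\cH)$---equivalently, that the two corners of $\cst(\mathcal{L})$ are full---follows from freeness and properness of the actions plus openness of the anchor maps by a partition-of-unity argument, producing $\xi,\eta\in C_c(X)$ whose inner product is concentrated near, and nonzero at, a prescribed $g=[x,y\op]$ (and symmetrically on the $\cH$-side). Completing $C_c(X)$ in the norm $\|\xi\|=\|\langle\xi,\xi\rangle_{C_c(\cH)}\|^{1/2}$ then yields the \ib\ implementing the \SMEnoun. Finally, since the statement concerns the \emph{reduced} algebras, I would note that the whole construction is compatible with the regular representations---the induced regular representation of $\cst_r(\cH)$ matches that of $\cst_r(\cG)$ under the bimodule, and the corner identification of the previous paragraph works verbatim for reduced completions---so the same completion of $C_c(X)$ descends to an \ib\ between $\cst_r(\cG)$ and $\cst_r(\cH)$; this reduced refinement is what I would cite from \cite{Wil:Haar,MRW:1987:Equivalence} rather than reprove.
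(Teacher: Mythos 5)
There is no in-paper proof to compare against here: this lemma is quoted background, and the paper supplies only the citations \cite{Wil:Haar} and \cite{MRW:1987:Equivalence}. Judged on its own terms, your sketch reconstructs the proof found in that literature: your formulas for the two actions and the two inner products on $C_c(X)$ are exactly the Muhly--Renault--Williams ones, and your route to positivity, fullness, and the reduced statement through the linking groupoid over $\cG\z\sqcup\cH\z$ is the argument of \cite[Theorem 4.1]{SW:2012:RsEquivalence} (cited elsewhere in the paper). That detour is indeed the right tool: \cite{MRW:1987:Equivalence} by itself gives \SMEnoun\ of the \emph{full} $\cst$-algebras, and it is the linking-groupoid picture that lets one pass to $\cst_{r}$, so your isolation of positivity and the reduced case as the points needing that device is correct.

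The one step that is mathematically wrong as written is your gloss of the Haar-system transport. Pushing $\lambda_{\sigma(x)}$ forward under $h\mapsto x\ract h$ produces measures on the $\cH$-orbits $x\ract\cH$, i.e.\ on the fibres of $\rho\colon X\to X/\cH\cong\cG\z$. But a left Haar system on $\cG\cong X\times_{\cH}X\op$ consists of measures on the range fibres $r_{\cG}\inv\bigl([x,x\op]\bigr)=\{[x,y\op]:\sigma(y)=\sigma(x)\}$, and these are parameterized by the $\sigma$-fibre $X\sigma(x)$ (equivalently, by the $\cG$-orbit of $x$), not by the $\cH$-orbit of $x$; transporting orbit measures through the homeomorphism $\cG\cong X\times_{\cH}X\op$ does not repair this mismatch. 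What \cite{Wil:Haar} actually constructs is a full continuous system of measures along $\sigma$ that is $\cH$-\emph{equivariant}; equivariance is what makes the induced measures on $r_{\cG}\inv\bigl([x,x\op]\bigr)$ independent of the representative $x$ and left $\cG$-invariant, and this construction (where second countability enters) is the real content of that reference. The same equivariant system is what one needs to put a Haar system on your linking groupoid $\mathcal{L}$, so the point cannot be absorbed into the linking-groupoid step either. Since you explicitly defer to \cite{Wil:Haar} rather than to your own sketch, this is a misdescription rather than a fatal gap, but the transport as you describe it would not produce a Haar system.
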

An analogous  theorem holds for the $\cst$-algebra of equivalent Fell bundles \cite{MW:2008:Disintegration}. Such results are powerful, for example because two \SMEadj\ $\cst$-algebras have the same representation theory, $\operatorname{K}$-theory, and lattices of ideals (via the so-called ``Rieffel Correspondence''). But not only do the above mentioned results state the existence of a \SMEnoun, they also {\em construct it explicitly}, which allows one to, for example, construct representatives of certain famous classes in $\operatorname{KK}$-theory \cite{DE:TransKron}.

\medskip

There is a statement analogous to Lemma~\ref{lem:thm for gpds} for a right Hilbert $\cst$-module~$\mathbf{X}$ over $\cst$-algebra $A$. In the following, we use the symbol $\innercpct[\mathbf{X}]{A}{\mathbf{x}}{\mathbf{y}}$ for $\mathbf{x,y}\in\mathbf{X}$  to denote the `$A$-rank-one' operator $\mathbf{X}\to\mathbf{X}$ that maps $\mathbf{z}$ to $\mathbf{x}\cdot \rinner[\mathbf{X}]{A}{\mathbf{y}}{\mathbf{z}}\in \mathbf{X}$, and we let $\compacts(\mathbf{X}_A)=\compacts_{A}(\mathbf{X})$ denote the $\cst$-algebra generated by these operators; it is an ideal of the $A$-adjointable operators (see \cite[Lemma 2.25]{RaWi:Morita}). We get that~$\mathbf{X}$ has a {\em left} $\compacts_{A}(\mathbf{X})$-inner product given by\footnote{Hopefully, it is not too confusing that the symbol on the right-hand side of Equation~\eqref{eq:ket-bra subscripts} carries $A$ in the subscript rather than the cumbersome $\compacts_{A}(\mathbf{X})$; the ket--bra notation in place of the bra--ket should be a clear indicator as to where the inner product takes values.}
\begin{equation}\label{eq:ket-bra subscripts}
    \linner[\mathbf{X}]{\compacts_{A}(\mathbf{X})}{\mathbf{x}}{\mathbf{y}}
    \coloneqq 
    \innercpct[\mathbf{X}]{A}{\mathbf{x}}{\mathbf{y}}.
\end{equation}
With inner products such as these, we will drop the sub- and/or superscripts whenever there is no ambiguity. Furthermore, we let $\mathbf{X}\op$ be the dual, left-Hilbert $\cst$-module as defined in \cite[p.\ 49]{RW:Morita}, meaning there exists an additive bijection $F \colon \mathbf{X}\to \mathbf{X}\op $ such that $F (\lambda \mathbf{x}) = \bar{\lambda}F (\mathbf{x})$ for all $\mathbf{x}\in \mathbf{X}$ and $\lambda\in \mbb{C}$. 
The analogue can now be stated as:
\begin{lemma}[motivation; {\cite[Proposition 2.21, Lemma 2.25, Proposition 3.8.]{RW:Morita}}]\label{RW:Morita:Prop3.8}
	Suppose~$\mathbf{X}$ is a full right-Hilbert $\cst$-module over a $\cst$-algebra~$A$.
	Then the set $\compacts_{A}(\mathbf{X})$ of $A$-compact operators  on~$\mathbf{X}$ is a $\cst$-algebra with respect to the operator norm, and~$\mathbf{X}$ is a $(\compacts_{A}(\mathbf{X}), A)$-\ib. Moreover, if~$\mathbf{X}$ is also a $(B,A)$-\ib, then there exists a *-isomorphism $\compacts_{A}(\mathbf{X}) \to B$ that is uniquely determined by $\innercpct[\mathbf{X}]{A}{\mathbf{x}}{\mathbf{y}} \mapsto \linner[\mathbf{X}]{B}{\mathbf{x}}{\mathbf{y}}$.
\end{lemma}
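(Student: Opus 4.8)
The plan is to dispatch the three assertions in turn, relying on the standard theory of adjointable and generalized-compact operators on a right-Hilbert module. For the first assertion, recall that the $A$-adjointable operators $\mathcal{L}_{A}(\mathbf{X})$ form a $\cst$-algebra under the operator norm. A direct computation gives $\bigl(\innercpct{A}{\mathbf{x}}{\mathbf{y}}\bigr)^{*} = \innercpct{A}{\mathbf{y}}{\mathbf{x}}$ and $\innercpct{A}{\mathbf{x}}{\mathbf{y}}\,\innercpct{A}{\mathbf{u}}{\mathbf{v}} = \innercpct{A}{\mathbf{x}\cdot\rinner{A}{\mathbf{y}}{\mathbf{u}}}{\mathbf{v}}$, so the linear span of the rank-one operators is a self-adjoint subalgebra of $\mathcal{L}_{A}(\mathbf{X})$; its closure $\compacts_{A}(\mathbf{X})$ is therefore a $\cst$-subalgebra (in fact a closed ideal), which settles the first claim.

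For the imprimitivity-bimodule claim, I would check the axioms for the left inner product \eqref{eq:ket-bra subscripts} against the given right $A$-structure. The adjoint and product formulas above already yield sesquilinearity and the symmetry $\bigl(\linner{\compacts_{A}(\mathbf{X})}{\mathbf{x}}{\mathbf{y}}\bigr)^{*} = \linner{\compacts_{A}(\mathbf{X})}{\mathbf{y}}{\mathbf{x}}$, while fullness is automatic since, by definition, $\compacts_{A}(\mathbf{X})$ is the closed span of the $\linner{\compacts_{A}(\mathbf{X})}{\mathbf{x}}{\mathbf{y}}$. The compatibility identity $\linner{\compacts_{A}(\mathbf{X})}{\mathbf{x}}{\mathbf{y}}\cdot\mathbf{z} = \mathbf{x}\cdot\rinner{A}{\mathbf{y}}{\mathbf{z}}$ holds by the very definition of the rank-one operator, and the compatibility of the left action with the left inner product reduces to associativity of the two module actions. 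The one delicate point is positivity: since $\innercpct{A}{\mathbf{x}}{\mathbf{x}}\mathbf{z} = \mathbf{x}\cdot\rinner{A}{\mathbf{x}}{\mathbf{z}}$, the module identity $\rinner{A}{\mathbf{x}\cdot a}{\mathbf{z}} = a^{*}\rinner{A}{\mathbf{x}}{\mathbf{z}}$ with $a=\rinner{A}{\mathbf{x}}{\mathbf{z}}$ gives $\rinner{A}{\mathbf{x}\cdot\rinner{A}{\mathbf{x}}{\mathbf{z}}}{\mathbf{z}} = \bigl(\rinner{A}{\mathbf{x}}{\mathbf{z}}\bigr)^{*}\rinner{A}{\mathbf{x}}{\mathbf{z}} \ge 0$, so $\innercpct{A}{\mathbf{x}}{\mathbf{x}}$ is a positive operator, and evaluating at $\mathbf{z}=\mathbf{x}$ shows it vanishes only when $\rinner{A}{\mathbf{x}}{\mathbf{x}}=0$, i.e.\ $\mathbf{x}=0$. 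Together with the hypothesis that $\mathbf{X}$ is full over $A$, this makes $\mathbf{X}$ a $(\compacts_{A}(\mathbf{X}),A)$-\ib.

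The final, uniqueness assertion is where the real work lies, and the key device is the left $B$-action. Because $\mathbf{X}$ is a $(B,A)$-\ib, each $b\in B$ acts as an $A$-adjointable operator on $\mathbf{X}_{A}$, with adjoint the action of $b^{*}$, giving a $*$-homomorphism $L\colon B\to\mathcal{L}_{A}(\mathbf{X})$. Evaluating on a left-inner-product generator, the compatibility identity gives, for all $\mathbf{z}$, $L(\linner{B}{\mathbf{x}}{\mathbf{y}})\mathbf{z} = \linner{B}{\mathbf{x}}{\mathbf{y}}\cdot\mathbf{z} = \mathbf{x}\cdot\rinner{A}{\mathbf{y}}{\mathbf{z}} = \innercpct{A}{\mathbf{x}}{\mathbf{y}}\mathbf{z}$, so $L$ carries $\linner{B}{\mathbf{x}}{\mathbf{y}}$ to the rank-one operator $\innercpct{A}{\mathbf{x}}{\mathbf{y}}$. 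I would then prove $L$ injective---the main obstacle---using fullness and positivity of the $B$-valued inner product: if $b\cdot\mathbf{x}=0$ for all $\mathbf{x}$, then $b\,\linner{B}{\mathbf{x}}{\mathbf{y}} = \linner{B}{b\cdot\mathbf{x}}{\mathbf{y}} = 0$ for all $\mathbf{x},\mathbf{y}$, so by continuity $b$ annihilates a dense subspace of $B$, whence $bB=\{0\}$; taking the $b^{*}$ component gives $bb^{*}=0$ and hence $b=0$. An injective $*$-homomorphism of $\cst$-algebras is isometric and so has closed range, and since $B$ is the closed span of the $\linner{B}{\mathbf{x}}{\mathbf{y}}$, its image $L(B)$ is the closed span of the $\innercpct{A}{\mathbf{x}}{\mathbf{y}}$, which is exactly $\compacts_{A}(\mathbf{X})$. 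Thus $L$ is a $*$-isomorphism $B\to\compacts_{A}(\mathbf{X})$, and its inverse is the asserted map $\innercpct{A}{\mathbf{x}}{\mathbf{y}}\mapsto\linner{B}{\mathbf{x}}{\mathbf{y}}$; it is uniquely determined by these values because the rank-one operators are dense in $\compacts_{A}(\mathbf{X})$ and a $*$-homomorphism is continuous. Beyond this faithfulness-and-range step, the only subtlety is the positivity computation above; everything else is bookkeeping with the associativity of the module actions and the two inner products.
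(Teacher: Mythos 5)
Your proof is correct, and it follows essentially the same route the paper relies on: the paper states this lemma purely as cited motivation from Raeburn--Williams \cite[Proposition 2.21, Lemma 2.25, Proposition 3.8]{RW:Morita} and gives no proof of its own, and your argument (the rank-one adjoint/product formulas making the span a $*$-subalgebra, positivity and definiteness of $\innercpct[\mathbf{X}]{A}{\mathbf{x}}{\mathbf{x}}$, and the injectivity-plus-fullness argument showing the left-action homomorphism $B\to\mathcal{L}_{A}(\mathbf{X})$ is an isometry onto $\compacts_{A}(\mathbf{X})$) is precisely the standard one from that reference. Nothing to flag beyond that.
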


Let us elaborate on the analogy to Lemma~\ref{lem:thm for gpds}: 
If  $\mathbf{Y}$ is another full right-Hilbert $\cst$-module over $A$, then the map 
$\mathbf{X}\otimes_{A}\mathbf{Y}\op\to \compacts_{A}(\mathbf{Y},\mathbf{X})$ determined by $\mathbf{x}\otimes \mathbf{y}\op \mapsto \innercpct[\mathbf{X}]{A}{\mathbf{x}}{\mathbf{y}}$  is an isomorphism of bi-Hilbert $\compacts_{A}(\mathbf{X})-\compacts_{A}(\mathbf{Y})$-bimodules (see Lemma~\ref{lem:tensor and compacts}). In particular, $\mathbf{X}\otimes_{A}\mathbf{X}\op \cong \compacts_{A}(\mathbf{X})$ is, in fact, the (up to isomorphism) unique $\cst$-algebra that is equivalent to $A$ via $\mathbf{X}$. (Note that we similarly have $\mathbf{X}\op\otimes_{\compacts}\mathbf{X}\cong A$ as bi-Hilbert $A-A$-modules via $\mathbf{x}_{1}\op\otimes \mathbf{x}_{2}\mapsto \rinner[\mathbf{X}]{A}{\mathbf{x}_{1}}{\mathbf{x}_{2}}$.) 

\smallskip

The main result of this paper, Theorem~\ref{thm:demiequiv to equiv}, is  an analogue of Lemma~\ref{lem:thm for gpds} and Proposition~\ref{RW:Morita:Prop3.8} in the setting of Fell bundles over groupoids. To this end, we will first show that the correct analogue of a \psp{$\cH$} and a full right-Hilbert $\cst$-module over $A$ is a {\em \demiequiv[$\cB$-]}, which can be thought of as ``half'' of a Fell bundle equivalence in the sense of \cite[Def. 6.1]{MW:2008:Disintegration}. We will then prove:
\begin{theorem}\label{thm:demiequiv to equiv}
    Suppose~$\cH$ is a \LCH\  groupoid with open source map and~$X$ is a \psp{$\cH$} with open anchor map. If~$\cB$ is a saturated Fell bundle over~$\cH$ and~$\cM$  is a~\demiequiv[$\cB$-] over $X$, then there is a saturated Fell bundle $\compacts(\cM_{\cB})$ over the \igpd\ of~$X$ that is  equivalent to~$\cB$ via~$\cM$. Moreover, if~$\cM$ is also an $(\cA,\cB)$-Fell bundle equivalence, then there exists a Fell bundle isomorphism $\compacts(\cM_{\cB}) \to \cA$ that is uniquely determined by $\innercpct[\cM]{\cB}{m}{n} \mapsto \linner[\cM]{\cA}{m}{n}$.
\end{theorem}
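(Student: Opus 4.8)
The plan is to imitate, one bundle-fibre at a time, the two motivating results: I build the total space of $\compacts(\cM_{\cB})$ out of ``generalised rank-one operators,'' topologise it through a fundamental family of sections coming from $\cM$, check the Fell-bundle and equivalence axioms, and finally read off the isomorphism onto $\cA$ from a universal property. Throughout I use the groupoid structure of $X\times_{\cH}X\op$ recorded in Lemma~\ref{lem:thm for gpds}: $s[x,y\op]=[y,y\op]$, $r[x,y\op]=[x,x\op]$, $[x,y\op]\inv=[y,x\op]$, and $[x,y\op][y,z\op]=[x,z\op]$. The crucial preliminary observation is that, for each $x$, the fibre $\cM_{x}$ is a right Hilbert $\cB_{\sigma(x)}$-module under the fibrewise action and the unit-fibre-valued inner product $\rinner[\cM]{\cB}{\cdot}{\cdot}$. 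Hence, over a point $\xi=[x,y\op]$ of $X\times_{\cH}X\op$ (so $\sigma(x)=\sigma(y)$), I may \emph{define} $\compacts(\cM_{\cB})_{\xi}\coloneqq\compacts_{\cB_{\sigma(x)}}(\cM_{y},\cM_{x})$, the $\cB_{\sigma(x)}$-compact operators between these two Hilbert modules over the common $C^{*}$-algebra $\cB_{\sigma(x)}$; it is the closed span of the operators $\innercpct[\cM]{\cB}{m}{n}$ with $m\in\cM_{x}$, $n\in\cM_{y}$. Replacing $(x,y\op)$ by $(x\ract h,(y\ract h)\op)$ gives a canonically isomorphic fibre via the right $\cB$-action, so the fibre depends only on $\xi$; completeness and the $C^{*}$-identity are then inherited from ordinary $C^{*}$-module theory.

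The Fell structure is dictated by the behaviour of rank-one operators. The involution is $(\innercpct[\cM]{\cB}{m}{n})^{*}=\innercpct[\cM]{\cB}{n}{m}\in\compacts(\cM_{\cB})_{\xi\inv}$, and for representatives with matching middle component $n,p\in\cM_{y}$ the multiplication is
\[
\innercpct[\cM]{\cB}{m}{n}\,\innercpct[\cM]{\cB}{p}{q}=\innercpct[\cM]{\cB}{m\ract\rinner[\cM]{\cB}{n}{p}}{q}\in\compacts(\cM_{\cB})_{[x,z\op]},
\]
where $\rinner[\cM]{\cB}{n}{p}\in\cB_{\sigma(y)}$ because $n,p\in\cM_{y}$. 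I will verify fibrewise that these are well defined modulo the balanced-tensor-product relations (so that, as a bimodule, $\compacts(\cM_{\cB})_{\xi}=\cM_{x}\otimes_{\cB_{\sigma(x)}}\cM_{y}\op$ by Lemma~\ref{lem:tensor and compacts}), associative, and compatible with the involution, so that, topology aside, $\compacts(\cM_{\cB})$ is a Fell bundle over $X\times_{\cH}X\op$.

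The technical heart is the topology. For continuous sections $m,n$ of $\cM$ I form the candidate sections $\omega_{m,n}\colon[x,y\op]\mapsto\innercpct[\cM]{\cB}{m(x)}{n(y)}$, which are well defined on $X\times_{\cH}X\op$ by the $\cH$-invariance built into the fibres. I must show that $\xi\mapsto\norm{\omega_{m,n}(\xi)}$ is \usc\ and that the $\omega_{m,n}(\xi)$ are dense in each fibre; the existence criterion for \uscBb s then yields a unique topology on the disjoint union of the fibres making every $\omega_{m,n}$ continuous. With that topology I check continuity of the involution and of the multiplication displayed above, the norm inequality $\norm{ab}\le\norm{a}\norm{b}$, the $C^{*}$-identity, and positivity, so that $\compacts(\cM_{\cB})$ is a Fell bundle; saturation follows from the fullness of the $\cB$-valued inner product on $\cM$ together with the saturation of $\cB$.

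It remains to produce the equivalence and the uniqueness clause. I make $\cM$ a $(\compacts(\cM_{\cB}),\cB)$-equivalence by the left action $\innercpct[\cM]{\cB}{m}{n}\lact p\coloneqq m\ract\rinner[\cM]{\cB}{n}{p}$ covering the left action~\eqref{eq:left G action on X}, and the left inner product $\linner[\cM]{\compacts(\cM_{\cB})}{m}{n}\coloneqq\innercpct[\cM]{\cB}{m}{n}$; the compatibility $\linner[\cM]{\compacts(\cM_{\cB})}{m}{n}\lact p=m\ract\rinner[\cM]{\cB}{n}{p}$ then holds by definition, and the remaining equivalence axioms are exactly the demi-equivalence hypotheses re-expressed through these formulas. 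Finally, suppose $\cM$ is also an $(\cA,\cB)$-equivalence, with $\cA$ a Fell bundle over a groupoid $\cG$; by Lemma~\ref{lem:thm for gpds} there is a groupoid isomorphism $X\times_{\cH}X\op\to\cG$, $[x,y\op]\mapsto\leoq[X]{\cG}{x}{y}$, and over it I define $\Phi(\innercpct[\cM]{\cB}{m}{n})=\linner[\cM]{\cA}{m}{n}$. The content is to show that the $\cA$-valued left inner product respects precisely the balancing and $\cH$-invariance relations defining the fibres (so $\Phi$ is well defined), that $\Phi$ is fibrewise isometric, $*$-multiplicative, and surjective by saturation of $\cA$, and that it is continuous; continuity together with fibrewise bijectivity upgrades $\Phi$ to an isomorphism of Fell bundles. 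I expect the topology step---upper semicontinuity of the section norms and continuity of the bundle operations---to be the main obstacle, with the interplay between the $\cB$-balancing and the $\cH$-quotient in checking well-definedness of the fibres and of $\Phi$ the second most delicate point.
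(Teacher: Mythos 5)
Your overall architecture coincides with the paper's: fibres realised as generalised rank-one operators $\compacts_{B(\sigma(x))}(M(y),M(x))\cong M(x)\otimes_{B(\sigma(x))}M(y)\op$, the Fell structure, left action, and left inner product all written in rank-one form, and uniqueness via the universal map $\innercpct[\cM]{\cB}{m}{n}\mapsto\linner[\cM]{\cA}{m}{n}$. However, there is a genuine gap at exactly the step you flag as the technical heart. Your fundamental family of sections $\omega_{m,n}\colon[x,y\op]\mapsto\innercpct[\cM]{\cB}{m(x)}{n(y)}$ is \emph{not} well defined on the \igpd. If $(x\ract h,(y\ract h)\op)$ is another representative of $[x,y\op]$, then under the canonical identification of $\compacts_{B(s_{\cH}(h))}\bigl(M(y\ract h),M(x\ract h)\bigr)$ with $\compacts_{B(r_{\cH}(h))}\bigl(M(y),M(x)\bigr)$ (the map $\Psi_{h}$ implemented by the right $\cB$-action, which is what ``the $\cH$-invariance built into the fibres'' amounts to), the operator $\innercpct[\cM]{\cB}{m(x\ract h)}{n(y\ract h)}$ does \emph{not} correspond to $\innercpct[\cM]{\cB}{m(x)}{n(y)}$: a continuous section $m$ of $\cM$ satisfies no relation of the form $m(x\ract h)=m(x)\ractB b$. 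Already for $\cH$ a group, $X=\cH$, and $\cB=\cM=\mathbb{C}\times\cH$ the trivial line bundle, the two candidate values are the numbers $m(x)\overline{n(y)}$ and $m(x\ract h)\overline{n(y\ract h)}$, while $\Psi_{h}$ is literally the identity on $\mathbb{C}$; so the ``section'' is multivalued and Hofmann's existence criterion cannot be fed with this family. The failure is not peripheral: it is precisely why the group-case argument of Abadie--Ferraro (whose sections $g\mapsto\innercpct[\cM]{\cB}{\psi(g\lact x)}{\varphi(x)}$ fix the second leg, i.e., use the continuous global section $g\mapsto(g\lact x,x\op)$ of $X\bfp{\sigma}{\sigma}X\op\to\cG$) does not transfer: for a general \psp{$\cH$} the quotient map $X\bfp{\sigma}{\sigma}X\op\to X\times_{\cH}X\op$ need not admit even local continuous sections, since free and proper actions in the topological category need not be locally trivial.

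The paper circumvents this by never topologising the quotient bundle through sections. It first builds the \uscBb\ $\cK$ over the \emph{unquotiented} space $X\bfp{\sigma}{\sigma}X\op$, where the $\mu\otimes\nu\op$ are honest continuous sections (Lemma~\ref{lem:DL:MJM2023:Lemma 5.2}); it then constructs the fibre identifications $\Psi_{h}$ and, crucially, proves they assemble into a \emph{jointly continuous} bundle map $\Psi$ (Lemmas~\ref{lem:Psi_h} and~\ref{lem:DL:MJM2023:Thm5.20}); the relation $\FBRel$ induced by $\Psi$ is shown to be closed with \emph{open} quotient map $Q$ (Lemma~\ref{lem:FBRel}), and $\cA$ is given the quotient topology (Lemmas~\ref{lem:Q| is homeo} and~\ref{lem:cA is uscBb}). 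The same machinery is what makes your multiplication formula rigorous: for representatives whose middle components do not match one must first apply some $\Psi_{h}$, and continuity of the resulting product on the quotient is proved using openness of $Q$, Fell's criterion, and properness and freeness of the action to force convergence of the implementing elements $h_{\lambda}$ (Lemma~\ref{lem:cA:mult}); your proposal leaves all of this to the phrase ``representatives with matching middle component.'' Once the quotient construction is in place, the remainder of your outline --- the equivalence structure on $\cM$ and the uniqueness map --- does go through essentially as you describe (Sections~\ref{sec:Equivalence} and~\ref{sec:Uniqueness} of the paper).
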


    Many theorems about symmetric imprimitivity \cite{EKQR:ImprimThms,KMQW:ImprimThms2,aHKRW:2013:Imprim,DuLi:ImprimThms-pp,IW:2011:MoritaFell} and generalized fixed point algebras \cite{Rieffel:1990:Proper,Exel:2000:Morita,Ferraro:2021:FixedPt} have appeared over the years and are extending (in various directions) a \citeyear{Green:1977}-result of \citeauthor{Green:1977}'s
    \cite{Green:1977}.
    In Section~\ref{sec:Applications}, we will see in what way our theorem captures some of those result.

\bigskip

The structure of the paper is as follows.
After establishing notation and assumptions in Subsection~\ref{ssec:assumptions}, we give the definition of a \demiequiv[$\cB$-]~$\cM$ over a \psp{$\cH$}~$X$ (Definition~\ref{def:wordBdl}) and prove some of its basic properties in Section~\ref{sec:wordBdls}. Section~\ref{sec:Existence} is devoted to proving that~$\cM$ gives rise to a Fell bundle $\compacts(\cM_{\cB})$ over the \igpd\ of $X$, which we will also denote by $\cM\otimes_{\cB}\cM\op$ for reasons that will become apparent. We prove that this {\em \iFbdl} is equivalent to~$\cB$ via~$\cM$ in Section~\ref{sec:Equivalence}. In Section~\ref{sec:Uniqueness}, we prove that $\compacts(\cM_{\cB})$ is unique (up to isomorphism), and we see some applications in Section~\ref{sec:Applications}. There are two short appendices to establish some background results about Hilbert $\cst$-modules and about \uscBb s.

\subsection{Assumptions, conventions, and notation}\label{ssec:assumptions}

We will denote groupoids using \verb|\mathcal| ($\cG,\cH,\mathcal{K}\ldots$), bundles using \verb|\mathscr| ($\cA,\cB,\cC,\ldots$), and stand-alone Hilbert $\cst$-modules and their elements using \verb|\mathbf| ($\mathbf{x\in X, y\in Y, z\in Z}\ldots$). Fibred products are denoted by $\mvisiblespace\bfp{f}{g}\mvisiblespace$, as defined in \eqref{eq:def bfp}. The algebraic tensor product is $\odot$ and its completions is $\otimes$; we add a subscript for the internal (i.e., balanced) tensor product.  Right and left actions on spaces are usually denoted by $\mvisiblespace\ract\mvisiblespace$ and $\mvisiblespace\lact\mvisiblespace$, respectively, while actions on bundles are denoted by $\mvisiblespace\ractB\mvisiblespace$ and $\mvisiblespace\lactB\mvisiblespace$. (The triangle is always pointing at the object that is being acted on.)

We try to use the letter $p$ for the projection map of Fell bundles, while those of  general Banach~bundles are denoted by $q$. The fibre over a point $h$ of the base space of a bundle $\cB=(B\to \cH)$ with total space $B$ is denoted $B(h)$; we adopt analogous notation for other bundles. We write $s_{\cB}\colon B\to \cH\z$ for the source map of~$\cH$ composed with the projection map of $\cB$, and we adopt analogous notation for anchor maps $\sigma\colon X\to \cH\z$.

We use the notion of \uscBb s, Fell bundles, actions of Fell bundles on \uscBb s, and equivalences of Fell bundles as in \cite{DL:MJM2023}, Definitions 2.3, 2.9, 2.10, and 2.11, respectively. 
As done in \cite{MW:2008:Disintegration}, our Fell bundles   are assumed \textbf{saturated} in the sense that their fibres are full (and hence imprimitivity) bimodules; we will rarely mention this assumption again.  We frequently use the fact that our \uscBb s have enough continuous cross sections (see \cite[Cor.\ 2.10]{LAZAR2018448}, \cite[App.\ A]{MW:2008:Disintegration}).

\fix we assume that
\begin{itemize}
	\item~$\cH$ is a \LCH\ groupoid with open source map\footnote{Any \LCH\ groupoid with a Haar system (and thus any \etale\ groupoid) is an example of a groupoid with open source map \cite[Proposition 1.23]{Wil2019}.} $s_{\cH}\colon \cH\to \cH\z$,
	\item~$X$ is a \LCH\ space and $\sigma\colon X\to \cH\z$ is a continuous open surjection,
	\item~$X$ is a \psp[right ]{$\cH$} with anchor map $\sigma$, and
	\item the groupoid $X\times_{\cH} X\op$ will be denoted by~$\cG$, and we will refer to it as the {\em \igpd} of~$X$ as in \cite[Lemma 2.45]{Wil2019}.
\end{itemize}

In analogy to the notation $\cH u = s_{\cH}\inv (u)$ and $u\cH = r_{\cH}\inv (u)$ for $u\in \cH\z$, we will write $X u \coloneqq \sigma\inv(u)$. We chose the letter $\sigma$ since it serves the purpose of a `source' map; for left-actions, we will therefore generally use $\rho$ (as in `range') for the anchor map.

\section{Demi-Equivalences}\label{sec:wordBdls}

For the upcoming definition, we remind the reader that the anchor map $\rho\colon X\to \cG\z$ of the left action of $\cG= X\times_\cH X\op$ on the \psp{$\cH$} $X$ is exactly the quotient map if we identify $\cG\z$ with $X/\cH$; see \eqref{eq:anchor map of left G action on X}. 
\begin{definition}[{cf.\ \cite[Definition 2.1]{AF:EquivFb}}]\label{def:wordBdl}
    Suppose $\cB=(p_{\cB}\colon B\to \cH)$ is a Fell bundle over the groupoid~$\cH$, and  $\cM=(q_{\cM}\colon M\to X)$ is an \uscBb\ over the \psp{$\cH$} $X$. We call~$\cM$ a {\em (right)~\demiequiv[$\cB$-]}\footnote{While it would have been nice to call~$\cM$ a ``principal~$\cB$-bundle'' to show the analogy to the groupoid-world concept of a \psp{$\cH$}, the clash with the existing notions of principal group bundles in differential geometry made that a non-viable option.} if there exist maps
    \[\begin{tikzcd}[row sep=tiny, column sep = small]
      \mvisiblespace \ractB \mvisiblespace\colon&[-15pt]  M\bfp{\sigma}{r} B \ar[r]& M,&
            &(m,b)\ar[r,mapsto]& m\ractB b,
      \end{tikzcd}
      \]
    and
      \[
          \begin{tikzcd}[row sep=tiny, column sep = small]
          \rip\cB<\mvisiblespace,\mvisiblespace>\colon&[-15pt] M \bfp{\rho}{\rho} M \ar[r]& B, &
            &(m_{1},m_{2})\ar[r,mapsto]& \rip\cB<m_{1},m_{2}>,
          \end{tikzcd}
      \]
    such that the following\footnote{ In the ensuing list of properties, the DE in ``(DEn)'' stands for ``\demiequiv''.} hold for all appropriately chosen $m_{i}\in M$ and $b\in B$.
    \begin{enumerate}[leftmargin=1.5cm,label=\textup{(DE\arabic*)}]
  \item\label{item:rwordBdl:ractB:fibre} $\mvisiblespace \ractB \mvisiblespace $ covers the map $\mvisiblespace \ract \mvisiblespace $ in the sense that $q_{\cM}(m\ractB b) = q_{\cM}(m)\ract p_{\cB}(b)$;
  \item\label{item:rwordBdl:ip:fibre} $\rip\cB<\mvisiblespace,\mvisiblespace>$ covers the map $\reoqempty[X]{\cH}$ in the sense that 
        $q_{\cM}(m_{1})\ract p_{\cB}\bigl(\rip\cB<m_{1},m_{2}>\bigr)=q_{\cM}(m_{2})$;
  \item\label{item:rwordBdl:ip} $\rip\cB<\mvisiblespace,\mvisiblespace>$ is continuous, and fibrewise sesquilinear (meaning linear in the second and anti-linear in the first coordinate);
   \item\label{item:rwordBdl:ip:C*linear} 
      $\rip\cB<m_{1},m_{2}\ractB b>=\rip\cB<m_{1},m_{2}>b$;
   \item\label{item:rwordBdl:ip:adjoint}
      $\rinner[*]{\cB}{m_{1}}{m_{2}}=\rip\cB<m_{2},m_{1}>$;
   \item\label{item:rwordBdl:positive} $\rip\cB<m,m>\geq 0$ in the $\cst$-algebra $B(\sigma_{\cM}(m))$, and $\rip\cB<m,m>=0$ only if $m=0$;
   \item\label{item:rwordBdl:norm compatible} the norm $m\mapsto \norm{\rip\cB<m,m>}^{1/2}$ agrees with the norm that the \uscBb~$\cM$ carries; and
   \item\label{item:rwordBdl:fibrewise full} for each $x\in X$, the linear span of $\{\rip\cB<m_{1},m_{2}>:m_{i}\in M(x)\}$ is dense in $B(\sigma(x))$.
  \end{enumerate}
\end{definition}

An analogous definition of a left~\demiequiv[$\cB$-] can be made; there, each instance of a range map becomes a source map (and vice versa) and sesquilinearity means that the {\em first} coordinate is linear.
When there is no ambiguity, we will drop the subscript-$\cB$ on the inner product; conversely, when there is ample room for ambiguity, we might add a superscript-$\cM$.

The reader might have noticed that there are a few natural properties, both algebraic and analytic, that a \demiequiv\ should satisfy if it is to be `half' of an equivalence in the sense of \citeauthor{MW:2008:Disintegration}. Let us show that all those properties actually follow automatically:
\begin{lemma}[{cf.\ \cite[Lemma 2.7]{AF:EquivFb}}]\label{lem:rwordBdl are nice} Suppose $\cB=(p_{\cB}\colon B\to \cH)$ is a Fell bundle over the groupoid~$\cH$  and $\cM=(q_{\cM}\colon M\to X)$ is a \demiequiv[right $\cB$-]\   over the \psp[right ]{$\cH$}~$X$. Then we have the following, where $(m_{1},m_{2})\in M\bfp{\rho}{\rho}M$.
          \begin{enumerate}[leftmargin=1.5cm,label=\textup{(DE\arabic*)}, start=9]
            \item\label{item:rwordBdl:M(x) full} Each $M(x)$ is a full right-Hilbert $\cst$-module over the $\cst$-algebra $B(\sigma(x))$;
          \item\label{item:rwordBdl:ip:C*linear on left}
          $         	\rip\cB<m_{1} \ractB b^*,m_{2}> = b\rip\cB<m_{1},m_{2}>$ for all appropriate $b\in B$;
          \item\label{item:rwordBdl:ip:s and r}
          $\sigma_{\cM}(m_{1})=r_{\cB}(\rip\cB<m_{1},m_{2}>)$ and $\sigma_{\cM}(m_{2})=s_{\cB}(\rip\cB<m_{1},m_{2}>)$;
          \item\label{item:rwordBdl:ip:CS} $\rip\cB<m_{1},m_{2}>\rinner[*]{\cB}{m_{1}}{m_{2}}
                    \leq
                    \norm{m_{2}}^2 \rip\cB<m_{1},m_{1}>$
            as elements of the $\cst$-algebra $B(\sigma_{\cM}(m_{1}))$ \textup(Cauchy--Schwarz\textup).
          \end{enumerate}
Moreover,~$\cB$ acts on the right of~$\cM$ in the sense of \cite[Definition 2.11]{DL:MJM2023}, i.e.,
          \begin{enumerate}[leftmargin=1.5cm,label=\textup{(DE\arabic*)}, resume]      
            \item\label{item:rwordBdl:ractB:bilinear} $\mvisiblespace \ractB \mvisiblespace$ is fibrewise bilinear;
            \item\label{item:rwordBdl:ractB:associative} $(m\ractB b)\ractB b' = m\ractB (bb')$ for all appropriate $b,b'\in B$;
            \item\label{item:rwordBdl:ractB:norm}  $\norm{m\ractB b}\leq \norm{m}\norm{b}$;  and
            \item\label{item:rwordBdl:ractB:cts} $\mvisiblespace \ractB \mvisiblespace$ is continuous.
          \end{enumerate}
\end{lemma}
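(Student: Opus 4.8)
The plan is to derive every listed property from the defining axioms \ref{item:rwordBdl:ractB:fibre}--\ref{item:rwordBdl:fibrewise full}, with the positive-definiteness of the inner product doing most of the work. The organizing tool is a \emph{nondegeneracy principle}: if $\xi,\xi'\in M(x)$ satisfy $\rip\cB<n,\xi>=\rip\cB<n,\xi'>$ for all $n\in M(x)$, then $\xi=\xi'$. Indeed, taking $n=\xi-\xi'$ (allowed by \ref{item:rwordBdl:ip}) gives $\rip\cB<\xi-\xi',\xi-\xi'>=0$, whence $\xi=\xi'$ by \ref{item:rwordBdl:positive}. Because each side of several of the claimed identities lives in a single fibre of $\cM$, this principle lets me verify such identities by pairing against the inner product and reducing to identities in the Fell bundle $\cB$.

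I would first clear away the algebraic statements. Property \ref{item:rwordBdl:ip:C*linear on left} is immediate from the adjoint axiom \ref{item:rwordBdl:ip:adjoint} and \ref{item:rwordBdl:ip:C*linear}: since the involution of $\cB$ reverses products, $\rip\cB<m_1\ractB b^*,m_2>=\rinner[*]{\cB}{m_2}{m_1\ractB b^*}=\bigl(\rip\cB<m_2,m_1>b^*\bigr)^*=b\,\rip\cB<m_1,m_2>$. Property \ref{item:rwordBdl:ip:s and r} is pure bookkeeping from \ref{item:rwordBdl:ip:fibre}: the arrow $h:=p_\cB(\rip\cB<m_1,m_2>)$ is, by \ref{item:rwordBdl:ip:fibre}, the unique element of $\cH$ carrying $q_\cM(m_1)$ to $q_\cM(m_2)$, so the matching conditions of the $\cH$-action force $r_\cH(h)=\sigma_\cM(m_1)$ and $s_\cH(h)=\sigma_\cM(m_2)$, which is exactly $r_\cB(\rip\cB<m_1,m_2>)=\sigma_\cM(m_1)$ and $s_\cB(\rip\cB<m_1,m_2>)=\sigma_\cM(m_2)$. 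For bilinearity \ref{item:rwordBdl:ractB:bilinear} and associativity \ref{item:rwordBdl:ractB:associative} I would invoke the nondegeneracy principle: both sides of each identity lie in a common fibre $M(y)$, and pairing on the left with an arbitrary $n\in M(y)$ and applying \ref{item:rwordBdl:ip:C*linear} (together, for \ref{item:rwordBdl:ractB:bilinear}, with sesquilinearity \ref{item:rwordBdl:ip}) reduces each identity to an identity of products in $\cB$, e.g.\ $\rip\cB<n,(m\ractB b)\ractB b'>=\rip\cB<n,m>(bb')=\rip\cB<n,m\ractB(bb')>$, which holds by associativity of the Fell-bundle multiplication.

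With \ref{item:rwordBdl:ractB:bilinear} and \ref{item:rwordBdl:ractB:associative} established, \ref{item:rwordBdl:M(x) full} assembles directly: \ref{item:rwordBdl:ractB:bilinear}/\ref{item:rwordBdl:ractB:associative} give $M(x)$ the structure of a right $B(\sigma(x))$-module, \ref{item:rwordBdl:ip}--\ref{item:rwordBdl:positive} make $\rip\cB<\cdot,\cdot>$ a positive-definite $B(\sigma(x))$-valued inner product, \ref{item:rwordBdl:fibrewise full} is fullness, and completeness is inherited from the fibre of the \uscBb~$\cM$ because \ref{item:rwordBdl:norm compatible} identifies the module norm with the bundle norm. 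For the Cauchy--Schwarz inequality \ref{item:rwordBdl:ip:CS} I would complete the square. After rescaling so that $\norm{m_2}\le 1$ (hence $\rip\cB<m_2,m_2>\le 1$ in the unitization of $B(\sigma_\cM(m_2))$, by \ref{item:rwordBdl:norm compatible}), put $a=\rip\cB<m_2,m_1>$ and expand the positive element $\rip\cB<m_2\ractB a-m_1,\,m_2\ractB a-m_1>\ge 0$, noting that $m_2\ractB a$ and $m_1$ lie in the same fibre $M(q_\cM(m_1))$. Using \ref{item:rwordBdl:ip:C*linear on left}, \ref{item:rwordBdl:ip:C*linear}, and positivity of the map $c\mapsto a^*ca$, the summand $\rip\cB<m_2\ractB a,m_2\ractB a>=a^*\rip\cB<m_2,m_2>a$ is bounded by $a^*a$ while the two cross terms contribute $-2a^*a$, leaving $\rip\cB<m_1,m_2>\rinner[*]{\cB}{m_1}{m_2}=a^*a\le\rip\cB<m_1,m_1>$; undoing the rescaling restores the factor $\norm{m_2}^2$. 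The norm bound \ref{item:rwordBdl:ractB:norm} is then the short computation $\norm{m\ractB b}^2=\norm{\rip\cB<m\ractB b,m\ractB b>}=\norm{b^*\rip\cB<m,m>b}\le\norm{b}^2\norm{m}^2$, using \ref{item:rwordBdl:norm compatible}, \ref{item:rwordBdl:ip:C*linear on left}, \ref{item:rwordBdl:ip:C*linear}, and submultiplicativity of the norm on $\cB$.

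The one genuinely analytic point, and the step I expect to be the main obstacle, is continuity of the action \ref{item:rwordBdl:ractB:cts}: the action is \emph{not} assumed continuous, so it must be extracted from continuity of the inner product \ref{item:rwordBdl:ip}. I would use the standard description of convergence in an \uscBb\ in terms of continuous sections (see the appendix and \cite[App.\ A]{MW:2008:Disintegration}, \cite[Cor.\ 2.10]{LAZAR2018448}). Given a net $(m_i,b_i)\to(m_0,b_0)$ in $M\bfp{\sigma}{r}B$, write $w_i=m_i\ractB b_i$ and $w_0=m_0\ractB b_0$; continuity of the base maps gives $q_\cM(w_i)\to q_\cM(w_0)$. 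The crucial observation is that, for any continuous section $\tau$ of $\cM$, the $\cB$-valued quantities $\rip\cB<\tau(q_\cM(w_i)),w_i>=\rip\cB<\tau(q_\cM(w_i)),m_i>b_i$ and $\rip\cB<w_i,w_i>=b_i^*\rip\cB<m_i,m_i>b_i$ converge \emph{in the total space of $\cB$}, by \ref{item:rwordBdl:ip}, \ref{item:rwordBdl:ip:C*linear}, \ref{item:rwordBdl:ip:C*linear on left}, and continuity of the Fell-bundle operations. Choosing $\tau$ with $\tau(q_\cM(w_0))=w_0$ and expanding $\rip\cB<w_i-\tau(q_\cM(w_i)),\,w_i-\tau(q_\cM(w_i))>$ via \ref{item:rwordBdl:ip}, all four terms converge in $\cB$ to $\rip\cB<w_0,w_0>$ with signs summing to zero; hence this positive element tends to the zero of its fibre, so its norm tends to $0$. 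By \ref{item:rwordBdl:norm compatible} this reads $\norm{w_i-\tau(q_\cM(w_i))}\to 0$, which is exactly the criterion for $w_i\to\tau(q_\cM(w_0))=w_0$ in $\cM$. All the remaining difficulty is bookkeeping: matching fibres under $\bfp{\rho}{\rho}$ so the inner products are defined, and invoking that a net converging to a zero vector has vanishing norm.
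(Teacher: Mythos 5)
Your proposal is correct and follows essentially the same route as the paper's proof: the same pairing/nondegeneracy trick (pair against an arbitrary element and use positive-definiteness) for \ref{item:rwordBdl:ractB:bilinear} and \ref{item:rwordBdl:ractB:associative}, the same short algebraic derivations of \ref{item:rwordBdl:M(x) full}--\ref{item:rwordBdl:ip:s and r} and \ref{item:rwordBdl:ractB:norm}, and the same continuity argument for \ref{item:rwordBdl:ractB:cts} via a continuous section through the limit point, expanding the inner product of the difference into four terms, and invoking norm compatibility together with the upper semi-continuity criterion for convergence. The only cosmetic difference is that you spell out the standard completing-the-square proof of Cauchy--Schwarz \ref{item:rwordBdl:ip:CS}, where the paper simply cites \cite[Lemma 4.7]{DL:MJM2023} verbatim.
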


Again, the analogous result for a \demiequiv[left ] must be rephrased in a way that  every range becomes a source etc.
With the properties listed in Lemma~\ref{lem:rwordBdl are nice}, we see that a \demiequiv[$\cB$-] really satisfies all `one-sided' properties of a Fell bundle equivalence as defined in \cite[Def. 6.1]{MW:2008:Disintegration}. In Proposition~\ref{prop:from two-sided demi to equivalence}, we will see which properties are needed for a `two-sided' \demiequiv\ to be an equivalence.

\begin{proof}[Proof of Lemma~\ref{lem:rwordBdl are nice}]
    Condition~\ref{item:rwordBdl:M(x) full} is just a restatement of other properties. To be precise, $M(x)$ is a right inner product $B(\sigma(x))$-module in the sense of \cite[Definition 2.1]{RaWi:Morita} because of Conditions~\ref{item:rwordBdl:ip}--
    \ref{item:rwordBdl:positive}, and it is a Hilbert $B(\sigma(x))$-module because we assumed that the norm with respect to which $M(x)$ is complete coincides with the norm induced by the inner product (Assumption~\ref{item:rwordBdl:norm compatible}). Fullness is exactly Assumption~\ref{item:rwordBdl:fibrewise full}.

    Condition~\ref{item:rwordBdl:ip:C*linear on left} follows from \ref{item:rwordBdl:ip:adjoint} and \ref{item:rwordBdl:ip:C*linear}  combined, and \ref{item:rwordBdl:ip:s and r} follows directly from \ref{item:rwordBdl:ip:fibre}.

   \ref{item:rwordBdl:ractB:bilinear} Fix $x\in X$ and $m,m_{i}\in M(x)$. If $b\in B(h)$ for some $h\in \sigma(x)\cH$, then
    \begin{align*}
        \rinner{}{m}{(m_{1}+\lambda m_{2})\ractB b}
        &\overset{\ref{item:rwordBdl:ip:C*linear}}{=}
        \rinner{}{m}{m_{1}+\lambda m_{2}}b
        \\
        &=
        (\rinner{}{m}{m_{1}}+\lambda \rinner{}{m}{m_{2}})b
        &&\text{ by \ref{item:rwordBdl:ip} (sesquilinearity)}.
    \end{align*}
    Since multiplication in~$\cB$ is bilinear, we conclude that
    \begin{align*}
        \rinner{}{m}{(m_{1}+\lambda m_{2})\ractB b}
        =
        \rinner{}{m}{m_{1}\ractB b + \lambda (m_{2}\ractB b)}
    \end{align*}
    for all $m\in M(x)$. Choosing $m=(m_{1}+\lambda m_{2})\ractB b - m_{1}\ractB b \lambda (m_{2}\ractB b)$, it follows from \ref{item:rwordBdl:positive} that $m=0$, meaning that $\mvisiblespace \ractB \mvisiblespace$ is linear in the first component. A similar computation, using \ref{item:rwordBdl:ip:C*linear on left}, proves linearity in the second component and also \ref{item:rwordBdl:ractB:associative}, using associativity of the multiplication of $\cB$.

    \ref{item:rwordBdl:ractB:norm} We have
    \[
        \norm{m\ractB b}^2
        \overset{\ref{item:rwordBdl:norm compatible}}{=}
        \norm{\rip\cB<m\ractB b,m\ractB b>}
        \overset{(\dagger)}{=}
         \norm{b^* \rip\cB<m,m> b}
        \leq 
        \norm{b^*}\norm{\rip\cB<m,m>}\norm{b}
        \overset{\ref{item:rwordBdl:norm compatible}}{=}
        \norm{m}^2\norm{b}^2,
    \]
    where $(\dagger)$ follows from \ref{item:rwordBdl:ip:C*linear} and \ref{item:rwordBdl:ip:C*linear on left}.
    
    \ref{item:rwordBdl:ractB:cts} Suppose $(m_{i },b_{i })$ is a net in $M\bfp{\sigma}{r}B$ that converges to $(m,b)$; in particular, $x_{i }\coloneqq q_{\cM}(m_{i })$ converges to $x\coloneqq q_{\cM}(m)$ in~$X$ and $h_{i }\coloneqq p_{\cB}(b_{i })$ converges to $h\coloneqq p_{\cB}(b)$ in~$\cH$.  Now choose a continuous section $\mu\in\Gamma_0 (X;\cM)$ of~$\cM$ with $\mu(x\ract h)=m\ractB b$. Since $\mu$ and the right $\cH$-action $ \mvisiblespace \ract \mvisiblespace$ on~$X$ are continuous, we have  $\mu(x_{i} \ract h_{i} ) \to m \ractB b$. Using \ref{item:rwordBdl:ip:C*linear}, \ref{item:rwordBdl:ip:C*linear on left}, and sesquilinearity of the inner product, we have
    \begin{align*}
        &\rinner{}{m_{i } \ractB b_{i }
        -
        \mu(x_{i } \ract h_{i } )}{m_{i } \ractB b_{i }
        -
        \mu(x_{i } \ract h_{i } )}
        \\
        &=
        b_{i }^*\rinner{}{m_{i } }{m_{i } }b_{i }
        -
        b_{i }^*
        \rinner{}{m_{i }}{\mu(x_{i } \ract h_{i } )}
        -
        \rinner{}{\mu(x_{i } \ract h_{i } )}{m_{i }}b_{i }
        +
        \rinner{}{
        \mu(x_{i } \ract h_{i } )}{
        \mu(x_{i } \ract h_{i } )}
        .
    \end{align*}
    By continuity of the involution and multiplication of~$\cB$ and by continuity of the inner product on~$\cM$, the above converges to
    \begin{align*}
        b^*\rinner{}{m  }{m  }b 
        -
        b^*
        \rinner{}{m }{m\ractB b}
        -
        \rinner{}{m\ractB b}{m }b 
        +
        \rinner{}{
        m\ractB b}{m\ractB b}
        ,
    \end{align*}
    which, by choice of $\mu$ and again by \ref{item:rwordBdl:ip:C*linear} and \ref{item:rwordBdl:ip:C*linear on left}, is $0$. Since 
    \begin{align*}
        \norm{
        m_{i } \ractB b_{i }
        -
        \mu(x_{i } \ract h_{i } )
        }^2
        &=
        \norm{
        \linner{}{m_{i } \ractB b_{i }
        -
        \mu(x_{i } \ract h_{i } )}{m_{i } \ractB b_{i }
        -
        \mu(x_{i } \ract h_{i } )}
        }
    \end{align*}
    by \ref{item:rwordBdl:norm compatible},
    it thus follows from \cite[Lemma A.5]{DL:MJM2023} that $m_{i } \ractB b_{i }
        -
        \mu(x_{i } \ract h_{i } )$ converges in the total space
        of~$\cM$ to  
        the zero-element of the Banach space $M(x\ract h)$. Since $\mu(x_{i} \ract h_{i} )$ converges to $m \ractB b$ and since limits are unique, this implies $m_{i } \ractB b_{i }\to \mu(x\ract h)=m \ractB b$, as claimed.

    The proof of \ref{item:rwordBdl:ip:CS}
    is verbatim that for \cite[Lemma 4.7]{DL:MJM2023} (after translating from the left to the right).
\end{proof}

\begin{remark}    
    In \cite[Definition 6.1]{MW:2008:Disintegration}, neither continuity of the inner product on a Fell bundle equivalence nor Condition~\ref{item:rwordBdl:norm compatible} were explicitly assumed but often invoked. I am unsure whether there is a way to deduce continuity from a combination of the algebraic properties of~$\cM$ and the topological properties of~$\cB$, like it was the case for continuity of the action, \ref{item:rwordBdl:ractB:cts}. Consequently, I chose to include these conditions as assumptions in the definition.
\end{remark}

Note that a two-sided version of a \demiequiv\ is an equivalence in the sense of \cite[Definition 6.1]{MW:2008:Disintegration}, provided there is some algebraic compatibility between the two structures:

\begin{proposition}\label{prop:from two-sided demi to equivalence}
    Suppose that $X$ is a groupoid equivalence between~$\cG$ and~$\cH$ with anchor maps $\rho$ respectively $\sigma$, that $\cA=(A\to \cG)$ and $\cB=(B\to \cH)$ are Fell bundles, and that $\cM=(M\to X)$ is a left $\cA$- and a \demiequiv[right $\cB$-]. Assume further that 
    \begin{enumerate}[label=\textup{(\arabic*)}]
        \item\label{item:demi to full:ractB and lactB commute} the actions of $\cA$ and~$\cB$ on~$\cM$ commute, i.e., for all $(a,m,b)\in A\bfp{s}{\rho}M\bfp{\sigma}{r}B$, we have $(a\lactB m)\ractB b= a\lactB (m\ractB b)$;
        \item\label{item:demi to full:ractB and lactB by adjointables}
        for each $x\in X$, $A(\rho(x))$ acts by $B(\sigma(x))$-adjointable operators on $M(x)$, meaning that
        \begin{align*}
            \rip\cB<a \lactB m_{1},m_{2}>
            =
            \rip\cB<m_{1},a^* \lactB m_{2}>
        \end{align*}
        for all $m_{i}\in M(x)$ and all $a\in A(\rho(x))$;
        \item\label{item:demi to full:ips compatible} for all $(m_{1},m_{2},m_{3})\in M\bfp{\sigma}{\sigma}M \bfp{\rho}{\rho} M$, the inner products on~$\cM$ satisfy
        \begin{equation*}\label{eq:demi to full:ips compatible}
            \lip\cA<m_{1},m_{2}>\lactB m_{3}
            =
            m_{1}\ractB \rip\cB<m_{2},m_{3}>.
        \end{equation*}
    \end{enumerate}
    Then~$\cM$ is an equivalence between $\cA$ and $\cB$.
\end{proposition}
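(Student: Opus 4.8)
The plan is to check $\cM$ against the definition of a Fell bundle equivalence (\cite[Definition 2.11]{DL:MJM2023}, following \cite[Definition 6.1]{MW:2008:Disintegration}) by sorting its axioms into \emph{one-sided} conditions, which involve only the $\cA$-structure or only the $\cB$-structure, and \emph{coupling} conditions, which mix the two. The one-sided conditions require no new work: since $\cM$ is a right $\cB$-\demiequiv, Lemma~\ref{lem:rwordBdl are nice} makes every fibre $M(x)$ a full right-Hilbert $B(\sigma(x))$-module on which $\cB$ acts continuously and compatibly with the $\cB$-valued inner product; applying the left analogues of Definition~\ref{def:wordBdl} and Lemma~\ref{lem:rwordBdl are nice} to the left $\cA$-structure makes each $M(x)$ a full left-Hilbert $A(\rho(x))$-module with a continuous, compatible $\cA$-action. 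What remains are exactly the conditions that link the two sides.

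Two of these are immediate. The commuting of the two actions is precisely Hypothesis~\ref{item:demi to full:ractB and lactB commute}. For the fibres to be imprimitivity bimodules in the sense of \cite[Definition 2.1]{RaWi:Morita}, one needs---besides the two one-sided Hilbert-module structures already established---the imprimitivity identity $\lip\cA<m_{1},m_{2}>\lactB m_{3}=m_{1}\ractB\rip\cB<m_{2},m_{3}>$, which is Hypothesis~\ref{item:demi to full:ips compatible}, together with the two adjointability identities expressing that each fibre algebra acts by operators adjointable for the other's inner product. The identity saying that $A(\rho(x))$ acts by $B(\sigma(x))$-adjointable operators is Hypothesis~\ref{item:demi to full:ractB and lactB by adjointables}; the symmetric one I would derive from the data already available.

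To obtain that remaining identity $\lip\cA<m_{1}\ractB b,m_{2}>=\lip\cA<m_{1},m_{2}\ractB b^{*}>$ (for $m_{i}\in M(x)$ and admissible $b$), I would test both elements of $A(\rho(x))$ against an arbitrary $m_{3}\in M(x)$ via the left action. Using Hypothesis~\ref{item:demi to full:ips compatible} on the left-hand side, then associativity~\ref{item:rwordBdl:ractB:associative} of the right action together with the one-sided identity~\ref{item:rwordBdl:ip:C*linear on left} (which rewrites $b\,\rip\cB<m_{2},m_{3}>$ as $\rip\cB<m_{2}\ractB b^{*},m_{3}>$), and finally Hypothesis~\ref{item:demi to full:ips compatible} again on the right-hand side, both $\lip\cA<m_{1}\ractB b,m_{2}>\lactB m_{3}$ and $\lip\cA<m_{1},m_{2}\ractB b^{*}>\lactB m_{3}$ evaluate to $m_{1}\ractB(b\,\rip\cB<m_{2},m_{3}>)$. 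Thus the two elements act identically on all of $M(x)$, and they coincide once the left action of $A(\rho(x))$ on $M(x)$ is known to be faithful. For that last point I would argue that if $d\lactB m_{3}=0$ for every $m_{3}$, then $d\,\lip\cA<m_{3},m_{4}>=\lip\cA<d\lactB m_{3},m_{4}>=0$ for all $m_{3},m_{4}$ by the left analogue of~\ref{item:rwordBdl:ip:C*linear}; since these inner products span a dense subspace of $A(\rho(x))$ by fullness, $dA(\rho(x))=\{0\}$, whence $dd^{*}=0$ and $d=0$.

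Finally I would record that the two norm-compatibility requirements agree, since both $\norm{\rip\cB<m,m>}^{1/2}$ and the corresponding $\cA$-expression equal the Banach-bundle norm of $m$ by~\ref{item:rwordBdl:norm compatible} and its left analogue, and that the continuity of the actions and of the inner products is already part of the two \demiequiv\ structures. Assembling these, $\cM$ satisfies every clause of \cite[Definition 6.1]{MW:2008:Disintegration} and is therefore an $(\cA,\cB)$-equivalence. I expect the genuine difficulty to lie not in any single computation but in the bookkeeping: pinning down precisely which clauses of the equivalence definition are one-sided---and hence already supplied by the two demi-equivalence structures---versus coupling, so that Hypotheses~\ref{item:demi to full:ractB and lactB commute}--\ref{item:demi to full:ips compatible} are seen to account for exactly the missing data. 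The one non-formal ingredient is the faithfulness argument above, which is what upgrades the fibrewise ``acts identically on every $m_{3}$'' statement to an equality of inner products in $A(\rho(x))$.
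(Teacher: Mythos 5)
Your proposal is correct, and its skeleton coincides with the paper's: both proofs sort the axioms of \cite[Definition 2.11]{DL:MJM2023} into one-sided conditions (supplied by the two \demiequiv\ structures via Lemma~\ref{lem:rwordBdl are nice} and its left analogue), take commutativity of the actions from Hypothesis~\ref{item:demi to full:ractB and lactB commute} and the imprimitivity identity from Hypothesis~\ref{item:demi to full:ips compatible}, and then derive the remaining adjointability identity $\lip\cA<m_{1}\ractB b,m_{2}>=\lip\cA<m_{1},m_{2}\ractB b^{*}>$ by exactly your computation: test both elements against an arbitrary $m_{3}$, apply Hypothesis~\ref{item:demi to full:ips compatible} twice together with \ref{item:rwordBdl:ractB:associative} and \ref{item:rwordBdl:ip:C*linear on left}, and conclude that the difference $a$ acts as zero on $M(x)$. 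The one place where you genuinely diverge is the cancellation step upgrading ``$a\lactB m_{3}=0$ for all $m_{3}$'' to $a=0$ in $A(\rho(x))$. The paper invokes Hypothesis~\ref{item:demi to full:ractB and lactB by adjointables} here: since $a$ acts $B(\sigma(x))$-adjointably, \cite[Remark 2.29]{RaWi:Morita} forces $a^{*}a=0$. You instead prove faithfulness of the left action intrinsically: $a\,\lip\cA<m_{3},m_{4}>=\lip\cA<a\lactB m_{3},m_{4}>=0$ by the left analogue of \ref{item:rwordBdl:ip:C*linear}, fullness (the left analogue of \ref{item:rwordBdl:fibrewise full}) gives $aA(\rho(x))=\{0\}$, hence $aa^{*}=0$ and $a=0$. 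Both arguments are valid, but they deploy the hypotheses at complementary places: the paper uses Hypothesis~\ref{item:demi to full:ractB and lactB by adjointables} only for the cancellation and gets ``$A(\rho(x))$ acts $B(\sigma(x))$-adjointably'' from Hypothesis~\ref{item:demi to full:ips compatible} (on the dense span of inner products, extended by boundedness and continuity), whereas you quote Hypothesis~\ref{item:demi to full:ractB and lactB by adjointables} verbatim for that axiom and make the cancellation hypothesis-free. A pleasant byproduct of your route, combined with the paper's observation, is that on unit fibres the adjointability hypothesis is close to redundant; your version also explains more transparently why the paper can remark that \ref{item:demi to full:ractB and lactB by adjointables} may be replaced by its mirror image \textup{(2')}. (One cosmetic slip: the imprimitivity-bimodule definition is \cite[Section 3]{RaWi:Morita}, not Definition 2.1, which is the one-sided inner-product module.)
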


Note that Assumption~\ref{item:demi to full:ractB and lactB by adjointables} is  distinctly asymmetric; it could have  been replaced by its counterpart:
\begin{enumerate}
    \item[\ref*{item:demi to full:ractB and lactB by adjointables}'] 
        For each $x\in X$, $B(\sigma(x))$ acts by $A(\rho(x))$-adjointable operators on $M(x)$.
\end{enumerate}

\begin{proof}
    First, we will do some sanity checks: Condition~\ref{item:demi to full:ractB and lactB commute} makes sense because the actions on $X$ commute, meaning that $(a\lactB m)\ractB b$ and $a\lactB (m\ractB b)$ live over the same fibre of~$\cM$ by \ref{item:rwordBdl:ractB:fibre}.   
     Next, let us check that Condition~\ref{item:demi to full:ips compatible} makes sense.
    Since $\lip\cA<\mvisiblespace,\mvisiblespace>$ is defined on $M\bfp{\sigma}{\sigma}M$ and $\rip\cB<\mvisiblespace,\mvisiblespace>$ on $M\bfp{\rho}{\rho}M$, we can evaluate the shown inner products. By \ref{item:rwordBdl:ip:s and r}, we have
          $\sigma_{\cM}(m_{2})=r_{\cB}(\rip\cB<m_{2},m_{3}>)$, 
          and since $(m_1,m_2)\in M\bfp{\sigma}{\sigma}M$, we therefore have  that $m_{1}$ can be acted on by $\rip\cB<m_{2},m_{3}>$ on the right; a similar argument shows that $m_{3}$ can be acted on by $\lip\cA<m_{1},m_{2}>$ on the left. Lastly, note that Conditions~\ref{item:rwordBdl:ractB:fibre} and \ref{item:rwordBdl:ip:fibre} combined with Equation~\eqref{eq:leoq into reoq} show that the elements in question are indeed living in the same fibre of $\cM$.

    To see that~$\cM$ is an equivalence, we have to check Properties~(FE1), (FE2), and~(FE3) in \cite[Definition 2.11]{DL:MJM2023}. By assumption,~$\cM$ is an \uscBb\ over a groupoid equivalence, and as explained in Lemma~\ref{lem:rwordBdl are nice}, 
    $\mvisiblespace\lactB\mvisiblespace$ and $\mvisiblespace\ractB\mvisiblespace$ are actions in the sense of \cite[Definition 2.10]{DL:MJM2023}. Since they are assumed to commute, we therefore get Condition~(FE1). For (FE2), note that the inner products are assumed to be sesquilinear in the correct sense. Furthermore,
    \begin{enumerate}[leftmargin = 2cm]
        \item[(FE2.a)] is satisfied  by \ref{item:rwordBdl:ip:fibre};
        \item[(FE2.b)] is satisfied  by \ref{item:rwordBdl:ip:adjoint}; 
        \item[(FE2.c)] is satisfied  by \ref{item:rwordBdl:ip:C*linear}; and
        \item[(FE2.d)] is exactly Assumption~\ref{item:demi to full:ips compatible}.
    \end{enumerate}
    Lastly, for (FE3), we must check that each $M(x)$ is an \ib\ between $A(\rho(x))$ and $B(\sigma(x))$. We know by \ref{item:rwordBdl:M(x) full} that it is a  full left- and right-Hilbert $\cst$-module. By Assumption~\ref{item:demi to full:ips compatible}, the left- and right-inner products are compatible and the left-action is by $B(\sigma(x))$-adjointable operators; it remains to show that the right-action is by $A(\rho(x))$-adjointable operators. For  $n_{i}\in M(x)$, we have
    \begin{align*}
        \lip\cA<n_{1},n_{2}\ractB b> \lactB n_{3}
        &\overset{\ref{item:demi to full:ips compatible} }{=}
        n_{1} \ractB \rip\cB<n_{2}\ractB b, \lactB n_{3}>
        \overset{\ref{item:rwordBdl:ip:C*linear on left}}{=}
        n_{1} \ractB \left( b^* \rip\cB<n_{2}, \lactB n_{3}>\right)
        \\
        &=        
         (n_{1} \ractB b^*)\ractB \rip\cB<n_{2}, \lactB n_{3}>
        \overset{\ref{item:demi to full:ips compatible} }{=}
        \lip\cA<n_{1}\ractB b^*,n_{2}>\lactB n_{3}.
    \end{align*}
    This proves that the element $a\coloneqq \lip\cA<n_{1},n_{2}\ractB b>  - \lip\cA<n_{1}\ractB b^*,n_{2}>$ of $A(\rho(x))$ `kills' all of $M(x)$. By Assumption~\ref{item:demi to full:ractB and lactB by adjointables}, $a$ is an adjointable operator with respect to the $B(\sigma(x))$-valued right-inner product. It therefore follows from \cite[Remark 2.29.]{RaWi:Morita} that $a^*a$ and hence $a$ is $0$, i.e., $\lip\cA<n_{1},n_{2}\ractB b> = \lip\cA<n_{1}\ractB b^*,n_{2}>$, so $b$ is an  $A(\rho(x))$-adjointable operator.
\end{proof}

\begin{remark}\label{rmk:first comparison to AF}
    In the setting of Proposition~\ref{prop:from two-sided demi to equivalence}, it follows from \cite[Corollary 4.6.]{DL:MJM2023} that Condition~\ref{item:demi to full:ips compatible} actually also holds more generally: if $(a,m_{1},m_{2})\in A\bfp{s}{\rho}M\bfp{\sigma}{\sigma}M$, then $\rip\cB<a \lactB m_{1},m_{2}>
            =
            \rip\cB<m_{1},a^* \lactB m_{2}>$ as elements of $B(h)$ where $h=\reoq[X]{q_{\cM}(m_{1})}{p_{\cA}(a)\inv\lact q_{\cM}(m_{2})}{\cH}$. In other words, $\cA$ is  acting on~$\cM$ by `$\cB$-adjointable' operators.  This observation foreshadows a connection to \cite{AF:EquivFb}, where results similar to the main theorem of the paper at hand have appeared. We will do a more in-depth comparison later in Corollary~\ref{cor:cf AF's K(M)}, but let us already point out some points of distinction:
    On the one hand,  their result is more general in that they consider Fell bundles  that are not necessarily separable or saturated; 
    instead of \ref{item:rwordBdl:fibrewise full}, they only assume that $\cM=(M\to X)$ satisfies
    \begin{align}\label{eq:7R}
    \tag{7R}
        \overline{\mathrm{span}}\{\rip{\cB}<M(x),M(x)>: x\in Xu\}
    &=
    B
    (u)
    \end{align}
    On the other hand, their result is more restrictive in that our topological groupoid~$\cH$ is replaced by a topological {\em group}. Moreover, they only consider $X=\cH$, which is restrictive even in the case of groups (see Example~\ref{ex:Paul's Ex 14}).
\end{remark}

We have  introduced all ingredients for the main theorem which says that any~\demiequiv[$\cB$-] can be rigged (in a unique way) to give a Fell bundle equivalence in the sense of  \cite[Definition 6.1]{MW:2008:Disintegration}.

\section{The \iFbdl: Existence}\label{sec:Existence}

 \fix we fix a Fell bundle $\cB=(p_{\cB}\colon B\to \cH)$ over the groupoid~$\cH$  and a \demiequiv[right $\cB$-]\  $\cM=(q_{\cM}\colon M\to X)$ over the \psp[right ]{$\cH$} $X$.

\medskip

Suppose that~$\cA$ and~$\cC$ are, like~$\cB$, Fell bundles over \LCH\ groupoids. 
In \cite[Sections~5 and 6]{DL:MJM2023}, it was shown that a so-called {\em hypo-equivalence} $\cN_{1}$ from~$\cA$ to~$\cB$ and a hypo-equivalence $\cN_{2}$ from~$\cB$ to~$\cC$ can be `multiplied' to yield a hypo-equivalence $\cN_{1}\otimes_{\cB}\cN_{2}$ from~$\cA$ to~$\cC$.
A careful examination of the proofs shows that not all the structure of hypo-equivalences is needed to construct the \uscBb\ $\cN_{1}\otimes_{\cB}\cN_{2}$.
We will start this section by making this claim more precise.

	Given $x,y\in X$ with $u\coloneqq\sigma(x)=\sigma(y)$, the \demiequiv[]{}~$\cM$ gives us two full Hilbert $\cst$-modules over the $\cst$-algebra $B(u)$, namely the right-module $M(x)$ and the {\em left}-module $M(y)\op$. In particular, combining the  well-known results mentioned earlier, we get an \ib\ between $\compacts_{B(u)}(M(x))$ and $\compacts_{B(u)}(M(y)\op )$ by taking the balanced tensor product:
	\begin{align}\label{eq:defn of K's fibres}
	K(x,y\op )\coloneqq M(x) \otimes_{u} M(y)\op 
	\cong 
	\compacts_{B(u)} \left(M(y),M(x)\right),
	\end{align}
 where we write $\mvisiblespace\otimes_{u}\mvisiblespace$ as short-hand for $\mvisiblespace\otimes_{B(u)}\mvisiblespace$.
The norm of this Banach space is, on sums of elementary tensors, given by
	\begin{align}\label{eq:norm on compacts}
	\norm{ \sum_{i
		} m_{i}\otimes n_{i}\op}
	&=
	\norm{
		\sum_{i,j
		} 
		\ket*{n_{i}\ractB \rinner[M(x)]{B(u)}{m_{i}}{m_{j}} }
		\bra*{ n_{j}}
	}^{1/2}
    && \text{operator norm on $\compacts_{B(u)}(M(y))$}
    \\
    \notag
    &=
    \norm{
        \sum_{i,j} \innercpct[]{}{m_{i}}{m_{j}\ractB \rinner[M(y)]{B(u)}{n_{j}}{n_{i}}}
    }^{1/2}
    && \text{operator norm on $\compacts_{B(u)}(M(x))$}.
	\end{align}
	We construct an  \uscBb\ over $X\bfp{\sigma}{\sigma}X\op$ as follows.
	\begin{lemma}[cf.\ {\cite[Lemma 5.2]{DL:MJM2023}}]\label{lem:DL:MJM2023:Lemma 5.2}
		On the set 
		$$K=\bigsqcup_{(x,y\op )\in X \bfp{\sigma}{\sigma}X\op} 
		K(x,y\op )
		,$$
		consider all cross-sections of the form
		\[
		\mu \otimes\nu\op \colon \quad   X\bfp{\sigma}{\sigma}X\op \to K,\qquad (x,y\op ) \mapsto \mu (x)\otimes \nu (y )\op,
		\]
		for $\mu,\nu \in \Gamma_{0} (X;\cM)$. Then there is a unique topology on 
		$K$ making it an \uscBb\ over $ X\bfp{\sigma}{\sigma}X\op $
		such that all cross-sections $\mu \otimes\nu\op $ are continuous.
	\end{lemma}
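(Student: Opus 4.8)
The plan is to recognize this as the special case $\cN_{1}=\cM$, $\cN_{2}=\cM\op$ of the tensor-product construction in \cite[Lemma 5.2]{DL:MJM2023}, and to produce the topology by invoking the standard existence-and-uniqueness theorem for \uscBb s from a prescribed family of sections (Hofmann's criterion; cf.\ the appendix of \cite{MW:2008:Disintegration}). Concretely, I would let $\Gamma$ be the complex-linear span, inside $\prod_{(x,y\op)}K(x,y\op)$, of the sections $\mu\otimes\nu\op$ with $\mu,\nu\in\Gamma_{0}(X;\cM)$. That theorem then grants a unique topology making $K$ a \uscBb\ over $X\bfp{\sigma}{\sigma}X\op$ with every element of $\Gamma$ continuous, provided I verify the two hypotheses: \textup{(a)} for each $(x,y\op)$ the set $\{f(x,y\op):f\in\Gamma\}$ is dense in the fibre $K(x,y\op)$, and \textup{(b)} for each $f\in\Gamma$ the function $(x,y\op)\mapsto\norm{f(x,y\op)}$ is \usc.

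Hypothesis \textup{(a)} is the easy one. Since~$\cM$ has enough continuous cross-sections, evaluation $\mu\mapsto\mu(x)$ carries $\Gamma_{0}(X;\cM)$ onto $M(x)$, and likewise onto $M(y)\op$; hence the values $\bigl\{\sum_{i}\mu_{i}(x)\otimes\nu_{i}(y)\op\bigr\}$ already exhaust the algebraic balanced tensor product $M(x)\odot_{B(u)}M(y)\op$, which is by definition dense in $K(x,y\op)=M(x)\otimes_{u}M(y)\op$.

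Hypothesis \textup{(b)} is the heart of the matter. Fix $f=\sum_{i=1}^{k}\mu_{i}\otimes\nu_{i}\op$ and abbreviate $m_{i}=\mu_{i}(x)$, $n_{i}=\nu_{i}(y)$, so that $f(x,y\op)$ is represented by $T=\sum_{i}\ket{m_{i}}\bra{n_{i}}\in\compacts_{B(u)}(M(y),M(x))$. By \eqref{eq:norm on compacts} we have $\norm{f(x,y\op)}^{2}=\norm{T^{*}T}$. The key algebraic identity is that, introducing the row operator $N\colon B(u)^{k}\to M(y)$, $N(\beta)=\sum_{i}n_{i}\ractB\beta_{i}$, and the Gram matrix $P=\bigl(\rip\cB<m_{i},m_{j}>\bigr)_{ij}\in M_{k}(B(u))$, one has $T^{*}T=NPN^{*}$; applying $\norm{aa^{*}}=\norm{a^{*}a}$ with $a=NP^{1/2}$ gives
\[
    \norm{f(x,y\op)}^{2}
    =\norm{NPN^{*}}
    =\norm{P^{1/2}N^{*}N\,P^{1/2}}
    =\norm{P(x)^{1/2}\,R(y)\,P(x)^{1/2}}_{M_{k}(B(u))},
\]
where $R=N^{*}N=\bigl(\rip\cB<n_{i},n_{j}>\bigr)_{ij}$. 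Now $P(x)$ and $R(y)$ are \emph{positive} elements of $M_{k}(B(u))$ depending continuously on $x$ respectively $y$: indeed $x\mapsto\rip\cB<\mu_{i}(x),\mu_{j}(x)>$ is a continuous $\cB$-valued section by \ref{item:rwordBdl:ip}, and similarly for the $\nu_{i}$. Since $M_{k}\!\bigl(\cB|_{\cH\z}\bigr)$ is a \usc\ bundle of $\cst$-algebras, continuity of the functional calculus (approximate $t^{1/2}$ uniformly by polynomials vanishing at $0$, then use that uniform limits of continuous sections are continuous, \cite[Lemma A.5]{DL:MJM2023}) shows $x\mapsto P(x)^{1/2}$ is continuous, and continuity of the bundle multiplication makes $(x,y\op)\mapsto P(x)^{1/2}R(y)P(x)^{1/2}$ a continuous section. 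As the norm of a continuous section of a \usc\ bundle is \usc, this is exactly \textup{(b)}.

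With \textup{(a)} and \textup{(b)} in hand the cited existence theorem delivers both the \uscBb\ structure and its uniqueness: the topology is forced, because $\Gamma$ is fibrewise dense and its members are required to be continuous. I expect the only delicate point to be \textup{(b)}; in particular, justifying continuity of $x\mapsto P(x)^{1/2}$ in the bundle (continuity of the functional calculus over a varying, possibly non-unital family of $\cst$-algebras) is the step needing the most care, although it reduces to the standard polynomial-approximation argument once one notes that $\norm{P(x)}$ is locally bounded by upper semicontinuity.
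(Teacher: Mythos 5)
You are correct, and your skeleton is the same as the paper's: both proofs reduce the lemma to the Hofmann existence-and-uniqueness criterion, so that everything comes down to your hypotheses \textup{(a)} (fibrewise density, which you and the paper verify identically from the existence of enough continuous sections of~$\cM$) and \textup{(b)} (upper semicontinuity of $(x,y\op)\mapsto\norm{\gamma(x,y\op)}$ for $\gamma$ in the span of the $\mu\otimes\nu\op$). The execution of \textup{(b)}, however, is genuinely different. The paper factors the Gram matrix $P(x)=\bigl(\rip\cB<\mu_{i}(x),\mu_{j}(x)>\bigr)_{i,j}$ as $b(x)b(x)^{*}$ with each entry $b_{il}$ a \emph{continuous} section of $\sigma^{*}(\cB)$ (a choice that is asserted rather than proved there), absorbs these factors into vectors $\nu_{l}(x,y\op)=\sum_{i}\nu_{i}(y)\ractB b_{il}(x)$, and then uses Lemma~\ref{lem:norm of compacts} to turn the operator norm into $\norm{\sum_{l}\rip\cB<\nu_{l},\nu_{l}>}_{B(u)}$, whose upper semicontinuity follows from \ref{item:rwordBdl:ip} and upper semicontinuity of the norm on~$\cB$ alone --- no matrix algebras ever appear. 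Your route instead ends at the (correct) identity $\norm{\gamma(x,y\op)}^{2}=\norm{P(x)^{1/2}R(y)P(x)^{1/2}}_{M_{k}(B(u))}$, and to conclude you need two inputs: continuity of $x\mapsto P(x)^{1/2}$, which you justify by polynomial approximation, and the assertion that $M_{k}$ of (the pullback of) $\cB|_{\cH\z}$ is again a \uscBb\ of $\cst$-algebras in which entrywise-continuous matrices are continuous sections. That assertion is true but not formally trivial: upper semicontinuity of the entry norms does not by itself yield upper semicontinuity of the matrix norm (the latter is not a function of the former), and the clean justification goes through the correspondence between \uscBb s of $\cst$-algebras and $C_{0}$-algebras, under which $M_{k}$ of the section algebra is the section algebra of the matrix bundle. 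So the two arguments trade hidden lemmas of comparable weight: you make explicit the functional-calculus continuity that the paper sweeps into its choice of the $b_{il}$, while the paper's symmetrization trick ($P=bb^{*}$ plus Lemma~\ref{lem:norm of compacts}) avoids the matrix-bundle structure that you take for granted. If you want your write-up self-contained at the paper's level of rigor, either cite the $C_{0}$-algebra fact for $M_{k}$, or switch at the last step to the paper's device: apply your polynomial-approximation argument entrywise to obtain continuous $b_{il}$ with $P=bb^{*}$ and finish via Lemma~\ref{lem:norm of compacts}, so that the only norm that ever needs to be \usc\ is that of~$\cB$ itself.
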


    In the literature \cite{DL:MJM2023,Muhly:Bdles}, the above bundle is denoted $\cM\otimes_{\cB\z}\cM\op$. But since we fixed $\cM$, we will ease notation and instead denote the bundle by  $\cK=(q_{\cK}\colon K\to X\bfp{\sigma}{\sigma}X\op)$.
	\begin{proof}
  Consider 
		\[
		\Gamma \coloneqq \mathrm{span}_{\mbb{C}}
		\{
		\mu \otimes\nu\op : \mu,\nu \in \Gamma_{0} (X;\cM)
		\}.
		\]
    Recall that~$\cM$ has enough continuous sections, so that for each $m\in M(x)$, there exists $\mu\in\Gamma_0(X;\cM)$ such that $\mu(x)=m$. In particular, it follows that 
    \( \{\gamma(x,y\op) : \gamma\in\Gamma\}
    \) is dense in $K(x,y\op)$. 
   	If we can show for an arbitrary element $\gamma=\sum_{i=1}^{k}\mu_{i}\otimes\nu\op_{i}$ of $\Gamma$ that $(x,y\op)\mapsto \norm{\gamma(x,y\op)}$ is \usc, then the claim follows from    
   	\cite[Corollary 3.7]{Hofmann1977} (see also \cite[13.18]{FellDoranVol1}, \cite[Theorem C.25]{Wil2007}, \cite[Remark 2.7]{DL:MJM2023}).
    We have by Equation~\eqref{eq:norm on compacts}
		\begin{align*}
			\norm{\gamma(x,y\op)}^2
			&=
			\norm{
				\sum_{i,j
				} 
				\ket*{\nu_{i}(y)\ractB \inner{\mu_{i}(x)}{\mu_{j}(x)}}
				\bra*{ \nu_{j}(y) }
			}.
		\end{align*}
   	It was shown in \cite[Lemma 2.65]{RaWi:Morita} that  the $k\times k$-matrix with $i,j$-entry $\inner{\mu_{i}(x)}{\mu_{j}(x)}$ is positive for each $x\in X$, so there exists a matrix $(b_{i,j}(x))_{i,j}$ over $B(\sigma(x))$ such that $\inner{\mu_{i}(x)}{\mu_{j}(x)} = \sum\nolimits_{l=1}^{k} b_{il}(x)b_{jl}(x)^*$. Note that we may choose each $b_{i,j}\colon x\mapsto b_{i,j}(x)$ to be a {\em continuous} section of the pullback bundle $\sigma^*(\cB)$.
For $1\leq l \leq k$, we let $\nu_{l}(x,y\op)\coloneqq \sum_{i=1}^{k} \mu_{i}(y)\ractB b_{il}(x)$. By continuity of $\ractB$, of each $\mu_{i}$ and $b_{il}$, and of summation in~$\cM$, each $\nu_{l}$ is continuous. Moreover, 
		\begin{align*}
			\norm{\gamma(x,y\op)}^2
			&=
			\norm{
				\sum\nolimits_{l=1}^{k
				} 
				\ket*{\nu_{l}(x,y\op)}
				\bra*{ \nu_{l}(x,y\op)}
			}
	\\		&
   =
			\norm{
				\sum\nolimits_{l=1}^{k
				} 
				\inner{\nu_{l}(x,y\op)}{ \nu_{l}(x,y\op)}
			}&&\text{by Lemma~\ref{lem:norm of compacts}}.
		\end{align*}
 Since the~$\cB$-valued inner product on~$\cM$, summation in~$\cB$, and each $\nu_{l}$ are continuous, and the norm on~$\cB$ is \usc, we conclude that $(x,y\op)\mapsto \norm{\gamma(x,y\op)}$ is \usc.
	\end{proof}

\begin{remark}\label{rmk:DL:MJM2023:Lemma5.4}
    The proof of \cite[Lemma 5.4]{DL:MJM2023} shows that, if we have two convergent nets $m_{\lambda}\to m$ and $n_{\lambda}\to n$ in $M$ with $\sigma_{\cM}(m_{\lambda})=\sigma_{\cM}(n_{\lambda})$, then $m_{\lambda}\otimes n_{\lambda}\op\to m\otimes n\op$ in $K$.
\end{remark}

Implicitly, Lemma~\ref{lem:DL:MJM2023:Lemma 5.2} is making use of the `dual' bundle to the \demiequiv{$\cB$}~$\cM$: given a fibre $M(x)$ of~$\cM$, its conjugate vector space $M(x)\op$ is, by definition, the fibre of 
$\cM\op=(M\op\to X\op)$ over $x\op$. Clearly, $\cM\op$ is also an \uscBb\ if $M\op$ and $X\op$ are given the same topologies as $M$ and $X$, respectively. Moreover, the right~$\cB$-action on~$\cM$ induces a left~$\cB$-action on $\cM\op$ if we define $b\lactB m\op \coloneqq (m\ractB b^{*})\op$. The reader should compare all of this with \cite[Example 6.7]{MW:2008:Disintegration} in the setting where~$\cM$ is an equivalence.

Note that~$\cK$ is the bundle-analogue of the space $X\bfp{\sigma}{\sigma}X\op$ in the world of groupoids and of the unbalanced tensor product $\mbf{X}\otimes \mbf{X\op}$ of Hilbert $\cst$-modules in the world of $\cst$-algebras. What we are actually after, though, is a bundle-analogue of their {\em quotients}, that is, $X\times_{\cH}X\op$  respectively $\mbf{X}\otimes_{A} \mbf{X\op}$:
	To get $\mbf{X}\otimes_{A} \mbf{X\op}$ from $\mbf{X}\otimes  \mbf{X\op}$, we quotient out by elements  of the form 
	\[
	(\mbf{x}\cdot a)\otimes \mbf{y} - \mbf{x}\otimes (a\cdot \mbf{y})
	\]
	where $\mbf{x}\in \mbf{X}, \mbf{y}\in\mbf{X\op}$, and $a\in A$.
	For bundles, such a difference does not make sense, since the two elementary tensors may live in different fibres: if $m\in M(x)$, $n\in M(y) $, and $b\in B(h)$ are such that $\sigma (x)=r_{\cH}(h)$ and $s_\cH (h)=\sigma(y)$, then
	\begin{align*}
	(m\ractB b)\otimes n\op  &\in M(x\ract h)\otimes_{s(h)} M(y)\op 
	, 
	\intertext{while}
	 m\otimes (b \lactB n\op) 
 &\in  M(x)\otimes_{r(h)} M (y\ract h\inv )\op.
	\end{align*}
	In order to identify $(m\ractB b)\otimes n\op $ with $m\otimes (b \lactB n\op)=m\otimes ( n \ractB b^*)\op$, we therefore need a way to identify the above two fibres of $\cK$:
	\begin{lemma}[cf.~{\cite[Theorem 5.20]{DL:MJM2023}}]\label{lem:Psi_h}
		For any $u,v\in\cH\z$ and $h\in u\cH v$, there exists a map 
		\[
		\Psi_{h}\colon\quad
		\bigsqcup_{\substack{x\in Xu\\ y \in Xv}}
		M(x\ract h)\otimes_{v} M(y)\op 
		\to
		\bigsqcup_{\substack{x\in Xu\\y \in Xv}}
		M(x)\otimes_{u} M (y\ract h\inv )\op
		\]
		determined on elementary  tensors  by
		\begin{equation}\label{eq:def:Psi_h}
			\Psi_{h}
			((m\ractB b)\otimes n\op) = m\otimes (n\ractB b^*)\op,     
		\end{equation}
		where $\sigma_{\cM}(m)=u$, $\sigma_{\cM}(n)=v$, and
    $b\in B(h)$.   These maps have the following properties: 
			\begin{enumerate}[label=\textup{($\Psi$\arabic*)}]
				\item When restricted to a single fibre $K(x\ract h,y\op)\to K(x,(y\ract h\inv)\op) $, each $\Psi_h$ is linear.
            \item\label{item:Psi:isometric} Each $\Psi_{h}$ is isometric, i.e., $\norm{\Psi_{h}(\xi)}=\norm{\xi}$.
                \item
				$\Psi_{h'}\circ \Psi_{h}=\Psi_{h'h}$ for $(h',h)\in\cH\comp $.
				\item
				$\Psi_{h\inv}$ is inverse to $\Psi_{h}$.
				 \item
				$\Psi_{u}$ for $u\in \cH\z$ is the identity map.
			\end{enumerate}
    
		\end{lemma}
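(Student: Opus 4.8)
The plan is to obtain each $\Psi_h$ as a composite of canonical isometric isomorphisms of Hilbert modules, so that well-definedness, fibrewise linearity ($\Psi$1), and isometry (\ref{item:Psi:isometric}) come for free, leaving only the cocycle relations to be checked by hand. The linchpin, which I would isolate as a preliminary claim, is that for $x\in Xu$ and $h\in u\cH v$ the right action induces a unitary isomorphism of right Hilbert $B(v)$-modules
\[
\theta_{x,h}\colon\ M(x)\otimes_{u}B(h)\ \xrightarrow{\ \cong\ }\ M(x\ract h),\qquad m\otimes b\longmapsto m\ractB b,
\]
where $B(h)$ is the $B(u)$--$B(v)$ imprimitivity bimodule furnished by saturation of~$\cB$ (so that its right inner product is $\langle b,b'\rangle_{B(v)}=b^{*}b'$).

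First I would check that $\theta_{x,h}$ preserves inner products: by \ref{item:rwordBdl:ip:C*linear} and \ref{item:rwordBdl:ip:C*linear on left},
\[
\rip\cB<m\ractB b,\,m'\ractB b'>\ =\ b^{*}\,\rip\cB<m,m'>\,b',
\]
and the right-hand side is exactly the $B(v)$-valued inner product of $m\otimes b$ and $m'\otimes b'$ in $M(x)\otimes_{u}B(h)$. Hence $\theta_{x,h}$ is a well-defined isometry; its range is closed, so surjectivity reduces to the density of $\operatorname{span}\{m\ractB b:m\in M(x),\ b\in B(h)\}$ in $M(x\ract h)$. For that I would take $m'\in M(x\ract h)$ and an approximate identity $(e_{\lambda})$ of $B(v)$, note $m'\ractB e_{\lambda}\to m'$, and approximate each $e_{\lambda}$ by sums $\sum_k b_k^{*}b_k'$ with $b_k,b_k'\in B(h)$ --- available because saturation gives $\overline{\operatorname{span}}\,B(h)^{*}B(h)=B(v)$ --- so that $m'\ractB e_{\lambda}$ is approximated by $\sum_k (m'\ractB b_k^{*})\ractB b_k'$, an element of the span since $m'\ractB b_k^{*}\in M(x)$ by \ref{item:rwordBdl:ractB:associative}. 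This establishes the claim and, with it, ($\Psi$1) and \ref{item:Psi:isometric}.

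With $\theta_{x,h}$ available I would assemble $\Psi_h$ as the chain
\[
M(x\ract h)\otimes_{v}M(y)\op
\cong\bigl(M(x)\otimes_{u}B(h)\bigr)\otimes_{v}M(y)\op
\cong M(x)\otimes_{u}\bigl(B(h)\otimes_{v}M(y)\op\bigr)
\cong M(x)\otimes_{u}M(y\ract h\inv)\op,
\]
using associativity of the balanced tensor product and, in the last step, the op of $\theta_{y,h\inv}$ together with the Fell-bundle duality $B(h\inv)\op\cong B(h)$ implemented by $c\op\mapsto c^{*}$. Tracing $(m\ractB b)\otimes n\op$ through the chain returns $m\otimes(n\ractB b^{*})\op$, confirming \eqref{eq:def:Psi_h}; since every arrow is an isometric isomorphism, $\Psi_h$ is linear on fibres and isometric.

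It remains to verify the cocycle relations on the dense set of elementary tensors, extending by continuity. For $(\Psi5)$, when $h=u\in\cH\z$ and $b\in B(u)$ the $B(u)$-balancing already gives $(m\ractB b)\otimes n\op=m\otimes(b\lactB n\op)=m\otimes(n\ractB b^{*})\op$, so $\Psi_u=\mathrm{id}$. For $(\Psi3)$ with $(h',h)\in\cH\comp$ I would use saturation to write a coefficient $c\in B(h'h)$ as a limit of sums $b'b$ with $b'\in B(h')$, $b\in B(h)$, factor $m\ractB c=(m\ractB b')\ractB b$, and compute
\[
\Psi_{h'}\bigl(\Psi_h(((m\ractB b')\ractB b)\otimes n\op)\bigr)
=\Psi_{h'}\bigl((m\ractB b')\otimes(n\ractB b^{*})\op\bigr)
=m\otimes\bigl(n\ractB (b'b)^{*}\bigr)\op
=\Psi_{h'h}\bigl((m\ractB c)\otimes n\op\bigr),
\]
whereupon $(\Psi4)$ follows via $\Psi_{h\inv}\Psi_h=\Psi_{h\inv h}=\mathrm{id}=\Psi_{hh\inv}=\Psi_h\Psi_{h\inv}$. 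The main obstacle is the establishment of $\theta_{x,h}$ --- concretely the surjectivity/density step and the matching of the two inner-product normalizations --- since this is where saturation of~$\cB$ is genuinely needed; once $\theta_{x,h}$ is in hand the remaining verifications are formal bookkeeping with elementary tensors, mirroring the proof of \cite[Theorem 5.20]{DL:MJM2023}.
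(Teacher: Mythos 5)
Your proof is correct and takes essentially the same route as the paper's: both hinge on the isometric identification $M(x)\otimes_{u}B(h)\cong M(x\ract h)$ (your $\theta_{x,h}$), justified by the inner-product identity $\rip\cB<m\ractB b,m'\ractB b'>=b^{*}\rip\cB<m,m'>b'$ together with a density argument, after which $\Psi_{h}$ is the induced isometry on balanced tensor products and the algebraic properties ($\Psi$1)--($\Psi$5) are verified on elementary tensors. The one point of divergence is the density step: the paper attributes $\overline{\operatorname{span}}\,M(x)\ractB B(h)=M(x\ract h)$ to (DE8), whereas you derive it from saturation of $\cB$ via an approximate unit of $B(v)$ and $\overline{\operatorname{span}}\,B(h)^{*}B(h)=B(v)$; your attribution is the sharper one, since (DE8) alone (without the paper's standing saturation assumption) would not yield this density.
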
	 
  \begin{proof}
    We construct $\Psi_{h}$ on each of the fibres $M(x\ract h)\otimes_{v} M(y)\op$.
    Consider  the map
    \[
        M(x) \times B(h) \times M(y)\op 
		\to
		M(x)\otimes_{u} M (y\ract h\inv )\op,
      \quad
      (m,b,n\op) \mapsto m\otimes (n\ractB b^*)\op
      .
    \]
    Since it is multilinear, it descends to a linear map with domain $[M(x) \odot B(h)] \odot M(y)\op $, where $\odot$ denotes the algebraic tensor product. 
    Because of the $B(u)$-balancing in the codomain, the map descends to a map with domain $[M(x) \odot_{B(u)} B(h)] \odot M(y)\op$.
    Since 
    \(
    [n\ractB b_{v}^*]\ractB b^*
    =
    n\ractB [bb_{v}]^*     
    \)
    for any $b_v\in B(v)$, it further descends to a map with domain $[M(x) \odot_{B(u)} B(h)] \odot_{B(v)} M(y)\op$. It is easy to check that that map is isometric (when $M(x) \odot_{B(u)} B(h)$ is given the norm of the balanced tensor product of Hilbert modules), and therefore extends to $[M(x) \otimes_{u} B(h)] \otimes_{v} M(y)\op$. Now, Assumption~\ref{item:rwordBdl:fibrewise full} implies that every element of $M(x\ract h)$ is the limit of sums of elements of the form $m\ractB b$; in other words, the linear map $M(x) \odot_{B(u)} B(h)\to M(x\ract h)$ determined by $m\odot b \mapsto m\ractB b$ has dense range. 
    We conclude the existence of    $\Psi_{h}$. The remaining properties are easy to check.
  \end{proof}

  The collection of maps $\Psi_h$ stitch together to give an isomorphism of \uscBb s as follows. 	
Let $t\colon X \bfp{\sigma}{\sigma}X\op\to \cH\z$ be given by $t(x,y\op )\coloneqq \sigma(x)=\sigma\op (y\op )$, and consider the continuous projection map
	\[
	f\colon \quad\cH\bfp{s}{t}(X \bfp{\sigma}{\sigma}X\op)
	\to X \bfp{\sigma}{\sigma}X\op,\quad
	(h,x,y\op )\mapsto (x,y\op ).
	\]
	The pull-back bundle of the \uscBb~$\cK$ via $f$
	is the bundle over the domain of $f$ 
 defined by
	\begin{align*}
		&\bigl\{
		(h,x,y\op ,\xi)\in [\cH\bfp{s}{t}(X \bfp{\sigma}{\sigma}X\op)]\times K
		:
		f(h,x,y\op )=q_{\cK}(\xi)
		\bigr\}
		\\
		&\cong
		\bigl\{
		(h,\xi)\in \cH\times K
		:
		s_{\cH}(h)=t(q_{\cK}(\xi))
		\bigr\}.
	\end{align*}
	While this bundle is often denoted by $f^{*} (\cK)$, we will denote it by $\cH \bfp{s}{t}  \cK$ instead, so that the letter $f$ does not have to be introduced. We make an analogous definition for $\cK \bfp{t}{r}  \cH$.
 
    It was shown in \cite[Lemma 2.8.]{DL:MJM2023} that basic open sets of this \uscBb\ are of the form $U\bfp{s}{t}V$ for $U\subseteq \cH$, $V\subseteq K$ open, and the sections  
	\[
	\cH \bfp{s}{t}  (X\bfp{\sigma}{\sigma}X\op)
	\to
	\cH \bfp{s}{t}  \cK,
	\quad
	(h,x,y\op )\mapsto (h,\tau(x,y\op )),
	\]
	for $\tau$ a continuous section of $\cK$, 
    are the continuous sections that uniquely determine the topology on $\cH \bfp{s}{t}  \cK$.

  \begin{lemma}[cf.\ {\cite[Theorem 5.20.]{DL:MJM2023}}]\label{lem:DL:MJM2023:Thm5.20}
      The map
		\begin{align*}
			\Psi\colon\quad 
			\cH \bfp{s}{t}  \cK
			&\to
			\cK\bfp{t}{r}  \cH,
			&
			(h, \xi)&\mapsto (\Psi_{h}
			(\xi), h)
            ,
			\intertext{is an isomorphism of \uscBb s covering the homeomorphism}
				\psi\colon
				\quad
				\cH \bfp{s}{t}  (X \bfp{\sigma}{\sigma}X\op)
				&\to
				(X \bfp{\sigma}{\sigma}X\op) \bfp{t}{r} \cH,
				&
				(h,x\ract h,y\op )& \mapsto (x, (y \ract h\inv)\op , h),
			\end{align*}
   In particular, $\Psi$ is jointly continuous.
  \end{lemma}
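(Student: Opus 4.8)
The plan is to verify three things in turn: that the base map $\psi$ is a homeomorphism, that $\Psi$ is a fibrewise linear isometric bijection whose inverse has the same structural shape, and -- the only substantial point -- that $\Psi$ is continuous. First I would dispose of $\psi$: its inverse is $(x,z\op,h)\mapsto (h, x\ract h, (z\ract h)\op)$, which is continuous because it is assembled from the continuous right $\cH$-action on $X$ and inversion in $\cH$; the same holds for $\psi$ itself, so $\psi$ is a homeomorphism. Lemma~\ref{lem:Psi_h} already gives that each $\Psi_h$ is linear and isometric with $\Psi_{h\inv}=\Psi_h\inv$, so $\Psi$ is a fibrewise linear isometric bijection covering $\psi$, with set-theoretic inverse $(\xi',h)\mapsto (h,\Psi_{h\inv}(\xi'))$. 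The latter is of exactly the same form as $\Psi$ with $h$ replaced by $h\inv$, so it suffices to prove that $\Psi$ is continuous: the identical argument with inverses then yields continuity of $\Psi\inv$, and $\Psi$ becomes an isomorphism of \uscBb s (joint continuity being part of this).

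For continuity I would exploit that the topology of an \uscBb\ is determined by its continuous sections together with upper semicontinuity of the norm. Concretely, for $\mu,\nu\in\Gamma_0(X;\cM)$ and $\beta\in\Gamma_0(\cH;\cB)$ I would consider the section of $\cH\bfp{s}{t}\cK$ given at a base point $(h,w,y\op)$ (with $\sigma(w)=\sigma(y)=s_{\cH}(h)$) by
\[
\Sigma_{\mu,\nu,\beta}(h,w,y\op)=\bigl(h,\ (\mu(w\ract h\inv)\ractB\beta(h))\otimes\nu(y)\op\bigr).
\]
This is continuous by continuity of the action, of $\ractB$ (\ref{item:rwordBdl:ractB:cts}), and of tensoring (Remark~\ref{rmk:DL:MJM2023:Lemma5.4}); the fibre bookkeeping uses \ref{item:rwordBdl:ractB:fibre} and Equation~\eqref{eq:leoq into reoq}. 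Two features make these sections useful. On one hand, as $\mu,\beta,\nu$ vary, the elements $(\mu(w\ract h\inv)\ractB\beta(h))\otimes\nu(y)\op$ have dense complex-linear span in the fibre $K(w,y\op)$: since $\cM$ and $\cB$ have enough sections, elements of the form $m\ractB b$ span a dense subspace of $M(w)$ by \ref{item:rwordBdl:fibrewise full} (this is precisely the density invoked in the proof of Lemma~\ref{lem:Psi_h}), and tensoring with the dense supply of values $\nu(y)$ fills out $K(w,y\op)$. On the other hand, the elementary-tensor formula~\eqref{eq:def:Psi_h} with $m=\mu(w\ract h\inv)$, $b=\beta(h)$, $n=\nu(y)$ gives
\[
\Psi\bigl(\Sigma_{\mu,\nu,\beta}(h,w,y\op)\bigr)=\bigl(\mu(w\ract h\inv)\otimes(\nu(y)\ractB\beta(h)^*)\op,\ h\bigr),
\]
which is again manifestly continuous in $(h,w,y\op)$ (now using continuity of the involution on $\cB$ and Remark~\ref{rmk:DL:MJM2023:Lemma5.4}). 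Thus $\Psi$ sends each $\Sigma_{\mu,\nu,\beta}$, and hence each finite linear combination of these, to a continuous section of $\cK\bfp{t}{r}\cH$.

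Finally I would upgrade ``$\Psi$ is continuous on a topology-determining, fibrewise-dense family of sections'' to ``$\Psi$ is continuous''. Given a convergent net $(h_i,\xi_i)\to(h,\xi)$, it is enough to show $\Psi_{h_i}(\xi_i)\to\Psi_h(\xi)$ in $\cK$, the base points already converging because $\psi$ is continuous. Using the net criterion of \cite[Lemma A.5]{DL:MJM2023}, I would fix $\varepsilon>0$, choose by density a finite combination $\Sigma$ of the $\Sigma_{\mu,\nu,\beta}$ whose value at $(h,w,y\op)$ is within $\varepsilon$ of $\xi$, and set $\Theta\coloneqq\Psi\circ\Sigma$, a continuous section of $\cK\bfp{t}{r}\cH$ by the previous paragraph. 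Because every $\Psi_{h'}$ is isometric, the distance $\norm{\Psi_{h_i}(\xi_i)-\Theta(\cdot)}$ collapses to $\norm{\xi_i-\Sigma(\cdot)}$, whose $\limsup$ is at most $\varepsilon$ by upper semicontinuity of the norm, while the distance from $\Theta$ at the limit point to $\Psi_h(\xi)$ equals $\norm{\xi-\Sigma(h,w,y\op)}<\varepsilon$; letting $\varepsilon\to 0$ gives the claim.

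I expect the main obstacle to be exactly this last step. Since $\{\Sigma_{\mu,\nu,\beta}\}$ only spans each fibre \emph{densely} rather than containing an exact section through $\xi$, the convergence has to be run through an approximate-section version of the net criterion, and it is the \emph{uniform} isometry of all the $\Psi_h$ (property~\ref{item:Psi:isometric} of Lemma~\ref{lem:Psi_h}) that prevents the approximation errors from accumulating across fibres. Everything else reduces to keeping track of the fibre conditions, which is routine.
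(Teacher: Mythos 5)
Your proof is correct, and it rests on the same three pillars as the paper's own argument --- density of the elements $m\ractB b$ in $M(x\ract h)$ coming from saturation (Assumption~\ref{item:rwordBdl:fibrewise full}), the uniform isometry~\ref{item:Psi:isometric} of the maps $\Psi_{h}$, and the approximate-net convergence criteria for \uscBb s --- but it distributes the work differently. The paper runs the argument through the \emph{standard} topology-determining sections $(h,x,y\op)\mapsto (h,\mu(x)\otimes\nu(y)\op)$ of $\cH\bfp{s}{t}\cK$; since $\Psi_{h}$ has no closed formula on $\mu(x\ract h)\otimes\nu(y)\op$, the approximation $\mu(x\ract h)\approx\sum_{i}\mu_{i}(x)\ractB\tau_{i}(h)$ must then be performed \emph{inside} the proof that $\Psi\circ\gamma\circ\psi\inv$ is continuous, after which the paper concludes by citing Lemma~\ref{lem:uscBb:cts} (continuity) and \cite[Proposition A.8]{DL:MJM2023} (openness). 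You instead build the approximation into the choice of sections: your adapted sections $\Sigma_{\mu,\nu,\beta}$, whose values are already of the form $(m\ractB b)\otimes n\op$, make $\Psi\circ\Sigma_{\mu,\nu,\beta}$ continuous by inspection via \eqref{eq:def:Psi_h}, and the saturation-based approximation is deferred to your final upgrade step, which you run by hand using isometry and the net criterion (the tool you want here is the paper's Lemma~\ref{lem:convergence uscBb:close nets}, or \cite[Lemma A.3]{DL:MJM2023}, rather than \cite[Lemma A.5]{DL:MJM2023}). Two remarks on what each route buys. First, your hand-rolled last paragraph is precisely a special case of Lemma~\ref{lem:uscBb:cts}: since your family $\{\Sigma_{\mu,\nu,\beta}\}$ consists of continuous sections with fibrewise dense span which $\Psi$ maps to continuous sections, you could invoke that lemma directly and delete the paragraph, yielding a proof that is actually shorter than the paper's. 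Second, your symmetry argument for continuity of $\Psi\inv$ (``same shape with $h$ replaced by $h\inv$'') is legitimate --- the adapted sections in the reverse direction are those with values $(m\ractB b^{*})\otimes n\op$ --- but it silently requires rerunning the entire continuity argument; the paper's route via \cite[Proposition A.8]{DL:MJM2023} (cf.\ Lemma~\ref{lem:uscBb:opem}) is cheaper, since a continuous, fibrewise isometric and surjective bundle map covering a homeomorphism automatically has continuous inverse.
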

  
		We remind the reader that `{\em $\Psi$ covers $\psi$}' means that the following diagram commutes:
		\[
		\begin{tikzcd}
			\cH \bfp{s}{t}  \cK
			\ar[r, "\Psi"]\ar[d, "q"]
			&
			\cK\bfp{t}{r}  \cH\ar[d, "q"]
			\\
			\cH \bfp{s}{t}  (X \bfp{\sigma}{\sigma}X\op)
			\ar[r, "\psi"]
			&
			(X \bfp{\sigma}{\sigma}X\op) \bfp{t}{r} \cH
		\end{tikzcd}
		\]
		So given $\xi\in K$ and $h\in\cH$ with $s_{\cH}(h)=t(q_{\cK}(\xi))$, this means that
		\begin{equation}\label{eq:q of Psi_h}
			q_{\cK} (\xi)
			= (x\ract h, y\op )
			\iff 
			q_\cK (\Psi_{h} (\xi)) = (x,(y \ract h\inv)\op ).
		\end{equation}

    \begin{proof}    
        To see that $\Psi$ is an isomorphism, we will invoke Lemma~\ref{lem:uscBb:cts} and \cite[Proposition A.8]{DL:MJM2023}. Because of Lemma~\ref{lem:DL:MJM2023:Lemma 5.2} and our comment preceding Lemma~\ref{lem:DL:MJM2023:Thm5.20}, the set
        \[
            \Gamma 
            =
            \{
	           (h,x,y\op )\mapsto (h,\mu(x)\otimes \nu(y)\op )
                :
                \mu,\nu\in \Gamma_0(X;\cM)
            \}
        \]
        is a collection of continuous sections of $\cH \bfp{s}{t}  \cK$. As the fibre of $\cH \bfp{s}{t}  \cK$ over a given $(h,x,y\op )$ is just $K(x,y\op)$ and since 
        the linear span of $\{\mu(x)\otimes \nu(y)\op:\mu,\nu\in \Gamma_0(X;\cM)\}$ is dense in $K(x,y\op)$, we conclude that the linear span of $\{\gamma(h,x,y\op):\gamma\in\Gamma\}$ is likewise dense in the fibre of $\cH \bfp{s}{t}  \cK$. 
        We claim that, given any element $\gamma\colon (h,x,y\op )\mapsto (h,\mu(x)\otimes \nu(y)\op )$ of $\Gamma$,
        the section
        \(
            \Psi\circ \gamma \circ \psi\inv 
        \)
         of $\cK\bfp{t}{r}  \cH$
        is  continuous. Indeed, it suffices to check that, if 
        $$(x_{\lambda},(y_{\lambda} \ract h_{\lambda}\inv)\op,h_{\lambda})\to (x,(y \ract h)\op,h)\text{ in }(X\bfp{\sigma}{\sigma}X\op)\bfp{t}{r}\cH,$$
         then 
         $$
        \Psi_{h_{\lambda}}\bigl(\mu(x_{\lambda} \ract h_{\lambda})\otimes \nu(y_{\lambda})\op\bigr) \to  
        \Psi_{h}\bigl(\mu(x \ract h)\otimes \nu(y)\op\bigr)\text{ in }K\bfp{t}{r}  \cH.$$

    Fix $\epsilon>0$.  As mentioned earlier, Assumption~\ref{item:rwordBdl:fibrewise full} implies that $M(x\ract h)=M(x)\ractB B(h)$, so we may take finitely many elements $m_{i}\in M(x)$ and $b_{i}\in B(h)$ such that
    \begin{align}\label{eq:choice of mi bi}       \norm{\mu(x\ract h)
   -
   \sum_{i=1}^{k} m_{i}\ractB b_{i}
   }
   <\epsilon.
   \end{align}
    For each $1\leq i\leq k$, fix a section $\mu_{i}$ of~$\cM$ and $\tau_{i}$ of~$\cB$ with $\mu_{i}(x)=m_i$ respectively $\tau_{i}(h)=b_{i}$. Since addition in~$\cM$ and the right $\cB$-action are continuous,the net $\zeta_{\lambda}\coloneqq \sum_{i} \mu_{i}(x_{\lambda})\ractB \tau_{i}(h_{\lambda})$ converges to $\zeta\coloneqq \sum_{i} \mu_{i}(x)\ractB \tau_{i}(h)$ in $M$.
    By \cite[Lemma A.3, $(i)\implies (iii)$]{DL:MJM2023}, we therefore have
    \begin{align}\label{eq:limsup}
        \limsup_{\lambda}
        \norm{
        \mu(x_{\lambda} \ract h_{\lambda})
        -
        \zeta_{\lambda}
        }
        \leq 
        \norm{
        \mu(x \ract h)
        -
        \zeta
        }
        <\epsilon.
    \end{align}
    Note that
        \begin{align*}
            \Psi_{h_{\lambda}}\bigl(\zeta_{\lambda}\otimes \nu(y_{\lambda})\op\bigr)
            &=             
            \sum_{i=1}^{k}
            \mu_{i}(x_{\lambda})
            \otimes \bigl(\nu(y_{\lambda})\ractB \tau_{i}(h_{\lambda})^*\bigr)\op,
        \end{align*}
    and likewise without subscript-$\lambda$'s. Thus, by continuity of $\mu_{i},\nu,\tau_{i}$, of the involution in $\cB$, of the right $\cB$-action, and of addition in $\cK$, combined with Remark~\ref{rmk:DL:MJM2023:Lemma5.4}, implies that
    \begin{equation}\label{eq:cty with zetas}
    \Psi_{h_{\lambda}}(\zeta_{\lambda}\otimes \nu(y_{\lambda})\op) \to   \Psi_{h}(\zeta \otimes \nu(y)\op)
    .
    \end{equation}
    Since $\Psi_{h}$ is isometric \ref{item:Psi:isometric}, we then have for large enough $\lambda$
    \begin{align*}
        &\norm{\Psi_{h_{\lambda}}(\zeta_{\lambda}\otimes \nu(y_{\lambda})\op)
        -
        \Psi_{h_{\lambda}}(\mu(x_{\lambda} \ract h_{\lambda})\otimes \nu(y_{\lambda})\op)}
        =
        \norm{
        (\zeta_{\lambda} 
        -
        \mu(x_{\lambda} \ract h_{\lambda})
        )
        \otimes \nu(y_{\lambda})\op}
        \\
        &\quad
        \leq
        \norm{
        \zeta_{\lambda}
        -
        \mu(x_{\lambda} \ract h_{\lambda})
        }
        \norm{\nu(y_{\lambda})}
        \overset{(*)}{\leq}  \epsilon (\epsilon + \norm{\nu(y)}),
    \end{align*}
    where $(*)$ follows from \eqref{eq:limsup} and continuity of $\nu$. The same computation without subscripts yields
    \begin{align*}
        &\norm{\Psi_{h}(\zeta\otimes \nu(y)\op)
        -
        \Psi_{h}(\mu(x \ract h)\otimes \nu(y)\op)}
        \leq
        \epsilon (\epsilon + \norm{\nu(y)})
        .
    \end{align*}
    As $\epsilon$ was arbitrary, Lemma~\ref{lem:convergence uscBb:close nets} states that this combined with \eqref{eq:cty with zetas} implies  \[
        \Psi_{h_{\lambda}}\bigl(\mu(x_{\lambda} \ract h_{\lambda})\otimes \nu(y_{\lambda})\op\bigr) \to  
        \Psi_{h}\bigl(\mu(x \ract h)\otimes \nu(y)\op\bigr).
        \]          
        Because each $\Psi_{h}$ is linear, isometric, and surjective,
        the claim now follows from Lemma~\ref{lem:uscBb:cts} and \cite[Proposition A.8]{DL:MJM2023}.  
    \end{proof}
   We are now able to proceed with the balancing by defining a relation $\FBRel$ on the set~$K$ as follows; the results are adapted from \cite[Section~6]{DL:MJM2023}.
		\begin{lemma}\label{lem:FBRel}
			If $\xi_{1}, \xi_{2}\in K$ with $q_{\cK} (\xi_{i})=(x_{i},y_{i}\op )$, let
			\[
			\xi_{1} \FBRel  \xi_{2}
			\qquad
			:\iff 
			\qquad
			\exists h\in \sigma(x_{1})\cH \sigma(x_{2}) \text{ such that } 
			\quad
			x_{2}=x_{1}\ract h,
			\quad 
			y_{2} = y_{1}\ract h,
			\quad
			\Psi_{h} (\xi_{2}) = \xi_{1}.
			\]
		This defines a closed equivalence relation on the total space~$K$ of $\cK$ whose quotient map $Q\colon K\to K/\FBRel$ is open.
    \end{lemma}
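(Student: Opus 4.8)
The plan is to establish the three assertions—that $\FBRel$ is an equivalence relation, that it is closed, and that $Q$ is open—separately, leaning throughout on the algebraic properties of the maps $\Psi_{h}$ from Lemma~\ref{lem:Psi_h} and on their joint continuity from Lemma~\ref{lem:DL:MJM2023:Thm5.20}. The equivalence-relation part is purely formal. For reflexivity of $\xi$ over $(x,y\op)$ I take $h=\sigma(x)\in\cH\z$: the unit fixes $x$ and $y$ (using $\sigma(x)=\sigma(y)$), and $\Psi_{\sigma(x)}=\mathrm{id}$. For symmetry, if $\xi_{1}\FBRel\xi_{2}$ via $h$, then $h\inv\in\sigma(x_{2})\cH\sigma(x_{1})$ witnesses $\xi_{2}\FBRel\xi_{1}$, since $x_{1}=x_{2}\ract h\inv$, $y_{1}=y_{2}\ract h\inv$, and $\Psi_{h\inv}=\Psi_{h}\inv$ gives $\Psi_{h\inv}(\xi_{1})=\xi_{2}$. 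Transitivity along $h$ and $h'$ uses that the two elements are composable (both factor through $\sigma(x_{2})$), that the $\cH$-action is associative so $x_{3}=x_{1}\ract(hh')$ and $y_{3}=y_{1}\ract(hh')$, and the cocycle identity $\Psi_{h}\circ\Psi_{h'}=\Psi_{hh'}$, which yields $\Psi_{hh'}(\xi_{3})=\Psi_{h}(\xi_{2})=\xi_{1}$.

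The analytic heart is closedness. Given a net $(\xi_{1}^{\lambda},\xi_{2}^{\lambda})\to(\xi_{1},\xi_{2})$ in $K\times K$ with $\xi_{1}^{\lambda}\FBRel\xi_{2}^{\lambda}$, I would first push the convergence down through the continuous projection $q_{\cK}$ to obtain $x_{i}^{\lambda}\to x_{i}$ and $y_{i}^{\lambda}\to y_{i}$ in the \LCH\ spaces $X$ and $X\op$. The connecting element is $h_{\lambda}=\reoq[X]{x_{1}^{\lambda}}{x_{2}^{\lambda}}{\cH}$, the unique element with $x_{1}^{\lambda}\ract h_{\lambda}=x_{2}^{\lambda}$. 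Since $\rho(x_{1}^{\lambda})=\rho(x_{2}^{\lambda})$ for every $\lambda$ and $\cG\z\cong X/\cH$ is Hausdorff, the limits satisfy $\rho(x_{1})=\rho(x_{2})$, so $h\coloneqq\reoq[X]{x_{1}}{x_{2}}{\cH}$ is defined; continuity of $\reoqempty[X]{\cH}$ (Remark~\ref{rmk:leoq}) then gives $h_{\lambda}\to h$, whence $h\in\sigma(x_{1})\cH\sigma(x_{2})$, $x_{2}=x_{1}\ract h$, and—by continuity of the $\cH$-action and uniqueness of limits in $X$—also $y_{2}=y_{1}\ract h$. It remains to pass $\Psi_{h_{\lambda}}(\xi_{2}^{\lambda})=\xi_{1}^{\lambda}$ to the limit. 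Here I invoke joint continuity of $\Psi$ (Lemma~\ref{lem:DL:MJM2023:Thm5.20}): since $(h_{\lambda},\xi_{2}^{\lambda})\to(h,\xi_{2})$ in $\cH\bfp{s}{t}\cK$, we get $\Psi_{h_{\lambda}}(\xi_{2}^{\lambda})\to\Psi_{h}(\xi_{2})$ in $K$; as the same net also converges to $\xi_{1}$, uniqueness of limits in the total space of the \uscBb~$\cK$ (as used already in the proof of~\ref{item:rwordBdl:ractB:cts}) forces $\Psi_{h}(\xi_{2})=\xi_{1}$, so that $\xi_{1}\FBRel\xi_{2}$ and $\FBRel$ is closed.

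For openness of $Q$ it suffices to show that the saturation of every open $U\subseteq K$ is open, since $Q\inv(Q(U))$ equals that saturation. Unwinding the definition, $\xi\FBRel\xi'$ with $\xi'\in U$ holds exactly when $\xi=\Psi_{h}(\xi')$ for some $h$ with $s_{\cH}(h)=t(q_{\cK}(\xi'))$; hence the saturation is $\Phi\bigl(\cH\bfp{s}{t}U\bigr)$, the image of an open subset of $\cH\bfp{s}{t}\cK$ under the continuous map $\Phi\colon(h,\xi')\mapsto\Psi_{h}(\xi')$. Writing $\Phi=\pi_{1}\circ\Psi$, where $\pi_{1}\colon\cK\bfp{t}{r}\cH\to K$ is the first-coordinate projection, I would use that $\Psi$ is a homeomorphism (Lemma~\ref{lem:DL:MJM2023:Thm5.20}) and that $\pi_{1}$ is open because $r_{\cH}$ is open—which in turn follows from our standing assumption that $s_{\cH}$ is open together with the fact that inversion is a homeomorphism. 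Thus $\Phi$ is open, the saturation of $U$ is open, and $Q$ is an open map.

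I expect closedness to be the main obstacle. The delicate point is producing the limiting connecting element $h$ and transporting the identity $\Psi_{h_{\lambda}}(\xi_{2}^{\lambda})=\xi_{1}^{\lambda}$ to the fibre over the limit: this requires coordinating the groupoid-level continuity of $\reoqempty[X]{\cH}$ and of the $\cH$-action (both ultimately resting on freeness, properness, and openness of $\rho$) with the bundle-level joint continuity of $\Psi$, and it relies on uniqueness of limits in the total space of the \uscBb~$\cK$.
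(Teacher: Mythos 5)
The paper gives no direct argument for this lemma: it cites \cite[Lemma 6.2]{DL:MJM2023} verbatim for the equivalence-relation property and for closedness, and \cite[Proposition 6.8]{DL:MJM2023} for openness of $Q$, adding only the observation that \etale ness enters solely through openness of the range map. Your proof is therefore a self-contained substitute, and two of its three parts are correct. The equivalence-relation argument is exactly right (units, $\Psi_{h\inv}=\Psi_{h}\inv$, and the composition rule for the $\Psi$'s). The openness argument is also correct and uses the right mechanism: the $\FBRel$-saturation of an open $U\subseteq K$ is $\pi_{1}\bigl(\Psi(\cH\bfp{s}{t}U)\bigr)$, where $\Psi$ is a homeomorphism by Lemma~\ref{lem:DL:MJM2023:Thm5.20} and the projection $\pi_{1}\colon \cK\bfp{t}{r}\cH\to K$ is open because $r_{\cH}$ is open; this isolates precisely the hypothesis that the paper says replaces \etale ness. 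Your use of continuity of $\reoqempty[X]{\cH}$ to obtain $h_{\lambda}\to h$ is legitimate via Remark~\ref{rmk:leoq}; the paper's analogous arguments (Lemmas~\ref{lem:Q| is homeo} and~\ref{lem:cA:mult}) instead use properness and pass to subnets, and both routes work.

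The gap is the final step of closedness. From $\xi_{1}^{\lambda}\to\xi_{1}$ and $\xi_{1}^{\lambda}=\Psi_{h_{\lambda}}(\xi_{2}^{\lambda})\to\Psi_{h}(\xi_{2})$ you conclude $\xi_{1}=\Psi_{h}(\xi_{2})$ by ``uniqueness of limits'' in $K$. That is not an axiom of \uscBb s and does not follow from them: since the norm is merely \usc, the total space need not be Hausdorff, and one net can converge to two distinct points of a single fibre. Concretely, take $\cH=\cH\z=X=\{0\}\cup\{1/n:n\in\mbb{N}\}$ (the trivial groupoid), let $\cB$ be the bundle of the $C(X)$-algebra $\mbb{C}\oplus c_{0}$ (all fibres $\mbb{C}$; continuous sections are the sequences tending to $0$ with an arbitrary value at $0$), and $\cM=\cB$; then $\cK\cong\cB$, the zero elements $0_{1/n}$ converge to \emph{every} point of the fibre over $0$, and $\FBRel$ --- which in this degenerate example is just the diagonal --- is not closed. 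So the assertion genuinely requires a separation hypothesis on total spaces: if the notion of \uscBb\ adopted from \cite[Definition 2.3]{DL:MJM2023} builds Hausdorffness into the definition, your step (and the lemma itself) is fine, but that must then be invoked explicitly; otherwise fibrewise uniqueness of limits in $\cK$ has to be proved, and no argument in the present paper supplies it. Be careful, too, with your cited precedent: in the proof of \ref{item:rwordBdl:ractB:cts} the phrase ``limits are unique'' is removable, because the conclusion there follows from continuity of addition (the relevant net is the sum of a net converging to a zero element and a net converging to the claimed limit), whereas in your argument uniqueness of limits does irreplaceable work.
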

  We often write $[\xi]$ for $Q(\xi)$.

\begin{proof}
The proof that $\FBRel$ is closed is verbatim that given for \cite[Lemma 6.2]{DL:MJM2023}, where it is also explained why $\FBRel$ is an equivalence relation.

To show that the quotient map $Q$ is open, the proof of \cite[Proposition 6.8]{DL:MJM2023} goes through for $\cN\coloneqq\cM\op$, even though~$\cM$ was assumed to be an equivalence and~$\cH$ was assumed to be \etale\ in \cite{DL:MJM2023}.
In fact, \etale ness was only  needed to be allowed to invoke \cite[Lemma 6.10]{DL:MJM2023}, but that lemma's proof holds as long as the range map of~$\cH$ is open (which we assumed here). 
\end{proof}

\begin{lemma}\label{lem:p_cA}
    On the quotient  of~$K$ by $\FBRel$,        the map $p\colon K/\FBRel \to \cG, [\xi]\mapsto [q_{\cK}(\xi)]$, is well defined, surjective, continuous, and open.
\end{lemma}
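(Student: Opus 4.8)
The plan is to recognize $p$ as the map induced on quotients by the bundle projection $q_{\cK}$, and then to read off each of the four claims from the corresponding property of $q_{\cK}$ and of the orbit map on the base. Write $Q_{\cG}\colon X\bfp{\sigma}{\sigma}X\op\to\cG$ for the orbit map $(x,y\op)\mapsto[x,y\op]$ of the diagonal $\cH$-action; as recorded in Remark~\ref{rmk:leoq} (where it is identified with $\leoqempty[X]{\cG}$), this map is continuous, open, and surjective. The one fact that makes everything go is that the square
\[
\begin{tikzcd}
K \ar[r,"q_{\cK}"] \ar[d,"Q"'] & X\bfp{\sigma}{\sigma}X\op \ar[d,"Q_{\cG}"] \\
K/\FBRel \ar[r,"p"'] & \cG
\end{tikzcd}
\]
commutes, i.e.\ $p\circ Q=Q_{\cG}\circ q_{\cK}$; this is immediate from the defining formula $p([\xi])=[q_{\cK}(\xi)]$.

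For well-definedness I would verify that $Q_{\cG}\circ q_{\cK}$ is constant on $\FBRel$-classes: if $\xi_{1}\FBRel\xi_{2}$ with $q_{\cK}(\xi_{i})=(x_{i},y_{i}\op)$, then by definition of $\FBRel$ there is some $h$ with $x_{2}=x_{1}\ract h$ and $y_{2}=y_{1}\ract h$, so $(x_{2},y_{2}\op)=(x_{1}\ract h,(y_{1}\ract h)\op)$ lies in the same diagonal $\cH$-orbit as $(x_{1},y_{1}\op)$; hence $Q_{\cG}(q_{\cK}(\xi_{1}))=Q_{\cG}(q_{\cK}(\xi_{2}))$ and the formula descends to $K/\FBRel$. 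Surjectivity and continuity are then formal consequences of the square. Since $q_{\cK}$ is surjective (every fibre contains its zero vector) and $Q_{\cG}$ is surjective, the composite $Q_{\cG}\circ q_{\cK}=p\circ Q$ is surjective, and therefore so is $p$. Since $Q$ is a quotient map and $p\circ Q=Q_{\cG}\circ q_{\cK}$ is continuous, the universal property of the quotient topology yields continuity of $p$.

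The only claim carrying genuine topological content is openness, and the single input I expect to pin down is that the bundle projection $q_{\cK}$ is an open map. This is the standard fact that the projection of an \uscBb\ is open (see, e.g., \cite[Appendix~C]{Wil2007}): using that $\cK$ has enough continuous sections, a basic tube $\{\xi:q_{\cK}(\xi)\in O,\ \norm{\xi-\tau(q_{\cK}(\xi))}<\epsilon\}$ about a continuous section $\tau$ is carried by $q_{\cK}$ exactly onto the open set $O$. Granting this, openness of $p$ is a diagram chase: for open $U\subseteq K/\FBRel$, continuity of $Q$ makes $Q\inv(U)$ open, openness of $q_{\cK}$ makes $q_{\cK}(Q\inv(U))$ open, and openness of $Q_{\cG}$ makes $Q_{\cG}(q_{\cK}(Q\inv(U)))$ open in $\cG$; finally, surjectivity of $Q$ gives $Q(Q\inv(U))=U$, whence this last set equals $(p\circ Q)(Q\inv(U))=p(U)$. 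Thus $p(U)$ is open. I do not anticipate any real obstacle beyond correctly invoking the openness of $q_{\cK}$ and of $Q_{\cG}$; the rest is bookkeeping with the universal property of $Q$.
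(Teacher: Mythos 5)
Your proof is correct, but it takes a different route from the paper: the paper gives no self-contained argument at all, instead declaring the proof to be verbatim that of \cite[Lemma 6.5]{DL:MJM2023} applied to $\cN\coloneqq\cM\op$, with a remark that the cited lemma was stated under the stronger hypothesis that $\cM$ is an equivalence but that its proof survives. Your argument replaces this citation with a direct diagram chase: the identity $p\circ Q=Q_{\cG}\circ q_{\cK}$, well-definedness read off from the base-point part of the definition of $\FBRel$ in Lemma~\ref{lem:FBRel}, continuity from the universal property of the quotient topology, and openness from $p(U)=Q_{\cG}\bigl(q_{\cK}(Q\inv(U))\bigr)$ together with openness of the bundle projection $q_{\cK}$ and of the orbit map $Q_{\cG}$. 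Both inputs you invoke are legitimately available in the paper's setting: openness of $q_{\cK}$ is part of (or an immediate consequence of) the \uscBb\ axioms used throughout, and openness of $Q_{\cG}=\leoqempty[X]{\cG}$ follows from Remark~\ref{rmk:leoq} once one knows from Lemma~\ref{lem:thm for gpds} that $X$ is an $(X\times_{\cH}X\op,\cH)$-equivalence. What your approach buys is verifiability within the paper itself, and it makes explicit exactly which structural facts carry the statement; what the paper's citation buys is brevity, at the cost of asking the reader to check that an external proof written for equivalences still goes through for a mere \demiequiv. One small note: for openness you do not even need $\Psi_{h}$ or the fibrewise structure of $\FBRel$ at all, which your argument correctly reflects — only the base-orbit condition enters.
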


\begin{proof}
    Verbatim the proof of \cite[Lemma 6.5]{DL:MJM2023} for $\cN\coloneqq \cM\op$, even though~$\cM$ was assumed to be an equivalence in that lemma.
\end{proof}

\fixnow we will denote the quotient bundle  of $\cK$  by $\cA=(p_{\cA}=p\colon A\to \cG)$. Once we have shown that $\cA$ is a Fell bundle that is equivalent to~$\cB$ via $\cM$, we will change notation.

\begin{lemma}[cf.~{\cite[Proposition 6.6. and Lemma 6.7.]{DL:MJM2023}}]\label{lem:Q| is homeo}
        Fix $g=\leoq[X]{\cG}{x}{y}\in \cG$. With respect to the subspace topology of $A=K/\FBRel$ on the fibre $A(g)$ of $\cA$, the restriction of the quotient map $Q\colon K\to A$ to the fibre over $(x,y\op)$ is a homeomorphism:
       \begin{align}\label{eq:cA's fibres}
        Q_{(x,y\op)}\colon \quad K(x,y\op)=M(x)\otimes_{\sigma(x)}M(y)\op
        \overset{\approx}{\longrightarrow}A(g).
       \end{align}
\end{lemma}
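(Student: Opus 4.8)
The plan is to verify that $Q_{(x,y\op)}$ is a continuous bijection onto $A(g)$ and then, as the main step, that its inverse is continuous. That the map lands in $A(g)$ is immediate from Lemma~\ref{lem:p_cA}: for $\xi\in K(x,y\op)$ we have $p([\xi])=[q_{\cK}(\xi)]=[x,y\op]=g$. Continuity of $Q_{(x,y\op)}$ is clear, being the restriction of the continuous quotient map $Q$ to the subspace $K(x,y\op)$.

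For surjectivity, I would take $[\eta]\in A(g)$ and write $q_{\cK}(\eta)=(x',{y'}\op)$. Then $[x',{y'}\op]=g=[x,y\op]$ forces $x'=x\ract h$ and $y'=y\ract h$ for some $h\in\sigma(x)\cH$, so $\eta\in K(x\ract h,(y\ract h)\op)$ and the corresponding map from Lemma~\ref{lem:Psi_h} sends it to $\Psi_{h}(\eta)\in K(x,y\op)$. By the very definition of $\FBRel$ one has $\Psi_{h}(\eta)\FBRel\eta$, whence $Q_{(x,y\op)}(\Psi_{h}(\eta))=[\eta]$. For injectivity, suppose $\xi_{1},\xi_{2}\in K(x,y\op)$ lie in the same class; since $\FBRel$ is an equivalence relation this means $\xi_{1}\FBRel\xi_{2}$, so there is $h$ with $x=x\ract h$ and $\Psi_{h}(\xi_{2})=\xi_{1}$. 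Freeness of the $\cH$-action on~$X$ forces $h=\sigma(x)\in\cH\z$, and then $\Psi_{h}$ is the identity by~($\Psi$5), so $\xi_{1}=\xi_{2}$.

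The crux is continuity of $Q_{(x,y\op)}\inv$, and here I would argue with nets, exploiting that $Q$ is open by Lemma~\ref{lem:FBRel}. Let $[\xi_{\lambda}]\to[\xi]$ in $A(g)$, with (unique) representatives $\xi_{\lambda},\xi\in K(x,y\op)$; the goal is $\xi_{\lambda}\to\xi$. Since $Q$ is a continuous open surjection, after passing to a subnet there are $\eta_{\mu}\in K$ with $Q(\eta_{\mu})=[\xi_{\lambda_{\mu}}]$ and $\eta_{\mu}\to\xi$ in~$K$; in particular $q_{\cK}(\eta_{\mu})=(a_{\mu},b_{\mu}\op)\to(x,y\op)$. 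From $Q(\eta_{\mu})=Q(\xi_{\lambda_{\mu}})$ and the definition of $\FBRel$ we obtain $h_{\mu}\in\cH$ with $a_{\mu}\ract h_{\mu}=x$ and $\Psi_{h_{\mu}}(\xi_{\lambda_{\mu}})=\eta_{\mu}$, so that $\xi_{\lambda_{\mu}}=\Psi_{h_{\mu}\inv}(\eta_{\mu})$. Because $a_{\mu}\ract h_{\mu}=x$ places $a_{\mu}$ and $x$ in the same $\cH$-orbit, we have $h_{\mu}=\reoq[X]{a_{\mu}}{x}{\cH}$, and continuity of $\reoqempty[X]{\cH}$ (Remark~\ref{rmk:leoq}) together with $a_{\mu}\to x$ yields $h_{\mu}\to\reoq[X]{x}{x}{\cH}=\sigma(x)$. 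Invoking the joint continuity of~$\Psi$ (Lemma~\ref{lem:DL:MJM2023:Thm5.20}) and continuity of inversion in~$\cH$, I conclude $\xi_{\lambda_{\mu}}=\Psi_{h_{\mu}\inv}(\eta_{\mu})\to\Psi_{\sigma(x)}(\xi)=\xi$. Thus every subnet of $(\xi_{\lambda})$ has a further subnet converging to $\xi$, which forces $\xi_{\lambda}\to\xi$ and establishes continuity of the inverse, completing the proof.

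The main obstacle is exactly this last paragraph: one cannot lift the convergence $[\xi_{\lambda}]\to[\xi]$ directly inside the fibre $K(x,y\op)$, but only in all of $K$ and only after passing to a subnet, so the argument must route through a possibly different fibre via $\Psi_{h_{\mu}}$ and then transport back. Making this work cleanly relies on the joint continuity of~$\Psi$ and on continuity of $h_{\mu}=\reoq[X]{a_{\mu}}{x}{\cH}$ to guarantee $h_{\mu}\to\sigma(x)$, so that $\Psi_{h_{\mu}\inv}$ converges to the identity map $\Psi_{\sigma(x)}$.
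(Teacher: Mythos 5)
Your proof is correct, but the hard half proceeds along a genuinely different route from the paper's. The bijectivity and forward-continuity arguments are the same as in the paper (freeness for injectivity; surjectivity of $Q$ plus transport by $\Psi_h$ for surjectivity; restriction of $Q$ for continuity). For the remaining direction, however, the paper never touches the inverse map directly: it shows that $Q_{(x,y\op)}$ is \emph{closed}, by taking a closed set $F\subseteq K(x,y\op)$, observing that it suffices to prove the saturation $Q\inv(Q(F))$ is closed in $K$ (since $A$ carries the quotient topology), and then running a net argument in which properness of the $\cH$-action (via \cite[Proposition 2.17]{Wil2019}) extracts a convergent \emph{subnet} of the transporting elements $h_{\lambda}$, whose limit is identified only a posteriori using Hausdorffness of $X$; joint continuity of $\Psi$ then carries the net back into the fixed fibre. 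You instead prove continuity of $Q_{(x,y\op)}\inv$ head-on: you lift the convergent net through the open map $Q$ by Fell's criterion (legitimate by Lemma~\ref{lem:FBRel}; the paper itself uses this device later, in Lemma~\ref{lem:cA:mult}), identify the transporting elements as $h_{\mu}=\reoq[X]{a_{\mu}}{x}{\cH}$, and obtain full-net convergence $h_{\mu}\to\sigma(x)$ with the limit known in advance from continuity of $\reoqempty[X]{\cH}$ (Remark~\ref{rmk:leoq}, applicable since $X$ is an $(X\times_{\cH}X\op,\cH)$-equivalence by Lemma~\ref{lem:thm for gpds}). Both arguments rest on the same two pillars---joint continuity of $\Psi$ and properness of the action, which in your version is hidden inside the continuity of $\reoqempty[X]{\cH}$---so the trade-off is mostly structural: the paper's route avoids Fell's criterion and the subnet-of-subnets principle, while yours avoids the closed-saturation bookkeeping and pins down the limit of the $h_{\mu}$'s immediately. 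Your closing reduction (every subnet of $(\xi_{\lambda})$ admits a further subnet converging to $\xi$) is stated correctly and is needed precisely because Fell's criterion only produces a subnet.
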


      The above lemma means that we can give  $A(g)$ the Banach space structure that makes all maps $Q_{(x,y\op)}$ {\em isomorphisms}: for $[\xi_{1}],[\xi_{2}] \in A(g)$, 
   		there exists a unique $h\in\cH$ such that $q_{\cK}(\xi_{1})= q_{\cK}(\Psi_{h} (\xi_{2}))$, and we may let 
			\[ 
			\lambda_{1}[\xi_{1}]+   \lambda_{2}[\xi_{2}]
			\coloneqq
			[\lambda_{1}\xi_{1} +   \lambda_{2}\Psi_{h}(\xi_{2})]
			\quad\text{and}\quad
			\norm{[\xi]}
			\coloneqq \norm{\xi}_{K(q(\xi) )}.
			\]
    Note that, in light of the Hilbert module isomorphism in Equation~\eqref{eq:defn of K's fibres}, we can also think of the fibres of $\cA$ as generalized compact operators:
    \begin{align}\label{eq:cA's fibres as compacts}
        \compacts_{B(\sigma(x))}\bigl(M(y), M(x)\bigr)
        \overset{\approx}{\longrightarrow}A([x,y\op]).
       \end{align}
\begin{proof}
    If $\xi,\eta\in K(x,y\op)$, then since the $\cH$-action on~$X$ is free, $h=\sigma(x)\in \cH\z$ is the only possible element that allows $\xi\FBRel\eta$, in which case $\Psi_{h}(\eta)=\eta$ (since $h$ is a unit) and hence $\xi=\eta$. Thus, $Q_{(x,y\op)}$ is injective. For surjectivity, note that $Q$ is surjective, so any element $a$ of $A(g)$ can be written as $a=Q(\xi)$ for $\xi\in K$ with $[q_{\cK}(\xi)]=g=\leoq[X]{\cG}{x}{y}$. In particular, there exists $h\in \cH$ with $q_{\cK}(\xi)=(x\ract h, (y\ract h)\op)$, so that $\Psi_{h}(\xi) \in K(x,y\op)$ and $a=Q(\xi)=Q_{(x,y\op)}(\Psi_{h}(\xi))$.

    Since $p=p_{\cA}$ is continuous, $A(g)=p_{\cA}\inv (\{g\})$ is closed in $A$, and so continuity of $Q$ implies continuity of $Q_{(x,y\op)}$. To see that $Q_{(x,y\op)}$ is closed (and hence a homeomorphism), we follow the idea in the proof of \cite[Lemma 6.7.]{DL:MJM2023}: Let $F\subseteq K(x,y\op)$ be a closed set. To show that $Q(F)\subseteq A(g)$ is closed, it suffices to show that it is closed in $A$, i.e., that $Q\inv (Q(F))$ is closed in $K$. So assume $\xi_{\lambda}\to \xi$ in $K$ with $Q(\xi_{\lambda})\in Q(F)$; we must prove that $Q(\xi)\in Q(F)$. Since $Q(\xi_{\lambda})\in F$, there exist $\eta_{\lambda}\in F$ with $\eta_{\lambda}\FBRel\xi_{\lambda}$. Since $F\subseteq K(x,y\op)$, this means there exist $h_{\lambda}\in \sigma(x)\cH $ with $q_{\cK}(\xi_{\lambda})=(x\ract h_{\lambda}, (y\ract h_{\lambda})\op)$ and $\Psi_{h_{\lambda}}(\xi_{\lambda})=\eta_{\lambda}$. Since $\xi_{\lambda}\to \xi$, we have
    \[
    (x\ract h_{\lambda}, (y\ract h_{\lambda})\op)
    =
    q_{\cK}(\xi_{\lambda})
    \to
    q_{\cK}(\xi)
    =:
    (x_0,y_0\op)
    .
    \]
    Since the $\cH$-action on~$X$ is proper, it follows from \cite[Proposition 2.17]{Wil2019} that (a subnet of) $\{h_{\lambda}\}_{\lambda}$ converges to, say, $h$; since~$X$ is Hausdorff, it follows that $(x_0,y_0\op)=(x\ract h, (y\ract h)\op)$. Since $\Psi$ is jointly continuous, we conclude that (a subnet of) $\eta_{\lambda}
    =
    \Psi_{h_{\lambda}}(\xi_{\lambda})$ converges to $\Psi_{h}(\xi)$
    in $K$. Since $\eta_{\lambda}\in F$ and $F$ is closed in $K(x,y\op)$ and hence also in $K$, we have $\Psi_{h}(\xi)\in F$. Since $\xi\FBRel\Psi_{h}(\xi)$, this means that $\xi\in Q\inv (Q(F))$, as needed.
\end{proof}

  \begin{lemma}\label{lem:cA is uscBb}
    The bundle $\cA$ - equipped with the quotient topology on $A=K/\FBRel$ and with the fibrewise linear structure inherited from $\cK$ via the maps given in \eqref{eq:cA's fibres} - is an \uscBb. With respect to this topology, all sections of the form 
	\[
		[x,y\op]\mapsto [\mu(x)\otimes \nu(y)\op]
	\]
	are continuous, where  $\mu,\nu$ are continuous sections of~$\cM$.
  \end{lemma}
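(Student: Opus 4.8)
The plan is to verify directly that $(A,p_\cA,\cG)$, equipped with the quotient topology, satisfies the axioms of an \uscBb\ in the sense of \cite[Definition 2.3]{DL:MJM2023}, transporting each axiom from the bundle~$\cK$ through the quotient map~$Q$. Two ingredients are already in hand: by Lemma~\ref{lem:p_cA}, $p_\cA$ is a continuous open surjection, and by Lemma~\ref{lem:Q| is homeo} each fibre $A(g)$ is identified linearly and isometrically with the complete space $K(x,y\op)$ via $Q_{(x,y\op)}$, so the fibres are Banach spaces. The remaining points are upper semicontinuity of the norm, continuity of the vector-space operations, and the vanishing condition that `$\norm{a_\lambda}\to 0$ and $p_\cA(a_\lambda)\to g$ force $a_\lambda\to 0_{A(g)}$'. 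The workhorse throughout is that $Q$ is a continuous, open, fibrewise isometric surjection, so a net converging in~$A$ can be lifted, after passing to a subnet, to a net of equal norm converging in~$K$ (the standard lifting property of open maps).

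For upper semicontinuity of the norm I would take a net $a_\lambda\to a$ in~$A$, fix $\xi\in Q\inv(a)$, and use openness of~$Q$ to obtain a subnet with $\xi_\lambda\to\xi$ in~$K$ and $Q(\xi_\lambda)=a_\lambda$; since $\norm{a_\lambda}=\norm{\xi_\lambda}$ and the norm on~$\cK$ is \usc, the bound $\limsup_\lambda\norm{a_\lambda}\le\norm{a}$ drops out. For the vanishing condition I would instead start from $p_\cA(a_\lambda)\to g=[x,y\op]$, use openness of the orbit map $q\colon X\bfp{\sigma}{\sigma}X\op\to\cG$ to pick representatives $\xi_\lambda\in Q\inv(a_\lambda)$ whose base points $q_\cK(\xi_\lambda)$ converge to $(x,y\op)$, and then invoke the corresponding property of~$\cK$ to get $\xi_\lambda\to 0$ in~$K$ (here $\norm{\xi_\lambda}=\norm{a_\lambda}\to 0$ and the base points converge); continuity of~$Q$ yields $a_\lambda\to 0_{A(g)}$. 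Scalar multiplication is handled identically, as it leaves fibres fixed.

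The main obstacle is continuity of addition $A\bfp{p_\cA}{p_\cA}A\to A$, since summing two elements of a common fibre $A(g)$ first requires realigning their $\cK$-representatives, which sit over points $(x_1,y_1\op)$ and $(x_2,y_2\op)=(x_1\ract h,(y_1\ract h)\op)$, through the isometry $\Psi_h$ — this is exactly the recipe for $+$ recorded just after Lemma~\ref{lem:Q| is homeo}. Given $a_\lambda\to a$, $b_\lambda\to b$ with $p_\cA(a_\lambda)=p_\cA(b_\lambda)$, I would lift along a subnet to $\xi_\lambda\to\xi$ and $\eta_\lambda\to\eta$ in~$K$; the matching of $p_\cA$-values, together with freeness of the $\cH$-action, determines unique connecting elements $h_\lambda\in\cH$ relating $q_\cK(\xi_\lambda)$ and $q_\cK(\eta_\lambda)$. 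Because the base points converge and the $\cH$-action on~$X$ is proper, \cite[Proposition 2.17]{Wil2019} lets me refine to a subnet with $h_\lambda\to h$, the element connecting the limits. Joint continuity of~$\Psi$ (Lemma~\ref{lem:DL:MJM2023:Thm5.20}) then gives $\Psi_{h_\lambda}(\eta_\lambda)\to\Psi_h(\eta)$, continuity of addition in~$\cK$ gives $\xi_\lambda+\Psi_{h_\lambda}(\eta_\lambda)\to\xi+\Psi_h(\eta)$, and applying the continuous map~$Q$ produces $a_\lambda+b_\lambda\to a+b$. This mirrors the properness-plus-joint-continuity step already used in the proof of Lemma~\ref{lem:Q| is homeo}.

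Finally, the asserted continuity of the cross-sections is the statement that $Q\circ(\mu\otimes\nu\op)\colon X\bfp{\sigma}{\sigma}X\op\to A$, $(x,y\op)\mapsto[\mu(x)\otimes\nu(y)\op]$, is continuous, which is immediate since $\mu\otimes\nu\op$ is continuous by Lemma~\ref{lem:DL:MJM2023:Lemma 5.2} and~$Q$ is continuous. As $\{\mu(x)\otimes\nu(y)\op:\mu,\nu\in\Gamma_0(X;\cM)\}$ is dense in each $K(x,y\op)$ and $Q_{(x,y\op)}$ maps onto $A(g)$, these maps take a dense set of values in every fibre, confirming that~$\cA$ has enough continuous sections. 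I expect the addition step to be the only genuinely delicate part; the rest is a routine transport of structure through the open, fibrewise-isometric map~$Q$.
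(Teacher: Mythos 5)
Your proposal is correct, but it reaches the conclusion by a genuinely different route than the paper. The paper's proof is essentially a citation: it feeds the facts you also start from --- that $Q$ and $\leoqempty[X]{\cG}$ are open quotient maps (Lemma~\ref{lem:FBRel}) and that each $Q_{(x,y\op)}$ is a surjective linear isometry (Lemma~\ref{lem:Q| is homeo}) --- into the ready-made quotient result \cite[Proposition 6.13]{DL:MJM2023}, which outputs the \uscBb\ structure at once; the continuity of the sections is then the same composite argument you give ($\mu\otimes\nu\op$ is continuous into $\cK$, and $Q$ is continuous). You instead re-prove that quotient result in situ: an axiom-by-axiom verification via Fell's criterion for the open map $Q$, with the one delicate point --- continuity of addition, where representatives must first be realigned by $\Psi_{h_\lambda}$ --- handled by properness and freeness of the $\cH$-action together with joint continuity of $\Psi$ (Lemma~\ref{lem:DL:MJM2023:Thm5.20}). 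That is exactly the technique the paper deploys in the proofs of Lemmas~\ref{lem:Q| is homeo} and~\ref{lem:cA:mult}, so your argument sits squarely within the paper's toolkit; what the citation buys the paper is brevity, while your route is self-contained and records explicitly where properness, freeness, and the joint continuity of $\Psi$ enter. Two small remarks: first, your reading of the section claim as continuity of $(x,y\op)\mapsto[\mu(x)\otimes\nu(y)\op]$ on $X\bfp{\sigma}{\sigma}X\op$ is the right one --- read literally as a map on $\cG$, the formula is not even well defined, since the class $[\mu(x)\otimes\nu(y)\op]$ depends on the chosen representative of $[x,y\op]$; second, these maps are not sections of $p_{\cA}$, so your closing sentence does not literally establish that $\cA$ has enough continuous sections, but this is harmless, as that is not an axiom of an \uscBb\ and holds automatically once $\cA$ is known to be one (\cite[Cor.\ 2.10]{LAZAR2018448}).
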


\begin{proof}
    That $\cA$ is an \uscBb\ follows from an application of \cite[Proposition 6.13]{DL:MJM2023}; here, we need that both $\leoqempty[X]{\cG}\colon X\bfp{\sigma}{\sigma}X\op\to \cG$ and $Q\colon K\to A$ are open quotient maps (Lemma~\ref{lem:FBRel}) and that $Q_{(x,y\op)}$ is surjective (Lemma~\ref{lem:Q| is homeo}) and a linear isometry (by definition of the linear structure on each fibre of $\cA$).

    That the given section is continuous follows  since $\leoqempty[X]{\cG}$ is open, $\mu \otimes\nu\op$ is a continuous section of $\cK$ (Lemma~\ref{lem:DL:MJM2023:Lemma 5.2}), and $Q$ is continuous. 
\end{proof}

At this point, we will diverge from what was done in \cite{DL:MJM2023} and prove that 
$\cA=(p_{\cA}\colon A\to \cG)$
is not just an \uscBb\ but actually a Fell bundle; see Theorem~\ref{thm:A is FB}. In particular, we need to construct two maps, namely a multiplication $\mvisiblespace\cdot\mvisiblespace\colon \cA\comp\to A$ and an involution $\mvisiblespace^*\colon A\to A$. Conceptually, the involution is easier, so this is where we will start.

\begin{lemma}\label{lem:cK:involution}
   There exists a unique bijective, fibrewise isometric and conjugate-linear map $\Flip\colon\cK\to \cK$ that covers the homeomorphism
   \[
        \flip\colon X\bfp{\sigma}{\sigma}X\op\to X\bfp{\sigma}{\sigma}X\op,
        \quad 
        (x,y\op)\mapsto (y,x\op),
   \]
   and that is fibrewise given on dense spanning elements by
   \begin{align}
    \label{eq:cK:*}
    \begin{split}
        K(x,y\op)
        &\to  
        K(y,x\op)
        \\
        m\otimes n\op
        &\mapsto
        n\otimes m\op.
    \end{split}
    \end{align}
\end{lemma}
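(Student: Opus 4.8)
The plan is to build $\Flip$ one fibre at a time and then verify continuity using the same machinery as in the proof of Lemma~\ref{lem:DL:MJM2023:Thm5.20}. For fixed $x,y$ with $u\coloneqq\sigma(x)=\sigma(y)$, the cleanest construction of the fibrewise map $K(x,y\op)\to K(y,x\op)$ routes through the identification \eqref{eq:defn of K's fibres}: under $K(x,y\op)\cong\compacts_{B(u)}(M(y),M(x))$ the elementary tensor $m\otimes n\op$ corresponds to the rank-one operator $\ket{m}\bra{n}$, while under $K(y,x\op)\cong\compacts_{B(u)}(M(x),M(y))$ the tensor $n\otimes m\op$ corresponds to $\ket{n}\bra{m}=\ket{m}\bra{n}^{*}$. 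I would therefore \emph{define} $\Flip$ on this fibre to be the adjoint operation $T\mapsto T^{*}$ transported through these two isomorphisms. Since the adjoint is a conjugate-linear, isometric bijection of compact operators satisfying $\ket{m}\bra{n}^{*}=\ket{n}\bra{m}$, this produces at once a conjugate-linear, fibrewise isometric bijection acting on spanning elementary tensors exactly by \eqref{eq:cK:*}, and it yields the involutivity $\Flip\circ\Flip=\mathrm{id}$, so that $\Flip$ is its own inverse. Because the elementary tensors span a dense subspace of each fibre and $\Flip$ is isometric, the fibrewise map is uniquely determined by \eqref{eq:cK:*}.

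As an alternative to the adjoint picture, one can check directly that the prescription $m\odot n\op\mapsto n\odot m\op$ respects the $B(u)$-balancing --- using $(m\ractB b)\op=b^{*}\lactB m\op$, so that $n\otimes(m\ractB b)\op=(n\ractB b^{*})\otimes m\op$ --- and is isometric via the norm formula \eqref{eq:norm on compacts}; but passing through the compact operators makes well-definedness, isometry, and conjugate-linearity all automatic, which is why I would present it that way.

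It then remains to show that the resulting bundle map $\Flip\colon\cK\to\cK$ is continuous; since $\Flip$ is its own inverse, continuity of $\Flip$ makes it a homeomorphism covering $\flip$. Here I would argue verbatim as in Lemma~\ref{lem:DL:MJM2023:Thm5.20}: as $\Flip$ is fibrewise linear, isometric, and surjective, it suffices by Lemma~\ref{lem:uscBb:cts} together with \cite[Proposition A.8]{DL:MJM2023} to show that $\Flip$ carries the generating sections of $\cK$ to continuous sections. For $\mu,\nu\in\Gamma_{0}(X;\cM)$, the generating section $(x,y\op)\mapsto\mu(x)\otimes\nu(y)\op$ of Lemma~\ref{lem:DL:MJM2023:Lemma 5.2} is sent by $\Flip$, after precomposing with the homeomorphism $\flip$, to the section $(x,y\op)\mapsto\Flip\bigl(\mu(y)\otimes\nu(x)\op\bigr)=\nu(x)\otimes\mu(y)\op$, which is again a generating section $\nu\otimes\mu\op$ and hence continuous by Lemma~\ref{lem:DL:MJM2023:Lemma 5.2}. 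This is the crux: $\Flip$ merely permutes the two tensor slots without touching the right $\cB$-action, so it maps distinguished spanning sections to distinguished spanning sections, and the continuity criterion applies with no estimation whatsoever.

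Finally, uniqueness of the global $\Flip$ follows from the fibrewise uniqueness (density of elementary tensors plus isometry) together with continuity, since a continuous bundle map is determined by its values on a dense subset of each fibre. I do not expect a serious obstacle: the only point requiring genuine care is the well-definedness and isometry of the fibrewise map, and the compact-operator viewpoint of \eqref{eq:defn of K's fibres} disposes of it immediately. In contrast to the continuity argument for $\Psi$ in Lemma~\ref{lem:DL:MJM2023:Thm5.20}, no $\epsilon$-approximation via Assumption~\ref{item:rwordBdl:fibrewise full} is needed here, precisely because $\Flip$ does not involve the $\cB$-action.
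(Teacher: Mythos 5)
Your proposal is correct and follows essentially the same route as the paper's own proof: the fibrewise map is defined as the adjoint $T\mapsto T^{*}$ under the identification $K(x,y\op)\cong\compacts_{B(u)}(M(y),M(x))$ of \eqref{eq:defn of K's fibres}, and continuity follows because $\Flip$ permutes the generating sections $\mu\otimes\nu\op\mapsto\nu\otimes\mu\op$, so the criterion of Lemma~\ref{lem:uscBb:cts} and \cite[Proposition A.8]{DL:MJM2023} applies with no estimates. One small inaccuracy: when invoking that criterion you call $\Flip$ ``fibrewise linear,'' but it is conjugate-linear, and Lemma~\ref{lem:uscBb:cts} is stated for linear maps; the paper patches exactly this point in a footnote by applying the cited results to the genuinely linear map $\cK\to\cK\op$, $m\otimes n\op\mapsto(n\otimes m\op)\op$, and then composing with the homeomorphism $\cK\op\to\cK$.
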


Note that the following diagram commutes:
\begin{equation}\label{diag:Flip and Psi}
	\begin{tikzcd}[row sep=large, column sep = large, every label/.append style ={font = \small}]
		K(x\ract h, y\op)
		\ar[r, "\Flip"]\ar[d, "\Psi_{h}"']&
		K(y,(x\ract h)\op)\ar[d, "\Psi_{h}"]
		\\
		K(x, (y\ract h\inv)\op) 
		\ar[r, "\Flip"]&
		K(y\ract h\inv, x\op) 
	\end{tikzcd}
\end{equation}

\begin{proof}
    To see that the map $\Flip$ exists on each fibre, fix $(x,y\op)\in X\bfp{\sigma}{\sigma}X\op$ and let $u=\sigma(x)=\sigma(y)$. Recall that $K(x, y\op)$ is isomorphic to $\compacts_{B(u)} (M(y),M(x))$ as bi-Hilbert $\compacts_{B(u)} (M(x))-\compacts_{B(u)} (M(y))$-bimodules. With this identification, the restriction of  $\Flip$    to this fibre is given by
    \(
        \innercpct[]{}{m}{n}   \mapsto \innercpct[]{}{n}{m},
    \)
    or in other words, $\Flip$ is the adjoint map $T\mapsto T^*$. In particular,  $\Flip$ is fibrewise antilinear and isometric. Its continuity is built into the topology on $\cK$:
    If $\mu,\nu$ are continuous sections of $\cM$, then $\mu\otimes\nu\op$ and $\nu\otimes\mu\op$ are continuous sections of $\cK$. Since $\Flip$ transforms the former into the latter, it is continuous  and open by an application of \cite[Propositions A.7 and~A.8]{DL:MJM2023}.\footnote{To be pedantic, \cite{DL:MJM2023} only deals with fibrewise linear rather than antilinear maps, so the cited results give an isomorphism $\cK\to\cK\op$ of Banach bundles determined by $m\otimes n\op\mapsto (n\otimes m\op)\op$ covering the map $(x,y\op)\mapsto (y,x\op)\op$. But we can then compose that map with the homeomorphism $\cK\op\to\cK$, $\xi\op\mapsto\xi$.}
\end{proof}

Continuity of $\Flip$ and continuity and openness of the quotient map $Q$ now immediately imply the following:
\begin{corollary}\label{cor:cA:involution}
   On the quotient bundle $\cA$, there exists a unique continuous, fibrewise conjugate-linear map $\mvisiblespace^{\ast}\colon A \to A$ given by $[\xi]^*=[\Flip(\xi)]$.
\end{corollary}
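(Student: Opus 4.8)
The plan is to verify, in turn, that the formula $[\xi]^* = [\Flip(\xi)]$ is well defined, that it determines $\mvisiblespace^*$ uniquely, that it is continuous, and that it is fibrewise conjugate-linear. Since $Q$ is surjective, every element of $A$ has the form $[\xi]$ for some $\xi \in K$, so the formula determines $\mvisiblespace^*$ on all of $A$ and thereby forces uniqueness; the only real content is in checking that the assignment makes sense on equivalence classes.

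First I would establish well-definedness, which is the one step requiring any work. Suppose $\xi_1 \FBRel \xi_2$, say with $q_\cK(\xi_i) = (x_i, y_i\op)$ and witnessing element $h \in \sigma(x_1)\cH\sigma(x_2)$ as in Lemma~\ref{lem:FBRel}, so that $x_2 = x_1 \ract h$, $y_2 = y_1 \ract h$, and $\Psi_h(\xi_2) = \xi_1$. Since $\Flip$ covers $\flip$, the element $\Flip(\xi_i)$ lies over $(y_i, x_i\op)$. I claim the same $h$ (which lies in $\sigma(y_1)\cH\sigma(y_2)$ because $\sigma(x_i)=\sigma(y_i)$) witnesses $\Flip(\xi_1) \FBRel \Flip(\xi_2)$: the two coordinate conditions $y_2 = y_1 \ract h$ and $x_2 = x_1 \ract h$ hold by hypothesis, and the remaining condition $\Psi_h(\Flip(\xi_2)) = \Flip(\xi_1)$ is exactly the commutativity of diagram~\eqref{diag:Flip and Psi} applied to the identity $\Psi_h(\xi_2) = \xi_1$. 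Hence $[\Flip(\xi_1)] = [\Flip(\xi_2)]$, so $\mvisiblespace^*$ is well defined.

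For continuity I would use that $A$ carries the quotient topology, so that $Q$ is a quotient map. The composite $Q \circ \Flip \colon K \to A$ is continuous because both $\Flip$ (Lemma~\ref{lem:cK:involution}) and $Q$ are, and by construction $\mvisiblespace^* \circ Q = Q \circ \Flip$; the universal property of the quotient topology then yields continuity of $\mvisiblespace^*$. (Openness of $Q$, recorded in Lemma~\ref{lem:FBRel}, is not actually needed for this implication, though it is part of the ambient setup.)

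Finally, for conjugate-linearity I would argue one fibre at a time. Fix $g = \leoq[X]{\cG}{x}{y}$; since $\Flip$ covers $\flip$ and $Q$ is fibre-preserving, $\mvisiblespace^*$ sends $A(g)$ into $A(g\inv) = A([y,x\op])$, and by Lemma~\ref{lem:Q| is homeo} the restricted quotient maps $Q_{(x,y\op)}$ and $Q_{(y,x\op)}$ are linear isometric isomorphisms onto the respective fibres. The defining formula then reads $(\mvisiblespace^*)|_{A(g)} = Q_{(y,x\op)} \circ \Flip|_{K(x,y\op)} \circ Q_{(x,y\op)}\inv$, a composite of two complex-linear isomorphisms with the conjugate-linear (and isometric) map $\Flip|_{K(x,y\op)}$ from Lemma~\ref{lem:cK:involution}; it is therefore conjugate-linear (and in fact isometric). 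The hard part is really just the well-definedness check, and there the commuting diagram~\eqref{diag:Flip and Psi} does all the work, so I expect no genuine obstacle to remain.
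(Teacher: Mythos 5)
Your proposal is correct and follows essentially the same route as the paper: the paper's one-line proof rests on the commutativity of diagram~\eqref{diag:Flip and Psi} (for descent of $\Flip$ through the $\FBRel$-classes), continuity of $\Flip$ and of the quotient map $Q$, and the fibrewise structure from Lemma~\ref{lem:Q| is homeo}, which are exactly the ingredients you spell out. Your side remark that openness of $Q$ is not needed for the continuity step (the quotient topology alone suffices) is accurate, even though the paper cites openness.
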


Now that we have (a candidate for) the involution on $\cA$, we proceed to construct the multiplication. Recall that our end goal is not only to show that $\cA$ is a Fell bundle but also that there is a left action $\lactB$ of $\cA$ on $\cM$. We want this action to behave nicely with respect to the multiplication in the sense that $a_{1}\lactB (a_{2}\lactB m)=(a_{1}\cdot a_{2})\lactB m$; it is therefore easier to {\em first} construct a candidate for the left-action and then use it to construct the multiplication. Like with the involution, we first do everything on the level of $\cK$ before we move to its quotient $\cA$.  For clarity, we remind the reader  that the fibre of $\cK$ over $(x,y\op)\in X\bfp{\sigma}{\sigma}X\op$ is given by
\(
K(x,y\op)=M(x)\otimes_{\sigma(x)}M(y)\op
\).

\begin{lemma}\label{lem:left action on fibres}
    For $(x,y\op)\in X\bfp{\sigma}{\sigma}X\op$, there exists a continuous bilinear map $$\Phi_{x,y}\colon\quad K(x,y\op)\times M(y) \to M(x)$$ determined on elementary tensors by 
    \begin{equation}\label{eq:def:Phi_xy}
        \Phi_{x,y}(m\otimes n\op, k) = m\ractB \rip\cB<n,k>.
    \end{equation}
    For all $\xi\in K(x,y\op) $ and $k\in M(y)$, it satisfies 
    \begin{equation}\label{eq:Phi:contractive}
    \norm{\Phi_{x,y}(\xi,k)}\leq \norm{\xi}\norm{k}
    .
    \end{equation}
\end{lemma}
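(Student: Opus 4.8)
The plan is to realize $\Phi_{x,y}$ as the evaluation map of compact operators under the identification in Equation~\eqref{eq:defn of K's fibres}. Fix $(x,y\op)\in X\bfp{\sigma}{\sigma}X\op$ and write $u=\sigma(x)=\sigma(y)$. Recall that $K(x,y\op)=M(x)\otimes_{u}M(y)\op$ is isomorphic to $\compacts_{B(u)}(M(y),M(x))$ (Equation~\eqref{eq:defn of K's fibres} together with Lemma~\ref{lem:tensor and compacts}), with the elementary tensor $m\otimes n\op$ corresponding to the rank-one operator $\ket{m}\bra{n}\colon M(y)\to M(x)$ that sends $k\mapsto m\ractB \rip\cB<n,k>$. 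I would therefore simply \emph{define} $\Phi_{x,y}(\xi,k)$ to be $T_{\xi}(k)$, where $T_{\xi}\in\compacts_{B(u)}(M(y),M(x))$ is the operator corresponding to $\xi$; on elementary tensors this recovers the stated formula, so that all the desired structure is inherited from evaluation of operators.

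Before anything else I would record that the formula lands in the correct fibre. For $n,k\in M(y)$ the pair lies in $M\bfp{\rho}{\rho}M$, so $\rip\cB<n,k>$ is defined, and by \ref{item:rwordBdl:ip:s and r} it lies in $B(u)$; moreover \ref{item:rwordBdl:ip:fibre} together with freeness of the $\cH$-action forces $p_{\cB}(\rip\cB<n,k>)=u$. Hence by \ref{item:rwordBdl:ractB:fibre} the element $m\ractB\rip\cB<n,k>$ lies in $M(x\ract u)=M(x)$, as required. Invoking the isomorphism already guarantees that the assignment descends from the algebraic to the balanced tensor product; if one prefers a hands-on check, the single $B(u)$-balancing relation to verify is that $(m\ractB b)\otimes n\op$ and $m\otimes(n\ractB b^{*})\op$ produce the same output, which is immediate from associativity \ref{item:rwordBdl:ractB:associative} and left $\cst$-linearity \ref{item:rwordBdl:ip:C*linear on left}:
\[
(m\ractB b)\ractB \rip\cB<n,k>
=
m\ractB\bigl(b\,\rip\cB<n,k>\bigr)
=
m\ractB \rip\cB<n\ractB b^{*},k>.
\]

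Bilinearity is then transparent: $\xi\mapsto T_{\xi}$ is linear, evaluation at a fixed $k$ is linear in the operator, and each $T_{\xi}$ is in particular $\mbb{C}$-linear in $k$ (on elementary tensors this is linearity of $\ractB$ together with linearity of $\rip\cB<n,\mvisiblespace>$ in its second slot, \ref{item:rwordBdl:ip}). For the contractive estimate I would use that the norm on $K(x,y\op)$ is \emph{exactly} the operator norm: Equation~\eqref{eq:norm on compacts} gives $\norm{\xi}=\norm{T_{\xi}}$, whence
\[
\norm{\Phi_{x,y}(\xi,k)}=\norm{T_{\xi}(k)}\leq \norm{T_{\xi}}\,\norm{k}=\norm{\xi}\,\norm{k},
\]
which is precisely \eqref{eq:Phi:contractive}.

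Finally, continuity is immediate and is really the payoff of the contractive estimate: since $x,y$ are fixed, $K(x,y\op)$, $M(y)$, and $M(x)$ are honest Banach spaces, and any bilinear map between normed spaces bounded as in~\eqref{eq:Phi:contractive} is automatically jointly continuous. I do not expect a genuine obstacle in this lemma; the only points requiring care are bookkeeping ones—confirming via Equation~\eqref{eq:norm on compacts} that the fibre norm is the operator norm (so that $\norm{T_{\xi}}=\norm{\xi}$), and checking the lone $B(u)$-balancing identity—both of which reduce to the defining axioms of a \demiequiv. The genuinely bundle-theoretic difficulty, namely joint continuity of these maps \emph{as $(x,y\op)$ varies} over $X\bfp{\sigma}{\sigma}X\op$, is a separate matter and is not part of this fibrewise statement.
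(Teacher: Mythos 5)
Your proof is correct, and it takes a genuinely different (and slicker) route than the paper's for the key estimate. Writing $u=\sigma(x)=\sigma(y)$, the paper constructs $\Phi_{x,y}$ as the composite $K(x,y\op)\times M(y)\twoheadrightarrow \bigl(M(x)\otimes_{u}M(y)\op\bigr)\otimes_{\compacts}M(y)\cong M(x)\otimes_{u}B(u)\cong M(x)$, invoking the absorption isomorphisms $\mbf{Y}\op\otimes_{\compacts}\mbf{Y}\cong A$ and $\mbf{X}\otimes_{A}A\cong\mbf{X}$, and then proves \eqref{eq:Phi:contractive} by an explicit computation on elementary tensors (factoring the positive matrix of inner products, writing $\innercpct[]{}{k}{k}=T^{*}T$, and comparing with Equation~\eqref{eq:norm on compacts}); note the paper's logical order is continuity first (from the construction), then the bound on a dense set. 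You instead define $\Phi_{x,y}$ as operator evaluation under the isometric identification $K(x,y\op)\cong\compacts_{B(u)}\bigl(M(y),M(x)\bigr)$ of Equation~\eqref{eq:defn of K's fibres}/Lemma~\ref{lem:tensor and compacts}, which makes \eqref{eq:Phi:contractive} the tautological bound $\norm{T_{\xi}(k)}\leq\norm{T_{\xi}}\,\norm{k}$ (valid for \emph{all} $\xi$ at once, since $\norm{\xi}=\norm{T_{\xi}}$ by the isometry and the $\cst$-identity applied to \eqref{eq:norm on compacts}) and joint continuity an instance of the fact that a bilinear map of Banach spaces bounded by the product of norms is continuous. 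What each buys: your argument is shorter and reverses the logic cleanly (bound first, continuity as a corollary), at the price of outsourcing the analytic content to the isometry claim of Lemma~\ref{lem:tensor and compacts}, whose appendix proof runs essentially the same matrix-positivity computation the paper performs here — so the total work is comparable, just relocated. Your bookkeeping is also sound: the single $B(u)$-balancing identity via \ref{item:rwordBdl:ractB:associative} and \ref{item:rwordBdl:ip:C*linear on left}, the fibre of $\rip\cB<n,k>$ pinned down by \ref{item:rwordBdl:ip:fibre} and freeness of the $\cH$-action, and the correct observation that joint continuity in $(x,y\op)$ is a separate, bundle-level matter handled by the subsequent lemma.
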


\begin{proof}
	The existence of $\Phi_{x,y}$ follows from the following, well-studied  isomorphisms  of bi-Hilbert $A-A$-modules for any full right-Hilbert $A$-module $\mbf{Y}$: Firstly, $\mbf{Y}\op\otimes_{\compacts}\mbf{Y} \to A$ determined by  
 $\mbf{y}_{1}\op\otimes \mbf{y}_{2} \mapsto \rinner[\mbf{Y}]{A}{\mbf{y}_{1}}{\mbf{y}_{1}}$, and secondly, $\mbf{Y} \otimes_{A} A \to \mbf{Y}$ determined by $\mbf{y}\otimes a\mapsto \mbf{y}\cdot a$. 
 In our situation, $A=B(u)$ where $u=\sigma(x)=\sigma(y)$, and we use first $M(y)$ and then $M(x)$ to take the r\^ole of $\mbf{Y}$. To be precise, if we write $\compacts$ for $\compacts_{B(u)} (M(y))$, then
 \[
 K(x,y\op)\times M(y)
  \twoheadrightarrow
  \bigl( M(x)\otimes_{u}M(y)\op \bigr) \otimes_{\compacts} M(y)
  \cong
  M(x)\otimes_{u} B(u)
  \cong
  M(x).
 \]

  The claim about the norm is also well known, but we add it here for completion. Since $\Phi_{x,y}$ is continuous, it suffices to prove the claim for $\xi=\sum_{i=1}^{\ell} m_{i}\otimes n_{i}\op$ a sum of elementary tensors.
  We have
    \begin{align*}
        \norm{\Phi_{x,y}(\xi,k)}^2
       & =\norm{\sum_{i} m_{i}\ractB \rip\cB<n_{i},k>}^2=
        \norm{
            \sum_{i,j} 
            \innercpct[]{}{m_{i}\ractB \rip\cB<n_{i},k>}{m_{j}\ractB \rip\cB<n_{j},k>}
        },
    \end{align*}
    where the norm on the right-hand side is the operator norm on $\compacts_{B(u)}(M(x))$. Note that $\rip\cB<n_{i},k>\in B(u)$ acts by $\compacts_{B(u)}(M(x))$-adjointable operators, so that
    \begin{align*}
    \norm{\Phi_{x,y}(\xi,k)}^2
        &=
        \norm{
            \sum_{i,j} \innercpct[]{}{m_{i}}{\left({m_{j}\ractB \rip\cB<n_{j},k>}\right)\ractB {\rip\cB<k,n_{i}>}}
        }
        .
    \intertext{Now, $\rip\cB<n_{j},k>\rip\cB<k,n_{i}> = \rip\cB<n_{j},k\ractB{\rip\cB<k,n_{i}>}>$ by \ref{item:rwordBdl:ip:C*linear}. Since $k,n_{i}\in M(y)$, we further know that $k\ractB{\rip\cB<k,n_{i}>}=\innercpct[]{}{k}{k}(n_{i})$. Recall that $\innercpct[]{}{k}{k}$ is a positive element of $\compacts_{B(u)}(M(y))$, so we can write it as $T^*T$. In particular,} 
         \norm{\Phi_{x,y}(\xi,k)}^2  &=
        \norm{\sum_{i,j} \innercpct[]{}{m_{i}}{m_{j}\ractB {\rip\cB<Tn_{j},Tn_{i}>}}}
        \overset{\eqref{eq:norm on compacts}}{=}
        \norm{\sum_{i} m_{i}\otimes (Tn_{i})\op}^2
    \end{align*}
 Since $\norm{\sum_{i} m_{i}\otimes (Tn_{i})\op}\leq \norm{1\otimes T}\norm{\sum_{i} m_{i}\otimes n_{i}\op}$ and $\norm{1\otimes T}=\norm{T}=\norm{k}$, we conclude  that
    \[
        \norm{\Phi_{x,y}(\xi, k)}
        \leq
        \norm{\xi}\norm{k},\] as claimed.
\end{proof}

Bilinearity of $\Phi_{x,y}$  helps us prove the following result.

\begin{lemma}\label{lem:U maps}
	For any $u\in\cH\z$ and any $y\in Xu$, there exists a map 
	\[
	U_{y}\colon\quad
	\bigsqcup_{x,z\in Xu}
	K(x,y\op)\times K(y,z\op)
	\to
	\bigsqcup_{x,z\in Xu}
	K(x,z\op)
	\]
	determined by
	\begin{equation}\label{eq:def:U}
		U_{y}
		(m\otimes n_{1}\op,n_{2}\otimes k\op)  
        =
        (m \ractB \rip\cB<n_{1},n_{2}>)\otimes k\op.     
	\end{equation}
	These maps have the following  properties:
	\begin{enumerate}[label=\textup{(U\arabic*)}]
			\item\label{item:U:linear} When restricted to a  fibre $K(x,y\op)\times K(y,z\op) \to K(x,z\op)$, $U_y$ is bilinear.
			\item\label{item:U:norm} Each $U_{y}$ satisfies $\norm{U_{y}(\xi,\eta)}\leq \norm{\xi}\norm{\eta}$
   and $\norm{U_{y} \bigl(\xi, \Flip(\xi)\bigr)}= \norm{\xi}^2$.
		\item\label{item:U:cH equivariant} If $h\in u\cH$, then
		\(
		\Psi_{h}\circ U_{y \ract h } =
		U_{y}\circ (\Psi_{h}\times \Psi_{h})
		\).
        \item\label{item:U:Flip} We have
        \(
        \Flip 
        \bigl(
          U_{y}  (\xi,\eta)
        \bigr)
        =
        U_{y} 
        \Bigl(
          \Flip(\eta),\Flip(\xi)
        \Bigr)
        \).
	\end{enumerate}
\end{lemma}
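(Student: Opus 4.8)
The plan is to build $U_{y}$ from the identification of the fibres of $\cK$ with spaces of compact operators and to realize $U_{y}$ as composition of operators; this makes well-definedness automatic (the formula \eqref{eq:def:U} must respect the balancing relations in all four tensor legs, which is precisely what composition handles for us) and hands us the analytic estimates in \ref{item:U:norm} for free. Concretely, fix $u\in\cH\z$ and $y\in Xu$. For $x,z\in Xu$, Equation~\eqref{eq:defn of K's fibres} gives isometric isomorphisms $K(x,y\op)\cong\compacts_{B(u)}(M(y),M(x))$ and $K(y,z\op)\cong\compacts_{B(u)}(M(z),M(y))$ under which $\xi$ corresponds to the operator $T_{\xi}\coloneqq\Phi_{x,y}(\xi,\mvisiblespace)$ from Lemma~\ref{lem:left action on fibres}; in particular $m\otimes n\op\mapsto\innercpct[]{}{m}{n}$. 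I would then define $U_{y}(\xi,\eta)\in K(x,z\op)$ to be the element corresponding to $T_{\xi}\circ T_{\eta}\in\compacts_{B(u)}(M(z),M(x))$. To see that this agrees with \eqref{eq:def:U}, one composes two rank-one operators: for $\xi=m\otimes n_{1}\op$ and $\eta=n_{2}\otimes k\op$, applying $\innercpct[]{}{n_{2}}{k}$ and then $\innercpct[]{}{m}{n_{1}}$ to $l\in M(z)$ and using \ref{item:rwordBdl:ip:C*linear} and \ref{item:rwordBdl:ractB:associative} yields $m\ractB(\rip\cB<n_{1},n_{2}>\rip\cB<k,l>)=\innercpct[]{}{m\ractB\rip\cB<n_{1},n_{2}>}{k}(l)$, so $T_{\xi}\circ T_{\eta}$ is the rank-one operator attached to $(m\ractB\rip\cB<n_{1},n_{2}>)\otimes k\op$, exactly as required.

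Properties \ref{item:U:linear} and \ref{item:U:norm} then fall out of the operator picture. Fibrewise bilinearity of $U_{y}$ is immediate from linearity of $\xi\mapsto T_{\xi}$ and $\eta\mapsto T_{\eta}$ together with bilinearity of composition. For the norm, submultiplicativity gives $\norm{U_{y}(\xi,\eta)}=\norm{T_{\xi}\circ T_{\eta}}\le\norm{T_{\xi}}\,\norm{T_{\eta}}=\norm{\xi}\,\norm{\eta}$, since the isomorphisms are isometric. For the second identity I would invoke the proof of Lemma~\ref{lem:cK:involution}, where $\Flip$ was shown to correspond fibrewise to the adjoint $T\mapsto T^{*}$; hence $T_{\Flip(\xi)}=T_{\xi}^{*}$, so $U_{y}(\xi,\Flip(\xi))$ corresponds to $T_{\xi}T_{\xi}^{*}$, whose norm is $\norm{T_{\xi}}^{2}=\norm{\xi}^{2}$ by the $\cst$-identity.

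It remains to verify the compatibility identities \ref{item:U:cH equivariant} and \ref{item:U:Flip}, which I would check on elementary tensors and extend by fibrewise continuity, both sides being bounded (resp.\ isometric) fibrewise maps. For \ref{item:U:Flip}, taking $\xi=m\otimes n_{1}\op$ and $\eta=n_{2}\otimes k\op$, rewriting $(m\ractB\rip\cB<n_{1},n_{2}>)\op=\rip\cB<n_{2},n_{1}>\lactB m\op$ via \ref{item:rwordBdl:ip:adjoint} and the definition $b\lactB m\op=(m\ractB b^{*})\op$, and then applying the balancing relation in $M(z)\otimes_{u}M(x)\op$, turns $\Flip\bigl(U_{y}(\xi,\eta)\bigr)=k\otimes(m\ractB\rip\cB<n_{1},n_{2}>)\op$ into $(k\ractB\rip\cB<n_{2},n_{1}>)\otimes m\op=U_{y}(\Flip(\eta),\Flip(\xi))$. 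For \ref{item:U:cH equivariant} I would write $h\in u\cH v$ and feed in elementary tensors of the form $(m\ractB b)\otimes p_{1}\op$ and $(p_{2}\ractB c)\otimes k\op$ with $b,c\in B(h)$, which densely span the relevant fibres by \ref{item:rwordBdl:fibrewise full}; using \eqref{eq:def:Psi_h}, \ref{item:rwordBdl:ip:C*linear}, \ref{item:rwordBdl:ip:adjoint}, \ref{item:rwordBdl:ip:C*linear on left}, \ref{item:rwordBdl:ractB:associative}, and the balancing relation, both $\Psi_{h}\bigl(U_{y\ract h}(\xi,\eta)\bigr)$ and $U_{y}\bigl(\Psi_{h}(\xi),\Psi_{h}(\eta)\bigr)$ reduce to the common value $m\otimes\bigl(k\ractB(c^{*}\rip\cB<p_{2},p_{1}>b^{*})\bigr)\op$. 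I expect this last verification to be the main obstacle: no single step is difficult, but one must track carefully over which points of $X$ each tensor leg lives (and confirm that $\rip\cB<p_{1},p_{2}>$, $\rip\cB<p_{1}\ractB b^{*},p_{2}>$, etc.\ land in the intended fibres of $\cB$) in order to apply the $\ractB$-associativity and balancing identities legitimately.
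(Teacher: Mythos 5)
Your proposal is correct and takes essentially the same route as the paper: both constructions realize the fibres $K(\cdot,\cdot\op)$ as $B(u)$-compact operators (the paper via the isomorphism chain \eqref{eq:Yop otimes Y is trivial} precomposed with the universal bilinear map, you via direct composition $T_{\xi}\circ T_{\eta}$, which is the same map), and both verify \ref{item:U:cH equivariant} and \ref{item:U:Flip} on elementary tensors of the special form $(m\ractB b)\otimes n\op$ with $b\in B(h)$, which span densely by \ref{item:rwordBdl:fibrewise full}, extending by fibrewise bilinearity and continuity. The one genuine streamlining on your side is the second claim of \ref{item:U:norm}: you obtain $\norm{U_{y}(\xi,\Flip(\xi))}=\norm{\xi}^{2}$ from $T_{\Flip(\xi)}=T_{\xi}^{*}$ (legitimately citing the proof of Lemma~\ref{lem:cK:involution}) together with the $\cst$-identity, whereas the paper computes this norm on dense elements via Equation~\eqref{eq:norm on compacts}; both arguments are valid, and your explicit check of \ref{item:U:Flip}, which the paper leaves to the reader, is also correct.
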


Note that `equivariance' of $U$ (Condition~\ref{item:U:cH equivariant})
 is equivalent to commutativity of the following diagram for all $x,z$:
\begin{equation}\label{diag:U}
	\begin{tikzcd}[row sep=large, column sep = large, every label/.append style ={font = \small}] 
		K(x \ract h , h\inv \lact y\op) \times K(y \ract h, h\inv \lact z\op)
		\ar[r, "U_{y\ract h}"]\ar[d, "\Psi_{h}\times \Psi_{h}"']&
		K(x\ract h, h\inv \lact z\op)\ar[d, "\Psi_{h}"]
		\\
		K(x, y\op) \times K(y, z\op)
		\ar[r, "U_{y}"]&
		K(x,z\op)
	\end{tikzcd}
\end{equation}

\begin{proof}[Proof of Lemma~\ref{lem:U maps}]
	The existence of $U_{y}$ follows from observations of general Hilbert $\cst$-modules similar to those following Proposition~\ref{RW:Morita:Prop3.8}:
	If $\mbf{X,Y,Z}$ are full right-Hilbert $\cst$-modules over a $\cst$-algebra $A$, then the maps
 \pagebreak[3]
	\begin{align}
		\label{eq:Yop otimes Y is trivial}
		(\mbf{X}\otimes_{A}\mbf{Y}\op)
		&\otimes_{\compacts_{A}(\mbf{Y})}
		(\mbf{Y}\otimes_{A}\mbf{Z}\op)
		&\longrightarrow& &
		\mbf{X}\otimes_{A}\mbf{Z}\op
		&&\longrightarrow& &
		\compacts_{A}(\mbf{Z},\mbf{X}) 
		\intertext{determined by}
		\notag
		(\mbf{x}\otimes \mbf{y}_{1}\op)&\otimes(\mbf{y}_{2}\otimes \mbf{z}\op)
		&\longmapsto&&
		(\mbf{x}\cdot \rinner[\mbf{Y}]{A}{\mbf{y}_{1}}{\mbf{y}_{2}}) \otimes \mbf{z}\op
		&&\longmapsto&&
		\innercpct[]{}{
			\mbf{x}\cdot \rinner[\mbf{Y}]{A}{\mbf{y}_{1}}{\mbf{y}_{2}}
		}{\mbf{z}}
	\end{align}
	are isomorphisms of bi-Hilbert $\compacts_{A}(\mathbf{X})-\compacts_{A}(\mathbf{Z})$-modules. The above isomorphism can be precomposed with the universal bilinear map $\mbf{X}\otimes_{A}\mbf{Y}\op
	\times
	\mbf{Y}\otimes_{A}\mbf{Z}\op
	\to 
	(\mbf{X}\otimes_{A}\mbf{Y}\op)
	\otimes_{\compacts_{A}(\mbf{Y})}
	(\mbf{Y}\otimes_{A}\mbf{Z}\op)$. 	
	We apply this to the case where $\mbf{Y}=M(y)$ and $A=B(u)$. Condition~\ref{item:U:linear} is therefore by construction.
	
	\ref{item:U:norm} 
	Since the maps in \eqref{eq:Yop otimes Y is trivial} are isomorphisms of modules (in particular, they are isometric), we have 
	\(
		\norm{U_{y}(\xi,\eta)}=\norm{\xi\otimes\eta}\) for $\xi\in K(x,y\op)$ and $\eta\in K(y,z\op)$, which is known (or easily shown) to be 
        bounded by $\norm{\xi}\norm{\eta}$. In particular, $U_{y}$ is not just linear but also continuous on each fibre.
		For the second claim, it therefore suffices to consider one of the dense elements $\xi= \sum_{i}m_{i}\otimes n_{i}\op$ in which case $\Flip(\xi) = \sum_{j}n_{j}\otimes m_{j}\op$ and
      \begin{align}\label{eq:U for xi Flip-xi}
      U(\xi,\Flip(\xi))
      &=
      \sum_{i,j}
      (m_{i} \ractB \rip\cB<n_{i},n_{j}>)\otimes m_{j}\op.
      \end{align}
      Again, since the maps in \eqref{eq:Yop otimes Y is trivial} are isometric, we see that
      \begin{align*}
      \norm{U(\xi,\Flip(\xi))}
      &=
      \norm{
      \sum_{i,j}
      \innercpct[]{}{ m_{i} \ractB \rip\cB<n_{i},n_{j}>}{ m_{j}}
      },
      \end{align*}
      which equals $\norm{\xi}^2$ by Equation~\eqref{eq:norm on compacts}.
 
\ref{item:U:cH equivariant} The properties of $\ractB$ and of $\rip\cB<\mvisiblespace,\mvisiblespace>$ imply that
	\[
	(m \ractB b)\ractB \rip\cB<n_{1},n_{2}>
	=
	m \ractB(b \rip\cB<n_{1},n_{2}>)
	=
	m  \ractB \rip\cB<n_{1} \ractB b^*,n_{2}>
	\]
	whenever $ m ,n_{1},n_{2}\in M$ and $b\in B$ are chosen such that the left (and hence each)  side of the above equations makes sense. Likewise, we have
	\(
	\rip\cB<n_{1},n_{2} \ractB b> =\rip\cB<n_{1},n_{2}> b.\) For
	$h\in u\cH$,
	suppose we are given elements
	$b_{i}\in B(h)$, $ m \in M(x),n_{1} \in M(y \ract h ), n_{2} \in M(y), k\in M( z \ract h )$. Then
	\begin{align*}
		&(\Psi_{h}\circ U_{y\ract h})\bigl(( m \ractB b_{1})\otimes n_{1}, (n_{2}\ractB b_{2})\otimes  k \op\bigr) 
		\\
		&=
		\Psi_{h}\bigl(( m \ractB b_{1})\ractB \rip\cB<n_{1},n_{2}\ractB b_{2}>\otimes k\op\bigr)
		=
		\Psi_{h}\bigl([m \ractB (b_{1}\rip\cB<n_{1},n_{2}>)]\ractB b_{2}\otimes k\op\bigr)
				\\
				&
		=
		\bigl(m\ractB \rip\cB<n_{1}\ractB b_{1}^*,n_{2}>\bigr)\otimes ( b_{2} \lactB k\op)
		\\
		&=
		U_{y}\bigl( m \otimes (b_{1}\lactB n_{1}\op),n_{2}\otimes (b_{2}\lactB  k \op)\bigr) 
		\\
		&
		=
		(U_{y}\circ \Psi_{h}\times \Psi_{h})\bigl(( m \ractB b_{1})\otimes n_{1},  (n_{2}\ractB b_{2})\otimes  k \op\bigr) 
		.
	\end{align*}
    Because of continuity and (bi)linearity of $\Psi_{h}$ and  of $U_{y},U_{y\ract h}$, we conclude that $\Psi_{h}\circ U_{y\ract h}=U_{y}\circ \Psi_{h}\times \Psi_{h}$, as claimed. One likewise checks that \ref{item:U:Flip} holds for for elementary tensors, and again uses (bi)linearity and continuity of $U_{y}$ and of $\Flip$ to deduce the claim.
\end{proof}

Similarly to how we constructed $\Psi$ in Lemma~\ref{lem:DL:MJM2023:Thm5.20} out of the maps $\Psi_{h}$ from Lemma~\ref{lem:Psi_h}, we now want to `stich together' the maps $\Phi_{x,y}$ on the one hand and the maps $U_{y}$ on the other hand, to bundle maps. To do so, we first need a definition.

    \begin{definition}
    		For two \uscBb s $\cM=(M\to X)$ and $\cN=(N\to Y)$, consider the product bundle $\cM\times\cN = (M\times N\to X\times Y)$; the norm on its fibre over $(x,y)$ can be chosen as the maximum of the norms of $M(x)$ and $N(y)$, and it is given the component-wise vector space structure. The global topology of the total space $M\times N$
    		of $\cM\times\cN$ is induced by the $\mathbb{C}$-linear span of sections of the form
    		\(
    		(x,y) \mapsto (\mu (x), \nu(y)),
    		\)
    		where $\mu$ and $\nu$ are continuous sections of~$\cM$ respectively $\cN$. 
    		
    		If 
    		$f\colon X\to Z$ and $g\colon Y\to Z$ are continuous functions into some other topological space,  then we write $\cM\bfp{f}{g}\cN$ for the restriction of $\cM\times\cN$ to the closed subset $X\bfp{f}{g}Y$ of the base $X\times Y$.
    \end{definition}

We now let 
    $\forwards,\backwards\colon X\bfp{\sigma}{\sigma}X\op\to X$ be given by 
     $\forwards(x, y\op)=x$ respectively 
    $\backwards(x, y\op)=y$ (``$\forwards$'' for ``first'', ``$\backwards$'' for ``second'').
\begin{lemma}
	   Writing $\Phi_{(x,y\op)}$ for the map $ \Phi_{x,y}$ of Lemma~\ref{lem:left action on fibres}, the map
	\begin{align*}
            \Phi\colon\quad 
		      \cK \bfp{\backwards}{q}  \cM
			&\to
			\cM,
			&
			(\xi, m)&\mapsto \Phi_{q(\xi)}(\xi,m)
            ,
			\intertext{is {\em bilinear}, jointly continuous, and covers the continuous surjection}
				(X \bfp{\sigma}{\sigma} X\op)\bfp{\backwards}{\operatorname{id}} X
        		&\to
        		X,
				&
				(x,y\op,y)& \mapsto x.
	\end{align*}
\end{lemma}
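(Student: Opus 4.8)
The plan is to dispose of the algebraic and set-theoretic assertions immediately and then concentrate all the work on joint continuity, following the template of the proof of Lemma~\ref{lem:DL:MJM2023:Thm5.20}. Fibrewise bilinearity of $\Phi$ is nothing but the fibrewise bilinearity of each $\Phi_{x,y}$, which is part of Lemma~\ref{lem:left action on fibres}. That $\Phi$ covers the stated map is definitional, since $\Phi_{x,y}(\xi,m)\in M(x)$ forces $q_{\cM}(\Phi(\xi,m))=x$; the base map $(x,y\op,y)\mapsto x$ is the restriction of a coordinate projection and hence continuous, and it is surjective because $(x,x\op,x)\mapsto x$ for every $x\in X$ (here $(x,x\op)\in X\bfp{\sigma}{\sigma}X\op$ since trivially $\sigma(x)=\sigma(x)$).

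For joint continuity it suffices to show that whenever a net $(\xi_{\lambda},m_{\lambda})\to(\xi,m)$ converges in the total space of $\cK\bfp{\backwards}{q}\cM$ --- equivalently $\xi_{\lambda}\to\xi$ in $K$ and $m_{\lambda}\to m$ in $M$, with $q_{\cK}(\xi_{\lambda})=(x_{\lambda},y_{\lambda}\op)\to(x,y\op)$ and $q_{\cM}(m_{\lambda})=y_{\lambda}\to y$ --- then $\Phi(\xi_{\lambda},m_{\lambda})\to\Phi(\xi,m)$ in $\cM$. Fix $\epsilon>0$. Since finite sums of elementary tensors are dense in $K(x,y\op)$ and~$\cM$ has enough continuous sections, I can choose continuous sections $\mu_{1},\nu_{1},\dots,\mu_{k},\nu_{k}$ of~$\cM$ so that $\zeta\coloneqq\sum_{i=1}^{k}\mu_{i}(x)\otimes\nu_{i}(y)\op$ satisfies $\norm{\xi-\zeta}<\epsilon$. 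Set $\tau_{\lambda}\coloneqq\sum_{i=1}^{k}\mu_{i}(x_{\lambda})\otimes\nu_{i}(y_{\lambda})\op$; this is the value at $(x_{\lambda},y_{\lambda}\op)$ of the continuous section $\sum_{i}\mu_{i}\otimes\nu_{i}\op$ of~$\cK$ (Lemma~\ref{lem:DL:MJM2023:Lemma 5.2}), so $\tau_{\lambda}\to\zeta$ in $K$.

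Next I compute $\Phi(\tau_{\lambda},m_{\lambda})=\sum_{i}\mu_{i}(x_{\lambda})\ractB\rip\cB<\nu_{i}(y_{\lambda}),m_{\lambda}>$ and argue, using continuity of the sections $\mu_{i},\nu_{i}$, continuity of the inner product \ref{item:rwordBdl:ip}, continuity of the action \ref{item:rwordBdl:ractB:cts}, and continuity of addition in~$\cM$, that $\Phi(\tau_{\lambda},m_{\lambda})\to\sum_{i}\mu_{i}(x)\ractB\rip\cB<\nu_{i}(y),m>=\Phi_{x,y}(\zeta,m)$. To pass back from $\tau_{\lambda}$ to $\xi_{\lambda}$, I invoke the contractive estimate \eqref{eq:Phi:contractive}: as $\xi_{\lambda}-\tau_{\lambda}$ and $\xi-\zeta$ lie in single fibres, $\norm{\Phi(\xi_{\lambda},m_{\lambda})-\Phi(\tau_{\lambda},m_{\lambda})}=\norm{\Phi_{x_{\lambda},y_{\lambda}}(\xi_{\lambda}-\tau_{\lambda},m_{\lambda})}\leq\norm{\xi_{\lambda}-\tau_{\lambda}}\,\norm{m_{\lambda}}$, and upper semicontinuity of the norm (via \cite[Lemma A.3]{DL:MJM2023}, exactly as in \eqref{eq:limsup}) gives $\limsup_{\lambda}\norm{\xi_{\lambda}-\tau_{\lambda}}\leq\norm{\xi-\zeta}<\epsilon$ together with $\limsup_{\lambda}\norm{m_{\lambda}}\leq\norm{m}$, so that $\limsup_{\lambda}\norm{\Phi(\xi_{\lambda},m_{\lambda})-\Phi(\tau_{\lambda},m_{\lambda})}\leq\epsilon\norm{m}$; likewise $\norm{\Phi_{x,y}(\xi,m)-\Phi_{x,y}(\zeta,m)}\leq\norm{\xi-\zeta}\,\norm{m}<\epsilon\norm{m}$. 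Thus both $\Phi(\xi_{\lambda},m_{\lambda})$ and its candidate limit $\Phi_{x,y}(\xi,m)$ sit within $\epsilon\norm{m}$ of the convergent net $\Phi(\tau_{\lambda},m_{\lambda})\to\Phi_{x,y}(\zeta,m)$; since $\epsilon$ is arbitrary, the close-nets criterion Lemma~\ref{lem:convergence uscBb:close nets} yields $\Phi(\xi_{\lambda},m_{\lambda})\to\Phi_{x,y}(\xi,m)=\Phi(\xi,m)$, as desired.

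The main obstacle, and the reason this does not follow from a ready-made criterion, is that $\Phi$ is genuinely bilinear rather than fibrewise linear, so I cannot simply feed it into the continuity lemma for linear bundle maps and must instead verify convergence on arbitrary nets by hand. The delicate point is that the $\xi_{\lambda}$ need not be values of the chosen sections, which is exactly why the two-sided approximation --- replacing $\xi_{\lambda}$ by the section-values $\tau_{\lambda}$ and $\xi$ by $\zeta$ --- is forced, and why the contractive bound \eqref{eq:Phi:contractive} together with the close-nets lemma is precisely the tool that glues the two estimates to the convergent net, mirroring the endgame of Lemma~\ref{lem:DL:MJM2023:Thm5.20}.
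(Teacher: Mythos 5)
Your proposal is correct and follows essentially the same strategy as the paper's proof: approximate $\xi$ (and, along the net, $\xi_{\lambda}$) by values of sections of the form $\sum_{i}\mu_{i}\otimes\nu_{i}\op$, push these approximants through $\Phi$ using joint continuity of $\rip\cB<\mvisiblespace,\mvisiblespace>$ and $\mvisiblespace\ractB\mvisiblespace$, control the errors with the contractive bound \eqref{eq:Phi:contractive}, and conclude with an \usc-bundle convergence criterion. The only cosmetic difference is that you close the argument with the close-nets criterion of Lemma~\ref{lem:convergence uscBb:close nets}, whereas the paper invokes \cite[Lemma A.3]{DL:MJM2023} directly by producing a section $\kappa$ of~$\cM$ through the limit; these two criteria are interchangeable (the former is proved from the latter).
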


Note that $\Phi$ is not a homomorphism of Banach bundles, since $\Phi$ is not fibrewise linear.

\begin{proof} Since  $\Phi_{x,y}$ lands in $M(x)$ by construction,  $\Phi$ covers $\forwards\colon (x,y\op,y) \mapsto x$.
	To see that $\Phi$ is jointly continuous, assume that we are given a convergent net $(\xi_{\lambda},k_{\lambda})\to (\xi, k)$ in $K \bfp{\backwards}{q}  M$, and let $q (\xi_{\lambda},k_{\lambda}) = (x_{\lambda},y_{\lambda}\op, y_{\lambda})$ and $q (\xi,k) = (x,y\op,y)$; note that  $x_{\lambda}\to x$ and $y_{\lambda}\to y$ in $X$. To prove that $\Phi(\xi_{\lambda},k_{\lambda})\to \Phi(\xi, k)$, fix $\delta>0$; we must find $\kappa\in \Gamma_0(X;\cM)$ with $\norm{ \Phi(\xi, k) - \kappa(x)}<\delta$ and $\norm{\Phi(\xi_{\lambda},k_{\lambda}) - \kappa(x_{\lambda})}<\delta$  for large $\lambda$ \cite[Lemma A.3]{DL:MJM2023}.

    Because of how the topology on~$\cK$ is defined and because $\xi_{\lambda}\to\xi$ in~$\cK$, we can find 
    finitely many nets 
    $m_{j,\lambda}\to m_{j}$ and $n_{j,\lambda} \to n_{j}$ in $M$
    such that
    \begin{equation}\label{eq:close via tau}
        \norm{\xi - \sum_{j=1}^{\ell} m_{j}\otimes n_{j}\op} < \frac{\delta}{2(\norm{k}+1)} ,
        \quad
        \norm{\xi_{\lambda} - \sum_{j=1}^{\ell} m_{j,\lambda}\otimes n_{j,\lambda}\op} < \frac{\delta}{2(\norm{k}+1)}
        .
    \end{equation}
    Since $\rip\cB<\mvisiblespace,\mvisiblespace>$ is jointly continuous, this implies that for each $j$, $\rip\cB<n_{j,\lambda},k_{\lambda}> \to \rip\cB<n_{j},k>$ in $B$. Since $\mvisiblespace\ractB\mvisiblespace$ is jointly continuous, this in turn implies that $\sum_{j=1}^{\ell} 
        m_{j,\lambda} \ractB \rip\cB<n_{j,\lambda},k_{\lambda}> \to \sum_{j=1}^{\ell} 
        m_{j} \ractB \rip\cB<n_{j},k>$ in $M$. That means that we can find a section $\kappa$ of~$\cM$ with 
    \begin{equation}\label{eq:close via kappa}
        \norm{\sum_{j=1}^{\ell} 
        m_{j} \ractB \rip\cB<n_{j},k> - \kappa(x)}<\frac{\delta}{2}
        ,\quad
        \norm{\sum_{j=1}^{\ell} 
        m_{j,\lambda} \ractB \rip\cB<n_{j,\lambda},k_{\lambda}> - \kappa (x_{\lambda})} < \frac{\delta}{2}
    \end{equation}
    for all $\lambda \geq \lambda_0$. 
    Let $\lambda$ be large enough such that we also have $|\norm{k_{\lambda}}-\norm{k}| < 1.$ Linearity of $\Phi$ in the first component yields:
       \begin{align*}
        \norm{
            \Phi(\xi,k)
            -\kappa(x)
        }
        &\leq
        \norm{
            \Phi(\xi,k)
            -
            \sum_{j=1}^{\ell} 
            m_{j} \ractB \rip\cB<n_{j},k>
        }
        +
        \norm{
            \sum_{j=1}^{\ell} 
            m_{j} \ractB \rip\cB<n_{j},k>
            -
            \kappa(x)
        }
        \\
        &
        \overset{\eqref{eq:close via kappa}}{\leq}
        \norm{
            \Phi\bigl(
            \xi-
            \sum_{j=1}^{\ell} 
            m_{j} \otimes n_{j}\op
            ,k\bigr)
        }
        +
        \frac{\delta}{2}
        \\
        &
        \overset{\eqref{eq:Phi:contractive}}{\leq} 
        \norm{  
            \xi
            -
            \sum_{j=1}^{\ell} 
            m_{j} \otimes n_{j}\op
        }
        \norm{k}
        +
        \frac{\delta}{2}
        \overset{\eqref{eq:close via tau}}{<}
       \delta,
    \end{align*}
    and the exact same computation with subscript-$\lambda$'s yields $\norm{
            \Phi(\xi_{\lambda},k_{\lambda})
            -\kappa(x_{\lambda})
        } < \delta$ for large $\lambda$, which proves the claim.
\end{proof}

We proceed with stitching together the $U_{y}$'s:

\begin{lemma}\label{lem:U}
	The map
	\begin{align*}
		U\colon\quad 
		\cK\bfp{\backwards}{\forwards}\cK
		&\to
		\cK,
		&
		(\xi,\eta)&\mapsto U_{\backwards(\xi)}
		(\xi,\eta)
		,
		\intertext{is {\em bilinear}, jointly continuous, and covers the continuous surjection}
				(X\times X\op)\bfp{\backwards}{\forwards}(X\times X\op)
			&\to
			X\times X\op,
			&
			(x,y\op,y,z\op)& \mapsto (x,z\op).
		\end{align*}
		Moreover, given $\mu,\xi,\eta\in K$ and $m\in M$ with $(\mu,\xi),(\xi,\eta)\in 	\cK\bfp{\backwards}{\forwards}\cK$ and $(\xi,m)\in \cK\bfp{\backwards}{q}\cM$, we have
		\begin{align}\label{eq:U:associative, transitive}
			\Phi(U(\mu, \xi), m)
			&=
			\Phi(\mu, \Phi(\xi, m) )
			&&\text{and thus}&&&
			U\bigl(U (\mu , \xi), \eta\bigr)
			&=
			U\bigl(\mu , U (\xi, \eta)\bigr)
			.
		\end{align}
\end{lemma}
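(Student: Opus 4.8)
The plan is to dispatch bilinearity and the covering claim immediately, then concentrate on joint continuity, which is the only real work, before deducing the two identities in \eqref{eq:U:associative, transitive}. Bilinearity of $U$ is simply the fibrewise bilinearity \ref{item:U:linear} of the maps $U_y$, and the covering statement is read off from the defining formula \eqref{eq:def:U}: if $\xi\in K(x,y\op)$ and $\eta\in K(y,z\op)$, then $U_y(\xi,\eta)$ lands in $K(x,z\op)$, so $U$ covers $(x,y\op,y,z\op)\mapsto(x,z\op)$. The substance is therefore joint continuity, for which I would follow the proof of the preceding lemma on $\Phi$ almost verbatim.

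For joint continuity, take a convergent net $(\xi_\lambda,\eta_\lambda)\to(\xi,\eta)$ in $\cK\bfp{\backwards}{\forwards}\cK$ and fix $\delta>0$. Because of how the topology on $\cK$ is defined, I would approximate $\xi$ by a finite sum $\sum_i m_i\otimes n_i\op$ of elementary tensors together with nets $m_{i,\lambda}\to m_i$ and $n_{i,\lambda}\to n_i$ making both $\norm{\xi-\sum_i m_i\otimes n_i\op}$ and $\norm{\xi_\lambda-\sum_i m_{i,\lambda}\otimes n_{i,\lambda}\op}$ small, and likewise approximate $\eta$ by $\sum_j p_j\otimes q_j\op$ with nets $p_{j,\lambda},q_{j,\lambda}$. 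Applying $U$ to these approximants gives $\sum_{i,j}(m_i\ractB\rip\cB<n_i,p_j>)\otimes q_j\op$, and joint continuity of $\rip\cB<\mvisiblespace,\mvisiblespace>$, of $\ractB$, of addition in $\cM$, and of the tensor construction (Remark~\ref{rmk:DL:MJM2023:Lemma5.4}) shows that the $\lambda$-approximants converge to it. Bilinearity of $U$ together with the contractive estimate $\norm{U(\mvisiblespace,\mvisiblespace)}\le\norm{\mvisiblespace}\norm{\mvisiblespace}$ from \ref{item:U:norm} then bounds the difference between $U(\xi_\lambda,\eta_\lambda)$ and the approximant by $\norm{\xi_\lambda-\cdots}\norm{\eta_\lambda}+\norm{\cdots}\norm{\eta_\lambda-\cdots}$, which stays below a fixed multiple of $\delta$ for large $\lambda$. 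Lemma~\ref{lem:convergence uscBb:close nets} (equivalently \cite[Lemma A.3]{DL:MJM2023}) upgrades this to $U(\xi_\lambda,\eta_\lambda)\to U(\xi,\eta)$. This is the main obstacle; everything afterwards is formal.

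For the first identity in \eqref{eq:U:associative, transitive}, I would verify it on elementary tensors. Writing $\mu=a\otimes b\op$ and $\xi=c\otimes d\op$, the compatibility conditions guarantee that $\rip\cB<b,c>$ and $\rip\cB<d,m>$ make sense, and using \eqref{eq:def:U} and \eqref{eq:def:Phi_xy} the left-hand side is $(a\ractB\rip\cB<b,c>)\ractB\rip\cB<d,m>$ while the right-hand side is $a\ractB\rip\cB<b,{c\ractB\rip\cB<d,m>}>$. These agree: associativity of the action \ref{item:rwordBdl:ractB:associative} turns the left side into $a\ractB(\rip\cB<b,c>\rip\cB<d,m>)$, while $\cst$-linearity \ref{item:rwordBdl:ip:C*linear} rewrites $\rip\cB<b,{c\ractB\rip\cB<d,m>}>$ as $\rip\cB<b,c>\rip\cB<d,m>$. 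Since both sides are jointly continuous and linear in each argument (using continuity of $U$ just proved and of $\Phi$ from the preceding lemma, together with \ref{lem:left action on fibres}) and agree on the dense set of elementary tensors, the identity holds in general.

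Finally, the second identity follows formally from the first via faithfulness of the action $\Phi$. Under the identification $K(x,z\op)\cong\compacts_{B(\sigma(x))}(M(z),M(x))$ of \eqref{eq:defn of K's fibres}, the operator $m\mapsto\Phi(\zeta,m)$ is precisely the image of $\zeta$, so $\zeta$ is determined by this operator. Applying the first identity twice gives, for every admissible $m$,
\[
\Phi\bigl(U(U(\mu,\xi),\eta),m\bigr)
=\Phi\bigl(\mu,\Phi(\xi,\Phi(\eta,m))\bigr)
=\Phi\bigl(U(\mu,U(\xi,\eta)),m\bigr),
\]
so $U(U(\mu,\xi),\eta)$ and $U(\mu,U(\xi,\eta))$ induce the same operator in $\compacts_{B(\sigma(x))}(M(z),M(x))$ and are therefore equal.
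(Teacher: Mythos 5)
Your proposal is correct and follows the paper's own strategy: joint continuity is obtained exactly as in the paper (and in the preceding lemma for $\Phi$) by approximating $\xi$ and $\eta$ with finite sums of elementary tensors coming from continuous sections, applying the norm bound \ref{item:U:norm} together with bilinearity to control the error, and invoking Lemma~\ref{lem:convergence uscBb:close nets}, while the first identity in \eqref{eq:U:associative, transitive} is checked on elementary tensors using \ref{item:rwordBdl:ip:C*linear} and \ref{item:rwordBdl:ractB:associative} just as the paper intends. The only (harmless) variation is that you deduce the second identity from the first via injectivity of $\zeta\mapsto\Phi(\zeta,\mvisiblespace)$ under the identification \eqref{eq:defn of K's fibres} with compact operators, rather than by another elementary-tensor computation; this is a clean way of making precise the ``and thus'' in the statement.
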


\begin{proof}
To see that $U$ is jointly continuous, assume that  $\{(\xi_{\lambda},\eta_{\lambda})\}_{\lambda}$ is a net in $\cK\bfp{\backwards}{\forwards}\cK$ that converges to $(\xi,\eta)$. Let $(x_{\lambda},y_{\lambda}\op)\coloneqq q_{\cK}(\xi_{\lambda})$, which converges to $(x,y\op)\coloneqq q_{\cK}(\xi)$, and let $(y_{\lambda},z_{\lambda}\op)\coloneqq q_{\cK}(\eta_{\lambda})$, which converges to $(y,z\op)\coloneqq q_{\cK}(\eta)$.
Fix an arbitrary $\delta\in (0,1)$; by \cite[Lemma A.3]{DL:MJM2023},  we should find a section $\kappa\in \Gamma_0(X\bfp{\sigma}{\sigma}X\op;\cK)$ with 
\[
\norm{ U\bigl( \xi , \eta \bigr) - \kappa(x,z\op)}<4\delta
\text{ and }
\norm{    U\bigl( \xi_{\lambda} , \eta_{\lambda} \bigr)
	- \kappa(x_{\lambda},z_{\lambda}\op)}<4\delta
\]
for large $\lambda$.
Because of how the topology on~$\cK$ is defined and because $\xi_{\lambda}\to\xi$, $\eta_{\lambda}\to\eta$ in~$\cK$,
there exist elements
\[
\zeta_{\lambda} \in
M(x_{\lambda})\odot M(y_{\lambda})\op
\text{ converging to }
\zeta \in
M(x)\odot M(y)\op
\text{ in } K
\]
and elements
\[
\chi_{\lambda} \in
M(y_{\lambda})\odot M(z_{\lambda})\op
\text{ converging to }
\chi \in
M(y)\odot M(z)\op
\text{ in } K
\]
such that
\begin{align}\label{eq:all four close}
	\norm{\xi - \zeta} ,
	\norm{\xi_{\lambda} - \zeta_{\lambda}}  < \frac{\delta}{\norm{\eta}+1},
	&&\text{ and }&
	\norm{\eta - \chi},
	\norm{\eta_{\lambda} - \chi_{\lambda}} < \frac{\delta}{\norm{\xi}+1}
\end{align}
for large $\lambda$. Here, it is important to point out that $\odot$ refers to the {\em algebraic} tensor product, so each of the elements considered above is a finite sum of elementary tensors. 
Note that
\begin{equation}\label{eq:U in terms of Phi}
	U(m\otimes n\op, k\otimes \ell\op)
	=
	\Phi
	(m\otimes n\op, k) \otimes \ell\op,
\end{equation}
so using bilinearity of $U$ and continuity of $\Phi$, the fact that $\zeta_{\lambda}\to \zeta$ and $\chi_{\lambda}\to \chi$ in $K$ and that addition and scalar multiplication is continuous on $\cK$, imply that 
\[
U( \zeta_{\lambda}, \chi_{\lambda})
\to
U(\zeta,\chi)
\]
for these particular sums of elementary tensors $ \zeta_{\lambda}, \chi_{\lambda}$.
This convergence means that we can find a section $\kappa$ of~$\cK$ with 
\begin{equation}\label{eq:U and kappa}
	\norm{
		U( \zeta_{\lambda}, \chi_{\lambda})
		- \kappa(x_{\lambda},z_{\lambda}\op)}<\delta
	\text{ and }
	\norm{
		U( \zeta,\chi)
		- \kappa(x,z\op)}<\delta.
\end{equation}
Combining this with Condition~\ref{item:U:norm}, we conclude for large $\lambda$:    
\begin{align*}
	\norm{
		U\bigl( \xi , \eta \bigr) - \kappa(x,z\op)
	}
	&\leq
	\norm{
		U\bigl( \xi , [\eta-\chi]\bigr)
	}
	+
	\norm{
		U\bigl( [\xi-\zeta] , \chi\bigr)
	}
	+
	\norm{
		U\bigl(\zeta,\chi\bigr)
		-
		\kappa(x,z\op)
	}\\
	&
	<
	\norm{
		\xi
	}\,
	\norm{\eta-\chi}
	+
	\norm{\xi-\zeta}
	\,\norm{\chi}
	+\delta
	&&\text{by \eqref{eq:U and kappa}, \ref{item:U:norm}}\\
	&<
	\delta
	+
	\frac{\delta}{\norm{\eta}+1}
	\,\frac{\delta}{\norm{\xi}+1} 
	+\delta
	\leq 3\delta
	&&\text{by \eqref{eq:all four close}}
	.
\end{align*}
We likewise get for $\lambda$ large enough such that $|\norm{\xi}-\norm{\xi_{\lambda}}|$ and $ |\norm{\eta}-\norm{\eta_{\lambda}}|$ are bounded by $1$,
\begin{align*}
	\norm{
		U\bigl( \xi_{\lambda} , \eta_{\lambda} \bigr) - \kappa(x_{\lambda},z_{\lambda}\op)
	}
	&<\norm{
		\xi_{\lambda}
	}\,
	\norm{\eta_{\lambda}-\chi_{\lambda}}
	+
	\norm{\xi_{\lambda}-\zeta_{\lambda}}
	\,\norm{\chi_{\lambda}}
	+\delta
	\\
	&\overset{\eqref{eq:all four close}}{<}
	(\norm{
		\xi
	}+1)\,
	\frac{\delta}{\norm{\xi}+1}
	+
	\frac{\delta}{\norm{\eta}+1}
	\,\norm{\chi_{\lambda}}
	+\delta.
\end{align*}
Note that
\begin{align*}
	\norm{\chi_{\lambda}}
	\leq
	\norm{\chi_{\lambda} - \eta_{\lambda}} 
	+ \norm{\eta_{\lambda}}
	\leq
	\norm{\chi_{\lambda} - \eta_{\lambda}} 
	+ \norm{\eta}+1
	\leq
	\frac{\delta}{\norm{\xi}+1}
	+ \norm{\eta}+1        
\end{align*}
so that
\begin{align*}
	\frac{\delta}{\norm{\eta}+1}
	\,\norm{\chi_{\lambda}}
	\leq
	\frac{\delta}{\norm{\eta}+1}
	\,
	\left[\frac{\delta}{\norm{\xi}+1}
	+ \norm{\eta}+1\right]
	\leq 2\delta.
\end{align*}
We conclude that
\begin{align*}
	\norm{
		U( \xi_{\lambda} , \eta_{\lambda} ) - \kappa(x_{\lambda},z_{\lambda}\op)
	}
	<
	(\norm{
		\xi
	}+1)\,
	\frac{\delta}{\norm{\xi}+1}
	+
	2\delta
	+\delta
	<
	4\delta,
\end{align*}
as needed.

For \eqref{eq:U:associative, transitive}, it likewise suffices to prove the claims for elementary tensors $\mu=\ell\otimes m_{1}\op$, $\xi=m_{2}\otimes n_{1}\op$, and $\eta=n_{2}\otimes k\op$. The computations make use of $\cB$-linearity of $\rip\cB<\mvisiblespace,\mvisiblespace>$ and are left to the reader.\qedhere
\end{proof}

For the next result, we remind the reader that we denote the groupoid $X\times_{\cH} X\op = (X\bfp{\sigma}{\sigma}X\op)/\cH$ by~$\cG$ and that we write $r_{\cA}\coloneqq r_{\cG}\circ p_{\cA}$ and $s_{\cA}\coloneqq s_{\cG}\circ p_{\cA}$.
\begin{lemma}\label{lem:cA:mult}
   On the quotient bundle $\cA=(p_{\cA}\colon A\to \cG)$, the following map is well defined, continuous, and fibrewise bilinear:
   \begin{align*}
   \mvisiblespace\cdot\mvisiblespace\colon\qquad
     \cA\comp \coloneqq 
     \left\{
        ([\xi],[\eta])\in A\times A  : s_{\cA}([\xi])=r_{\cA}([\eta])
     \right\}
    \to
    A,
\end{align*}given by\begin{align*}
    [\xi]\cdot[\eta]
    \coloneqq
    \Bigl[
        U\bigl( \xi , \Psi_{h}(\eta)\bigr)
    \Bigr],
   \end{align*}
   where $U$ is as defined in Lemma~\ref{lem:U} and $h\in\cH$ is the unique element such that  $\backwards_{\cK}(\xi) = \forwards_{\cK}\bigl(\Psi_{h}(\eta)\bigr)$.
\end{lemma}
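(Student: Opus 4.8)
The plan is to verify the three assertions in turn, the first two being essentially bookkeeping and the third (continuity) being the real work. Throughout, write $q_{\cK}(\xi)=(x,y\op)$ and $q_{\cK}(\eta)=(x',z\op)$. The composability condition $s_{\cA}([\xi])=r_{\cA}([\eta])$ unwinds to $y\ract\cH=x'\ract\cH$, i.e. $\rho(y)=\rho(x')$, so by Remark~\ref{rmk:leoq} the element in the statement is exactly $h=\reoq[X]{y}{x'}{\cH}$, the unique $h\in\cH$ with $y\ract h=x'$. For it, $\Psi_{h}(\eta)\in K(y,(z\ract h\inv)\op)$, whence $(\xi,\Psi_{h}(\eta))\in\cK\bfp{\backwards}{\forwards}\cK$ and $U(\xi,\Psi_{h}(\eta))\in K(x,(z\ract h\inv)\op)$ is defined.

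For well-definedness, suppose $\xi\FBRel\xi'$ and $\eta\FBRel\eta'$ via $k_1,k_2\in\cH$, so $\Psi_{k_1}(\xi')=\xi$, $\Psi_{k_2}(\eta')=\eta$, with base points transported by the diagonal action. A direct computation with $y\ract h=x'$ shows the element attached to $\xi',\eta'$ is $h'=k_1\inv h k_2$. Using the stitched form of equivariance \ref{item:U:cH equivariant}, namely $\Psi_k\circ U=U\circ(\Psi_k\times\Psi_k)$ wherever composable, together with $\Psi_{k_1}\Psi_{h'}=\Psi_{k_1 h'}=\Psi_{hk_2}=\Psi_{h}\Psi_{k_2}$ (Lemma~\ref{lem:Psi_h}), I obtain
\[
\Psi_{k_1}\bigl(U(\xi',\Psi_{h'}(\eta'))\bigr)=U\bigl(\Psi_{k_1}(\xi'),\Psi_{k_1}\Psi_{h'}(\eta')\bigr)=U\bigl(\xi,\Psi_{h}\Psi_{k_2}(\eta')\bigr)=U(\xi,\Psi_{h}(\eta)).
\]
Since the base points also differ by the diagonal action of $k_1$, this is precisely $U(\xi,\Psi_{h}(\eta))\FBRel U(\xi',\Psi_{h'}(\eta'))$, so the two representatives have the same image under $Q$ and the product is well defined.

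For fibrewise bilinearity, fix $g_1,g_2\in\cG$ with $s(g_1)=r(g_2)$ and choose representatives of the special shape $g_1=[x,y\op]$, $g_2=[y,z\op]$ with matching middle entry~$y$. By the fibre isomorphisms of Lemma~\ref{lem:Q| is homeo}, every element of $A(g_1)$ (resp.\ $A(g_2)$) is $[\xi]$ with $\xi\in K(x,y\op)$ (resp.\ $[\eta]$ with $\eta\in K(y,z\op)$). For such representatives the attached element is the unit $\sigma(y)$, so $\Psi_{h}=\operatorname{id}$ and $[\xi]\cdot[\eta]=[U(\xi,\eta)]$ with $U(\xi,\eta)\in K(x,z\op)$. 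Bilinearity then follows immediately from fibrewise bilinearity of $U$ (\ref{item:U:linear}) and linearity of the fibre isomorphism $Q_{(x,z\op)}$ onto $A([x,z\op])=A(g_1g_2)$.

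The main obstacle is continuity. Let $([\xi_{\lambda}],[\eta_{\lambda}])\to([\xi],[\eta])$ in $\cA\comp$. By the subnet criterion for convergence it suffices to extract, from every subnet, a further subnet along which the products converge to $[\xi]\cdot[\eta]$, so I may freely pass to subnets. Since $Q$ is open (Lemma~\ref{lem:FBRel}), after passing to a subnet I lift to nets $\hat\xi_{\lambda}\to\xi$ and $\hat\eta_{\lambda}\to\eta$ in $K$ with $Q(\hat\xi_{\lambda})=[\xi_{\lambda}]$ and $Q(\hat\eta_{\lambda})=[\eta_{\lambda}]$. Writing $q_{\cK}(\hat\xi_{\lambda})=(x_{\lambda},y_{\lambda}\op)$ and $q_{\cK}(\hat\eta_{\lambda})=(x'_{\lambda},z_{\lambda}\op)$, continuity of the projections gives $y_{\lambda}\to y$ and $x'_{\lambda}\to x'$, while composability forces $\rho(y_{\lambda})=\rho(x'_{\lambda})$; hence the attached elements are $h_{\lambda}=\reoq[X]{y_{\lambda}}{x'_{\lambda}}{\cH}$, and continuity of $\reoqempty[X]{\cH}$ (Remark~\ref{rmk:leoq}) yields $h_{\lambda}\to h$ in $\cH$. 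Joint continuity of $\Psi$ (Lemma~\ref{lem:DL:MJM2023:Thm5.20}) then gives $\Psi_{h_{\lambda}}(\hat\eta_{\lambda})\to\Psi_{h}(\eta)$, so $(\hat\xi_{\lambda},\Psi_{h_{\lambda}}(\hat\eta_{\lambda}))\to(\xi,\Psi_{h}(\eta))$ in $\cK\bfp{\backwards}{\forwards}\cK$, and joint continuity of $U$ (Lemma~\ref{lem:U}) gives $U(\hat\xi_{\lambda},\Psi_{h_{\lambda}}(\hat\eta_{\lambda}))\to U(\xi,\Psi_{h}(\eta))$ in $K$. Applying the continuous map $Q$ and invoking well-definedness to identify $[U(\hat\xi_{\lambda},\Psi_{h_{\lambda}}(\hat\eta_{\lambda}))]=[\xi_{\lambda}]\cdot[\eta_{\lambda}]$, I conclude $[\xi_{\lambda}]\cdot[\eta_{\lambda}]\to[\xi]\cdot[\eta]$ along the subnet, as required.
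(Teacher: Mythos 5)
Your proof is correct and follows essentially the same route as the paper's: well-definedness via the $\Psi$-equivariance of $U$, fibrewise bilinearity from \ref{item:U:linear} together with the linear structure transported by the maps $Q_{(x,y\op)}$, and continuity by lifting through the open quotient map $Q$ (Fell's criterion, with subnet reductions) and then invoking joint continuity of $\Psi$ and $U$. The only cosmetic difference is that you obtain $h_{\lambda}\to h$ from continuity of $\reoqempty[X]{\cH}$ (Remark~\ref{rmk:leoq}), whereas the paper extracts a convergent subnet of $\{h_{\lambda}\}_{\lambda}$ directly from properness and freeness of the $\cH$-action; both are valid.
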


\begin{remark}\label{rmk:structure on fibres of A}
    If $(g_{1},g_{2})\in \cG\comp$, we may choose representatives $(x,y\op)$ and $(y\op,z)$ of $g_{1}$ respectively $g_{2}$. If we then identify the fibres $A(g_{1}), A(g_{2})$  with $K(x,y\op)=M(x)\otimes_{\sigma(x)}M(y)\op$ and $K(y,z\op)$ via \eqref{eq:cA's fibres}, then the map $\mvisiblespace\cdot\mvisiblespace$ is fibrewise  on dense spanning sets given by 
    \begin{align}\label{eq:cA:cdot}
     \begin{split}
        \mvisiblespace\cdot\mvisiblespace\colon\qquad 
        A(g_{1})\times A(g_{2})
        &\to 
        K(x,z\op)
        \cong
        A(g_1g_2)
        \\
        \bigl(
            m_1\otimes n_1\op,
            m_2\otimes n_2\op
        \bigr)
        &\mapsto
            (m_1 \ractB \rinner[\cM]{\cB}{n_1}{m_2})\otimes n_2\op
     \end{split}
    \end{align}
\end{remark}

\begin{proof}[Proof of Lemma~\ref{lem:cA:mult}]
 Since $
         U_{h}\circ (\Psi_{h}\times \Psi_{h})=\Psi_{h}\circ U_{y\ract h}$, the map $\mf{m}\coloneqq \mvisiblespace\cdot\mvisiblespace$ is well defined. Since $U_{y}$ and $\Psi_{h}$ are linear, linearity of $\mf{m}$ follows directly from the definition of the fibrewise Banach space structure on $\cA$.
         
To see that it is continuous, we will  invoke openness of the quotient map $Q$: Suppose $\{\chi_{\lambda}\}_{\lambda}$ is a net in $\cA\comp$ which converges to $\chi$. Since it suffices to show that a {\em subnet} of $\{\mf{m}(\chi_{\lambda})\}_{\lambda}$ converges to $\mf{m}(\chi)$, we can without loss of generality assume that the {\em entire} net $\{\chi_{\lambda}\}_{\lambda}$ lifts to a net $\{(\xi_{\lambda}, \eta_{\lambda})\}_{\lambda}$ in $K\times K$ that converges to a lift $(\xi,\eta)$ of $\chi$ (here, we have made use of \cite[Proposition 1.1 (Fell's criterion)]{Wil2019} for the open map $Q$).
    Denote $q_{\cK}(\xi_{\lambda})=(x_{\lambda}, y_{\lambda}\op)$.
    Since $\chi_{\lambda}\in \cA\comp$, we may let $h_{\lambda},h\in\cH$ be the unique elements such that $q_{\cK}\bigl(\Psi_{h_{\lambda}}(\eta_{\lambda})\bigr)=(y_{\lambda}, z_{\lambda}\op)$ and $q_{\cK}\bigl(\Psi_{h}(\eta)\bigr)=(y, z\op)$ for some $z_{\lambda},z\in X$. Since $\xi_{\lambda}\to \xi$, it follows that $y_{\lambda}\to y$. Since $\eta_{\lambda}\to\eta$ and $q_{\cK}\bigl(\Psi_{h_{\lambda}}(\eta_{\lambda})\bigr)=(y_{\lambda}, z_{\lambda}\op)$, we further have that $(y_{\lambda}\ract h_{\lambda}, h_{\lambda}\inv \lact z_{\lambda}\op)\to q_{\cK}(\eta) = (y\ract h, h\inv \lact z\op) $. Since both $y_{\lambda}$ and $y_{\lambda}\ract h_{\lambda}$ converge, it follows from properness and freeness of the~$\cH$-action on~$X$ that a subnet of $\{h_{\lambda}\}_{\lambda}$ converges to $h$; again, without loss of generality we can assume that the entire net converges. By continuity of $\Psi$, we conclude that $ \Psi_{h_{\lambda}}(\eta_{\lambda})\to \Psi_{h} (\eta) $.
    Since $U$ is jointly continuous by Lemma~\ref{lem:U}, we therefore have
    \[
     U \bigl( \xi_{\lambda} ,  \Psi_{h_{\lambda}}(\eta_{\lambda}) \bigr)
     \longrightarrow
     U\bigl( \xi , \Psi_{h} (\eta)  \bigr)
    \]
    in~$K$, which suffices since $Q$ is continuous. 
\end{proof}

We arrive at our first main result:

\begin{theorem}\label{thm:A is FB}
   With respect to the multiplication in Lemma~\ref{lem:cA:mult} and the involution in Corollary~\ref{cor:cA:involution}, the \uscBb~$\cA$ described in Lemma~\ref{lem:cA is uscBb} is a saturated Fell bundle over the groupoid $\cG=X\times_{\cH} X\op$, called the {\em \iFbdl} of~$\cM$. 
\end{theorem}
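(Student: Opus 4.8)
The plan is to check, one axiom at a time, that $\cA$ satisfies the definition of a saturated Fell bundle \cite[Definition 2.9]{DL:MJM2023}. The \uscBb\ structure is Lemma~\ref{lem:cA is uscBb}, and Lemma~\ref{lem:cA:mult} together with Corollary~\ref{cor:cA:involution} already supply a continuous, fibrewise-bilinear multiplication and a continuous, fibrewise-conjugate-linear involution. The multiplication covers the product of~$\cG$ by Remark~\ref{rmk:structure on fibres of A} (a product of elements over $[x,y\op]$ and $[y,z\op]$ lands in $K(x,z\op)$, i.e.\ over $[x,z\op]$), and the involution covers inversion since $\Flip$ covers $(x,y\op)\mapsto(y,x\op)$ and $[y,x\op]=[x,y\op]\inv$ in~$\cG$. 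It therefore remains to verify, fibrewise, associativity, the identity $(ab)^*=b^*a^*$, the estimate $\norm{ab}\le\norm a\,\norm b$, the \cst-identity $\norm{a^*a}=\norm a^2$, positivity of $a^*a$, and saturation. The organizing principle throughout is the identification~\eqref{eq:cA's fibres as compacts} of $A([x,y\op])$ with $\compacts_{B(\sigma(x))}(M(y),M(x))$: under it, the isomorphisms~\eqref{eq:Yop otimes Y is trivial} behind $U$ turn the fibrewise product into composition of operators, and~\eqref{eq:cK:*} turns the involution into the operator adjoint.

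I would first dispatch the two purely algebraic identities by pushing them down from $\cK$. For associativity, given composable $a=[\xi]$, $a'=[\eta]$, $a''=[\zeta]$, one can choose lifts whose middle $X$-components already agree (this is legitimate because $Q$ identifies $\FBRel$-related elements), so that every $\Psi_h$ occurring in the products of Lemma~\ref{lem:cA:mult} is a unit; both $(aa')a''$ and $a(a'a'')$ then descend directly from the single identity $U(U(\xi,\eta),\zeta)=U(\xi,U(\eta,\zeta))$ of~\eqref{eq:U:associative, transitive}, with equivariance~\ref{item:U:cH equivariant} handling any residual intertwiner. The relation $(ab)^*=b^*a^*$ reduces in the same way to property~\ref{item:U:Flip} of Lemma~\ref{lem:U maps}, using $[\xi]^*=[\Flip(\xi)]$ and the compatibility of $\Flip$ with the $\Psi_h$ recorded in diagram~\eqref{diag:Flip and Psi}.

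The norm and positivity axioms are then short. Writing $a=[\xi]$, $b=[\eta]$ and $ab=[U(\xi,\Psi_h(\eta))]$, the submultiplicativity $\norm{ab}=\norm{U(\xi,\Psi_h(\eta))}\le\norm\xi\,\norm{\Psi_h(\eta)}=\norm a\,\norm b$ is immediate from the first half of~\ref{item:U:norm}, the isometry of $\Psi_h$, and the coincidence of the fibre norms of $\cA$ and $\cK$. For the \cst-identity, note that in $a^*a=[\Flip(\xi)]\cdot[\xi]$ the middle components of $\Flip(\xi)$ and $\xi$ already match, so no intervening $\Psi_h$ occurs and $a^*a=[U(\Flip(\xi),\xi)]$; putting $\zeta=\Flip(\xi)$ and using $\Flip\circ\Flip=\operatorname{id}$ gives $U(\Flip(\xi),\xi)=U(\zeta,\Flip(\zeta))$, whose norm is $\norm\zeta^2=\norm\xi^2=\norm a^2$ by the second half of~\ref{item:U:norm}. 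The same computation, read through~\eqref{eq:cA's fibres as compacts} with $\xi$ corresponding to $T\in\compacts_{B(\sigma(x))}(M(y),M(x))$, shows that $a^*a$ over the source unit $[y,y\op]$ corresponds to $T^*T\ge0$ in the \cst-algebra $\compacts_{B(\sigma(y))}(M(y))$, giving positivity. Finally, saturation is built into~\eqref{eq:cA's fibres as compacts}: each fibre $A([x,y\op])\cong\compacts_{B(\sigma(x))}(M(y),M(x))$ is an imprimitivity bimodule between the unit-fibre \cst-algebras $\compacts_{B(\sigma(x))}(M(x))$ and $\compacts_{B(\sigma(x))}(M(y))$, full because each $M(x)$ is a full Hilbert $B(\sigma(x))$-module by Assumption~\ref{item:rwordBdl:fibrewise full}.

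There is no single deep obstacle here—the theorem mostly assembles the preceding lemmas—but the step requiring the most care is confirming that, fibrewise \emph{and compatibly across choices of representative}, the multiplication and involution of $\cA$ really implement composition and adjunction of operators under~\eqref{eq:cA's fibres as compacts}; this coherence is precisely what the equivariance~\ref{item:U:cH equivariant} and the commuting diagram~\eqref{diag:Flip and Psi} were set up to guarantee. Once it is in place, positivity (namely $a^*a$ corresponding to $T^*T$), the \cst-identity, and saturation follow at once, and the associativity bookkeeping collapses to~\eqref{eq:U:associative, transitive} as soon as one aligns the three representatives along their middle components.
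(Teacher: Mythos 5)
Your proposal is correct and takes essentially the same route as the paper: an axiom-by-axiom verification of \cite[Definition 2.9]{DL:MJM2023}, with the covering properties giving (F1)/(F5), \ref{item:U:norm} and \ref{item:Psi:isometric} giving submultiplicativity and the $\cst$-identity, \ref{item:U:Flip} together with Diagram~\eqref{diag:Flip and Psi} giving $(ab)^*=b^*a^*$, and associativity reduced to \eqref{eq:U:associative, transitive} (your aligned-representative trick is exactly what the paper's appeal to uniqueness of $h$ amounts to, the paper leaving those details as an exercise). The only divergences are cosmetic: for positivity the paper factors the matrix $\bigl(\rip\cB<n_{i},n_{j}>\bigr)_{i,j}$ via \cite[Lemma 2.65]{RaWi:Morita} to exhibit $[\xi]\cdot[\xi]^*$ as a sum of manifestly positive elements $[\mathbf{x}_{l}\otimes\mathbf{x}_{l}\op]$ rather than reading $a^*a$ directly as $T^*T$ under \eqref{eq:cA's fibres as compacts} --- though it, too, ends by invoking the $*$-isomorphism of Remark~\ref{rmk:structure on fibres of A} --- and it proves saturation as density of products of fibres via the first isomorphism in \eqref{eq:Yop otimes Y is trivial} rather than via fullness of each fibre as an imprimitivity bimodule over the unit fibres; both variants are sound.
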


\begin{proof}
    We will check the conditions as stated in \cite[Definition 2.9]{DL:MJM2023}.
    It is clear that  multiplication $\mvisiblespace\cdot\mvisiblespace$ is bilinear and that $\mvisiblespace^{*}$ is  conjugate linear  and self-inverse; this takes care of Conditions (F2), (F6), and (F8). For the following, fix $([\xi],[\eta])\in \cA\comp$, and let $h\in\cH$ be the unique element such that $q_{\cK}(\xi)=(x,y\op)$ and $q_{\cK}(\Psi_{h}(\eta))=(y,z\op)$, meaning that $q_{\cK}(\eta)=(y\ract h,h\inv \lact z\op)$.
    
    (F1) Since $U_{y}\bigl( \xi , \Psi_{h}(\eta)\bigr)\in K(x,z\op)$ and since $p_{\cA}([\xi])=[q_{\cK}(\xi)]$, we get the first and last equality in the following computation:
    \[
    p_{\cA} ([\xi]\cdot[\eta])
    =
    [x,z\op]
    =
    [x,y\op]\cdot [y,z\op]
    =
     p_{\cA} ([\xi])\cdot p_{\cA}([\eta]).
    \]

    (F3) Associativity of the multiplication
    can be shown using uniqueness of $h$, the second identity in \eqref{eq:U:associative, transitive}, commutativity of Diagram~\ref{diag:U}, and the fact that $\Psi_{hh'}=\Psi_{h}\circ\Psi_{h'}$; the details are left as an exercise.
    
    (F4) The definition of the norm on the fibres of $\cA$ implies that
    \[
        \| [\xi]\cdot[\eta]
        \|
        =
        \left\| U_{y}\bigl( \xi , \Psi_{h}(\eta)\bigr) \right\|
        \overset{\ref{item:U:norm}}{\leq}
        \left\| \xi \right\|\, \left\| \Psi_{h}(\eta) \right\|
        \overset{\ref{item:Psi:isometric}}{=}
        \left\| \xi \right\| \, \left\|\eta \right\|
        =
        \left\| [\xi] \right\| \, \left\|[\eta] \right\|,
    \]
    proving  
    that    multiplication is norm-decreasing. 

    (F5) 
    Since $p_{\cA}([\xi]^*)= [q_{\cK}(\Flip(\xi))]$ and since $[x,y\op]\inv=[y,x\op]$ in $\cG$, we have that $\mvisiblespace^{*}$ maps $A(g)$ to $A(g\inv)$. 

    (F7) We must show that $([\xi]\cdot[\eta])^*=[\eta]^*\cdot[\xi]^*$. We have
    \begin{align*}
        ([\xi]\cdot[\eta])^*
        &=
        \left[
        \Flip\left(
        U_{y}\bigl( \xi , \Psi_{h}(\eta)\bigr)
        \right)
        \right]
        \\
        &=
        \left[
        U_{y} 
        \Bigl(
          \Flip(\Psi_{h}(\eta)),\Flip(\xi)
        \Bigr)
        \right]
        &&\text{by \ref{item:U:Flip}}
        \\
        &=
        \left[
        U_{y} 
        \Bigl(
          \Psi_{h}(\Flip(\eta)),\Flip(\xi)
        \Bigr)
        \right]
        &&\text{by commutativity of Diagram~\eqref{diag:Flip and Psi}}
         \\
        &=
        \left[
        U_{y} 
        \Bigl(
          \Flip(\eta),\Psi_{h\inv}(\Flip(\xi))
        \Bigr)
        \right]
        &&\text{by
        \ref{item:U:cH equivariant} and definition of $\FBRel$},
    \end{align*}
    which is exactly $[\eta]^*\cdot[\xi]^*$.

    (F9) It is clear that $\|[\xi]\| = \|[\xi]^*\|$. To show $\|[\xi]\cdot[\xi]^*\| =\|[\xi]\|^2$, note that $\eta\coloneqq \Flip(\xi)$ has
    $q_{\cK}(\eta)=(y,x\op)$; in particular, $u\coloneqq \sigma(x)$ is the unique element of~$\cH$ with $\backwards_{\cK}(\xi) = \forwards_{\cK}\bigl(\Psi_{u}(\eta)\bigr)$. Since $\Psi_{u}$ is the identity, we conclude that
    \[
    	[\xi]\cdot[\xi]^*
    	=
    	\bigl[ U_{y} \bigl(\xi, \Flip(\xi))\bigr],
    \]
    which has norm equal to $\norm{\xi}^2=\norm{[\xi]}^2$ by \ref{item:U:norm}, as needed.

    (F10) To see that $[\xi]\cdot [\xi]^* \geq 0$, we 
    again invoke \cite[Lemma 2.65]{RaWi:Morita}: The matrix with $i,j$-entry $\rip\cB<n_{i},n_{j}>$ is positive, so there exists a matrix $(b_{i,j})_{i,j}$ over $B(u)$ such that $\rip\cB<n_{i},n_{j}> = \sum\nolimits_{l=1} b_{il}b_{jl}^*$. In particular, the $B(u)$-balancing in $M(x)\otimes_{u}M(y)\op$ allows us to write
    \begin{align*}
        \sum_{i,j}
      (m_{i} \ractB \rip\cB<n_{i},n_{j}>)\otimes m_{j}\op
      =
        \sum_{i,j,l}
      (m_{i} \ractB  (b_{il}b_{jl}^*))\otimes m_{j}\op
      =
        \sum_{l}
      (\sum_{i} m_{i} \ractB  b_{il})\otimes (\sum_{j} m_{j}\ractB b_{jl})\op.
    \end{align*}
    If we let $\mathbf{x}_{l}\coloneqq \sum_{i} m_{i} \ractB  b_{il}$, then this combined with \eqref{eq:U for xi Flip-xi} shows that
    \begin{align*}
        [\xi]\cdot [\xi]^*
        =
        \bigl[ U_{y} \bigl(\xi, \Flip(\xi))\bigr]
        =
        \sum_{l}
        \left[
          \mathbf{x}_{l}\otimes \mathbf{x}_{l}\op
        \right].
    \end{align*}
    Since the element $\mathbf{x}_{l}\otimes \mathbf{x}_{l}\op$ of $K(x,x\op)$ is positive (it corresponds to the positive operator $\innercpct[]{}{\mathbf{x}_{l}}{\mathbf{x}_{l}}$ in $\compacts_{B(u)}(M(x))$), and since the algebra $A([x,x\op])$ is *-isomorphic to $K(x,x\op)$ (see Remark~\ref{rmk:structure on fibres of A}), the claim follows.

    Lastly, to see that $\cA$ is saturated, we must show that the linear span of elements of the form $[\xi]\cdot [\eta]$ for $[\xi]\in A([x,y\op])$ and $[\eta]\in A([y,z\op])$ is dense in $A([x,z\op])$. When we consider that $A([x,y\op])\cong M(x)\otimes_{u}M(y)\op$ and that these isomorphisms respect the multiplicative and linear structure we defined on $\cA$, then the claim follows directly from the  first isomorphism in Equation~\eqref{eq:Yop otimes Y is trivial}. 
\end{proof}

\section{Equivalence from the \iFbdl\ to the `coefficient bundle'}\label{sec:Equivalence}

We now proceed to equip the  \demiequiv[right $\cB$-]~$\cM$ with the structure of a  \demiequiv[left $\cA$-], where $\cA=(p_{\cA}\colon A\to \cG)$ is the quotient of $\cK=(K\to X\bfp{\sigma}{\sigma}X\op)$ by the equivalence relation $\FBRel$ defined in Lemma~\ref{lem:FBRel};  we have shown in Theorem~\ref{thm:A is FB} that $\cA$ is a Fell bundle.

\begin{proposition}\label{prop:left action}
    There is a map $\mvisiblespace\lactB\mvisiblespace\colon A\bfp{s}{\rho}M \to M$ given by
    \begin{align}\label{eq:def lactB}
        [\xi]\lactB m \coloneqq \Phi_{x,y}\bigl(\Psi_{h}(\xi), m\bigr) 
    \end{align}
    where $y=q_{\cM}(m)$ and where $(x,h)\in X\bfp{\sigma}{r}\cH$ is the unique element such that $q_{\cK}(\Psi_{h}(\xi))=(x,y\op)$. This map furthermore has the following properties.\footnote{The LA in ``(LAn)'' stands for ``left action.''}
    \begin{enumerate}[label=\textup{(LA\arabic*)}]
        \item\label{item:lactB covers} It covers the map $\mvisiblespace\lact\mvisiblespace\colon \cG\bfp{s}{\rho}X \to X$; and
        \item \label{item:lactB and ractB commute}
        $[\xi]\lactB (m\ractB b)=([\xi]\lactB m)\ractB b$
        for all appropriate $b\in B$.
    \end{enumerate}
\end{proposition}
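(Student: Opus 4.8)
The plan is to check, in order, that the prescription \eqref{eq:def lactB} is unambiguous, that the map covers $\lact$ (property~\ref{item:lactB covers}), that it is continuous, and finally that it intertwines the right $\cB$-action (property~\ref{item:lactB and ractB commute}). First I would pin down the data $(x,h)$: writing $q_\cK(\xi)=(x_0,y_0\op)$ and $y=q_\cM(m)$, the hypothesis $s_\cA([\xi])=\rho(y)$ unwinds, under $\cG\z\cong X/\cH$, to $y_0\ract\cH=y\ract\cH$, so freeness of the $\cH$-action produces a \emph{unique} $h$ with $y_0=y\ract h$; putting $x\coloneqq x_0\ract h\inv$, Equation~\eqref{eq:q of Psi_h} gives $q_\cK(\Psi_h(\xi))=(x,y\op)$, so that $\Phi_{x,y}(\Psi_h(\xi),m)$ is defined and lies in $M(x)$. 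For independence of the representative, suppose $\xi\FBRel\xi'$, so there is $k$ with $q_\cK(\xi')=(x_0\ract k,(y_0\ract k)\op)$ and $\Psi_k(\xi')=\xi$. The $h$-datum attached to $\xi'$ is then $hk$ (and its $x$-datum is again $x$), and the cocycle identity $\Psi_{hk}=\Psi_h\circ\Psi_k$ (Lemma~\ref{lem:Psi_h}) yields $\Psi_{hk}(\xi')=\Psi_h(\Psi_k(\xi'))=\Psi_h(\xi)$; hence both prescriptions produce the same element of $M(x)$, and $\lactB$ is well defined.

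Property~\ref{item:lactB covers} is then immediate: since $\Phi_{x,y}$ lands in $M(x)$, we get $q_\cM([\xi]\lactB m)=x$, while normalizing the representative shows $p_\cA([\xi])=[x_0,y_0\op]=[x,y\op]$, and the defining formula~\eqref{eq:left G action on X} gives $[x,y\op]\lact y=x$. For the joint continuity of $\lactB$ I would follow verbatim the net-lifting argument in the proof of Lemma~\ref{lem:cA:mult}: given $([\xi_\lambda],m_\lambda)\to([\xi],m)$ in $A\bfp{s}{\rho}M$, openness of $Q$ and Fell's criterion lift a subnet to $\xi_\lambda\to\xi$ in $K$; properness and freeness of the $\cH$-action force (a further subnet of) the associated $h_\lambda$ to converge to $h$; and joint continuity of $\Psi$ (Lemma~\ref{lem:DL:MJM2023:Thm5.20}) and of $\Phi$ then give $\Phi_{x_\lambda,y_\lambda}(\Psi_{h_\lambda}(\xi_\lambda),m_\lambda)\to\Phi_{x,y}(\Psi_h(\xi),m)$.

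The substantive step is \ref{item:lactB and ractB commute}, which I would prove on dense spanning elements and extend by the within-fibre continuity of $\Phi_{x,y}$ (Equation~\eqref{eq:Phi:contractive}) and the isometry of $\Psi$ (property~\ref{item:Psi:isometric}). Set $\eta\coloneqq\Psi_h(\xi)\in K(x,y\op)$ and $h_b\coloneqq p_\cB(b)$. Using fullness~\ref{item:rwordBdl:fibrewise full} (which, exactly as in the proof of Lemma~\ref{lem:Psi_h}, exhibits every element of $M(x)$ as a limit of sums $p\ractB c$ with $p\in M(x\ract h_b)$ and $c\in B(h_b\inv)$), I may present $\eta$ as a limit of sums $\sum_i m_i\otimes n_i\op$ in which, moreover, $m_i=p_i\ractB c_i$ with $p_i\in M(x\ract h_b)$ and $c_i\in B(h_b\inv)$. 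On one hand,
\[
([\xi]\lactB m)\ractB b
=\Bigl(\textstyle\sum_i m_i\ractB\rip\cB<n_i,m>\Bigr)\ractB b
=\textstyle\sum_i m_i\ractB\rip\cB<n_i,m\ractB b>
\]
by \ref{item:rwordBdl:ip:C*linear} and associativity \ref{item:rwordBdl:ractB:associative}. On the other hand, the $h$-datum for the input $m\ractB b\in M(y\ract h_b)$ is $h_b\inv h$, so the cocycle identity gives $\Psi_{h_b\inv h}(\xi)=\Psi_{h_b\inv}(\eta)=\sum_i p_i\otimes(n_i\ractB c_i^*)\op$ via the explicit formula~\eqref{eq:def:Psi_h}; feeding this into $\Phi_{x\ract h_b,\,y\ract h_b}(\,\cdot\,,m\ractB b)$ and applying \ref{item:rwordBdl:ip:C*linear on left} and \ref{item:rwordBdl:ractB:associative} produces $\sum_i m_i\ractB\rip\cB<n_i,m\ractB b>$ as well, matching the first computation.

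I expect the main obstacle to be exactly this last step: one has to move between the two distinct fibres $K(x,y\op)$ and $K(x\ract h_b,(y\ract h_b)\op)$, which is what forces the rewriting of $\eta$ in the special form $\sum_i(p_i\ractB c_i)\otimes n_i\op$ before the $\Psi$-formula can be applied, and one must keep the three elements $h$, $h_b$, and $h_b\inv h$ of $\cH$ straight while confirming that every tensor sits in its intended fibre. By contrast, well-definedness reduces cleanly to the cocycle law for $\Psi$, property~\ref{item:lactB covers} is a one-line consequence of $\Phi$ landing in $M(x)$, and continuity is a routine transplant of the argument already carried out for the multiplication in Lemma~\ref{lem:cA:mult}.
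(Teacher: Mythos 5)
Your proposal is correct, and its overall skeleton (well-definedness via freeness of the $\cH$-action, \ref{item:lactB covers} from the fact that $\Phi_{x,y}$ lands in $M(x)$, and \ref{item:lactB and ractB commute} from the identities \ref{item:rwordBdl:ip:C*linear}, \ref{item:rwordBdl:ip:C*linear on left}, \ref{item:rwordBdl:ractB:associative} on elementary tensors plus density) matches the paper's. The genuine divergence is in \ref{item:lactB and ractB commute}: the paper never leaves the fibre $K(x,y\op)$ --- it observes that the defining formula $\Phi_{x,y}(m\otimes n\op,k)=m\ractB\rip\cB<n,k>$ still makes sense when the second argument lies in $M(y\ract p_{\cB}(b))$ (since $\rip\cB<\mvisiblespace,\mvisiblespace>$ is defined on all of $M\bfp{\rho}{\rho}M$), computes $\Phi_{x,y}(\eta,k)\ractB b=\Phi_{x,y}(\eta,k\ractB b)$ for this tacitly extended map, and declares that the claim follows; the suppressed point is that this extended $\Phi_{x,y}(\,\cdot\,,m\ractB b)$ agrees with the actual definition of $[\xi]\lactB(m\ractB b)$, which by the statement uses the \emph{different} representative $\Psi_{p_{\cB}(b)\inv h}(\xi)\in K\bigl(x\ract p_{\cB}(b),(y\ract p_{\cB}(b))\op\bigr)$. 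Your proof makes exactly this elided step explicit: the cocycle identity $\Psi_{p_{\cB}(b)\inv}\circ\Psi_{h}=\Psi_{p_{\cB}(b)\inv h}$ together with the decomposition of $\eta$ into tensors $(p_{i}\ractB c_{i})\otimes n_{i}\op$ --- needed so that the formula \eqref{eq:def:Psi_h} for $\Psi_{p_{\cB}(b)\inv}$ can be applied --- shows that both prescriptions yield $\sum_{i}m_{i}\ractB\rip\cB<n_{i},m\ractB b>$ in the limit. So your argument is longer but self-contained, whereas the paper's is slicker at the cost of hiding precisely the fibre bookkeeping you identified as the crux. Finally, your continuity argument, while correct (it is indeed the argument of Lemma~\ref{lem:cA:mult}), is not required: the statement does not assert continuity, and the paper deliberately avoids proving it here, since it falls out later from Proposition~\ref{prop:from two-sided demi to equivalence} via Corollary~\ref{cor:iFbl is equivalent to cB} --- which is exactly why the proposition lists only \ref{item:lactB covers} and \ref{item:lactB and ractB commute}.
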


Of course, $\mvisiblespace\lactB\mvisiblespace$ will turn out to be an action in the sense of \cite[Definition 2.10]{DL:MJM2023}; in particular, it is continuous, which one can indeed show by hand. But thanks to Proposition~\ref{prop:from two-sided demi to equivalence}, we can make do with proving fewer properties.

\begin{proof}[Proof of Proposition~\ref{prop:left action}]
    Write $p_{\cA}([\xi])=[x_{1},x_{2}\op]$. Since  $([\xi], m)\in A\bfp{s}{\rho}M$, we have $s_{\cG}([x_{1},x_{2}\op])=\rho_{\cM} (m)=[y,y\op]$, which means that there exists a unique $h\in \cH$ such that
    $[x_{1},x_{2}\op]=[x_{1}\ract h, y\op]$. Now that the second component is fixed as $y\op$, the first component $x\coloneqq x_{1}\ract h$ is also uniquely determined, and satisfies $\sigma(x)=s_{\cH}(h)$. In other words, we have argued that the equality $s_{\cA}([\xi])=\rho_{\cM} (m)$ implies that there exists a {\em unique} representative $\Psi_{h}(\xi)\in K$ of the $\FBRel$-equivalence class $[\xi]$ for which $q_{\cK}(\Psi_{h}(\xi))$ has $q_{\cM}(m)\op$ as its second component: $\Psi_{h}(\xi)\in M(x)\otimes_{\sigma(x)} M(y\op)$. 
     We can now invoke Lemma~\ref{lem:left action on fibres} to conclude that $[\xi]\lactB m$ is a well-defined element of $M(x)$.

    \ref{item:lactB covers} Follows from $p_{\cA}([\xi])\lact q_{\cM}(m) = [x_{1},x_{2}\op] \lact y = [x,y\op] \lact y = x = q_{\cM}([\xi]\lactB m)$.

    \ref{item:lactB and ractB commute} We compute
    \begin{align*}
        \Phi_{x,y}\bigl(m \otimes n \op, k\bigr) \ractB b
        =
        \left( m \ractB \rip\cB< n ,k> \right) \ractB b
        =
        m \ractB \left( \rip\cB< n ,k> b\right)
        =
        m \ractB \rip\cB< n ,k\ractB b>
        =
        \Phi_{x,y}\bigl(m \otimes n \op, k\ractB b\bigr),
    \end{align*}
    so that bilinearity and continuity of $\Phi$ implies
    \[
         \Phi_{x,y}\bigl(\eta, k\bigr) \ractB b
        =
        \Phi_{x,y}\bigl(\eta, k\ractB b\bigr)
    \]
    for any appropriate $\eta\in K$. The claim follows.
\end{proof}

\begin{proposition}\label{prop:left ip}
    The map $\lip\cA<\mvisiblespace,\mvisiblespace>\colon M\bfp{\sigma}{\sigma}M\to A$  given by
    \[
        \lip\cA<m ,n > \coloneqq 
        [m \otimes n\op]
    \]
   has the following properties.\footnote{The LIP in ``(LIPn)'' stands for ``left inner product.''}
    \begin{enumerate}[label=\textup{(LIP\arabic*)}]
        \item\label{item:cA:FE2a} It covers the map $\leoqempty[X]{\cG}\colon X\bfp{\sigma}{\sigma}X \to \cG$, meaning that \( p_{\cA}(\lip\cA<m ,n > )
        \lact q_{\cM}(n )=q_{\cM}(m )\);
        \item\label{item:cA:cts and sesquilinear}  it is continuous and fibrewise linear in the first and antilinear in the second component;
        \item \label{item:cA:FE2b} $\lip\cA<m ,n >^* =\lip\cA<n ,m > $;
        \item\label{item:cA:FE2c}  $
        [\xi] \cdot
        \lip\cA<m ,n > 
        =\lip\cA<[\xi]\lactB m ,n >  $ for all $[\xi]\in A$ with $s_{\cA}([\xi])=\rho_{\cM}(m )$; and
        \item\label{item:cA:FE2d}  $\lip\cA<m_{1},m_{2}> \lactB m_{3} = m_{1}\ractB \rip\cB<m_{2},m_{3}>$.
    \end{enumerate}
\end{proposition}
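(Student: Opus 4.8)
The plan is to verify the five conditions \ref{item:cA:FE2a}--\ref{item:cA:FE2d} in turn, writing $x_i \coloneqq q_{\cM}(m_i)$ and identifying each fibre $A(g)$ for $g=\leoq[X]{\cG}{x}{y}$ with $K(x,y\op)=M(x)\otimes_{\sigma(x)}M(y)\op$ through the isometric isomorphism $Q_{(x,y\op)}$ of Lemma~\ref{lem:Q| is homeo}. The first three properties are essentially immediate. For \ref{item:cA:FE2a}, I compute $p_{\cA}(\lip\cA<m,n>)=[q_{\cK}(m\otimes n\op)]=\leoq[X]{\cG}{q_{\cM}(m)}{q_{\cM}(n)}$, so that the definition of the $\cG$-action in \eqref{eq:left G action on X} gives $p_{\cA}(\lip\cA<m,n>)\lact q_{\cM}(n)=q_{\cM}(m)$. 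For \ref{item:cA:cts and sesquilinear}, continuity follows from Remark~\ref{rmk:DL:MJM2023:Lemma5.4} together with continuity of the quotient map $Q$, while the linearity in the first and antilinearity in the second slot is inherited from that of $(m,n)\mapsto m\otimes n\op$ on $K(x,y\op)$ via the fibrewise linear isomorphism $Q_{(x,y\op)}$. For \ref{item:cA:FE2b}, I use Corollary~\ref{cor:cA:involution} and Lemma~\ref{lem:cK:involution}: $\lip\cA<m,n>^*=[\Flip(m\otimes n\op)]=[n\otimes m\op]=\lip\cA<n,m>$.

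The substance lies in \ref{item:cA:FE2c} and \ref{item:cA:FE2d}, both of which collapse to elementary-tensor identities once the fibre representatives are chosen correctly. For \ref{item:cA:FE2c}, set $y=q_{\cM}(m)$; since $s_{\cA}([\xi])=\rho_{\cM}(m)=[y,y\op]$, Proposition~\ref{prop:left action} supplies the unique representative $\xi'\in K(x,y\op)$ of $[\xi]$ with second component $y\op$, for which $[\xi]\lactB m=\Phi_{x,y}(\xi',m)$. Putting $\eta\coloneqq m\otimes n\op\in K(y,q_{\cM}(n)\op)$, the second component of $\xi'$ and the first component of $\eta$ both equal $y$, so the gluing element $h$ appearing in the multiplication of Lemma~\ref{lem:cA:mult} is forced by freeness of the $\cH$-action to be the unit $\sigma(y)$; hence $\Psi_{h}=\operatorname{id}$ and $[\xi]\cdot\lip\cA<m,n>=[U_{y}(\xi',\eta)]$. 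Reducing $\xi'$ to an elementary tensor $m'\otimes n'{}\op$ by bilinearity and continuity of $U$, $\Phi$, and $Q$, formula \eqref{eq:def:U} gives $U_{y}(m'\otimes n'{}\op,\eta)=(m'\ractB\rip\cB<n',m>)\otimes n\op=\Phi_{x,y}(m'\otimes n'{}\op,m)\otimes n\op$ by \eqref{eq:def:Phi_xy}, whose class is exactly $\lip\cA<[\xi]\lactB m,n>$.

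For \ref{item:cA:FE2d}, I again unwind the definition \eqref{eq:def lactB} of $\lactB$ applied to $[\xi]=[m_{1}\otimes m_{2}\op]$ acting on $m_{3}$: the fibred-product hypothesis $\rho(x_{2})=\rho(x_{3})$ gives $s_{\cA}([\xi])=\rho_{\cM}(m_{3})$, and the representative of $[\xi]$ with second component $x_{3}\op$ is $\Psi_{h}(m_{1}\otimes m_{2}\op)$ for the unique $h$ with $x_{2}\ract h\inv=x_{3}$, i.e.\ $h\inv=\reoq[X]{x_{2}}{x_{3}}{\cH}$. The claim then reads $\Phi_{x,x_{3}}\bigl(\Psi_{h}(m_{1}\otimes m_{2}\op),m_{3}\bigr)=m_{1}\ractB\rip\cB<m_{2},m_{3}>$. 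Using \ref{item:rwordBdl:fibrewise full} to approximate $m_{1}$ by sums of the form $m\ractB b$ with $m\in M(x)$ and $b\in B(h)$, the explicit formula \eqref{eq:def:Psi_h} yields $\Psi_{h}\bigl((m\ractB b)\otimes m_{2}\op\bigr)=m\otimes(m_{2}\ractB b^{*})\op$, so that \eqref{eq:def:Phi_xy} gives $\Phi_{x,x_{3}}\bigl(m\otimes(m_{2}\ractB b^{*})\op,m_{3}\bigr)=m\ractB\rip\cB<m_{2}\ractB b^{*},m_{3}>$; one application of \ref{item:rwordBdl:ip:C*linear on left} followed by associativity \ref{item:rwordBdl:ractB:associative} rewrites this as $(m\ractB b)\ractB\rip\cB<m_{2},m_{3}>$, and passing to the limit proves the identity.

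I expect the main obstacle to be the careful bookkeeping of the gluing maps $\Psi_{h}$ and the choice of fibre representatives in \ref{item:cA:FE2c} and \ref{item:cA:FE2d}: before any elementary-tensor formula can be invoked, one must pin down the unique $h\in\cH$ that aligns the relevant fibres and confirm the various well-definedness conditions (matching source and range, correct fibres). Once the representatives are arranged so that the middle components agree---trivially so in \ref{item:cA:FE2c}, where $h$ reduces to a unit---the computations shrink to short applications of the $\cB$-linearity of the inner product (\ref{item:rwordBdl:ip:C*linear} and \ref{item:rwordBdl:ip:C*linear on left}) and associativity of the right action.
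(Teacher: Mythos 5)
Your proposal is correct and follows essentially the same route as the paper: \ref{item:cA:FE2a}--\ref{item:cA:FE2b} are dispatched by the definitions of the topology on $\cK$, the quotient structure, and $\Flip$; \ref{item:cA:FE2c} is the identity $U(\,\cdot\,, k\otimes\ell\op)=\Phi(\,\cdot\,,k)\otimes\ell\op$ (the paper's Equation~\eqref{eq:U in terms of Phi}) combined with the observation that the gluing element in the multiplication is a unit; and \ref{item:cA:FE2d} is proved exactly as in the paper by approximating $m_{1}$ via \ref{item:rwordBdl:fibrewise full}, applying \eqref{eq:def:Psi_h} and \eqref{eq:def:Phi_xy}, and invoking \ref{item:rwordBdl:ip:C*linear on left} with associativity before passing to the limit. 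If anything, your explicit justification of why $\Psi_{h}$ reduces to the identity in \ref{item:cA:FE2c} makes a step the paper leaves implicit.
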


\begin{proof}
    The map is clearly well defined. Property~\ref{item:cA:FE2a}  follows immediately from Equation~\eqref{eq:left G action on X}.

    \ref{item:cA:cts and sesquilinear} Sesquilinearity is obvious. For continuity, it suffices to check that the map $M\bfp{\sigma}{\sigma}M\to K$, $(m,n)\mapsto m\otimes n\op$, is continuous since the quotient map $\cK\to \cA$ is continuous. But said continuity is built into the definition of $\cK$'s topology; see Lemma~\ref{lem:DL:MJM2023:Lemma 5.2}.
    
    \ref{item:cA:FE2b} Follows from the definition of the involution ${}^*$ on~$\cA$. 
    
    \ref{item:cA:FE2c} Recall from Proposition~\ref   {prop:left action} that
    \begin{align*}
            [\xi]\lactB m_{1}= \Phi_{x,y}\bigl(\Psi_{h}(\xi), m_{1}\bigr) 
    \end{align*}
    where $y=q_{\cM}(m_{1})$ and where $(x,h)\in X\bfp{\sigma}{r}\cH$ is the unique element such that $q_{\cK}(\Psi_{h}(\xi))=(x,y\op)$, so that
    \begin{align*}
        \lip\cA<[\xi]\lactB m_{1},m_{2}>
        &=
        \left[
            \Phi_{x,y}\bigl(\Psi_{h}(\xi), m_{1}\bigr)
            \otimes
            m_{2}\op
        \right]
        \\
        &=
        \left[
            U\bigl(\Psi_{h}(\xi), m_{1}\otimes 
            m_{2}\op\bigr)
        \right]
        &&\text{by Equation~\eqref{eq:U in terms of Phi}}
        \\
        &=
        [\xi] \cdot [m_{1}\otimes 
            m_{2}\op]
        =
        [\xi] \cdot \lip\cA<m_{1},m_{2}>,
    \end{align*}
    so \ref{item:cA:FE2c} holds.

    \ref{item:cA:FE2d} Let $y=q_{\cM}(m_{3})$ and let $h\in\cH$ be the unique element such that $q_{\cK}(\Psi_{h}(m_{1}\otimes m_{2}\op))=(x,y\op)$ for some (unique) $x\in X$. Because of \eqref{eq:q of Psi_h}, we know that $q_{\cM}(m_{1})=x\ract h$ and $q_{\cM}(m_{2})=y\ract h$. In particular can approximate $m_{1}$ by $\sum_{i} n_{i}\ractB b_{i}$ for finitely many $n_{i}\in M(x)$ and $b_{i}\in B(h)$.
    In the following final computation, the first instance of $\approx$ is valid because $\Phi_{x,y}$ and $\Psi_{h}$ are continuous and linear (see \ref{item:Psi:isometric}), and the second   because the right $\cB$-action on~$\cM$ is continuous by \ref{item:rwordBdl:ractB:cts}.
    \begin{align*}
        \lip\cA<m_{1},m_{2}> \lactB m_{3}
        &=
        [m_{1}\otimes m_{2}\op] \lactB m_{3}
        \overset{\eqref{eq:def lactB}}{=}
        \Phi_{x,y}\bigl(\Psi_{h}(m_{1}\otimes m_{2}\op), m_{3}\bigr)
        \\&
        \approx
        \sum\nolimits_{i}\Phi_{x,y}\bigl(\Psi_{h}([n_{i}\ractB b_{i}]\otimes m_{2}\op), m_{3}\bigr)
        \\&
        \overset{\eqref{eq:def:Psi_h}}{=}
        \sum\nolimits_{i}
        \Phi_{x,y}\bigl(n_{i}\otimes (m_{2}\ractB b_{i}^*)\op, m_{3}\bigr)
        \\
        &\overset{\eqref{eq:def:Phi_xy}}{=}
        \sum\nolimits_{i}
        n_{i}\ractB \rip\cB<m_{2}\ractB b_{i}^*, m_{3}>
        = \sum\nolimits_{i}
        n_{i}\ractB (b_{i}\, \rip\cB<m_{2}, m_{3}>)
        \approx 
        m_{1}\ractB \rip\cB<m_{2}, m_{3}>.
        \qedhere
    \end{align*}
\end{proof}

\begin{corollary}\label{cor:iFbl is equivalent to cB}
    The saturated Fell bundle $\cA$ is equivalent to~$\cB$ via~$\cM$ with the $\cA$-action and $\cA$-inner product specified in Propositions~\ref{prop:left action} and~\ref{prop:left ip}, respectively. In particular, the left action is continuous.
\end{corollary}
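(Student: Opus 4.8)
The plan is to deduce the corollary from Proposition~\ref{prop:from two-sided demi to equivalence}. Its hypotheses require that $X$ be a $(\cG,\cH)$-groupoid equivalence, that $\cM$ be simultaneously a left $\cA$- and a right $\cB$-demi-equivalence, and that three compatibility conditions hold. The first requirement is exactly Lemma~\ref{lem:thm for gpds}, and $\cM$ is a right $\cB$-demi-equivalence by standing assumption, so the genuine work is to check that the action of Proposition~\ref{prop:left action} and the inner product of Proposition~\ref{prop:left ip} make $\cM$ a left $\cA$-demi-equivalence, and then to verify the three compatibilities.

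First I would run through the left analogues of \ref{item:rwordBdl:ractB:fibre}--\ref{item:rwordBdl:fibrewise full}. The left versions of the first five are precisely \ref{item:lactB covers}, \ref{item:cA:FE2a}, \ref{item:cA:cts and sesquilinear}, \ref{item:cA:FE2c}, and \ref{item:cA:FE2b}, already recorded in Propositions~\ref{prop:left action} and~\ref{prop:left ip}. The remaining three are the new points. Write $u=\sigma(x)$ and use the isometric fibre identification $A([x,x\op])\cong K(x,x\op)\cong\compacts_{B(u)}(M(x))$ from \eqref{eq:defn of K's fibres} and Lemma~\ref{lem:Q| is homeo}, under which $\lip\cA<m,m>=[m\otimes m\op]$ corresponds to the rank-one operator $\innercpct[]{}{m}{m}$. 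This operator is positive and, since $Q_{(x,x\op)}$ is injective, vanishes only when $m=0$ (exactly the argument used for (F10) in Theorem~\ref{thm:A is FB}); and its operator norm is $\norm{\rip\cB<m,m>}=\norm{m}^2$, so that $\norm{\lip\cA<m,m>}^{1/2}=\norm{m}$ agrees with the bundle norm. Fibrewise fullness holds because the elementary tensors $m_1\otimes m_2\op$ span a dense subspace of $K(x,x\op)\cong A([x,x\op])$. Once these are in place, the left analogue of Lemma~\ref{lem:rwordBdl are nice} applies; in particular it produces continuity of $\lactB$ automatically, which is the ``in particular'' clause of the statement (recall that Proposition~\ref{prop:left action} deliberately left continuity unproven).

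It then remains to check the three compatibilities. Condition~\ref{item:demi to full:ractB and lactB commute} (the actions commute) is \ref{item:lactB and ractB commute}, and condition~\ref{item:demi to full:ips compatible} (the inner products agree) is exactly \ref{item:cA:FE2d}. For the adjointability hypothesis I would verify the symmetric alternative to \ref{item:demi to full:ractB and lactB by adjointables} permitted by the proposition, namely that each $b\in B(\sigma(x))$ acts by an $A(\rho(x))$-adjointable operator on $M(x)$. This is immediate from the $B(u)$-balancing inside the fibre $K(x,x\op)=M(x)\otimes_{u}M(x)\op$: for a unit-fibre element $b\in B(u)$ one has $\lip\cA<m_1\ractB b,m_2>=[(m_1\ractB b)\otimes m_2\op]=[m_1\otimes(m_2\ractB b^*)\op]=\lip\cA<m_1,m_2\ractB b^*>$, using $b\lactB m_2\op=(m_2\ractB b^*)\op$. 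Proposition~\ref{prop:from two-sided demi to equivalence} then yields that $\cM$ is an $(\cA,\cB)$-equivalence.

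Since the argument is essentially an assembly of results already established, there is no single deep obstacle; the part demanding the most care is matching the left demi-equivalence axioms to the proven LA/LIP properties and supplying the two standard Hilbert-module facts (positivity of $\innercpct[]{}{m}{m}$ and operator-norm equal to $\norm{m}^2$) that give the left (DE6) and (DE7). The one conceptually noteworthy point is that continuity of the left action is never checked directly but is delivered automatically by the left version of Lemma~\ref{lem:rwordBdl are nice}.
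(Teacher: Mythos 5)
Your proposal is correct and follows essentially the same route as the paper: match the (LA)/(LIP) properties to the left demi-equivalence axioms, obtain the left versions of \ref{item:rwordBdl:positive}--\ref{item:rwordBdl:fibrewise full} (and, via the left analogue of Lemma~\ref{lem:rwordBdl are nice}, continuity of $\lactB$) from the fibre identification $A(\rho(x))\cong M(x)\otimes_{\sigma(x)}M(x)\op\cong\compacts_{B(\sigma(x))}(M(x))$, and then invoke Proposition~\ref{prop:from two-sided demi to equivalence} together with \ref{item:lactB and ractB commute} and \ref{item:cA:FE2d}. The only (harmless) deviation is that you verify the symmetric alternative to Assumption~\ref{item:demi to full:ractB and lactB by adjointables} — that $B(\sigma(x))$ acts by $A(\rho(x))$-adjointable operators, via the $B(\sigma(x))$-balancing — whereas the paper checks Assumption~\ref{item:demi to full:ractB and lactB by adjointables} itself, which is automatic because $A(\rho(x))$ is literally the algebra of compacts on $M(x)$; the paper's remark following that proposition explicitly licenses your substitution.
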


\fix     we will frequently denote the \iFbdl\ $\cA$ of~$\cM$ by  $\cA=\cM\otimes_{\cB}\cM\op$ in analogy with $X\times_{\cH} X\op$ and with $\mbf{X}\otimes_{A}\mbf{X}\op$. However, the notation $\cA=\compacts(\cM_\cB)$ also lends itself well and has the advantage that it makes explicit reference to the underlying bundle~$\cB$ (and the fact that it is acting on the right-hand side).

\begin{proof}[Proof of Corollary~\ref{cor:iFbl is equivalent to cB}]
    We first check that~$\cM$ is a \demiequiv[left $\cA$-]. In the following lists, all references are either to Proposition~\ref{prop:left action} or to Proposition~\ref{prop:left ip}.
    \begin{itemize}[leftmargin=2cm]
      \item[\ref{item:rwordBdl:ractB:fibre}] was shown in \ref{item:lactB covers};
      \item[\ref{item:rwordBdl:ip:fibre}] was shown in \ref{item:cA:FE2a};
      \item[\ref{item:rwordBdl:ip}] was shown in \ref{item:cA:cts and sesquilinear};
       \item[\ref{item:rwordBdl:ip:C*linear}] 
          was shown in \ref{item:cA:FE2c}; and
       \item[\ref{item:rwordBdl:ip:adjoint}]
          was shown in \ref{item:cA:FE2b}.
  \end{itemize}
  For the remaining conditions necessary for a \demiequiv\ that we have not checked in any previous lemma, note that each $M(x)$ is a full right-Hilbert $\cst$-module by \ref{item:rwordBdl:M(x) full}, and so it is an \ib\ between $M(x)\otimes_{u} M(x)\op$ and $B(u)$ for $u\coloneqq \sigma(x)$. All structure with which we equipped the bundle $\cA$ is compatible with the homeomorphism $A(\rho(x))\cong M(x)\otimes_{u} M(x)\op$ in Lemma~\ref{lem:Q| is homeo}; in other words, with the restriction to $M(x)$ of the given $\cA$-action and $\cA$-inner product on $\cM$, we recover exactly the structure that $M(x)$ naturally carries. In particular:
\begin{itemize}[leftmargin=2cm]
      \item[\ref{item:rwordBdl:positive}:]  We have $\lip\cA<m,m>\geq 0$ in the $\cst$-algebra $A(\rho_{\cM}(m))$, and $\lip\cA<m,m>=0$ only if $m=0$.
       \item[\ref{item:rwordBdl:norm compatible}:] By \cite[Lemma 2.30.]{RaWi:Morita}, the norm of $\rip\cB<m,m>$ in $B(\sigma(x))$ coincides with the norm of $m\otimes m\op$ in $M(x)\otimes_{u}M(x)\op$ which, by definition, is exactly the norm of $[m\otimes m\op]=\lip\cA<m,m>$ in $A(\rho(x))$. Since~$\cM$ is a \demiequiv[right $\cB$-], the norm $m\mapsto \norm{\rip\cB<m,m>}^{1/2}$ agrees with the norm that the \uscBb~$\cM$ already carries, which is therefore in turn identical to the norm $m\mapsto \norm{\lip\cA<m,m>}^{1/2}$.
       \item[\ref{item:rwordBdl:fibrewise full}:]  For each $x\in X$, the linear span of $\{\lip\cA<m_{1},m_{2}>:m_{i}\in M(x)\}$ is dense in $A(\rho(x))$, since $M(x)$ is an \ib.
\end{itemize}
That $A(\rho(x))\cong M(x)\otimes_{u}M(x)\op$ further implies that 
Assumption~\ref{item:demi to full:ractB and lactB by adjointables} of Proposition~\ref{prop:from two-sided demi to equivalence} is satisfied. Since Assumptions~\ref{item:demi to full:ractB and lactB commute} and~\ref{item:demi to full:ips compatible} were shown in~\ref{item:lactB and ractB commute} and~\ref{item:cA:FE2d}, respectively, we can conclude that~$\cM$ is an equivalence by Proposition~\ref{prop:from two-sided demi to equivalence}.
\end{proof}

\section{The \iFbdl: Uniqueness}\label{sec:Uniqueness}

For the next result, we remind the reader of Remark~\ref{rmk:leoq}: given a $(\cG,\cH)$-groupoid equivalence~$X$ and two elements $x,y\in X$ with the same~$\cH$-anchor, we write $\leoq[X]{\cG}{x}{y}$ for the unique element of~$\cG$ that satisfies $\leoq[X]{\cG}{x}{y}\lact y=x $.

\begin{proposition}[cf.\ {\cite[Corollary 3.11]{AF:EquivFb}}]\label{prop:iFbdl is unique}
    Suppose $\cA =(A\to \cG)$, $\cA' =(A'\to \cG')$, and $\cB=(B\to \cH)$ are Fell bundles over groupoids $\cG,\cG',\cH$ respectively, and that $\cM=(q_{\cM}\colon M\to X)$ is both an $(\cA,\cB)$- and an $(\cA', \cB)$-Fell bundle equivalence. Then there exists a unique Fell bundle isomorphism $\Omega\colon \cA\to\cA'$ determined by the following commutative diagram.
    \[
      \begin{tikzcd}[row sep=small]
        \cA \ar[ddd, "p_{\cA}"']\ar[rrr, "\Omega"]&&& \cA'\ar[ddd, "p_{\cA'}"]
        \\
          & \linner[\cM]{\cA}{m}{n} \ar[r, mapsto]\ar[d, mapsto] & \linner[\cM]{\cA'}{m}{n}\ar[d, mapsto]&
          \\
          &\leoq[X]{\cG}{q_{\cM}(m)}{q_{\cM}(n)}\ar[r, mapsto] & \leoq[X]{\cG'}{q_{\cM}(m)}{q_{\cM}(n)}&
        \\
        \cG \ar[rrr, "\omega"']&&& \cG' 
      \end{tikzcd}  
    \]
\end{proposition}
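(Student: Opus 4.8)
The plan is to construct $\Omega$ fibre by fibre over a groupoid isomorphism $\omega\colon\cG\to\cG'$, to verify that each fibrewise map is a well-defined isometric isomorphism, and then to assemble these into a continuous Fell-bundle isomorphism. First I would produce $\omega$. Since $\cM$ is both an $(\cA,\cB)$- and an $(\cA',\cB)$-equivalence, its base~$X$ is simultaneously a $(\cG,\cH)$- and a $(\cG',\cH)$-groupoid equivalence, so Lemma~\ref{lem:thm for gpds} supplies groupoid isomorphisms $\cG\cong X\times_{\cH}X\op\cong\cG'$. Composing them gives the unique groupoid isomorphism $\omega$ with $\omega\bigl(\leoq[X]{\cG}{x}{y}\bigr)=\leoq[X]{\cG'}{x}{y}$, which is the bottom arrow of the diagram.

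Next, fix $g=\leoq[X]{\cG}{x}{y}$ and set $u=\sigma(x)=\sigma(y)$. Fullness of the $\cA$-valued inner product makes $A(g)$ the closed linear span of the elements $\lip\cA<m,n>$ with $m\in M(x)$ and $n\in M(y)$, and likewise for $A'(\omega(g))$. On this dense span I would define the fibrewise map by $\lip\cA<m,n>\mapsto\linner{\cA'}{m}{n}$; the crux is that it is isometric, so that it is well defined and extends to a linear bijective isometry $A(g)\to A'(\omega(g))$. The key point is that the norm on $A(g)$ is governed entirely by the right-Hilbert $B(u)$-module data of $M(x)$ and $M(y)$, which $\cA$ and $\cA'$ share. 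Concretely, for $a=\sum_i\lip\cA<m_i,n_i>$ one computes, using the equivalence axioms (FE2.b)--(FE2.d), that $a^*a=\sum_{i,j}\lip\cA<{n_i\ractB\rip\cB<m_i,m_j>},n_j>$ lies in $A(s_{\cG}(g))=A(\rho(y))$. Because $M(y)$ is a full $B(u)$-\ib, Proposition~\ref{RW:Morita:Prop3.8} identifies $A(\rho(y))\cong\compacts_{B(u)}(M(y))$ via $\lip\cA<k,k'>\mapsto\innercpct[]{}{k}{k'}$, and the $\cst$-identity (F9) then yields
\[
  \norm{a}^2=\norm{a^*a}=\norm{\sum_{i,j}\innercpct[]{}{n_i\ractB\rip\cB<m_i,m_j>}{n_j}},
\]
which is exactly the norm of Equation~\eqref{eq:norm on compacts} and depends only on $\cM$ as a right-$\cB$-module. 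The identical computation for $\cA'$ produces the same number, establishing the isometry.

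With the fibrewise isometric isomorphisms in hand, I would check that $\Omega$ is a Fell-bundle morphism. It is fibrewise linear by construction and covers $\omega$ since $\lip\cA<m,n>$ lies over $\leoq[X]{\cG}{x}{y}$. The $*$-compatibility is immediate from (FE2.b). Multiplicativity is verified on generators through the identity $\lip\cA<m_1,n_1>\cdot\lip\cA<m_2,n_2>=\lip\cA<{m_1\ractB\rip\cB<n_1,m_2>},n_2>$, a consequence of (FE2.c) and (FE2.d); since its right-hand side is expressed solely via the common $\cB$-structure $\ractB$ and $\rip\cB<\mvisiblespace,\mvisiblespace>$, applying $\Omega$ reproduces the same identity for $\cA'$. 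For continuity, the $\cA$- and $\cA'$-valued inner products are continuous maps $M\bfp{\sigma}{\sigma}M\to A$ and $\to A'$; composing with continuous sections $\mu,\nu$ of~$\cM$ gives a continuous map $(x,y)\mapsto\lip\cA<\mu(x),\nu(y)>$ on $X\bfp{\sigma}{\sigma}X$, and since $\leoqempty[X]{\cG}$ is an open surjection (Fell's criterion, \cite[Proposition 1.1]{Wil2019}) these descend to continuous local sections of $\cA$ whose values are dense in each fibre and which $\Omega$ carries to the analogous local sections of $\cA'$; \cite[Proposition A.8]{DL:MJM2023} then shows $\Omega$ is continuous, and being a fibrewise isometric bijection it is an isomorphism of \uscBb s. (Alternatively, one may route continuity through $\compacts(\cM_\cB)$, whose generating sections are continuous by Lemma~\ref{lem:cA is uscBb}, and compose the two resulting isomorphisms.) Uniqueness is automatic: any Fell-bundle isomorphism realizing the diagram must agree with $\Omega$ on the generators $\lip\cA<m,n>$, hence everywhere by linearity and continuity, while $\omega$ is already unique by Lemma~\ref{lem:thm for gpds}.

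The main obstacle I expect is the fibrewise isometry for non-unit $g$: a priori the Banach-space norms of $A(g)$ and $A'(\omega(g))$ are unrelated, and one must show they coincide. This is precisely where the argument uses that the two equivalences share the single underlying right-Hilbert $\cB$-module~$\cM$, funnelling both norms through the $\cst$-identity into the $B(u)$-compact-operator norm of Equation~\eqref{eq:norm on compacts}. The algebraic bookkeeping of (FE2.b)--(FE2.d) --- in particular keeping track of which fibre of~$\cB$ each inner product lands in when forming $a^*a$ --- must be carried out carefully.
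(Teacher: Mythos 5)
Your core isometry computation is exactly the mechanism of the paper's proof: both arguments funnel the a priori unrelated norms of $A(g)$ and $A'(\omega(g))$ through the $\cst$-identity and the canonical identification of a unit fibre with $\compacts_{B(u)}(M(y))$ (Lemma~\ref{RW:Morita:Prop3.8}), landing on the representative-free expression of Equation~\eqref{eq:norm on compacts}, which depends only on the shared right-$\cB$-module structure of $\cM$. Where you genuinely differ is in the organization. The paper first reduces, via Corollary~\ref{cor:iFbl is equivalent to cB} and the groupoid isomorphism of Lemma~\ref{lem:thm for gpds}, to the case $\cA=\cM\otimes_{\cB}\cM\op$, $\cG'=\cG$, $\omega=\operatorname{id}$; it then constructs a map $\tilde{\Omega}$ out of the unbalanced bundle $\cK$, checks invariance under the maps $\Psi_{h}$ so that $\tilde{\Omega}$ descends through the $\FBRel$-quotient of Lemma~\ref{lem:FBRel}, and finally proves bijectivity, continuity, and openness. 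You instead define $\Omega$ fibrewise and symmetrically on the spans of the generators $\linner[\cM]{\cA}{m}{n}$, absorbing well-definedness into the isometry of finite sums; this avoids the quotient bookkeeping entirely, and it has the additional merit that you verify multiplicativity and $*$-compatibility on generators explicitly, which the paper leaves implicit in the phrase ``determined by.'' The price is that the paper's route gets continuity more cheaply, since the topology of $\cM\otimes_{\cB}\cM\op$ is generated by precisely the sections the argument needs.

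There is, however, one step that is a genuine gap as written: your claim that ``fullness of the $\cA$-valued inner product'' makes $A(g)$ the closed linear span of $\{\linner[\cM]{\cA}{m}{n}: m\in M(x),\, n\in M(y)\}$ for an arbitrary, possibly non-unit, $g=\leoq[X]{\cG}{x}{y}$. The equivalence axioms only make each $M(x)$ an \ib\ between the unit-fibre algebras $A(\rho(x))$ and $B(\sigma(x))$, so fullness is given over units only; for non-unit $g$ the density statement is exactly what the paper proves by a separate argument, using saturation of the Fell bundle (so that $A'(g)$ is an $A'(x\ract \cH)$--$A'(y\ract \cH)$-\ib), Hewitt--Cohen factorization \cite[Proposition 2.33]{RaWi:Morita} to write an arbitrary element as $a_{g}a_{y\ract\cH}$, approximation of $a_{y\ract\cH}$ by sums of $\linner[\cM]{\cA'}{n'}{n}$, and the identity $a_{g}\linner[\cM]{\cA'}{n'}{n}=\linner[\cM]{\cA'}{a_{g}\lactB n'}{n}$. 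Without this, your $\Omega$ is defined only on a closed subspace of each fibre, and both surjectivity and your uniqueness argument (which also invokes this density) break down. Two smaller points: continuity of $\Omega$ should be obtained from Lemma~\ref{lem:uscBb:cts} (i.e., \cite[Proposition A.7]{DL:MJM2023}), not from Proposition A.8, which is the openness criterion you correctly apply afterwards; and the maps $(x,y\op)\mapsto\linner[\cM]{\cA}{\mu(x)}{\nu(y)}$ do not literally descend to sections of $\cA$ over $\cG$ (their values depend on the chosen representative of $[x,y\op]$), so the section-based continuity check must be run along the open quotient $X\bfp{\sigma}{\sigma}X\op\to\cG$, or replaced by the paper's direct net argument using openness of the quotient map $Q$ and Lemma~\ref{lem:convergence uscBb:close nets}.
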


\begin{proof}[Proof of Proposition~\ref{prop:iFbdl is unique}]
    By Corollary~\ref{cor:iFbl is equivalent to cB}, we may without loss of generality assume that $\cA=\cM\otimes_{\cB}\cM\op$ and $\cG=X\times_{\cH}X\op$. In that setting, $\leoq[X]{\cG}{q_{\cM}(m)}{q_{\cM}(n)}$ is exactly $[q_{\cM}(m),q_{\cM}(n)] \in \cG$; see Lemma~\ref{lem:thm for gpds}. We also remind the reader that $\omega\colon X\times_{\cH} X\to \cG', [x,y\op]\mapsto \leoq[X]{\cG'}{x}{y}$, is known to be an  isomorphism of topological groupoids. It is therefore clear that $\cA'\cong \omega^*(\cA')$, and so we may without loss of generality assume that $\cG'=\cG$ (and hence $\omega=\operatorname{id}$ and $\cA' = \omega^*(\cA')$). As before, we will denote the anchor maps of~$X$ by  $\sigma\colon X\to \cH\z$ and $\rho\colon X\to \cG\z, x\mapsto [x,x\op]$. Recall that $\cG\z\cong X/\cH$ via $[x,x\op]\mapsto x\ract \cH$.

    To construct $\Omega$, we will first  construct a surjective bundle map 
    \(
    \tilde{\Omega} \colon \cK \to \cA'
    \) covering the quotient map $X\bfp{\sigma}{\sigma}X\op\to \cG, (x,y\op)\mapsto [x,y\op]$. We will then show that 
    $\tilde{\Omega}$ is constant on $\FBRel$-equivalence classes
    (where $\FBRel$ is as in Lemma~\ref{lem:FBRel}),  so that $\tilde{\Omega}$ induces a bundle map $\Omega$ from the quotient $\cM\otimes_{\cB}\cM\op = Q(\cK)$ to $\cA'$ covering the identity map $\cG\to\cG$. After we have shown that $\Omega$ is injective, we will then prove the conditions in \cite[Propositions A.7 and A.8]{DL:MJM2023} to  deduce that $\Omega$ is open and continuous and hence the claimed isomorphism of Fell bundles.

    We define $\tilde{\Omega}$ fibrewise. Fix $u\in \cH\z$ and $x,y\in Xu$, let $g\coloneqq [x,y\op]$, and consider
    \begin{equation}\label{eq:fibrewise on Cartesian}
    M(x)\times M(y)\op \to \cA'_{g},
    \quad
    (m, n\op) \mapsto \linner[\cM]{\cA'}{m}{n}.
    \end{equation}
    By the assumption on the left $\cA'$-valued inner product on~$\cM$, this map is bilinear. Moreover,  by \cite[Corollary 4.6]{DL:MJM2023}\footnote{Note that we cannot simply invoke the assumption that each $M(x)$ is an \ib\ (\cite[Definition 6.1(c)]{MW:2008:Disintegration}), because we are in the setting where $x$ might not equal $y$.}, we have $\linner[\cM]{\cA'}{m \ractB b}{n} = \linner[\cM]{\cA'}{m }{n\ractB b^*} $ for all $b\in B(u)$, meaning that the map in \eqref{eq:fibrewise on Cartesian} descends to a linear map 
    \[
    \tilde{\Omega}_{(x,y\op)}\colon M(x)\odot_{B(u)}M(y)\op \to A'(g),
    \quad\text{determined by}\quad
    \tilde{\Omega}_{(x,y\op)}(m\odot n\op) = \linner[\cM]{\cA'}{m}{n},
    \]
    where $M(x)\odot_{B(u)}M(y)\op$ denotes the balanced algebraic tensor product. 

       We claim that $\tilde{\Omega}_{(x,y\op)}$ has dense range. Since $\cA'$ is a Fell bundle, $A'(g)$ is an $A'(x\ract \cH)-A'(y\ract \cH)$-\ib, so by \cite[Proposition 2.33 (Hewitt-Cohen)]{RaWi:Morita},  any element of $ A'(g)$ can be written as  $a_{g} a_{y\ract \cH}$ for some $a_{g}\in  A'(g)$ and some  $a_{y\ract \cH}\in A'(y\ract \cH)$, where the product is  the multiplication in the Fell bundle~$\cA'$. By assumption, $M(y)$ is a $A'(y\ract \cH)-B(u)$-\ib. In particular, $\linner[\cM]{\cA'}{M(y)}{M(y)}$ is dense in $A'(y\ract \cH)$, meaning we can approximate $a_{y\ract \cH}$ by linear combinations of elements of the form $\linner[\cM]{\cA'}{n'}{n}$ for $n',n\in M(y)$. Since $a_{g}\linner[\cM]{\cA'}{n'}{n}=\linner[\cM]{\cA'}{a_{g}\lactB n'}{n}$ and $a_{g}\lactB n' \in M(g\lact y)=M(x)$, we conclude that the arbitrary element $a_{g} a_{y\ract \cH}$  of $ A'(g)$ can be approximated by linear combinations of elements of the form $\linner[\cM]{\cA'}{m}{n}$ for $m\in M(x),n\in M(y)$, as claimed.

    Since~$\cM$ is an equivalence, $M(x)$ is an \ib\ between $A'(\rho(x))=A'(x\ract \cH)$ and $B(\sigma(x))=B(u)$, so that there exists a canonical isomorphism $A'(x\ract \cH)\cong \compacts_{B(u)}(M(x))$ of $\cst$-algebras. In particular, the norm with which  we equip $M(x)\odot_{B(u)}M(y)\op$, is unambiguously given by Equation~\eqref{eq:norm on compacts} and, using the $*$-isomorphism,  can be rewritten to
    \begin{equation*}
	\norm{ \sum_{i=1}^{k
		} m_{i}\otimes n_{i}\op}
	=
	\norm{
		\sum_{i,j=1}^{k
		} 
		\linner[\cM]{\cA'}{n_{i}\ractB \rinner[\cM]{\cB}{m_{i}}{m_{j}} }{ n_{j}}
	}^{1/2}.
	\end{equation*}
    Using the properties \cite[Def.\ 2.9, (F9)]{DL:MJM2023} and
    \cite[Def.\ 2.11, (FE2.b), (FE2.c)(FE2.d)]{DL:MJM2023}, we therefore see that 
    each $ \tilde{\Omega}_{(x,y\op)}$ is isometric and hence extends to a map on the completion $K(x,y\op)$ of $M(x)\odot_{B(u)}M(y)\op$. All in all, we get a fibrewise linear map
    \[
    \tilde{\Omega}\colon \quad
    K=\bigsqcup_{(x,y\op)\in X\bfp{\sigma}{\sigma}X\op} K(x,y\op) \to \bigsqcup_{g\in \cG}A'(g),
    \quad\text{determined by}\quad
    \tilde{\Omega}(m\otimes n\op) = \linner[\cM]{\cA'}{m}{n},
    \]
    covering the quotient map $X\bfp{\sigma}{\sigma}X\op\mapsto \cG$. 
    
    Next, fix $u,v\in \cH\z$, $h\in u\cH v$, and $x\in Xu$, $y\in Xv$. For  an arbitrary but fixed element $\xi=\sum_{i}(m_{i}\ractB b_{i})\odot n_{i}\op$  of $M(x\ract h)\odot_{B(u)}M(y)\op$, 
    we have
    \begin{align*}
    \tilde{\Omega}_{(x,y\op)}\left(  \Psi_{h}(\xi)\right) 
    &=
    \sum_{i}
    \linner[\cM]{\cA'}{m_{i}}{n_{i}\ractB b_{i}^*}
    &&\text{by definition of $\Psi$, see \eqref{eq:def:Psi_h}}
    \\
    &=
    \sum_{i}
    \linner[\cM]{\cA'}{m_{i}\ractB b_{i}}{n_{i}}
    =
     \tilde{\Omega}_{(x,y\op)}(\xi)
    &&\text{by \cite[Corollary 4.6]{DL:MJM2023}}.
    \end{align*}
    Since $\tilde{\Omega}_{(x,y\op)}(\Psi_{h}(\xi))=\tilde{\Omega}_{(x,y\op)}(\xi)$ for all $\xi$ in the algebraic tensor product, we conclude that  $\tilde{\Omega}$ descends to a bundle map
    \[
        \Omega\colon \quad \cM\otimes_{\cB}\cM\op \to \cA',
    \quad\text{determined by}\quad
    \Omega([m\otimes n\op]) = \linner[\cM]{\cA'}{m}{n},
    \]
    covering the identity map $\cG\to \cG$.
    We must now show that this map is a continuous open bijection.

  Since $\Omega$ covers the identity map, surjectivity of $\Omega$ follows from the fact that each $\tilde{\Omega}_{(x,y\op)}$ has dense range, and injectivity of $\Omega$ follows since each $\tilde{\Omega}_{(x,y\op)}$ is isometric. To see that $\Omega$ is continuous, suppose that we have a convergent net in $\cA=\cM\otimes_{\cB}\cM\op$, say $[\xi_{\lambda}]\to [\xi]$; by $(2)\implies (1)$ in \cite[Theorem 18.1.]{Munkres}, it suffices to show that a subnet of $\{\Omega([\xi_{\lambda}])\}_{\lambda}$ converges to $\Omega([\xi])$. Since the quotient map $Q\colon\cK\to \cA$ is open (Lemma~\ref{lem:FBRel}), we can without loss of generality assume that $\xi_{\lambda}\to\xi$ in~$\cK$. Let $q_{\cK}(\xi_{\lambda})=(x_{\lambda},y_{\lambda}\op)$ with limit $q_{\cK}(\xi)=(x,y)$ in $X\bfp{\sigma}{\sigma}X\op$. We now invoke the definition of the topology on~$\cK$ (Lemma~\ref{lem:DL:MJM2023:Lemma 5.2}; see also \cite[Lemma A.3]{DL:MJM2023}): for any given $\epsilon >0$, we can find finitely many continuous sections $\mu_{i},\nu_{i}$ of~$\cM$ and a $\lambda_0$ such that for all $\lambda\geq \lambda_0$, we have
  \begin{equation}\label{eq:approx in cK}
    \norm{\xi_{\lambda} - \sum_{i=1}^{k}\mu_{i}(x_{\lambda})\otimes \nu_{i}(y_{\lambda})\op} < \epsilon
    \text{ and }
    \norm{\xi - \sum_{i=1}^{k}\mu_{i}(x)\otimes \nu_{i}(y)\op}<\epsilon.
  \end{equation}
  The norm on each fibre $A([x,y\op])$ of~$\cA$ is defined as the norm on one of its lifts under $Q$, meaning that $Q$ restricted to  $M(x)\otimes_{u}M(y)\op$ is isometric. Since $\Omega$ is likewise isometric, \eqref{eq:approx in cK} implies
  \begin{align}\label{eq:Omega circ Q}
    \norm{\Omega([\xi_{\lambda}]) - \Omega\left(\left[\sum\nolimits_{i}\mu_{i}(x_{\lambda})\otimes \nu_{i}(y_{\lambda})\op\right]\right)} < \epsilon
    \text{ and }
    \norm{\Omega([\xi]) - \Omega\left(\left[\sum\nolimits_{i}\mu_{i}(x)\otimes \nu_{i}(y)\op\right]\right)}<\epsilon.
  \end{align}
  By definition of $\Omega$,
  \[
  \Omega\left(\left[\sum\nolimits_{i}\mu_{i}(x)\otimes \nu_{i}(y)\op\right]\right)
  =
  \sum\nolimits_{i}\linner[\cM]{\cA'}{\mu_{i}(x)}{\nu_{i}(y)}.
  \]
  By assumption on~$\cM$, $\linner[\cM]{\cA'}{\mvisiblespace}{\mvisiblespace}$ is continuous, so that continuity of $\mu_{i},\nu_{i}$ and of addition in~$\cA$ implies 
  \[
  \sum\nolimits_{i}\linner[\cM]{\cA'}{\mu_{i}(x_{\lambda})}{\nu_{i}(y_{\lambda})}
  \to 
  \sum\nolimits_{i}\linner[\cM]{\cA'}{\mu_{i}(x)}{\nu_{i}(y)}.
  \]
  Combining \eqref{eq:Omega circ Q} with Lemma~\ref{lem:convergence uscBb:close nets}, we conclude that $\Omega([\xi_{\lambda}])\to \Omega([\xi])$, so $\Omega$ is continuous. Lastly, since $\Omega$ is isometric, it follows from \cite[Proposition A.8]{DL:MJM2023} that $\Omega$ is open as well.
\end{proof}

This also finishes the proof of the main theorem, Theorem~\ref{thm:demiequiv to equiv}, since it is a combination of Corollary~\ref{cor:iFbl is equivalent to cB} and Proposition~\ref{prop:iFbdl is unique}.

\begin{remark} 
    A brief comment on another property related to equivalences that has a version not only in the realm of groupoids and $\cst$-algebras but also of Fell bundles: linking objects.    
    
    By \cite[Theorem 3.19.]{RaWi:Morita}, two $\cst$-algebras are \SMEadj\ if and only if they are complementary full corners of another algebra, called the {\em linking algebra}. In \cite[Theorem 4.1]{SW:2012:RsEquivalence}, this result was moved into groupoid-land: an equivalence of groupoids gives rise to a {\em linking groupoid}  whose $\cst$-algebra is the linking algebra that witnesses the \SMEnoun. Likewise, a Fell bundle equivalence 
    has a {\em linking (Fell) bundle} 
    (see \cite[Section 3]{SW:Equivalence}, \cite[Theorem 3.2]{AF:EquivFb}) whose $\cst$-algebra is the linking algebra of the Fell bundle $\cst$-algebras \cite[Theorem 14]{SW:Equivalence}.
    
    It would be interesting to study whether  `one-sided' versions of these linking objects exist. In the paper at hand, we considered \psp{groupoid}s  $X$, full right-Hilbert $\cst$-modules $\mathbf{X}$, and Fell bundle-\demiequiv s $\cM$, and turned them into equivalences between the coefficient object and the `generalized compacts', that is, the \igpd\ $X\times_\cH X\op$, the $\cst$-algebra of compact operators $\compacts(\mathbf{X}_A)$, and the \iFbdl\ $\cM\otimes_{\cB}\cM\op = \compacts(\cM_{\cB})$, respectively. Do these results have analogues in the language of linking objects?
\end{remark}

\section{Applications}\label{sec:Applications}
The original impetus for the paper at hand was Proposition A.2.3 in the appendix of \cite{MS:2023:stablerank}; it can easily seen to be a corollary of our main theorem in the special case that~$\cH$ is a group that acts freely and properly on a space $X$. Before we study other interesting applications, let us first do some sanity checks.
In analogy to the isomorphism of \igpd s in Example~\ref{ex:X=H}, we have:
\begin{corollary}\label{cor:ib of cB is cB}
    Let~$\cB$ be a Fell bundle, considered as a self-Fell bundle equivalence \cite[Example 6.6]{MW:2008:Disintegration}. Then there is an isomorphism of Fell bundles $\compacts(\cB_{\cB})=\cB\otimes_{\cB}\cB\op\cong \cB$ determined by $[b_{1}\otimes b_{2}\op]\mapsto b_{1}\cdot b_{2}^*$.
\end{corollary}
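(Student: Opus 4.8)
The plan is to deduce this from the uniqueness already in hand (Theorem~\ref{thm:demiequiv to equiv}, or equivalently Proposition~\ref{prop:iFbdl is unique}), rather than building $\Omega$ by hand and verifying compatibility with products and adjoints. First I would recognize $\cM\coloneqq\cB$ as a \demiequiv[right $\cB$-] over $X\coloneqq\cH$. By Example~\ref{ex:X=H}, $\cH$ is a \psp{$\cH$} with anchor map $\sigma=s_\cH$, which is open by our standing assumptions, so the hypotheses of the theory apply. The right action is the bundle multiplication $m\ractB b=mb$, and the candidate inner product is $\rip\cB<m_1,m_2>\coloneqq m_1^{*}m_2$, which lies in $B\bigl((p_\cB m_1)\inv(p_\cB m_2)\bigr)=B\bigl(\reoq[X]{q_\cM(m_1)}{q_\cM(m_2)}{\cH}\bigr)$, as demanded by~\ref{item:rwordBdl:ip:fibre}. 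Conditions~\ref{item:rwordBdl:ractB:fibre}--\ref{item:rwordBdl:ip:adjoint} are immediate from associativity, (anti)linearity, and continuity of the multiplication and involution of~$\cB$; the positivity and norm conditions \ref{item:rwordBdl:positive}--\ref{item:rwordBdl:norm compatible} hold because each fibre $B(h)=M(h)$ is a right-Hilbert $B(s_\cH(h))$-module in the standard way; and fibrewise fullness~\ref{item:rwordBdl:fibrewise full} is precisely the saturation of~$\cB$. Hence $\compacts(\cB_\cB)=\cB\otimes_\cB\cB\op$ is defined and, by Corollary~\ref{cor:iFbl is equivalent to cB}, is a saturated Fell bundle equivalent to~$\cB$ via~$\cM$.

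Next I would put the second equivalence on the table. Viewed as a self-equivalence (\cite[Example 6.6]{MW:2008:Disintegration}), $\cM=\cB$ is a $(\cB,\cB)$-equivalence whose left action is again the multiplication and whose left $\cB$-valued inner product is $\lip\cB<m_1,m_2>=m_1 m_2^{*}$. Under the groupoid isomorphism $\cG=\cH\times_\cH\cH\op\cong\cH$, $[h_1,h_2\op]\mapsto h_1 h_2\inv$, of Example~\ref{ex:X=H}, this lands in the correct fibre $B(h_1 h_2\inv)$ and covers $\leoqempty[X]{\cG}$, and the compatibility $\lip\cB<m_1,m_2>\lactB m_3=m_1\ractB\rip\cB<m_2,m_3>$ is just associativity $(m_1 m_2^{*})m_3=m_1(m_2^{*}m_3)$. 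The one place that needs genuine care is exactly this step: correctly transporting the left structure through the identification $\cG\cong\cH$ and confirming the fibre placement of $m_1 m_2^{*}$; everything else is formal.

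With both equivalences established, the conclusion is immediate. Since $\cM$ is simultaneously a $\bigl(\compacts(\cB_\cB),\cB\bigr)$- and a $(\cB,\cB)$-equivalence, Theorem~\ref{thm:demiequiv to equiv} yields a unique Fell bundle isomorphism $\Omega\colon\compacts(\cB_\cB)\to\cB$ determined by $\innercpct[\cM]{\cB}{m}{n}\mapsto\linner[\cM]{\cB}{m}{n}$. By Proposition~\ref{prop:left ip} the source element is $[m\otimes n\op]$, while the target is the self-equivalence inner product $m n^{*}$; taking $m=b_1$, $n=b_2$ gives $\Omega([b_1\otimes b_2\op])=b_1 b_2^{*}$, which is exactly the claimed map. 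I expect no further obstacle: should one wish, the multiplicativity of this map can be cross-checked directly against the fibrewise formula~\eqref{eq:cA:cdot} and the involution of Lemma~\ref{lem:cK:involution}, but invoking the uniqueness statement renders those calculations unnecessary.
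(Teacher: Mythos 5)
Your proposal is correct and follows exactly the route the paper intends: the corollary is stated without proof precisely because it is the uniqueness part of Theorem~\ref{thm:demiequiv to equiv} (equivalently Proposition~\ref{prop:iFbdl is unique}) applied to the self-equivalence of \cite[Example 6.6]{MW:2008:Disintegration}, with $\innercpct[\cM]{\cB}{b_1}{b_2}=[b_1\otimes b_2\op]$ mapped to $\linner[\cM]{\cB}{b_1}{b_2}=b_1 b_2^*$. Your explicit verification that $\cB$ is a \demiequiv[right $\cB$-] over $X=\cH$ (so that $\compacts(\cB_{\cB})$ is even defined) is a step the paper leaves implicit, and you carry it out correctly, including the fibre bookkeeping via Example~\ref{ex:X=H}.
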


In analogy to the isomorphism of Hilbert $\cst$-bimodules in \eqref{eq:Yop otimes Y is trivial}, we have:
\begin{corollary}
    Suppose  that~$\cM$, $\cN$,~$\cK$ are~\demiequiv[right $\cB$-]s, and let $\cA=\compacts(\cN_{\cB})$ be the \iFbdl\ of $\cN$. Then $(\cM \otimes_{\cB}\cN\op)\otimes_{\cA} (\cN\otimes_{\cB} \cK\op)\cong \cM \otimes_{\cB}\cK\op$ as \uscBb s.
\end{corollary}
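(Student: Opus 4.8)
The plan is to build the isomorphism fibrewise from the purely $\cst$-algebraic identity \eqref{eq:Yop otimes Y is trivial} and then upgrade it to a homeomorphism of total spaces via the section criterion \cite[Propositions A.7 and A.8]{DL:MJM2023}. Write $\cM=(M\to X)$, $\cN=(N\to X)$, $\cK=(L\to X)$ (all over the same \psp{$\cH$}~$X$), and recall that the three balanced tensor bundles in the statement, as well as the outer product $(\cM\otimes_\cB\cN\op)\otimes_\cA(\cN\otimes_\cB\cK\op)$, are all instances of the tensor-product construction of \cite[Sections 5 and 6]{DL:MJM2023} (equivalently, of Lemmas~\ref{lem:DL:MJM2023:Lemma 5.2}--\ref{lem:cA is uscBb} run with mixed arguments), so each is a \uscBb\ over $\cG=X\times_\cH X\op$ whose topology is determined by elementary-tensor sections. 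Fixing $g=\leoq[X]{\cG}{x}{z}=[x,z\op]$ and a matched representative $y\in X$ with $\sigma(x)=\sigma(y)=\sigma(z)=:u$, the fibre of $(\cM\otimes_\cB\cN\op)\otimes_\cA(\cN\otimes_\cB\cK\op)$ over $g$ is
\[
 \bigl(M(x)\otimes_u N(y)\op\bigr)\otimes_{A([y,y\op])}\bigl(N(y)\otimes_u L(z)\op\bigr),
\]
where $A([y,y\op])\cong\compacts_{B(u)}(N(y))$ by \eqref{eq:defn of K's fibres} applied to $\cN$, while the fibre of $\cM\otimes_\cB\cK\op$ over $g$ is $M(x)\otimes_u L(z)\op$. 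Taking $\mathbf{X}=M(x)$, $\mathbf{Y}=N(y)$, $\mathbf{Z}=L(z)$, and $A=B(u)$ in \eqref{eq:Yop otimes Y is trivial} gives an isometric isomorphism of these two Banach spaces, determined on elementary tensors by
\[
 (m\otimes n_1\op)\otimes(n_2\otimes k\op)\longmapsto \bigl(m\ractB\rip\cB<n_1,n_2>\bigr)\otimes k\op ;
\]
this is exactly the multiplication formula \eqref{eq:cA:cdot}, reflecting that the $\cA$-balancing is implemented by the module action.

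First I would check that these fibrewise maps assemble into a single well-defined, fibrewise-linear, isometric bijection $\Theta\colon(\cM\otimes_\cB\cN\op)\otimes_\cA(\cN\otimes_\cB\cK\op)\to\cM\otimes_\cB\cK\op$ covering $\mathrm{id}_\cG$. Well-definedness requires descent through both layers of balancing in the domain: the $A([y,y\op])$-balancing over the middle unit fibre, and the $\cA$-balancing identifying different choices of the middle representative $y$ (the analogue of the relation $\FBRel$ of Lemma~\ref{lem:FBRel}). The first is automatic since \eqref{eq:Yop otimes Y is trivial} is an isomorphism of modules, hence $\compacts_{B(u)}(N(y))$-balanced; for the second, the $\cst$-algebraic isomorphism is natural in $\mathbf{Y}=N(y)$ and the $\cA$-action is implemented by the maps $\Psi$, so I would verify compatibility on elementary tensors of the form $(m\otimes(n\ractB b)\op)\otimes(\,\cdots)$ versus $(m\otimes n\op)\otimes(b\lactB(\cdots))$ using \ref{item:rwordBdl:ip:C*linear}, \ref{item:rwordBdl:ip:C*linear on left}, and the defining formula \eqref{eq:def:Psi_h}, exactly as the descent of $\tilde{\Omega}$ through $\FBRel$ was handled in the proof of Proposition~\ref{prop:iFbdl is unique}. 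That $\Theta$ is fibrewise an isometric bijection is then immediate from \eqref{eq:Yop otimes Y is trivial}.

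The hard part will be continuity and openness of $\Theta$, which is the content beyond the fibrewise algebra. By \cite[Proposition A.7]{DL:MJM2023} it suffices to exhibit a family of continuous sections determining the domain's topology whose $\Theta$-images are continuous sections of $\cM\otimes_\cB\cK\op$; openness then follows from fibrewise isometry via \cite[Proposition A.8]{DL:MJM2023}, as in Proposition~\ref{prop:iFbdl is unique}. The domain's topology is generated by the balanced elementary-tensor sections built from continuous sections $\mu$ of $\cM$, $\nu,\nu'$ of $\cN$, and $\kappa$ of $\cK$, and under $\Theta$ such a section is sent to $[x,z\op]\mapsto\bigl[(\mu(x)\ractB\rip\cB<\nu(y),\nu'(y)>)\otimes\kappa(z)\op\bigr]$. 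Here $x\mapsto \mu(x)\ractB\rip\cB<\nu(y),\nu'(y)>$ is a continuous section of $\cM$, because $\rip\cB<\nu,\nu'>$ is a continuous section of the pullback of $\cB$ (continuity of the inner product, \ref{item:rwordBdl:ip}) and $\ractB$ is continuous (\ref{item:rwordBdl:ractB:cts}); composing with the continuous open quotient map of Lemma~\ref{lem:cA is uscBb} shows the $\Theta$-image is a continuous section of $\cM\otimes_\cB\cK\op$. The only genuine bookkeeping is pinning down the middle variable $y$ along a convergent net in the base and confirming that these sections really do determine the tensor-product topology; for the attendant net estimates I would reuse Remark~\ref{rmk:DL:MJM2023:Lemma5.4} and Lemma~\ref{lem:convergence uscBb:close nets} in the manner of Lemma~\ref{lem:DL:MJM2023:Thm5.20}. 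With continuity and openness established, $\Theta$ is a fibrewise-isometric continuous open bijection covering $\mathrm{id}_\cG$, hence the desired isomorphism of \uscBb s.
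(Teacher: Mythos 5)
Your proposal is correct, but it takes a genuinely different route from the paper's. The paper's proof (explicitly a sketch) stays at the level of Fell-bundle equivalences: by Theorem~\ref{thm:demiequiv to equiv}, each of $\cM$, $\cN$, $\cK$ is an equivalence between its \iFbdl\ and~$\cB$; one then rebrackets using associativity of the balanced tensor product of equivalences, collapses $\cN\op\otimes_{\cA}\cN\cong\cB$ by applying the uniqueness result (Proposition~\ref{prop:iFbdl is unique}) to $\cN\op$, and finally absorbs the coefficient bundle, $\cM\otimes_{\cB}\cB\cong\cM$. You instead build the isomorphism directly and fibrewise from the $\cst$-module identity \eqref{eq:Yop otimes Y is trivial}, check descent through both layers of balancing, and upgrade to a bundle isomorphism via \cite[Propositions A.7 and A.8]{DL:MJM2023} — essentially re-running the machinery of Lemma~\ref{lem:DL:MJM2023:Thm5.20} and Proposition~\ref{prop:iFbdl is unique} rather than quoting their consequences. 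What the paper's route buys is brevity and generality: it never names representatives, and it works verbatim when $\cM$, $\cN$, $\cK$ live over three \emph{different} principal $\cH$-spaces, whereas your write-up assumes a common base~$X$ (an unnecessary restriction, though your argument survives the notational generalization with fibres $M(x)\otimes_u N(y)\op$, $y$ in the base of $\cN$, etc.). The cost is that the paper leans on two claims flagged only as ``easy to see'' (associativity of $\otimes_{\cB}$ and absorption of the coefficient bundle) plus the uniqueness proposition. What your route buys is self-containedness and an explicit formula: the isomorphism is exhibited on elementary tensors as $(m\otimes n_{1}\op)\otimes(n_{2}\otimes k\op)\mapsto\bigl(m\ractB\rip\cB<n_{1},n_{2}>\bigr)\otimes k\op$, i.e.\ composition of generalized compacts; and your descent checks (via the $\Psi$-maps together with \ref{item:rwordBdl:ip:C*linear} and \ref{item:rwordBdl:ip:C*linear on left}) and the continuity/openness argument are precisely the verifications the paper's ``easy to see'' steps would require if written out in full. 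In effect, you have supplied the details that the paper's sketch defers.
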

\begin{proof}[Proof sketch.]
    First, let us make sure that the objects we have written down are not nonsense: By Theorem~\ref{thm:demiequiv to equiv}, all three \demiequiv s are equivalences between their respective \iFbdl s and~$\cB$. As explained in \cite[Example 6.7]{MW:2008:Disintegration}, their opposites are then likewise equivalences, just in the other direction. It was shown in \cite{DL:MJM2023} that equivalences can be concatenated: $\cM \otimes_{\cB}\cN\op$ is an equivalence from the \iFbdl\ of~$\cM$ to~$\cA$, and so forth. 
    In particular, both the left- and the right-hand side of the alleged isomorphism is an equivalence between the \iFbdl\ of~$\cM$ and that of~$\cK$. 

    Since the  balanced tensor product of \ib\ is associative, it is easy to see that the same is true for $\otimes_{\cB}$ of Fell bundle equivalences. In particular, 
    \begin{align*}
        (\cM \otimes_{\cB}\cN\op)\otimes_{\cA} (\cN\otimes_{\cB} \cK\op)
        &
        \cong
        \bigl(\cM \otimes_{\cB}(\cN\op\otimes_{\cA} \cN)\bigr)\otimes_{\cB} \cK\op.
    \end{align*}
    By Proposition~\ref{prop:iFbdl is unique} (applied to $\cN\op$), the Fell bundle $\compacts(\cN\op_{\cA})=\cN\op\otimes_{\cA} \cN$ is  isomorphic to~$\cB$. Since the balanced tensor product of \ib\ absorbs the coefficient algebra (meaning that $\mbf{X}\otimes_{A} A\cong \mbf{X}$), it is easy to see that $\otimes_{\cB}$ absorbs the coefficient Fell bundle. Thus,
    \begin{align*}
        (\cM \otimes_{\cB}\cN\op)\otimes_{\cA} (\cN\otimes_{\cB} \cK\op)
        &
        \cong
        (\cM \otimes_{\cB} \cB )\otimes_{\cB} \cK\op
        \cong 
        \cM\otimes_{\cB}\cK\op,
    \end{align*}
    as claimed.
\end{proof}

We can also conclude some other, well-known results.
\begin{corollary}
    Suppose~$\cH$ is a \LCH\ groupoid with Haar system $\{\lambda_u\}_{u\in\cH\z}$ and~$\cB$ is a Fell bundle over~$\cH$. Then $\cst(\cB)$ is \SMEadj\ to $\mathrm{M}_n(\cst(\cB))$. 
\end{corollary}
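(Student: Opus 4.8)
The plan is to exhibit $\mathrm{M}_n(\cst(\cB))$ as the $\cst$-algebra of an \iFbdl\ and then appeal to the Fell bundle equivalence theorem \cite{MW:2008:Disintegration}. Let $\mathcal{R}_n$ denote the pair groupoid on $\{1,\dots,n\}$: its unit space is $\{1,\dots,n\}$, it has a single arrow $(i,j)$ from $j$ to $i$ for each pair, and composition is $(i,j)(j,k)=(i,k)$. First I would take $X=\cH\times\{1,\dots,n\}$ (the disjoint union of $n$ copies of~$\cH$) as a \psp[right ]{$\cH$}, with action $(h,i)\ract k=(hk,i)$ and anchor $\sigma(h,i)=s_{\cH}(h)$; freeness and properness are inherited copy-by-copy from Example~\ref{ex:X=H}. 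A short orbit computation then identifies the \igpd\ of~$X$ as $\cG=X\times_{\cH}X\op\cong\cH\times\mathcal{R}_n$ via $[(h_1,i),(h_2,j)\op]\mapsto(h_1h_2\inv,i,j)$, under which $\cG\z\cong\cH\z\times\{1,\dots,n\}$.

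Next I would turn the \uscBb~$\cM=(M\to X)$ with fibres $M(h,i)=B(h)$ into a \demiequiv[right $\cB$-] by copying into each of the $n$ sheets the structure that makes~$\cB$ a \demiequiv[$\cB$-] over $X=\cH$ (cf.\ Example~\ref{ex:X=H} and Corollary~\ref{cor:ib of cB is cB}): the action is $(b,i)\ractB b'=(bb',i)$ and the inner product is $\rinner{\cB}{(b_1,i)}{(b_2,i)}=b_1^*b_2$. Checking Axioms~\ref{item:rwordBdl:ractB:fibre}--\ref{item:rwordBdl:fibrewise full} reduces fibrewise to the case $X=\cH$ handled in Corollary~\ref{cor:ib of cB is cB}. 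By Theorem~\ref{thm:demiequiv to equiv},~$\cM$ is then an equivalence between~$\cB$ and the saturated \iFbdl\ $\cA=\compacts(\cM_{\cB})$ over~$\cG$. Using the identification from Corollary~\ref{cor:ib of cB is cB}, each fibre $A([(h_1,i),(h_2,j)\op])=B(h_1)\otimes_{u}B(h_2)\op$ (with $u=s_{\cH}(h_1)=s_{\cH}(h_2)$) is isometrically isomorphic to $B(h_1h_2\inv)$ via $b_1\otimes b_2\op\mapsto b_1b_2^*$; under $\cG\cong\cH\times\mathcal{R}_n$, the multiplication~\eqref{eq:cA:cdot} and the involution (Corollary~\ref{cor:cA:involution}) become $(\beta,i,j)\cdot(\beta',j,k)=(\beta\beta',i,k)$ and $(\beta,i,j)^{*}=(\beta^{*},j,i)$. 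Thus~$\cA$ is the Fell bundle $\cB^{(n)}$ over $\cH\times\mathcal{R}_n$ whose fibre over $(h,i,j)$ is $B(h)$, equipped with this ``matrix'' multiplication.

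Finally, since~$\cH$ carries a Haar system, so does $\cG\cong\cH\times\mathcal{R}_n$: one may take the product of $\{\lambda_u\}_{u}$ with counting measure in the $\mathcal{R}_n$-directions (this also follows abstractly from Haar-system transfer along the equivalence~$X$). The Fell bundle equivalence theorem \cite{MW:2008:Disintegration} then shows that $\cst(\cB)$ is \SMEadj\ to $\cst(\cB^{(n)})$, so it remains to see $\cst(\cB^{(n)})\cong\mathrm{M}_n(\cst(\cB))$. Writing a section $F$ of $\cB^{(n)}$ as the matrix $(F_{ij})$ with $F_{ij}(h)\coloneqq F(h,i,j)\in B(h)$, the convolution on~$\cG$ against the Haar system above computes to $(F*G)_{ik}=\sum_{j}F_{ij}*G_{jk}$ and the involution to $(F^{*})_{ij}=(F_{ji})^{*}$; these are exactly matrix multiplication and adjoint over $\Gamma_c(\cH;\cB)$, giving a $*$-isomorphism $\Gamma_c(\cG;\cB^{(n)})\cong\mathrm{M}_n(\Gamma_c(\cH;\cB))$ that completes to $\cst(\cB^{(n)})\cong\mathrm{M}_n(\cst(\cB))$. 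Composing the two relations yields the claim.

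I expect the main obstacle to be bookkeeping rather than anything conceptual: one must keep the two ``middle-variable'' balancings straight while checking that the fibrewise isomorphisms $B(h_1)\otimes_{u}B(h_2)\op\cong B(h_1h_2\inv)$ carry the bundle multiplication~\eqref{eq:cA:cdot} onto the matrix multiplication of $\cB^{(n)}$ (here Remark~\ref{rmk:structure on fibres of A} is the convenient bridge), and one must confirm that the chosen Haar system on~$\cG$ reproduces precisely the entrywise convolution. Both become routine once the groupoid identification $\cG\cong\cH\times\mathcal{R}_n$ is fixed.
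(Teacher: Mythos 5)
Your argument is correct, but it takes a genuinely different route from the paper's. The paper keeps the \psp{$\cH$} as $X=\cH$ itself (so that, by Example~\ref{ex:X=H}, the \igpd\ is again $\cH$) and instead amplifies the \emph{fibres}: it takes $\cM$ with fibres $\mbb{C}^n\otimes B(h)$, identifies the resulting \iFbdl\ with $\mathrm{M}_n(\mbb{C})\otimes\cB$ over the \emph{same} groupoid $\cH$, and then concludes via \cite[Theorem 6.4]{MW:2008:Disintegration} and the identification $\cst(\mathrm{M}_n(\mbb{C})\otimes\cB)\cong\mathrm{M}_n(\cst(\cB))$. You instead amplify the \emph{base}: $X$ is $n$ disjoint copies of $\cH$ with fibres $B(h)$ unchanged, so the \igpd\ becomes $\cH\times\mathcal{R}_n$ and the \iFbdl\ becomes your $\cB^{(n)}$. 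Both are instances of Theorem~\ref{thm:demiequiv to equiv}, and the two pictures are exchanged by the usual dichotomy $\mathrm{M}_n(A)\cong A\otimes\cst(\mathcal{R}_n)$: the paper pushes the pair groupoid into the coefficients, you push it into the groupoid. What the paper's choice buys is that the groupoid and the Haar system never change, so the final identification of section algebras is entrywise matrix bookkeeping over $\cH$ with no new integration. What your choice buys is that all fibrewise verifications are literally those of Example~\ref{ex:X=H} and Corollary~\ref{cor:ib of cB is cB} (no $\mbb{C}^n$-bookkeeping, and the fibre maps $B(h_1)\otimes_u B(h_2)\op\cong B(h_1h_2\inv)$ come straight from saturation), and that your example genuinely exercises the extra generality of the main theorem over the Abadie--Ferraro setting, where $X=\cH$ is forced; the cost is the extra (routine, and correctly sketched) verification that convolution on $\cH\times\mathcal{R}_n$ against the product Haar system $\lambda\times\mathrm{counting}$ is matrix convolution, which you need before you can conclude $\cst(\cB^{(n)})\cong\mathrm{M}_n(\cst(\cB))$. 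Note that the final completion step (passing from the $*$-isomorphism of $\Gamma_c$-algebras to the full $\cst$-completions) is asserted rather than proved in your write-up, but the paper glosses over the exactly analogous step, so this is not a gap relative to the paper's own standard.
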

Note that the right choice of~$\cB$ allows $\cst(\cB)$ to model the full $\cst$-algebras of groupoids (with or without a twist \'a la Kumjian) and full crossed product $\cst$-algebras. 

\begin{proof}
    Take $X=\cH$, with $\sigma\colon X\to \cH\z$ the source map of~$\cH$. The existence of a Haar system implies that the source map of~$\cH$ (and hence the anchor map of $X$) is open \cite[Proposition 1.23]{Wil2019}, so we are in good shape to use our main theorem, provided we can find the \demiequiv[right $\cB$-]. Let~$\cM$ to be the bundle over~$X$ with fibres $\mbb{C}^n\times \cB(h)$ for each $h \in\cH$. We define
    \begin{align*}
        \mvisiblespace\ractB \mvisiblespace\colon&
        \qquad
        \cM\bfp{\sigma}{r}\cB \to \cM,
        \quad
        (\vec{x},b)\ractB b' \coloneqq (\vec{x}, b\cdot b'),
    \intertext{and}
    \rinner[\cM]{\cB}{\mvisiblespace}{\mvisiblespace}\colon&
        \qquad
        \cM\bfp{\sigma}{\sigma}\cM \to \cB,
        \quad
        \rinner[\cM]{\cB}{(\vec{x},b)}{(\vec{y},c)}
        \coloneqq
        \rinner[\mbb{C}^n]{\mbb{C}}{\vec{x}}{\vec{y}} \, b^*\cdot c,
    \end{align*}
    where we choose the inner product on $\mbb{C}^n$ to be conjugate linear in the first coordinate to match up with our definitions of Hilbert $\cst$-modules.
    Then, using Corollary~\ref{cor:ib of cB is cB}, it is easy to see that
    \[
    \cM\otimes_{\cB}\cM\op \cong \mathrm{M}_n (\mbb{C})\times \cB
    \text{ via }
    [(\vec{e}_{i}, b_{1})\otimes (\vec{e}_{j}, b_{2})\op]
    \mapsto
    (E_{i,j},b_{1}\cdot b_{2}^*),
    \]
    where $\vec{e}_{i}$ is the standard basis vector of $\mbb{C}^n$ and $E_{i,j}$ is the matrix unit with a $1$ in the $i^\text{th}$ row and $j^\text{th}$ column and $0$s everywhere else. By Theorem~\ref{thm:demiequiv to equiv}, the bundles~$\cB$ and $\mathrm{M}_n (\mbb{C})\times \cB$ are equivalent. By 
    \cite[Theorem 6.4]{MW:2008:Disintegration}, it follows that $\cst(\cB)$ is \SMEadj\ to  $\cst(\mathrm{M}_n (\mbb{C})\times \cB)\cong\mathrm{M}_n(\cst(\cB))$.
\end{proof}

\subsection{Kumjian's Stabilization trick}

In this subsection, we will see that our main theorem implies \cite[Corollary 4.5]{Kum:1998:Fell}, which in its original form was only shown for principal $r$-discrete groupoids. The approach here is inspired by that preceding \cite[Theorem 15]{Muhly:Bdles}. We will go through it in great detail to help the reader understand our technical results.

We start with a (saturated) Fell bundle $\cB=(B\to \cH)$ over a \LCH\ groupoid~$\cH$ with Haar system  $\{\lambda_v\}_{v\in \cH\z}$. Fix $h\in \cH $, and let $v\coloneqq r(h)$ and $u\coloneqq s(h)$. We define $M_0(h)\coloneqq\Gamma_c (\cH u;\cB)$, sections of the restriction of~$\cB$ to $\cH u = s\inv (s(h))$. If $g\in \cH$ is another element with $r(g)=v$, then we define a sesquilinear form
\[
    \rip{B(g\inv h)}<\mvisiblespace,\mvisiblespace>\colon M_0(g)\times M_0(h) \to B(g\inv h)
\]
by
\begin{equation}\label{eq:rip for my version of cE}
    \rip{B(g\inv h)}<\mu,\xi> = \int_{\cH s(g)} \mu(\ell )^*  \xi(\ell g\inv  h) \diff \lambda_{s(g)}(\ell).
\end{equation}
Note that this indeed makes sense: if $\ell\in \cH s(g)$, then  $\ell$ is in the domain of $\mu$ and $\ell  g\inv $ is defined. Moreover, $s(\ell   g\inv h)=s(h)$, so $\ell   g\inv h$ is in the domain of $\xi$. Since  $\mu(\ell)^* \in B(\ell\inv )$ and $\xi(\ell   g\inv h)\in B(\ell g\inv h)$, their product is indeed an element of $B(\ell\inv   \ell  g\inv    h)=B(g\inv h)$. And since $\mu$ and $\xi$ are compactly supported, the integral exists.

For $h=g$, the form is valued in the $\cst$-algebra $B(u)$, so we may take the completion $M(h)$ of $M_0(h)$ with respect to the induced norm $$\norm{\xi}_{M(h)}\coloneqq \norm{ \rip{B(u)}<\xi,\xi>}^{1/2}.$$ 
If $k\in u\cH w$, so that $(h,k)\in \cH\comp$, then the fibre $B(k)$  can act on an element $\xi$ of $M_0(h)$ and deliver an element $\xi \ractB b$ of $M_0(hk)$: if $\ell$ is in $\cH w$ (the domain of any element of $M_0(hk)$), then $\ell k\inv\in \cH u$ is in the domain of $\xi$ and so
\[
    ( \xi \ractB b )(\ell)
    \coloneqq
    \xi(\ell k\inv)\cdot b
    \text{ is defined and an element of }
    B(\ell k\inv) \cdot B(k)
    \subseteq B(\ell).
\] 
This extends to a map $M(h)\times B(k)\to M(hk)$. In fact, it induces an isomorphism $M(h)\otimes_{u} B(k)\cong M(hk)$ of right-Hilbert $\cst$-$B(w)$-modules.

We let $\cM = (q_{\cM}\colon M\to \cH)$ be the bundle with fibres $M(h)$, which we will now topologize.
If $\tau\in \Gamma_c (\cH;\cB)$ is a section of~$\cB$ and $h\in \cH u$, is arbitrary then $\tilde{\tau}(h) \coloneqq \tau|_{\cH u}$ is a continuous, compactly supported section $\cH u \to \cB$; i.e., $\tilde{\tau}(h)$ is an element of $M(h)$, so $\tilde{\tau}$ is a section of the bundle $\cM$. The family
\(
    \{ \tilde{\tau} \colon \tau\in \Gamma_c (\cH;\cB)\}
\)
uniquely induces a topology on~$\cM$ making it \usc. The attentive reader will have expected what comes next: we will show that~$\cM$ is a \demiequiv[right $\cB$-] over the \psp{$\cH$} $X=\cH$ (Example~\ref{ex:X=H}).

\begin{remark}
Let us compare what we have done so far with what was done in \cite{Muhly:Bdles,Kum:1998:Fell}; this will also give us an indicator as to what will happen next. \citeauthor{Muhly:Bdles} constructs first an \uscBb\ $\cV=(V\to \cH\z)$, which is exactly $\cV=\cM|_{\cH\z}$ and which \citeauthor{Kum:1998:Fell} calls the ``Hilbert $\cB\z$-module bundle over $\cH\z$'' in \cite[Subsection 4.2]{Kum:1998:Fell}; he points out that $\cV$ is full.

From $\cV$, they then construct a bundle $\cE=(E\to \cH)$; in the notation of the paper at hand, $\cE$ can be constructed as the pullback bundle of
\begin{align}\label{eq:def of cE}
    \cV\otimes_{\cB\z}\cB=\left(\bigsqcup_{(v,h)}V(v)\otimes_{v}B(h) \to \cH\z \bfp{\operatorname{id}}{r}\cH\right)
\end{align}
along the isomorphism $\cH\cong \cH\z \bfp{\operatorname{id}}{r}\cH, h\mapsto (r(h),h)$.\footnote{The bundle $\cV\otimes_{\cB\z}\cB$ is constructed in the same fashion as the bundle $\cK=\cM\otimes_{\cB\z}\cM\op$ in Lemma~\ref{lem:DL:MJM2023:Lemma 5.2}; a more general construction of  bundles of the form $\cM\otimes_{\cB\z}\cN$ can be found in \cite{DL:MJM2023}.}

It is then shown that $\cE$ is an equivalence between~$\cB$ and  the semi-direct product Fell bundle of a certain $\cH$-action on another bundle  which \citeauthor{Kum:1998:Fell} and \citeauthor{Muhly:Bdles} denote  by $\mathcal{K}(\cV)$. Following \cite[Subsection 1.7]{Kum:1998:Fell}, $\mathcal{K}(\cV)$ is the $\cst$-algebraic bundle with fibre $\compacts (V(u)_{B(u)})$ over $u\in \cH\z$. It is apparent that $\mathcal{K}(\cV)$ coincides with our $\compacts(\cV_{\cB\z})=\cV\otimes_{\cB\z}\cV\op$, the \iFbdl\ of the \demiequiv[right $\cB\z$-] $\cV$.\footnote{To nitpick, \citeauthor{Kum:1998:Fell}'s $\mathcal{K}(\cV)$ is the pullback of $\compacts(\cV_{\cB\z})$ along the homeomorphism $\cH\z\to\cH\z\times_{\cH\z}(\cH\z)\op, u \mapsto [u,u\op]$.} By construction $\cE\cong\cM$ via the map $\xi\otimes b \mapsto \xi\ractB b$,
and so in light of our main theorem, we should expect that the \iFbdl\ $\compacts(\cM_\cB)$ of~$\cM$ is (isomorphic to) a semi-direct product of $\compacts(\cV_{\cB\z})$ by $\cH$.
\end{remark}

By Example~\ref{ex:X=H}, the anchor maps of $X=\cH$ are exactly the range and the source map once we identify its \igpd\ $\cG=X\times_{\cH}X\op$ with $\cH$. This means that the  $\cB$-valued inner product
\[
 \rip{\cB}<\mvisiblespace,\mvisiblespace>\colon M \bfp{r}{r}M \to B
\]
covers the map $\reoqempty[X]{\cH}$ (Condition~\ref{item:rwordBdl:ip:fibre}). Moreover, the map
\[
\mvisiblespace\ractB \mvisiblespace\colon 
M \bfp{s}{r} B \to M 
\]
covers the multiplication map of~$\cH$ (or, in other words, the map that describes the right $\cH$-action on $X=\cH$; Condition~\ref{item:rwordBdl:ractB:fibre}). Let us indicate how to check all other conditions for~$\cM$ to be a \demiequiv.

\ref{item:rwordBdl:ip} 
Follows from the properties of Haar systems (Property~(HS2) in \cite[Definition 1.19]{Wil2019}) and the definition of the topology on~$\cM$ as given above.

\ref{item:rwordBdl:ip:C*linear} Straight-forward computation. 

\ref{item:rwordBdl:ip:adjoint}
Follows from the properties of Haar systems (Property~(HS3) in \cite[Definition 1.19]{Wil2019}).

\ref{item:rwordBdl:positive} 
Since the integrand of $\rip\cB<\xi,\xi>$ takes positive values in $B(s(h))$, so does the inner product itself, and  $\rip\cB<\xi,\xi>=0$ implies $\xi(\ell)=0$ for all $\ell\in \cH s(h)$, which is the entire domain of $\xi$.

\ref{item:rwordBdl:norm compatible} Holds by definition of the Banach space structure on $M(h)$.

\ref{item:rwordBdl:fibrewise full} Note that $M(h)=M(u)$ as right-Hilbert $\cst$-$B(u)$-modules, so it suffices to point out that $M(u)= V(u)$ is known to be full since~$\cB$ is a saturated Fell bundle.

\bigskip

It follows from Theorem~\ref{thm:demiequiv to equiv} that~$\cB$ is equivalent to the \iFbdl\ $\cM\otimes_{\cB}\cM\op$ of~$\cM$. Let us describe $\cM\otimes_{\cB}\cM\op$ as a bundle $\cA=(A\to \cH)$ over $\cH$. There are multiple variants here, depending on our choice of section $\cH \to X\bfp{s}{s}X\op$ of the quotient map $X\bfp{s}{s}X\op \to X\times_\cH X\op\cong \cH$.

In view of $\cE$ as described in Equation~\eqref{eq:def of cE} and in view of the fact that $B(h)\cong B(h\inv)\op$ by the Fell bundle properties, let us declare\footnote{Notationally, it would have been nicer to choose $A(h)=M(h)\otimes_{u} M(u)\op$, but then our description would drift farther away from those in \cite{Kum:1998:Fell,Muhly:Bdles}.}
\[
    A(h) \coloneqq M(v)\otimes_{v} M(h\inv)\op \text{ for } h\in \cH^v_u.
\]
To describe the multiplication map, we must be a bit careful: if $k\in\cH^u_w$, then our choice of section turns the composable pair $(h,k)$ into $([v,(h\inv)\op], [u,(k\inv)\op])$ in the \igpd. But $h\inv$ and $u$ do not coincide, and so neither will the `inner' parts of $A(h)$ and $A(k)$:
\begin{align*}
    A(h)\times A(k)
    &=
    \left(
    M(v)\otimes_{v}
    M(h\inv)\op
    \right)
    \times
    \left(
    M(u)\otimes_{u} 
    M(k\inv)\op
    \right).
\end{align*}
On the level of groupoids, we must therefore replace (for example) the representative $(u,(k\inv)\op)$ by $(h\inv,(k\inv h\inv)\op)$.\footnote{Again, this choice is not very pleasant, but it is indeed the best in this scenario.}  On the level of $\cA$, this is being done by the $\Psi$-map from Lemma~\ref{lem:DL:MJM2023:Thm5.20}:

\[
\begin{tikzcd}
    M(u)\otimes_{u} 
    M(k\inv)\op
    \ar[d, "\cong"]
    \ar[ddd, rounded corners, to path={-- ([xshift=-4ex]\tikztostart.west)
-| ([xshift=-10ex]\tikztotarget.west)
-- (\tikztotarget)}, "\Psi_{h}"]
    &
    {[\xi \ractB  b]}\otimes \eta\op
    \\
    {[M(h\inv)\otimes_{v}B(h)]}\otimes_{u} 
    M(k\inv)\op
    \ar[d, "\cong"]
    &
    {[\xi \otimes b]}\otimes \eta\op
    \ar[u]
    \ar[d]    
    \\
    M(h\inv)\otimes_{v}{[M(k\inv)\otimes_{u} B(h\inv)]}\op
    \ar[d, "\cong"]
    &
    \xi \otimes {[\eta\otimes b^*]}\op
    \ar[d]    
    \\
    M(h\inv)\otimes_{v} 
    M(k\inv h\inv)\op
    &
    \xi \otimes {[\eta\ractB  b^*]}\op
\end{tikzcd}
\]
Having found compatible representatives, we can then use the $U$-map from Lemma~\ref{lem:U}:
\begin{align*}
    A(h)\times A(k)
    &\overset{\operatorname{id}\times \Psi_{h}}{\longrightarrow} 
    \left(
        M(v)\otimes_{v}
        M(h\inv)\op
    \right)
    \times
    \left(
        M(h\inv)\otimes_{v}
        M(k\inv h\inv)\op
    \right)
    &&\ni \phantom{\mapsto}
    (\xi\otimes\eta_{1}\op,\eta_{2}\otimes \zeta\op) 
    \\
    &\overset{U_{h\inv}}{\longrightarrow} 
     M(v)\otimes_{v}
     M(k\inv h\inv )\op
     =
     A(hk)
     &&
     \mapsto
    (\xi\ractB \rip\cB<\eta_{1},\eta_{2}>)\otimes \zeta\op 
    .
\end{align*}
We therefore see that, with the choice of $A(h)$ that we have made, the multiplication $A(h)\times A(k)\to A(hk)$ is given by $U_{h\inv}\circ (\operatorname{id}\times \Psi_{h})$. 
The involution on $A$ is easier to decipher: it is given by 
\[
    A(h)
    =
    M(v)\otimes_{v}
    M(h\inv)\op
    \overset{\Flip}{\longrightarrow}
    M(h\inv)\otimes_{v}
    M(v)\op
    \overset{\Psi_{h\inv}}{\longrightarrow}
    M(u)\otimes_{u}
    M(h)\op
    =
    A(h\inv)
    .
\]
Now, $M(h)$ and $M(s(h))$ are {\em equal} as right-Hilbert $\cst$-modules; the letter $h$ is merely needed to keep track of the `mixed' inner product in Equation~\eqref{eq:rip for my version of cE}.
This means that we have an isomorphism
\[
    \Psi_{h}\colon \qquad V(u)\otimes_u V(u)\op \longrightarrow V(v)\otimes_u V(v)\op.
\]
If we make the canonical identification of $V(u) \otimes_{u}V(u)\op$ with $\compacts \bigl(V(u)_{B(u)}\bigr)=\compacts(\cV_{\cB\z})(s(h))$, then the above can be written as 
\[
    h\lact \mvisiblespace\colon \qquad \compacts(\cV_{\cB\z})(s(h)) \longrightarrow \compacts(\cV_{\cB\z})(r(h)).
\]
In other words, $\Psi$ encodes an action of~$\cH$ on the bundle $\compacts(\cV_{\cB\z})$ which covers the map $\mvisiblespace\lact\mvisiblespace\colon h\lact s(h)=r(h)$. 
In this picture, the map $\Flip$ is just the adjoint:
\[
\begin{tikzcd}  
    \xi \otimes \eta\op 
    \ar[d, mapsto, "\Flip"']
    \ar[r, mapsfrom]&
    \innercpct{}{\xi}{\eta}
    \ar[r, phantom, "\in"]
    &
    \compacts(\cV_{\cB\z})(v)
\ar[d, "*"]
    \\
     \eta \otimes \xi\op
    \ar[r, mapsto]&
    \innercpct{}{\eta}{\xi}
    =
    \innercpct{}{\xi}{\eta}^*
    \ar[r, phantom, "\in"]
    &
    \compacts(\cV_{\cB\z})(v)
\end{tikzcd}
\]
and $U_{h\inv}$ is just juxtaposition of compact operators: 
\[
\begin{tikzcd}  
    (\xi \otimes \eta_{1}\op,\eta_{2}\otimes \zeta\op) 
    \ar[d, mapsto, "U_{h\inv}"']
    \ar[r, mapsfrom]&
    \bigl(\innercpct{}{\xi}{\eta_{1}}, \innercpct{}{\eta_{2}}{\zeta}\bigr)
    \ar[r, phantom, "\in"]
    &
    \compacts(\cV_{\cB\z})(v)
\times \compacts(\cV_{\cB\z})(v)
\ar[d, "\mvisiblespace\circ\mvisiblespace"]
    \\
    (\xi\ractB \rip\cB<\eta_{1},\eta_{2}>)\otimes \zeta\op     
    \ar[r, mapsto]&
    \innercpct[]{}{\xi\ractB \rip\cB<\eta_{1},\eta_{2}>}{\zeta}
    =
    \innercpct{}{\xi}{\eta_{1}}
    \circ \innercpct{}{\eta_{2}}{\zeta}\ar[r, phantom, "\in"]
    &
    \compacts(\cV_{\cB\z})(v)
\end{tikzcd}
\]
If we write 
\[
A(h)
= [V(v)\times\{v\}]\otimes_{v}[V(v)\times\{h\inv\}]\op.
=
[V(v) \otimes_{v}V(v)\op]\times\{h\}
\cong 
\compacts \bigl(V(v)_{B(v)}\bigr)\times\{h\},
\]
then 
as sets, $\cA=\compacts \bigl(\cV_{\cB\z}\bigr)\rtimes \cH$. Moreover, we found that the multiplication $A(h)\times A(k)\to A(hk)$ is given by $U_{h\inv}\circ (\operatorname{id}\times \Psi_{h})$ and the involution $A(h)\to A(h\inv)$ by $\Psi_{h\inv}\circ\Flip$, which we have shown translates to
\[
    (T_{1},h)\cdot (T_{2},k)
    =
    \bigl(T_{1}\circ (h\lact T_{2}),hk\bigr)
    \quad\text{and}\quad
    (T,h)^*
    =
    (T^*,h\inv),
\]
which are exactly the formulas in the semi-direct product.
We deduce that the \iFbdl\ $\cA$ of the $\cB$-action on~$\cM$ is exactly the Fell bundle $\compacts \bigl(\cV_{\cB\z}\bigr)\rtimes \cH$, showing that Theorem~\ref{thm:demiequiv to equiv} recovers\footnote{Of course, there is still the difficulty of having to come up with a suitable $\cM$ in the first place; we have taken it for granted here.} \citeauthor{Kum:1998:Fell}'s Stabilization trick.

\subsection{Higher order operators {\em \`a la} Abadie--Ferraro}

As alluded to earlier, a theorem similar to our main theorem has appeared in \cite{AF:EquivFb} in the setting that $\cH=X=\cG$ is a group. Before we can cite it, let us first establish the following bridge between their terminology and ours:
\begin{lemma}\label{lem:wordBdl agrees with AF's}
	Suppose~$\cB$ is a saturated Fell bundle over a \LCH\ {\em group} $\cH$. Then \demiequiv[$\cB$-]s {\em over the same group} are exactly  right Hilbert~$\cB$-bundles in the sense of \cite[Definition 2.1]{AF:EquivFb} that are {\em fibrewise full}.
\end{lemma}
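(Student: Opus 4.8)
The plan is to show that the two notions describe literally the same data, so that the equivalence reduces to matching axioms one at a time. First I would specialize the ambient setup via Example~\ref{ex:X=H}: since $\cH$ is a group, $\cH\z=\{e\}$ is a point and $\sigma=s_\cH\colon X=\cH\to\{e\}$ is constant, so $X\bfp{\sigma}{\sigma}X\op=\cH\times\cH\op$ and $M\bfp{\rho}{\rho}M=M\times M$. Moreover the imprimitivity groupoid $\cG$ is canonically $\cH$, the left anchor map is $\rho=r_\cH$, and $\reoq[X]{h_1}{h_2}{\cH}=h_1\inv h_2$; hence \ref{item:rwordBdl:ip:fibre} says exactly that $\rip\cB<m_1,m_2>$ lands in $B(h_1\inv h_2)$ when $m_i\in M(h_i)$. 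This is precisely the indexing of the $\cB$-valued inner product in \cite[Definition 2.1]{AF:EquivFb}, and the right action $\mvisiblespace\ractB\mvisiblespace$ covering multiplication (\ref{item:rwordBdl:ractB:fibre}) is exactly their right $\cB$-action. Thus a right $\cB$-demi-equivalence and an Abadie--Ferraro right Hilbert $\cB$-bundle carry the same underlying $\cM$, action, and inner product, and the correspondence I propose is the identity on this data.

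With the data identified, I would verify that the axiom lists agree. For the forward inclusion, given a demi-equivalence I would read off that \ref{item:rwordBdl:ractB:fibre}--\ref{item:rwordBdl:norm compatible} reproduce the algebraic and analytic axioms of \cite[Definition 2.1]{AF:EquivFb} (action covers multiplication; sesquilinearity and continuity of the inner product; the $\cst$-linearity $\rip\cB<m_1,m_2\ractB b>=\rip\cB<m_1,m_2>b$; the adjoint identity; positivity; and norm-compatibility), while the remaining structural axioms built into their definition---that the right $\cB$-action is a genuine, bilinear, associative, norm-decreasing and continuous action---are supplied for free by Lemma~\ref{lem:rwordBdl are nice} (\ref{item:rwordBdl:ractB:bilinear}--\ref{item:rwordBdl:ractB:cts}). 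Fibrewise fullness is \ref{item:rwordBdl:fibrewise full}, and since $Xe=\cH$ is a single orbit, this immediately yields their weaker fullness condition \eqref{eq:7R}. So a demi-equivalence is a fibrewise-full right Hilbert $\cB$-bundle.

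For the reverse inclusion, I would start from a right Hilbert $\cB$-bundle in their sense that is in addition fibrewise full and simply check \ref{item:rwordBdl:ractB:fibre}--\ref{item:rwordBdl:fibrewise full}. Each of \ref{item:rwordBdl:ractB:fibre}--\ref{item:rwordBdl:norm compatible} is among their axioms under the translation above, and \ref{item:rwordBdl:fibrewise full} is exactly the added fibrewise-fullness hypothesis; nothing further is required, since these eight conditions are by definition a right $\cB$-demi-equivalence.

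The hard part will be the careful bookkeeping rather than any deep argument: I must confirm that the Abadie--Ferraro conventions (left/right (anti)linearity of the inner product, the placement $B(h_1\inv h_2)$ of its values, and the side on which $\cB$ acts) line up after the identification $\cG\cong\cH$, and in particular that their analytic hypotheses coincide with ours---that they too assume continuity of the inner product and the norm-compatibility \ref{item:rwordBdl:norm compatible}, which (as noted in the remark following Lemma~\ref{lem:rwordBdl are nice}) are not automatic. The one genuine point of content is the fullness comparison: their definition only requires the orbit-wise condition \eqref{eq:7R}, which over a group is strictly weaker than fibrewise fullness, so the two notions really do differ and ``fibrewise full'' is the precise extra hypothesis needed---exactly as the statement asserts.
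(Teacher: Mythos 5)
Your translation of the axioms is accurate and matches the paper's own proof almost item for item: the identification $\reoq[X]{h_{1}}{h_{2}}{\cH}=h_{1}\inv h_{2}$ from Example~\ref{ex:X=H}, the use of Lemma~\ref{lem:rwordBdl are nice} to supply the structural properties of the action, and the observation that fibrewise fullness \ref{item:rwordBdl:fibrewise full} is strictly stronger than \eqref{eq:7R}. However, there is a genuine gap, and it is the one point of the lemma that requires an actual argument rather than bookkeeping: a \demiequiv\ is by definition an \uscBb, whereas a right Hilbert $\cB$-bundle in the sense of \cite[Definition 2.1]{AF:EquivFb} is required to be a \emph{continuous} Banach bundle in the sense of \cite[Definition II.13.4]{FellDoran:VolI}. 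Your proposal declares that both notions "carry the same underlying $\cM$," but the forward inclusion does not go through until one shows that the a priori only \usc\ norm on $\cM$ is in fact continuous; your closing paragraph flags conventions about the inner product but never addresses this mismatch of bundle categories.

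The paper closes exactly this gap: since $\cB$ is a Fell bundle over a \emph{group}, the norm $b\mapsto\norm{b}$ on $\cB$ is continuous (not merely \usc) by \cite[Lemma 3.30]{BMZ:HigherCat}; since by \ref{item:rwordBdl:norm compatible} the norm on $\cM$ is $m\mapsto\norm{\rip\cB<m,m>}^{1/2}$ and the inner product is continuous by \ref{item:rwordBdl:ip}, the norm on $\cM$ is a composition of continuous maps and hence continuous, so $\cM$ is a continuous Banach bundle as AF require. Note that this step genuinely uses that $\cH$ is a group --- for a general groupoid the norm on a Fell bundle need not be continuous, which is why the lemma does not hold in that generality. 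The reverse inclusion is unaffected (every continuous Banach bundle is in particular \usc), and with this continuity point added, the rest of your axiom-matching stands as written.
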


\begin{proof}
    {\em A priori}, the norm of a \demiequiv\ $\cM=(M\to\cH)$ is only \usc. However, since~$\cB$ is a Fell bundle over a group, it follows from  \cite[Lemma 3.30]{BMZ:HigherCat} that $b\mapsto \norm{b}$ is continuous (not only \usc) on $\cB$, and since the norm on~$\cM$ is given by $\norm{\rip\cB<m,m>}^{1/2}$ by Assumption~\ref{item:rwordBdl:norm compatible}, it is the concatenation of continuous maps and hence itself continuous. Therefore, the \uscBb~$\cM$ is actually a {\em continuous}  Banach bundle in the sense of \cite[Definition II.13.4]{FellDoran:VolI}, as needed for \cite[Definition 2.1]{AF:EquivFb}.
    
    As in \cite{AF:EquivFb}, the inner product and the right-action are continuous by \ref{item:rwordBdl:ip} and \ref{item:rwordBdl:ractB:cts}, respectively. The assumption that the Hilbert bundle be fibrewise full corresponds to \ref{item:rwordBdl:fibrewise full} (and is stronger than the assumption (7R) in \cite{AF:EquivFb}). The remaining items of \cite[Definition 2.1]{AF:EquivFb} correspond to our assumptions as follows.
    \begin{enumerate}[label=\textup{(\arabic*R)}]
        \item corresponds  to \ref{item:rwordBdl:ractB:fibre} and \ref{item:rwordBdl:ip:fibre}, using that $\reoq[\cH]{h_{1}}{h_{2}}{\cH}=h_{1}\inv h_{2}$ as explained in Example~\ref{ex:X=H};
        \item corresponds  to~\ref{item:rwordBdl:ractB:bilinear};
        \item corresponds to \ref{item:rwordBdl:ip}; 
        \item corresponds  to~\ref{item:rwordBdl:ip:C*linear} and~\ref{item:rwordBdl:ip:adjoint};
        \item corresponds  to~\ref{item:rwordBdl:positive}; and
        \item corresponds to~\ref{item:rwordBdl:norm compatible}. \hfill\qedhere
    \end{enumerate}
\end{proof}

We can now restate the result of \citeauthor{AF:EquivFb} to which we want to compare ours, in our terminology. We will artificially add the assumption that the fibres are all full, to align with our situation. We will further use different letters for  elements of the group $\cH$: we use $x$ if we think of~$\cH$ as a \psp{$\cH$}; we use $h$ if we think of~$\cH$ as the group acting on the right, and $g$ if it is acting on the left; in particular, $x\ract h$ and $g\lact x$ will just be the products $xh$ respectively $gx$ in $\cH$.
\begin{theorem}[{\cite[Definition 3.6, Theorem 3.9, Corollary 3.10]{AF:EquivFb}}]\label{thm:AF}
    Let~$\cH$ be a \LCH\ group, $\cB=(B\to \cH)$ a Fell bundle, and $\cM=(M\to \cH)$ a right \demiequiv[$\cB$-], also over $\cH$. For $g\in \cH$, let $\mathbb{B}_g (\cM )$ be the collection of continuous maps $S \colon M  \to M $ such that
    \begin{itemize}
        \item there exists $c \in \mathbb{R}$ such that $\norm{Sm}\leq c\norm{m}$ for all $m \in M $;
        \item $S(M(x)) \subseteq M(g\lact x)$ for all $x \in \cH$; and
        \item  there exists $S^* \colon M \to M$ such that $\rip\cB<Sm_{1}, m_{2}> = \rip\cB<m_{1}, S^*m_{2}>$ for all $m_{i} \in M $.
    \end{itemize}    
    Then there exists a unique Fell bundle $\mathbb{K}(\cM )$ over~$\cH$ such
that:
\begin{enumerate}[label=\textup{(\roman*)}]
    \item for all $g\in \cH$, the fiber $\mathbb{K}(\cM )_{g}$ is, as a Banach space, the closure in $\mathbb{B}_{g}(\cM )$ of
\[
\mathrm{span}\{\innercpct[]{}{m}{n} : m \in M(g\lact x), n \in M(x), x \in  \cH\}
\]
where $\innercpct[]{}{m}{n} \colon M \to M, k \mapsto m\ractB \rip\cB<n,k>$;
\item given $\psi  , \varphi \in C_c(M)$ and $x \in \cH$, the function $[ \psi  , \varphi, x] \colon \cH \to \mathbb{K}(\cM )$ given by
$[ \psi  , \varphi, x](g) = [ \psi  (g\lact x), \varphi(x)]$, is a continuous section of $\mathbb{K}(\cM)$.
\end{enumerate}
Moreover, $\cM $ is a $\mathbb{K}(\cM )-\cB$-equivalence
bundle with the action $\mathbb{K}(\cM ) \times M \to M$ given by $(S, m) \to S(m)$ and the left inner
product $M \times M \to \mathbb{K}(\cM )$ given by $(m,n) \to \innercpct[]{}{m}{n}$.
\end{theorem}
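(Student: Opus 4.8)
The plan is to recover Theorem~\ref{thm:AF} as a special case of the main theorem, by identifying Abadie--Ferraro's $\mathbb{K}(\cM)$ with the \iFbdl\ $\compacts(\cM_\cB)$. By Lemma~\ref{lem:wordBdl agrees with AF's}, a fibrewise full right Hilbert $\cB$-bundle over the group $\cH$ is precisely a \demiequiv[right $\cB$-] over the \psp{$\cH$} $X=\cH$ (Example~\ref{ex:X=H}); since $\cH$ is a group, $\cH\z$ is a single point, so every fibre $M(x)$ is a Hilbert module over the single $\cst$-algebra $B_e\coloneqq B(\cH\z)$, and $\cG=X\times_\cH X\op\cong\cH$. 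Theorem~\ref{thm:demiequiv to equiv} then provides a saturated Fell bundle $\compacts(\cM_\cB)=(A\to\cG)$ that is equivalent to~$\cB$ via~$\cM$, and Proposition~\ref{prop:iFbdl is unique} yields the uniqueness clause. It remains to verify that $\compacts(\cM_\cB)$ has exactly the fibres, topology, multiplication, involution, action, and inner product listed in Theorem~\ref{thm:AF}.

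First I would realise each fibre of $\compacts(\cM_\cB)$ as genuine operators on~$M$. Under $\cG\cong\cH$ the element $g$ is represented by $(gy,y\op)$ for any $y\in\cH$, and Equation~\eqref{eq:cA's fibres as compacts} gives $A(g)\cong\compacts_{B_e}(M(y),M(gy))$. The left action of Proposition~\ref{prop:left action} sends $[\xi]\in A(g)$ to the map $k\mapsto[\xi]\lactB k$ on~$M$; on a rank-one element $[\xi]=[m_0\otimes n_0\op]$ this is exactly the operator $\innercpct[]{}{m_0}{n_0}\colon k\mapsto m_0\ractB\rip\cB<n_0,k>$ of Theorem~\ref{thm:AF}(i). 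Property~\ref{item:lactB covers} shows it carries $M(x)$ into $M(gx)$, Equation~\eqref{eq:Phi:contractive} (together with isometry of $\Psi_h$) bounds it by $\norm{[\xi]}$, and Remark~\ref{rmk:first comparison to AF}---that the $\cA$-action is by $\cB$-adjointable operators---supplies the adjoint, namely $[\xi]^*\lactB\cdot=[\Flip(\xi)]\lactB\cdot$. Hence the left action defines a map $A(g)\to\mathbb{B}_g(\cM)$.

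The crucial step is that this map is \emph{isometric}. The bound ``$\leq$'' is Equation~\eqref{eq:Phi:contractive}. For ``$\geq$'', I would restrict the operator $[\xi]\lactB\cdot$ to the single fibre $M(y)$: there it coincides with the compact operator in $\compacts_{B_e}(M(y),M(gy))$ that corresponds to $[\xi]$ under~\eqref{eq:cA's fibres as compacts}, whose norm is $\norm{[\xi]}$ by definition of the fibre norm on $A(g)$; since the operator norm over all of~$M$ dominates the norm over the subfibre $M(y)$, we get $\norm{[\xi]\lactB\cdot}\geq\norm{[\xi]}$. As the rank-one elements $m\otimes n\op$ span $K(gy,y\op)=A(g)$ densely and are carried to Abadie--Ferraro's generating operators, the image of this isometry is precisely the closed span $\mathbb{K}(\cM)_g$, establishing Theorem~\ref{thm:AF}(i).

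What remains is to transport the algebraic and topological structure, which is bookkeeping. The fibrewise multiplication of Remark~\ref{rmk:structure on fibres of A}, $(m_1\otimes n_1\op)\cdot(m_2\otimes n_2\op)=(m_1\ractB\rip\cB<n_1,m_2>)\otimes n_2\op$, becomes composition of operators; the involution $\Flip$ of Corollary~\ref{cor:cA:involution} becomes the operator adjoint; and the left action and left inner product of Propositions~\ref{prop:left action} and~\ref{prop:left ip} are, by construction, $(S,m)\mapsto S(m)$ and $(m,n)\mapsto\innercpct[]{}{m}{n}$, giving the final clause. For the topology, Theorem~\ref{thm:AF}(ii) declares the sections $g\mapsto[\mu(gx),\nu(x)]$ continuous, and after writing $g=[gx,x\op]$ these are exactly the generating sections $[x,y\op]\mapsto[\mu(x)\otimes\nu(y)\op]$ of Lemma~\ref{lem:cA is uscBb}; since both families determine their bundle's topology and coincide under the fibre identification, the topologies agree. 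I expect the main obstacle to be this last point, reconciling the two a priori different bundle structures---the quotient-of-tensor-product topology on $\compacts(\cM_\cB)$ versus Abadie--Ferraro's topology on a bundle of operators---and checking that the fibrewise isometry assembles into a genuine \emph{homeomorphism} of total spaces; this is exactly where the explicit generating sections from Lemma~\ref{lem:cA is uscBb} and the uniqueness of topologies determined by a section family do the work.
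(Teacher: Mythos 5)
Your proposal is essentially correct, but it is not the paper's treatment: the paper gives \emph{no proof} of this statement at all — it is Abadie--Ferraro's theorem, quoted (with fibrewise fullness artificially added, and under the paper's standing saturation hypothesis) and then used as an \emph{input}: Corollary~\ref{cor:cf AF's K(M)} combines it with Proposition~\ref{prop:iFbdl is unique} to identify $\cM\otimes_{\cB}\cM\op$ with $\compacts(\cM)$. You run that logic backwards: starting from Theorem~\ref{thm:demiequiv to equiv} you build the identification of $\compacts(\cM_{\cB})$ with $\mathbb{K}(\cM)$ by hand, so that the quoted theorem becomes a corollary rather than an ingredient. This is legitimate — Sections~\ref{sec:Existence}--\ref{sec:Uniqueness} nowhere rely on \cite{AF:EquivFb}, so there is no circularity — and your crux, the fibrewise isometry $A(g)\to\mathbb{B}_g(\cM)$ (upper bound from \eqref{eq:Phi:contractive} together with isometry of $\Psi_h$, lower bound by restricting the operator to the representative fibre $M(y)$, where by \eqref{eq:cA's fibres as compacts} and the definition of the fibre norm it realizes exactly $\norm{[\xi]}$), is correct and makes precise what the paper only gestures at in the remark following Corollary~\ref{cor:cf AF's K(M)} (that the ``adjointable operators with a shift'' picture is built into the construction). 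Three smaller points. First, membership of $[\xi]\lactB\mvisiblespace$ in $\mathbb{B}_g(\cM)$ also requires \emph{continuity} of that operator on $M$, which you never verify; it follows from the continuity of the left action recorded in Corollary~\ref{cor:iFbl is equivalent to cB}, noting that over a group the fibred-product constraint is vacuous, so the action is defined and jointly continuous on all of $A\times M$. Second, the uniqueness clause is about the topology being pinned down by the sections in (ii); your closing argument via uniqueness of section-determined \uscBb\ topologies is the right mechanism, whereas your earlier appeal to Proposition~\ref{prop:iFbdl is unique} by itself only gives uniqueness up to isomorphism. Third, your route proves only the statement as transcribed here: Abadie--Ferraro's actual theorem allows non-saturated bundles whose fibres satisfy only \eqref{eq:7R} rather than fibrewise fullness, and Theorem~\ref{thm:demiequiv to equiv} cannot reach that generality — this is precisely why the paper cites their result instead of deriving it. What your approach buys is a self-contained proof of the restricted statement (and hence of Corollary~\ref{cor:cf AF's K(M)}) with no appeal to \cite{AF:EquivFb}; what the citation buys is the stronger theorem.
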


\citeauthor{AF:EquivFb} call the elements of $\mathbb{B}_g (\cM )$ {\em adjointable operators of order $g$}. We conclude:

\begin{corollary}\label{cor:cf AF's K(M)}
	Suppose~$\cH$, $\cB$, and~$\cM$ are as in Theorem~\ref{thm:AF}. The map 
	\begin{align*}
		\cM\otimes_{\cB}\cM\op \to \compacts(\cM)
	\end{align*}
	which maps $[m\otimes n\op]$ in the fibre over $[x,y\op]$ of $\cM\otimes_{\cB}\cM\op$ to the operator
	\[
		\innercpct[]{}{m}{n}\colon \quad 
M \ni k \mapsto m\ractB \rinner[\cM]{\cB}{n}{k} \in M
	\]
	in the fibre over $xy\inv$ of $\compacts(\cM)$, is an isomorphism of Fell bundles, covering the map $f$ from Example~\ref{ex:X=H}.
\end{corollary}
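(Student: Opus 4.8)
The plan is to observe that $\cM\otimes_{\cB}\cM\op$ and $\compacts(\cM)$ are two Fell bundles that each exhibit~$\cM$ as an equivalence with~$\cB$, and then to let the uniqueness statement Proposition~\ref{prop:iFbdl is unique} do the work. The only genuine content beyond bookkeeping is checking that the two inner-product descriptions and the two base groupoids line up under the identification $f$ of Example~\ref{ex:X=H}.

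First I would assemble the two equivalence structures. On the one side, Lemma~\ref{lem:wordBdl agrees with AF's} identifies the right \demiequiv[$\cB$-]~$\cM$ with a fibrewise-full right Hilbert~$\cB$-bundle in the Abadie--Ferraro sense, so Theorem~\ref{thm:AF} applies and tells us that $\compacts(\cM)=\mathbb{K}(\cM)$ is a Fell bundle over~$\cH$ making~$\cM$ a $\compacts(\cM)-\cB$-equivalence, with left inner product $\linner[\cM]{\compacts(\cM)}{m}{n}=\innercpct[]{}{m}{n}$. On the other side, Corollary~\ref{cor:iFbl is equivalent to cB} shows that the \iFbdl\ $\cM\otimes_{\cB}\cM\op$, living over $\cG=\cH\times_{\cH}\cH\op$, makes~$\cM$ a $(\cM\otimes_{\cB}\cM\op)-\cB$-equivalence with left inner product $\linner[\cM]{\cM\otimes_{\cB}\cM\op}{m}{n}=[m\otimes n\op]$ (Proposition~\ref{prop:left ip}). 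Here one should briefly confirm that the equivalence-bundle structure produced by Theorem~\ref{thm:AF} is an equivalence in the sense of \cite[Definition 2.11]{DL:MJM2023} used throughout, which is exactly what the bridge in Lemma~\ref{lem:wordBdl agrees with AF's} (together with AF's explicit action and inner product) provides.

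Next I would pin down the base-groupoid identification. By Example~\ref{ex:X=H}, the map $f\colon\cG=\cH\times_{\cH}\cH\op\to\cH$, $[x,y\op]\mapsto xy\inv$, is an isomorphism of topological groupoids under which the left $\cG$-action on $X=\cH$ becomes left multiplication. Consequently, for $x,y\in\cH$ with $s(x)=s(y)$, the element $\leoq[\cH]{\cH}{x}{y}$—the unique $g\in\cH$ with $g\lact y=x$—is precisely $xy\inv=f([x,y\op])$. This is the covering map that will appear.

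Finally I would apply Proposition~\ref{prop:iFbdl is unique} with $\cA=\cM\otimes_{\cB}\cM\op$ over $\cG$ and $\cA'=\compacts(\cM)$ over $\cH$. It produces a unique Fell bundle isomorphism $\Omega\colon\cM\otimes_{\cB}\cM\op\to\compacts(\cM)$ determined by $\linner[\cM]{\cM\otimes_{\cB}\cM\op}{m}{n}\mapsto\linner[\cM]{\compacts(\cM)}{m}{n}$ and covering $[x,y\op]\mapsto\leoq[\cH]{\cH}{x}{y}$. Translating through the two inner-product descriptions, $\Omega$ sends $[m\otimes n\op]\mapsto\innercpct[]{}{m}{n}$ and covers $[x,y\op]\mapsto xy\inv=f([x,y\op])$, which is exactly the asserted map. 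The single point requiring real care—and the main obstacle—is verifying that, under the reindexing $g=xy\inv$, Abadie--Ferraro's fibre $\mathbb{K}(\cM)_{g}$ and their left inner product coincide with the fibre $A([x,y\op])$ and inner product $[m\otimes n\op]$ of our construction (recall $m\in M(x)=M(gy)$, $n\in M(y)$, matching AF's $m\in M(g\lact x)$, $n\in M(x)$ with their ``$x$'' taken to be our~$y$), so that the two occurrences of ``$\linner[\cM]{\cdot}{m}{n}$'' in Proposition~\ref{prop:iFbdl is unique} truly are the two maps in question. Once this matching is in place, the corollary follows directly.
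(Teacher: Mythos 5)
Your proposal is correct and follows essentially the same route as the paper: invoke Lemma~\ref{lem:wordBdl agrees with AF's} to place~$\cM$ in the Abadie--Ferraro framework, note that~$\cM$ is then an equivalence between~$\cB$ and $\compacts(\cM)$ (Theorem~\ref{thm:AF}) as well as between~$\cB$ and $\cM\otimes_{\cB}\cM\op$, and conclude by the uniqueness statement, which the paper cites as \cite[Corollary 3.11]{AF:EquivFb} ``or equivalently'' Proposition~\ref{prop:iFbdl is unique}. The extra bookkeeping you spell out (matching the base groupoids via $f$ and the two inner products) is exactly what the paper leaves implicit.
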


\begin{proof}
	By Lemma~\ref{lem:wordBdl agrees with AF's},~$\cM$ is a right Hilbert~$\cB$-bundle in the sense of \cite[Definition 2.1]{AF:EquivFb}.
	 It is proven in \cite[Corollary 3.10]{AF:EquivFb} that~$\cM$ is an equivalence between~$\cB$ and $\compacts(\cM)$. Since~$\cM$ is also an equivalence between~$\cB$ and $\cM\otimes_{\cB}\cM\op$, it thus follows  from \cite[Corollary 3.11]{AF:EquivFb} (or equivalently, from Proposition~\ref{prop:iFbdl is unique}), that the displayed map is the claimed isomorphism.
\end{proof}

\begin{remark}
    As alluded to in Remark~\ref{rmk:first comparison to AF}, the description {\em \`a la} \citeauthor{AF:EquivFb} of the \iFbdl\ $\cM\otimes_{\cB}\cM\op$  as ``adjointable operators with a shift'' is also built into our construction, albeit less visibly so.
\end{remark}

Even when we consider Fell bundles over groups, Theorem~\ref{thm:demiequiv to equiv} can cover some examples that are not covered by \cite{AF:EquivFb}, since there, $X=\cH$ always. One example is   \cite[Proposition A.2.3]{MS:2023:stablerank}; another one is Example~\ref{ex:Paul's Ex 14}.

\subsection{Imprimitivity Theorems}\label{ssec:imprim}

Earlier, we alluded to a relationship between Theorem~\ref{thm:demiequiv to equiv} and imprimitivity theorems; let is give one example.

\begin{example}[{\cite[Example 14]{Muhly:Bdles}}]\label{ex:Paul's Ex 14}
    If~$X$ is a \LCH\ group with closed subgroup~$H$, then~$H$ acts on~$X$ by right translation ($x\ract h \coloneqq xh$)  and~$X$ is a \psp{$H$}. Its \igpd\
    \(
        X\times_{H} X\op
    \) is isomorphic to the transformation group groupoid $X\ltimes X/H$ of~$X$ acting on the quotient space $X/H$ via
    \begin{equation}\label{eq:Muhly's Ex14:G}
        X\times_{H} X\op \to X\ltimes X/H
        ,\quad
        [x,y\op] \mapsto (xy\inv, yH).
    \end{equation}
    In terms of $G\coloneqq X\ltimes X/H$, the 
    anchor map described in \eqref{eq:anchor map of left G action on X} becomes the quotient map,
    \[
    \rho\colon X\to G\z\cong X/H,
    \quad
    x\mapsto (1_{X}, xH)\triangleeq xH,
    \]
    and the left action on~$X$ as described in \eqref{eq:left G action on X}
    becomes the action
    $(x, yH)\lact y \coloneqq xy$.
    We conclude that $G$
    is equivalent  to the group $H$ via $X$. 
    To sum up,
    \begin{align*}
        X\op\bfp{\rho}{\rho} X \to H,\quad
        \reoq[X]{x}{y}{H} = x\inv y,
        \quad\text{ and }\quad
        X\times X\op \to G,\quad
        \leoq[X]{G}{x}{y} = 
        (xy\inv, yH).
    \end{align*}
    Note that $\reoqempty[X]{H}$ indeed lands in $H$ since $\rho(x)=\rho(y)$.

    Now suppose that $A$ is a $\cst$-algebra with action $\alpha\colon X\to \operatorname{Aut}(A)$.
    Let $\cB= A\rtimes H$ be the Fell bundle over $H$ that encodes the restriction $\alpha|_H$, meaning that, for each $h\in H$, the fibre $B(h)$ is $A$ and the structure maps of~$\cB$ are given by 
   \[
    (a_1,h_1)\cdot (a_2,h_2)\coloneqq (a_1\alpha_{h_{1}}(a_{2}), h_{1}h_{2})
        \quad\text{and}\quad
        (a,h)^*\coloneqq
        (\alpha_{h\inv}(a)^*, h\inv)
        .
    \]
    Let $\cM=A\times X$ be the ``trivial $A$-bundle'' over $X$.  Then~$\cB$ acts on~$\cM$ via
    \[
        M\times B\to M,
        \quad
        (m,x)\ractB (b, h)
        \coloneqq
        (m\alpha_{x}(b), xh),
    \]
    and we have a~$\cB$-valued inner product given by
    \[
        M\bfp{\rho}{\rho} M\to B,
        \quad
        \rinner{\cB}{(m,x)}{(n,y)}
        \coloneqq
        \bigl(
        \alpha_{x\inv}(m^*n), 
        x\inv y
        \bigr).
    \]
    One quickly checks that~$\cM$ is a~\demiequiv[$\cB$-], 
    so~$\cB$ is equivalent via~$\cM$ to $\cM\otimes_{\cB}\cM\op$. Let us identify the fibre of the latter Fell bundle over an element $(xy\inv,yH)=\leoq[X]{G}{x}{y}$ of $G$: As explained in Theorem~\ref{thm:A is FB}, we use \eqref{eq:cA's fibres} to identify 
    \begin{align*}
        \left(\cM\otimes_{\cB}\cM\op\right) (xy\inv,yH)
        \cong
        M(x)\otimes_{e} M(y)\op
    \end{align*}
    as $\compacts_{B(e)}(M(x))-\compacts_{B(e)}(M(y))$-\ib s.
    Over a unit $(1_{G},yH)$ of $G$, the fibre is (canonically isomorphic to) the $\cst$-algebra $\compacts_{B(e)}(M(y))$ itself, where $y$ is any representative of $yH$. These $B(e)$-compact operators on $M(y)$ are generated by $\innercpct[]{}{n_{1}}{n_{2}}$ for $n_{i}\in M(y)$, and one quickly computes that such a rank-one operator just multiplies $n\in M(y)$ on the left by $n_{1}n_{2}^*$.

The balancing in $M(x)\otimes_{e} M(y)\op$ identifies
    \[
    (m\alpha_x(b),x)\otimes (n,y)\op
    =
    (m,x) \otimes (n\alpha_{y}(b^*),y)\op
    \]
    for all $b\in B(e)=A$. If we let $A(xy\inv,yH)\coloneqq A$ as a Banach space, then we can therefore show that the map  
    \begin{align*}
        M(x)\otimes_{e} M(y)\op    
        &\to
        A(xy\inv,yH)
        \\
        (m,x)\otimes (n,y)\op
        &\mapsto
        m\alpha_{xy\inv}(n)^*
    \end{align*}
    is a Banach space isomorphism. 
   These isomorphisms create a new Fell bundle $\cA=(p_{\cA}\colon A\to G)$ out of $\cM\otimes_\cB\cM\op$:~$\cA$ has the fibre $A(xy\inv,yH)$ over $(xy\inv,yH)$, and the structure maps of $\cM\otimes_\cB\cM\op$ described in \eqref{eq:cA:cdot} and Corollary~\ref{cor:cA:involution} translate to
    \begin{align*}
        \mvisiblespace\cdot\mvisiblespace\colon\qquad&
        A(x ,yzH) \times A(y ,zH)
        \to 
        A(xy , zH),
        \quad
        (a, b)\mapsto a \alpha_{x }(b),
    \intertext{and}
        \mvisiblespace^*\colon\qquad&
        A(x ,yH)
        \to 
        A(  x\inv , xyH),
        \quad
        a\mapsto 
        \alpha_{  x\inv }(a^*).
    \end{align*}
    The Fell bundle~$\cA$ is therefore exactly the semi-direct product Fell bundle $X\ltimes (A\times X/H)$ which encodes the left-$X$ action
    $x\cdot (a,yH)=(\alpha_x(a), xyH)$ on the constant $\cst$-algebraic bundle $A\times X/H$ over $X/H$. We have therefore recovered the same Fell bundle that was mentioned as being equivalent to~$\cB$ in \cite[Example 14]{Muhly:Bdles}.
\end{example}

Example~\ref{ex:Paul's Ex 14} is the extreme case where $X$ is a group; the other extreme case is when $X$ is just a  \psp{$H$}: We can likewise define a free and proper action of the transformation group groupoid $X\rtimes H$ on $X$ by letting $(x,h)$ transform $x$ into $x\ract h$. The associated \igpd\ $X\times_{X\rtimes H}X\op$ is exactly the quotient $X/H$ of the original action on $X$, and so an application of \cite[Theorem 2.8]{MRW:1987:Equivalence} recovers Green's theorem that $C_0(X/H)$  and  $C_0(X)\rtimes_{r} H$ are \SMEadj.

There is a plethora of generalizations of Green's result. For example, we can allow the object $X$ that is being acted on to be not just a space (as in Green's theorem) or a group (as in Example~\ref{ex:Paul's Ex 14}), but a groupoid $\cX$, and we allow the underlying bundles to be more than line-bundles: if $\cM=(M\to \cX)$ is a {\em Fell bundle} that  carries a free and proper action of a group $K$, then~$\cM$ is an equivalence between the semi-direct product Fell bundle $\cB=\cM\rtimes K$ and the quotient Fell bundle $K\backslash\cM$.
\footnote{This is a special case of \cite[Theorem 3.1]{KMQW:ImprimThms1}, where they consider {\em two} group actions on a groupoid $\cX$. \cite[Theorem 6.1]{DuLi:ImprimThms-pp} goes two steps further, by firstly allowing groupoid actions, and secondly by allowing the actions to be only {\em self-similar} rather than by homomorphisms; this yields a Fell bundle equivalence between the Zappa--Sz\'ep product Fell bundles $(\cM/\mathcal{K}_{1})\bowtie \mathcal{K}_{2}$ and $\mathcal{K}_{1}\bowtie(\mathcal{K}_{2}\backslash\cM)$ as constructed in \cite{DuLi:ZS}.}  Theorem~\ref{thm:demiequiv to equiv} can recover such results, in that~$\cM$ is a $\cB$-\demiequiv[], and $\cM\otimes_{\cB}\cM\op$ can be computed to be isomorphic to the quotient bundle.

\section*{Acknowledgements}
I was supported by Methusalem grant METH/21/03 –- long term structural funding of the Flemish Government, and an FWO Senior Postdoctoral Fellowship (project number 1206124N). 

I would like to thank Bram Mesland for asking me a question that the main theorem answers, and I would further like to thank him and Arthur Pander Maat for multiple helpful discussions during the production of this paper. 

\appendix

\section{Hilbert $\cst$-Modules}\label{app:Hilbert modules}

\begin{lemma}\label{lem:norm of compacts}
	If $\mbf{X}$ is a full right Hilbert $C^*$-module over a $\cst$-algebra $B$ and $\mbf{x}_{i}\in\mbf{X}$, then the operator-norm of the $B$-compact positive operator $\sum_{i=1}^{k}\innercpct[]{}{\mbf{x}_{i}}{\mbf{x}_{i}}$ is exactly the norm of
	$
		\sum_{i=1}^{k}\rinner[\mathbf{X}]{B}{\mbf{x}_{i}}{\mbf{x}_{i}}
	$ in $B$.
\end{lemma}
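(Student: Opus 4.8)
The plan is to reduce the statement about a finite \emph{sum} of rank-one operators to the classical norm identity for a \emph{single} generator. First I would record the single-generator case: for any element $\mbf{y}$ of a full right-Hilbert $B$-module $\mbf{Y}$, the ``creation'' map $\Theta_{\mbf{y}}\colon B\to\mbf{Y}$, $b\mapsto \mbf{y}\cdot b$, is adjointable, with adjoint $\Theta_{\mbf{y}}^{*}(\mbf{z})=\rinner[\mbf{Y}]{B}{\mbf{y}}{\mbf{z}}$. One then has $\Theta_{\mbf{y}}\Theta_{\mbf{y}}^{*}=\innercpct[]{}{\mbf{y}}{\mbf{y}}$ as an operator on $\mbf{Y}$, while $\Theta_{\mbf{y}}^{*}\Theta_{\mbf{y}}$ is the element $\rinner[\mbf{Y}]{B}{\mbf{y}}{\mbf{y}}$ of $B$. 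The $\cst$-identity $\norm{S}^{2}=\norm{S^{*}S}=\norm{SS^{*}}$ then yields $\norm{\innercpct[]{}{\mbf{y}}{\mbf{y}}}=\norm{\rinner[\mbf{Y}]{B}{\mbf{y}}{\mbf{y}}}$; this is essentially \cite[Lemma 2.30]{RaWi:Morita}.

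To treat $\sum_{i=1}^{k}\innercpct[]{}{\mbf{x}_{i}}{\mbf{x}_{i}}$, I would pass to the $k$-fold amplification $\mbf{X}^{\oplus k}$, equipped with the diagonal inner product $\inner{(\mbf{u}_{i})_{i}}{(\mbf{v}_{i})_{i}}=\sum_{i}\rinner[\mbf{X}]{B}{\mbf{u}_{i}}{\mbf{v}_{i}}$, and consider the single vector $\mbf{x}=(\mbf{x}_{1},\dots,\mbf{x}_{k})\in\mbf{X}^{\oplus k}$. Applying the single-generator case with $\mbf{Y}=\mbf{X}^{\oplus k}$ produces \[ \norm{\innercpct[]{}{\mbf{x}}{\mbf{x}}}_{\compacts_{B}(\mbf{X}^{\oplus k})}=\norm{\inner{\mbf{x}}{\mbf{x}}}=\norm{\textstyle\sum_{i=1}^{k}\rinner[\mbf{X}]{B}{\mbf{x}_{i}}{\mbf{x}_{i}}}, \] which is exactly the right-hand side appearing in the lemma.

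The remaining and decisive step is to match the operator norm of $\sum_{i}\innercpct[]{}{\mbf{x}_{i}}{\mbf{x}_{i}}$ on $\mbf{X}$ with the quantity just computed on $\mbf{X}^{\oplus k}$. The natural device is the ``row'' operator $\Phi\colon B^{\oplus k}\to\mbf{X}$, $(b_{i})_{i}\mapsto\sum_{i}\mbf{x}_{i}\cdot b_{i}$, which is adjointable and satisfies $\Phi\Phi^{*}=\sum_{i}\innercpct[]{}{\mbf{x}_{i}}{\mbf{x}_{i}}$, so that a further application of the $\cst$-identity expresses the desired operator norm purely in terms of the inner products $\rinner[\mbf{X}]{B}{\mbf{x}_{i}}{\mbf{x}_{j}}$. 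I expect this comparison to be the main obstacle: it amounts to controlling the norm of the Gram data $\bigl(\rinner[\mbf{X}]{B}{\mbf{x}_{i}}{\mbf{x}_{j}}\bigr)_{i,j}$, and it is here that the positivity of that matrix — furnished by \cite[Lemma 2.65]{RaWi:Morita} — must be brought to bear in order to complete the identification and hence the proof.
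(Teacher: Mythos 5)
Your first two steps are correct --- the single-generator identity via $\Theta_{\mbf{y}}$ and the $\cst$-identity is sound, and the amplification correctly expresses the right-hand side of the lemma as the norm of a single rank-one operator on $\mbf{X}^{\oplus k}$ --- but the third step, which you flag as the main obstacle, is a genuine and \emph{unfillable} gap. Your row operator correctly yields
\[
\norm{\sum_{i=1}^{k}\innercpct[]{}{\mbf{x}_{i}}{\mbf{x}_{i}}}
=\norm{\Phi\Phi^{*}}
=\norm{\Phi^{*}\Phi}
=\norm{G}_{M_{k}(B)},
\qquad
G=\bigl(\rinner[\mbf{X}]{B}{\mbf{x}_{i}}{\mbf{x}_{j}}\bigr)_{i,j},
\]
whereas the right-hand side of the lemma is $\norm{\sum_{i}G_{ii}}_{B}$, the norm of the $B$-valued trace of $G$. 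No positivity argument can identify these two quantities, because they genuinely differ once $k\geq 2$. Already for $B=\mbb{C}$ the lemma would assert that the operator norm of a positive finite-rank Hilbert-space operator equals its trace: taking $\mbf{X}=\mbb{C}^{2}$ and $\mbf{x}_{1},\mbf{x}_{2}$ orthonormal gives $\sum_{i}\innercpct[]{}{\mbf{x}_{i}}{\mbf{x}_{i}}=\operatorname{id}$, of norm $1$, while $\sum_{i}\rinner[\mbf{X}]{B}{\mbf{x}_{i}}{\mbf{x}_{i}}=2$. Over noncommutative $B$ the failure can also go the other way: for $\mbf{X}=B=M_{2}(\mbb{C})$ with $\rinner[\mbf{X}]{B}{a}{b}=a^{*}b$ and $\mbf{x}_{1}=E_{11}$, $\mbf{x}_{2}=E_{12}$, the left-hand side is the norm of left multiplication by $\mbf{x}_{1}\mbf{x}_{1}^{*}+\mbf{x}_{2}\mbf{x}_{2}^{*}=2E_{11}$, namely $2$, while $\sum_{i}\rinner[\mbf{X}]{B}{\mbf{x}_{i}}{\mbf{x}_{i}}=E_{11}+E_{22}$ has norm $1$. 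In other words, your strategy, carried out honestly, \emph{disproves} Lemma~\ref{lem:norm of compacts} for $k\geq 2$; what it does prove is the case $k=1$ and the correct general formula $\norm{\sum_{i}\innercpct[]{}{\mbf{x}_{i}}{\mbf{x}_{i}}}=\norm{G}_{M_{k}(B)}$.

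You should not be troubled that you could not find the missing argument: the paper's own proof is invalid, for essentially the reason your reduction exposes. It deduces $\norm{T}\leq\norm{\sum_{i}\rinner[\mbf{X}]{B}{\mbf{x}_{i}}{\mbf{x}_{i}}}$ from the inequality $\rinner{B}{\mbf{z}}{\mbf{x}_{i}}\rinner{B}{\mbf{x}_{i}}{\mbf{z}}\leq\norm{\mbf{z}}^{2}\rinner{B}{\mbf{x}_{i}}{\mbf{x}_{i}}$, cited as Cauchy--Schwarz, and then obtains the reverse estimate by exchanging the roles of $B$ and $\compacts_{B}(\mbf{X})$, using that $\mbf{X}$ is an \ib. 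But writing $c=\rinner{B}{\mbf{x}_{i}}{\mbf{z}}$, Cauchy--Schwarz bounds $cc^{*}$ by $\norm{\mbf{z}}^{2}\rinner{B}{\mbf{x}_{i}}{\mbf{x}_{i}}$, and $c^{*}c$ by $\norm{\mbf{x}_{i}}^{2}\rinner{B}{\mbf{z}}{\mbf{z}}$; the inequality actually used bounds $c^{*}c$ by the wrong majorant, i.e., it tacitly interchanges $c^{*}c$ and $cc^{*}$, and it is false (for $\mbf{X}=B=M_{2}(\mbb{C})$, $\mbf{x}_{i}=E_{12}$, $\mbf{z}=1$ it would assert $E_{11}\leq E_{22}$). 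Since the examples above show that \emph{both} inequalities packaged in the lemma can fail, it is the statement, not merely the two proofs, that needs repair. In its sole application, the proof of Lemma~\ref{lem:DL:MJM2023:Lemma 5.2}, the correct substitute is $\norm{\gamma(x,y\op)}^{2}=\norm{\bigl(\rinner[\cM]{\cB}{\nu_{l}(x,y\op)}{\nu_{l'}(x,y\op)}\bigr)_{l,l'}}$, the norm of the Gram matrix in $M_{k}\bigl(B(\sigma(x))\bigr)$, and upper semicontinuity of $(x,y\op)\mapsto\norm{\gamma(x,y\op)}$ still follows from this, because the entries depend continuously on $(x,y\op)$ and $k\times k$ matrices over (the restriction to $\cH\z$ of) $\cB$ again form an \uscBb.
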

\begin{proof}
	Denote the compact operator in question by $T$. For any $\mbf{z}\in\mbf{X}$, we have
	\begin{align*}
		\rinner[\mbf{X}]{B}{T\mbf{z}}{\mbf{z}}
		&=
		\sum\nolimits_{i} 
		\rinner[\mathbf{X}]{B}{
			\mbf{x}_{i} \cdot
			\rinner[\mbf{X}]  {B}
			{\mbf{x}_{i}}
			{\mbf{z}}
		}
		{\mbf{z}}
		=
		\sum\nolimits_{i} 
		\rinner[\mbf{X}]  {B}
		{\mbf{z}}{\mbf{x}_{i}}
		\rinner[\mathbf{X}]{B}{
			\mbf{x}_{i}
		}
		{\mbf{z}}.
	\end{align*}
	Note that the right-hand side is a sum of positive elements and hence positive, so $T$ is indeed a positive operator by \cite[Lemma 2.28]{RaWi:Morita}.
	By the Cauchy--Schwarz inequality for Hilbert modules \cite[Lemma 2.5]{RaWi:Morita}, we have for each $i$
	\[ 
	\rinner[\mbf{X}]  {B}
	{\mbf{z}}{\mbf{x}_{i}}
	\rinner[\mathbf{X}]{B}{
		\mbf{x}_{i}
	}
	{\mbf{z}}
	\leq
	\norm{\mbf{z}}^2
	\rinner[\mathbf{X}]{B}{
		\mbf{x}_{i}
	}
	{\mbf{x}_{i}}
	\]
	in the $\cst$-algebra $B$, and hence we conclude that
	\[
	\rinner[\mbf{X}]{B}{T\mbf{z}}{\mbf{z}}
	\leq
	\norm{\mbf{z}}^2
	\sum\nolimits_{i} 
	\rinner[\mathbf{X}]{B}{
		\mbf{x}_{i}
	}
	{\mbf{x}_{i}}
	.
	\]
	Using \cite[Remark~2.29]{RaWi:Morita} and positivity of $T$ in the first equality of the following, we deduce
	\begin{align*}
		\norm{T}
		&=\sup
		\{\norm{\rinner[\mbf{X}]{B}{T\mbf{z}}{\mbf{z}}}:\norm{\mbf{z}}\leq 1
		\}
		\leq
		\norm{\sum\nolimits_{i} 
			\rinner[\mathbf{X}]{B}{
				\mbf{x}_{i}
			}
			{\mbf{x}_{i}}}.
	\end{align*}
	For the reverse inequality, note that $\mbf{X}$ is an \ib\ between $\compacts_{B}(\mbf{X})$ and $B$, meaning that $B$ is exactly the algebra of $\compacts_{B}(\mbf{X})$-compact operators on the {\em left}-Hilbert $\cst$-module $\mbf{X}$. In particular, the same proof for $T$ replaced by 
	$\sum_{i}\rinner[\mathbf{X}]{B}{
		\mbf{x}_{i}
	}
	{\mbf{x}_{i}}$ yields the other estimate.
\end{proof}

The following result is well known.
\begin{lemma}\label{lem:tensor and compacts}
    Suppose~$\mathbf{X}$ is an $A-B$-\ib\ and $\mathbf{Y}$ is a $C-B$-\ib. Then the balanced tensor product $\mathbf{X}\otimes_{B} \mathbf{Y}\op$ is isomorphic to the $B$-compact operators $\compacts_{B}(\mathbf{Y},\mathbf{X})$ as $A-C$-\ib s.
\end{lemma}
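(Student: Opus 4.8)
The plan is to construct the isomorphism $\Theta\colon \mathbf{X}\otimes_{B}\mathbf{Y}\op\to \compacts_{B}(\mathbf{Y},\mathbf{X})$ directly on elementary tensors by $\Theta(\mathbf{x}\otimes\mathbf{y}\op)=\innercpct[]{}{\mathbf{x}}{\mathbf{y}}$, where the latter denotes the rank-one operator $\mathbf{z}\mapsto \mathbf{x}\cdot\rinner[\mathbf{Y}]{B}{\mathbf{y}}{\mathbf{z}}$ from $\mathbf{Y}$ to $\mathbf{X}$, and then verify that it respects all the relevant structure. First I would check that the assignment $(\mathbf{x},\mathbf{y}\op)\mapsto \innercpct[]{}{\mathbf{x}}{\mathbf{y}}$ is bilinear and $B$-balanced: bilinearity is immediate from sesquilinearity of the $B$-valued inner product on $\mathbf{Y}$ and linearity in $\mathbf{X}$, while $B$-balancing amounts to the identity $\innercpct[]{}{\mathbf{x}\cdot b}{\mathbf{y}}=\innercpct[]{}{\mathbf{x}}{b^*\cdot\mathbf{y}}$, which follows since $\rinner[\mathbf{Y}]{B}{b^*\cdot\mathbf{y}}{\mathbf{z}}=b\,\rinner[\mathbf{Y}]{B}{\mathbf{y}}{\mathbf{z}}$ by $C^*$-linearity of the inner product. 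This gives a well-defined linear map $\Theta$ on the algebraic balanced tensor product with dense range in $\compacts_{B}(\mathbf{Y},\mathbf{X})$, since the rank-one operators span a dense subspace.

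The crux is to show that $\Theta$ is isometric, which simultaneously yields injectivity and the extension to the completion. For this I would compute the two relevant inner products on a general element $\sum_i \mathbf{x}_i\otimes\mathbf{y}_i\op$ and match them. On the tensor product side, the $A$-valued (left) inner product is $\lInner{}{\sum_i \mathbf{x}_i\otimes\mathbf{y}_i\op}{\sum_j\mathbf{x}_j\otimes\mathbf{y}_j\op}$, which unfolds using the $\compacts_B(\mathbf{Y})$-valued inner product on $\mathbf{Y}\op$ coming from $\mathbf{Y}$; on the operator side one computes $T^*T$ for $T=\sum_i\innercpct[]{}{\mathbf{x}_i}{\mathbf{y}_i}$ and identifies it as an element of $\compacts_B(\mathbf{Y})$. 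The key algebraic input is that the $C$-valued left inner product on the balanced tensor product is characterized by $\lInner{}{\mathbf{x}_1\otimes\mathbf{y}_1\op}{\mathbf{x}_2\otimes\mathbf{y}_2\op}=\lInner{}{\mathbf{x}_1\cdot\rinner[\mathbf{Y}]{B}{\mathbf{y}_2}{\mathbf{y}_1}}{\mathbf{x}_2}$ (taking values in $C=\compacts_B(\mathbf{X})$ after identifying via $\mathbf{X}$), and that this matches the formula for the $C$-valued inner product $\linner{}{S}{T}=S\,T^*$ on $\compacts_B(\mathbf{Y},\mathbf{X})$ viewed as a $\compacts_B(\mathbf{X})$--$\compacts_B(\mathbf{Y})$-bimodule. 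I expect the norm computation to be most cleanly organized by invoking Lemma~\ref{lem:norm of compacts} to pass between the operator norm of a sum of rank-one operators and the norm of the corresponding sum of inner products, exactly as in the computation justifying Equation~\eqref{eq:norm on compacts}.

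Once isometry is in hand, $\Theta$ extends uniquely to an isometric isomorphism of Banach spaces onto $\compacts_B(\mathbf{Y},\mathbf{X})$. It then remains to verify compatibility with the bimodule actions and both inner products, so that $\Theta$ is an isomorphism of imprimitivity bimodules and not merely of Banach spaces. The left $A$-action and right $C$-action on $\mathbf{X}\otimes_B\mathbf{Y}\op$ correspond under $\Theta$ to pre- and post-composition of operators: $a\cdot\innercpct[]{}{\mathbf{x}}{\mathbf{y}}=\innercpct[]{}{a\cdot\mathbf{x}}{\mathbf{y}}$ and $\innercpct[]{}{\mathbf{x}}{\mathbf{y}}\cdot c=\innercpct[]{}{\mathbf{x}}{c^*\cdot\mathbf{y}}$ (where $C$ acts on $\compacts_B(\mathbf{Y},\mathbf{X})$ through its identification with $\compacts_B(\mathbf{Y})$), and both follow from the defining formula for the rank-one operators together with adjointability. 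The matching of inner products is then forced by the computation already carried out for isometry, so the two $A$-valued and the two $C$-valued inner products agree under $\Theta$.

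The main obstacle I anticipate is bookkeeping rather than conceptual difficulty: one must keep careful track of which coefficient algebra ($A$, $B$, or $C$) each inner product takes values in, and use the canonical identifications $A\cong\compacts_B(\mathbf{X})$ and $C\cong\compacts_B(\mathbf{Y})$ coming from the imprimitivity bimodule structure of $\mathbf{X}$ and $\mathbf{Y}$ consistently. Because all the required identities are standard facts about imprimitivity bimodules and balanced tensor products, I would present the proof at the level of these structural identities and cite \cite{RaWi:Morita} for the underlying module facts, leaving the purely routine verifications to the reader.
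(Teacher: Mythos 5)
Your proposal is correct and takes essentially the same approach as the paper: the same canonical map $\mathbf{x}\otimes\mathbf{y}\op\mapsto\innercpct[]{}{\mathbf{x}}{\mathbf{y}}$, the same $B$-balancing check, dense range from the rank-one operators, and isometry as the crux, which the paper carries out by exactly the norm computation you sketch (using \cite[Lemma 2.65 and Remark 2.29]{RaWi:Morita} together with Equation~\eqref{eq:norm on compacts}). The only point to correct is a labelling slip: the left inner product $(S,T)\mapsto ST^{*}$ on $\compacts_{B}(\mathbf{Y},\mathbf{X})$ takes values in $A\cong\compacts_{B}(\mathbf{X})$, whereas it is $C$ that is identified with $\compacts_{B}(\mathbf{Y})$.
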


\begin{proof}
    First recall that $\compacts_{B}(\mathbf{Y},\mathbf{X})$ is densely spanned by the maps
    \[
     \innercpct[]{}{\mathbf{x}}{\mathbf{y}}\colon\quad\mathbf{y}'\mapsto \mathbf{x} \cdot  \rip{B}<\mathbf{y},\mathbf{y}'>
    \]
    for $\mathbf{x}\in \mathbf{X},\mathbf{y}\in \mathbf{Y}$. Its bimodule structure is given by
    \[
        a\cdot  \innercpct[]{}{\mathbf{x}}{\mathbf{y}} \coloneqq \innercpct[]{}{a\cdot  \mathbf{x}}{\mathbf{y}}
        \quad\text{and}\quad
        \innercpct[]{}{\mathbf{x}}{\mathbf{y}} \cdot  c \coloneqq \innercpct[]{}{\mathbf{x}}{c^*\cdot  \mathbf{y}}.
    \]
    The map
    \[
    \mathbf{X}\times \mathbf{Y}\op
    \to \compacts_{B}(\mathbf{Y},\mathbf{X}),
    \quad
    (\mathbf{x},\mathbf{y}\op)
    \mapsto 
    \innercpct[]{}{\mathbf{x}}{\mathbf{y}},
    \]
    is bilinear, and so it descends to a linear map with domain $\mathbf{X}\odot \mathbf{Y}\op$. For any $b\in B$, we have
    \[
     (\mathbf{x}\cdot  b) \cdot  \rip{B}<\mathbf{y},\mathbf{y}'>
     =
     \mathbf{x} \cdot  \rip{B}<\mathbf{y}\cdot  b^*,\mathbf{y}'>,
     \text{ so that }
      \innercpct[]{}{\mathbf{x}\cdot  b}{\mathbf{y}}
      =
      \innercpct[]{}{\mathbf{x}}{\mathbf{y}\cdot  b^*}.
    \]
    Since $(\mathbf{y}\cdot  b^*)\op = b\cdot  \mathbf{y}\op$, we conclude that we have a linear map
    \[
    \mathbf{X}\odot_{B} \mathbf{Y}\op
    \to \compacts_{B}(\mathbf{Y},\mathbf{X})
    \text{ determined by }
    \mathbf{x}\odot \mathbf{y}\op
    \mapsto 
    \innercpct[]{}{\mathbf{x}}{\mathbf{y}}.
    \]
    Clearly, this map is an $A-C$-bimodule map and, 
    by definition of $\compacts_{B}(\mathbf{Y},\mathbf{X})$, it has dense range. Thus, if we can show that the map is isometric, then it extends to an isomorphism of Hilbert bimodules. For $\mathbf{x}_{i}\in \mathbf{X}$ and $\mathbf{y}_{i}, \mathbf{y}\in \mathbf{Y}$, we have
    \begin{align*}
        \norm{\sum_{i} \innercpct[]{}{\mathbf{x}_{i}}{\mathbf{y}_{i}}(\mathbf{y})}_{\mathbf{X}}^2
        &=
        \norm{\sum_{i,j} 
        \rip{B}<\mathbf{x}_{i}\cdot  {\rip{B}<\mathbf{y}_{i},\mathbf{y}>}, \mathbf{x}_{j}\cdot  {\rip{B}<\mathbf{y}_{j},\mathbf{y}>}>
        }_{B}
        =
        \norm{\sum_{i,j} 
        \rip{B}<\mathbf{y},\mathbf{y}_{i}>
        \rip{B}<\mathbf{x}_{i}, \mathbf{x}_{j}> \rip{B}<\mathbf{y}_{j},\mathbf{y}>
        }_{B}
    \intertext{As in \cite[Lemma 2.65]{RaWi:Morita}, we can write $\rinner[\mathbf{X}]{B}{\mathbf{x}_{j}}{\mathbf{x}_{i}}=\sum_{l} b_{j,l}b_{i,l}^*$ for some $b_{i,j}\in B$, so that}
        &=
        \norm{\sum_{i,j,l} 
        \rip{B}<\mathbf{y},\mathbf{y}_{i}>
        b_{i,l}b_{j,l}^* \rip{B}<\mathbf{y}_{j},\mathbf{y}>
        }_{B}
        =
        \norm{\sum_{i,j,l} 
        \rip{B}<\mathbf{y},\mathbf{y}_{i} \cdot  b_{i,l}>
        \rip{B}<\mathbf{y}_{j}\cdot  b_{j,l},\mathbf{y}>
        }_{B}.
    \end{align*}
    Write $\mathbf{z}_{l}\coloneqq \sum_i \mathbf{y}_{i} \cdot  b_{i,l}$. Since
    \[
        \rinner[\mathbf{Y}]{B}
        {\mathbf{y}}{\mathbf{z_{l}}}\rinner[\mathbf{Y}]{B}
        {\mathbf{z}_{l}}{\mathbf{y}}
        =
        \rinner[\mathbf{Y}]{B}
        {\mathbf{y}}{\mathbf{z}_{l}\cdot  \rinner[\mathbf{Y}]{B}
        {\mathbf{z}_{l}}{\mathbf{y}}}
        =
        \rinner[\mathbf{Y}]{B}
        {\mathbf{y}}{ \linner[\mathbf{Y}]{C}
        {\mathbf{z}_{l}}{\mathbf{z}_{l}}\cdot  \mathbf{y}},
    \]
    it follows that
    \begin{align*}
    \norm{\sum_{i} \innercpct[]{}{\mathbf{x}_{i}}{\mathbf{y}_{i}}}_{\compacts}^2
    =
    \sup_{\norm{\mathbf{y}}\leq 1}
        \norm{\sum_{i} \innercpct[]{}{\mathbf{x}_{i}}{\mathbf{y}_{i}}(\mathbf{y})}_{\mathbf{X}}^2
        &=
        \sup_{\norm{\mathbf{y}}\leq 1}
        \norm{ 
        \rinner[\mathbf{Y}]{B}
        {\mathbf{y}}{ \sum_{l}\linner[\mathbf{Y}]{C}
        {\mathbf{z}_{l}}{\mathbf{z}_{l}}\cdot  \mathbf{y}}
        }_{B}
        \overset{(\dagger)}{=}
        \norm{\sum_{l}\linner[\mathbf{Y}]{C}
        {\mathbf{z}_{l}}{\mathbf{z}_{l}}},
    \end{align*}
    where $(\dagger)$ follows from \cite[Remark 2.29]{RaWi:Morita} applied to the positive operator $\sum_{l}\linner[\mathbf{Y}]{C}
        {\mathbf{z}_{l}}{\mathbf{z}_{l}}$. 
    Since $B$ acts by $C$-adjointable operators on $\mathbf{Y}$, we have
    \begin{align*}               
        \norm{\sum_{l} \linner[\mathbf{Y}]{C}  {\mathbf{z}_{l}}{\mathbf{z}_{l}
            }
        }_{C}
        &=
        \norm{\sum_{i,j,l} \linner[\mathbf{Y}]{C}  {\mathbf{y}_{i}}{
            \mathbf{y}_{j}\cdot  (b_{j,l}b_{i,l}^*)
            }
        }_{C}
        =\norm{\sum_{i,j} \linner[\mathbf{Y}]{C}  {\mathbf{y}_{i}}{
            \mathbf{y}_{j}\cdot  \rinner[\mathbf{X}]{B}{\mathbf{x}_{j}}{\mathbf{x}_{i}}
            }
        }_{C}
        \\
        &=
        \norm{\sum_{i,j} \rinner[\mathbf{Y}\op]{C}  {\mathbf{y}_{i}\op}{
            \rinner[\mathbf{X}]{B}{\mathbf{x}_{i}}{\mathbf{x}_{j}}\cdot 
            \mathbf{y}_{j}\op
            }
        }_{C}
        \overset{\eqref{eq:norm on compacts}}{=}
        \norm{\sum_{i} \mathbf{x}_{i} \otimes \mathbf{y}_{i}\op}_{\mathbf{X}\otimes_{B}\mathbf{Y}\op}^2,
    \end{align*}
    as claimed.
\end{proof}

\section{\uscBb s}\label{app:uscBb}

The following lemma explains that, in an \uscBb, being arbitrarily close to convergent nets  implies convergence. It is, in essence, just a restatement of \cite[Proposition C.20]{Wil2007}, where we drop the assumption that the bundle be a $\cst$-bundle.
\begin{lemma}\label{lem:convergence uscBb:close nets}
    Let $\cM=(q_{\cM}\colon M\to X)$ be an \uscBb\ over a \LCH\ space $X$. Suppose we are given a net $(m_{\lambda})_{\lambda}$ in~$\cM$ and a point $m\in \cM$ such that $q_{\cM}(m_{\lambda})\to q_{\cM}(m)$. Then $m_{\lambda}\to m$ if and only if, for every $\epsilon>0$, there exists a convergent net $(n_{\lambda}^\epsilon)_{\lambda}$ in~$\cM$ with $q_{\cM}(n_{\lambda}^\epsilon)=q_{\cM}(m_{\lambda})$ such that for all large $\lambda$, we have
    \[
        \norm{m_{\lambda} - n_{\lambda}^\epsilon}<\epsilon
        \text{ and }
        \norm{m - \lim_{\lambda} n_{\lambda}^\epsilon}<\epsilon.
    \]
\end{lemma}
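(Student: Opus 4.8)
The forward implication is immediate and requires no structure: if $m_\lambda\to m$, then the constant choice $n_\lambda^\epsilon\coloneqq m_\lambda$ works for every $\epsilon>0$, since this net converges (to $m$), satisfies $q_{\cM}(n_\lambda^\epsilon)=q_{\cM}(m_\lambda)$, and gives $\norm{m_\lambda-n_\lambda^\epsilon}=0<\epsilon$ and $\norm{m-\lim_\lambda n_\lambda^\epsilon}=\norm{m-m}=0<\epsilon$. So all the content is in the converse, and the plan is to reduce it to the continuous-section criterion for convergence in an \uscBb, namely \cite[Lemma A.3]{DL:MJM2023}: writing $x\coloneqq q_{\cM}(m)$, it suffices to produce, for every $\delta>0$, a section $\kappa\in\Gamma_0(X;\cM)$ with $\norm{m-\kappa(x)}<\delta$ and $\norm{m_\lambda-\kappa(q_{\cM}(m_\lambda))}<\delta$ for all large $\lambda$.

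First I would fix $\delta>0$ and apply the hypothesis with $\epsilon\coloneqq\delta/3$, obtaining a convergent net $(n_\lambda^\epsilon)_\lambda$ with $q_{\cM}(n_\lambda^\epsilon)=q_{\cM}(m_\lambda)$, with $\norm{m_\lambda-n_\lambda^\epsilon}<\epsilon$ eventually, and with limit $n\coloneqq\lim_\lambda n_\lambda^\epsilon$ satisfying $\norm{m-n}<\epsilon$. Since $q_{\cM}$ is continuous and $X$ is Hausdorff, $q_{\cM}(n)=\lim_\lambda q_{\cM}(n_\lambda^\epsilon)=\lim_\lambda q_{\cM}(m_\lambda)=x$. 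Because $\cM$ has enough continuous sections, I may then choose $\kappa\in\Gamma_0(X;\cM)$ with $\kappa(x)=n$.

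It remains to verify the two estimates. The first is immediate: $\norm{m-\kappa(x)}=\norm{m-n}<\epsilon<\delta$. For the second, the triangle inequality gives $\norm{m_\lambda-\kappa(q_{\cM}(m_\lambda))}\le\norm{m_\lambda-n_\lambda^\epsilon}+\norm{n_\lambda^\epsilon-\kappa(q_{\cM}(m_\lambda))}$, and the first summand is $<\epsilon$ eventually by construction. For the second summand, note that $n_\lambda^\epsilon$ and $\kappa(q_{\cM}(m_\lambda))$ lie over the common base point $q_{\cM}(m_\lambda)$, that $n_\lambda^\epsilon\to n$ by choice, and that $\kappa(q_{\cM}(m_\lambda))\to\kappa(x)=n$ by continuity of $\kappa$ together with $q_{\cM}(m_\lambda)\to x$. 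Hence $(n_\lambda^\epsilon,\kappa(q_{\cM}(m_\lambda)))$ is a net in the fibred product $M\bfp{q_{\cM}}{q_{\cM}}M$ converging to $(n,n)$; by joint continuity of fibrewise subtraction, their difference converges to the zero element $0_x$ of $M(x)$, and by upper semicontinuity of the norm $\limsup_\lambda\norm{n_\lambda^\epsilon-\kappa(q_{\cM}(m_\lambda))}\le\norm{0_x}=0$. So this summand is $<\epsilon$ eventually as well, yielding $\norm{m_\lambda-\kappa(q_{\cM}(m_\lambda))}<2\epsilon<\delta$ for large $\lambda$, which is exactly what \cite[Lemma A.3]{DL:MJM2023} requires.

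The only genuinely non-formal step, and the one place where the bundle topology rather than merely the fibrewise data enters, is the claim that $\norm{n_\lambda^\epsilon-\kappa(q_{\cM}(m_\lambda))}\to0$; I expect this to be the crux. It rests precisely on the two defining features of an \uscBb\ that survive dropping the $\cst$-structure of \cite[Proposition C.20]{Wil2007}, namely joint continuity of the vector-space operations on the fibred product and upper semicontinuity of the fibre norm. Everything else is bookkeeping with the triangle inequality and the choice $\epsilon=\delta/3$.
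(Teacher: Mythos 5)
Your proof is correct and follows essentially the same route as the paper: the paper's (very terse) proof also reduces the backward implication to the continuous-section criterion of \cite[Lemma A.3]{DL:MJM2023} via the triangle inequality, applying that lemma's (i)$\implies$(ii) direction to the convergent net $(n_\lambda^\epsilon)_\lambda$ to produce the section $\kappa$. The only difference is that you construct $\kappa$ through the limit $n$ using the existence of enough continuous sections and then re-derive its eventual $\epsilon$-closeness to $(n_\lambda^\epsilon)_\lambda$ from the \uscBb\ axioms (joint continuity of fibrewise subtraction on the fibred product and upper semicontinuity of the norm), i.e.\ you unpack, rather than cite, that direction of Lemma A.3.
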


\begin{proof}
    The forwards implication is trivial. The backwards implication follows from an application of \cite[Lemma A.3, (i)$\implies$(ii)]{DL:MJM2023} (applied to the convergent net $(n_{\lambda}^\epsilon)_{\lambda}$), the triangle inequality, and an application of \cite[Lemma A.3, (i)$\impliedby$(ii)]{DL:MJM2023}.
\end{proof}

\begin{lemma}[cf.\ {\cite[Proposition A.7]{DL:MJM2023}}]\label{lem:uscBb:cts}
    Let $\cM=(q_\cM\colon M\to X)$ and $\cN=(q_{\cN}\colon N\to Y)$ be two {\uscBb s}. Suppose we have a commutative diagram
    \begin{equation}\label{diag:Omega and omega}
    \begin{tikzcd}[row sep=small]
        \cM \ar[d, "q_{\cM}"']\ar[r, "\Omega"]& \cN \ar[d, "q_{\cN}"]
        \\
        X \ar[r, "\omega"]& Y
      \end{tikzcd}  
    \end{equation}
    and that the maps
    satisfy the following conditions.
\begin{enumerate}[label=\textup{(\alph*)}]
    \item \label{item:ptws linear} for each $x\in X$, $\Omega|_{M_x}$ is linear;
    \item\label{item:constant} there exists a constant $K>0$ such that $\|\Omega(m)\|\leq K\|m\|$ for all $m\in M$;
    \item $\omega$ is a continuous map;  and
    \item\label{item:Omega circ f} there exists a collection $\Gamma$ of continuous sections of~$\cM$ such that the $\mathbb{C}$-linear span of
 $\{\gamma(x): \gamma\in\Gamma \}$ is dense in each~$M_x$ and, for each $\gamma\in\Gamma $, the section
    $\Omega\circ\gamma\colon x\mapsto \bigl(x,\Omega(\gamma(x))\bigr)$ of the pull-back bundle $\omega^*(\cN)$ is continuous.
\end{enumerate}
Then $\Omega$ is continuous. 
\end{lemma}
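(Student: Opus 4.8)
The plan is to verify continuity of $\Omega$ directly on convergent nets, reducing everything to the approximation hypothesis~\ref{item:Omega circ f} by way of the fibrewise linearity~\ref{item:ptws linear} and the uniform bound~\ref{item:constant}. So suppose $m_\lambda\to m$ in~$M$ and set $x\coloneqq q_\cM(m)$; I must show $\Omega(m_\lambda)\to\Omega(m)$ in~$N$. First I would check the base points behave correctly: continuity of $q_\cM$ gives $q_\cM(m_\lambda)\to x$, and since $\Omega$ covers $\omega$ and $\omega$ is continuous, $q_\cN(\Omega(m_\lambda))=\omega(q_\cM(m_\lambda))\to\omega(x)=q_\cN(\Omega(m))$. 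This puts me in a position to apply Lemma~\ref{lem:convergence uscBb:close nets} to the net $\Omega(m_\lambda)$ and the point $\Omega(m)$, so it suffices to produce, for each $\epsilon>0$, a convergent net in~$N$ lying over the base points $q_\cN(\Omega(m_\lambda))$ and $\epsilon$-close to $\Omega(m_\lambda)$ (resp.\ its limit $\epsilon$-close to $\Omega(m)$).

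Fix $\epsilon>0$. By~\ref{item:Omega circ f} the $\mathbb{C}$-linear span of $\{\gamma(x):\gamma\in\Gamma\}$ is dense in $M_x$, so I can choose a finite combination $\zeta=\sum_i c_i\gamma_i$ with $\gamma_i\in\Gamma$ such that $\|m-\zeta(x)\|<\epsilon/K$; note that $\zeta$ is itself a genuine continuous section of~$\cM$. Applying \cite[Lemma A.3, $(i)\implies(iii)$]{DL:MJM2023} to the convergent net $m_\lambda\to m$ and the section $\zeta$ yields $\limsup_\lambda\|m_\lambda-\zeta(q_\cM(m_\lambda))\|\le\|m-\zeta(x)\|<\epsilon/K$, so that $\|m_\lambda-\zeta(q_\cM(m_\lambda))\|<\epsilon/K$ for all large~$\lambda$.

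Now I would set $n_\lambda\coloneqq\Omega\bigl(\zeta(q_\cM(m_\lambda))\bigr)$. By fibrewise linearity~\ref{item:ptws linear}, $n_\lambda=\sum_i c_i\,\Omega\bigl(\gamma_i(q_\cM(m_\lambda))\bigr)$, and since each $\Omega\circ\gamma_i$ is a continuous section of $\omega^*(\cN)$ by~\ref{item:Omega circ f}, evaluating along $q_\cM(m_\lambda)\to x$ and using continuity of addition and scalar multiplication in~$\cN$ gives $n_\lambda\to\Omega(\zeta(x))$ in~$N$. Moreover $q_\cN(n_\lambda)=\omega(q_\cM(m_\lambda))=q_\cN(\Omega(m_\lambda))$, so $n_\lambda$ sits over the correct base points. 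Finally, combining~\ref{item:ptws linear} and~\ref{item:constant} gives, for large~$\lambda$,
\[
\|\Omega(m_\lambda)-n_\lambda\|=\bigl\|\Omega\bigl(m_\lambda-\zeta(q_\cM(m_\lambda))\bigr)\bigr\|\le K\,\|m_\lambda-\zeta(q_\cM(m_\lambda))\|<\epsilon,
\]
while $\|\Omega(m)-\Omega(\zeta(x))\|=\|\Omega(m-\zeta(x))\|\le K\|m-\zeta(x)\|<\epsilon$ with $\Omega(\zeta(x))=\lim_\lambda n_\lambda$. Since $\epsilon>0$ was arbitrary, Lemma~\ref{lem:convergence uscBb:close nets} yields $\Omega(m_\lambda)\to\Omega(m)$, proving continuity.

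The only genuine subtlety — the step I would check most carefully — is that the approximating net $n_\lambda$ be simultaneously convergent in~$N$ \emph{and} supported over exactly the base points $q_\cN(\Omega(m_\lambda))$; this is precisely what~\ref{item:Omega circ f} is engineered to guarantee, since pushing the continuous section $\zeta$ through $\Omega$ again produces a continuous section of the pullback bundle. Everything else is a triangle inequality already packaged inside Lemma~\ref{lem:convergence uscBb:close nets}.
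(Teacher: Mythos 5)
Your proof is correct, but it takes a genuinely different route from the paper's. The paper does not argue with nets at all: it factors $\Omega$ through the pull-back bundle, writing $\Omega$ as the composition of $\Psi\colon m\mapsto\bigl(q_{\cM}(m),\Omega(m)\bigr)$, a bundle map from $\cM$ to $\omega^*(\cN)$ covering the \emph{identity} on $X$, with the (continuous) projection $\omega^*(\cN)\to\cN$; it then observes that the proof of \cite[Proposition A.7]{DL:MJM2023} applies to $\Psi$ (after noting that the cited result only needs the span of $\{\gamma(x):\gamma\in\Gamma\}$ to be fibrewise dense, not that $\Gamma$ be a vector space), since hypothesis~\ref{item:Omega circ f} is exactly the statement that $\Psi\circ\gamma$ is continuous. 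You instead inline the whole argument: you reduce continuity to the net criterion of Lemma~\ref{lem:convergence uscBb:close nets}, approximate $m$ fibrewise by a finite combination $\zeta$ of sections from $\Gamma$, control $\|m_{\lambda}-\zeta(q_{\cM}(m_{\lambda}))\|$ via \cite[Lemma A.3]{DL:MJM2023}, and transport the estimates through $\Omega$ using~\ref{item:ptws linear} and~\ref{item:constant}; your key steps (that $x'\mapsto\Omega(\gamma_i(x'))$ is continuous into $N$ because the pull-back topology is the subspace topology of $X\times N$, and that the approximating net sits over the correct base points) are all sound. What the paper's reduction buys is brevity and reuse of existing machinery; what your version buys is self-containedness -- you never need to open up the proof of \cite[Proposition A.7]{DL:MJM2023} to check that its hypotheses can be weakened, which the paper has to do explicitly -- and it is essentially a direct proof of that proposition in the generality needed here.
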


\begin{proof}
    First we point out that the proof of \cite[Proposition A.7]{DL:MJM2023} does not require $\Gamma$ to be a vector space but only that $\mathrm{span}_{\mbb{C}}\{\gamma(x):\gamma\in \Gamma\}$ is dense in $M(x)$; in other words, we already know that  Lemma~\ref{lem:uscBb:cts} holds in the case that $\omega$ is a homeomorphism.

    Consider the commutative diagram
    \begin{equation}\label{diag:Omega and Psi}
    \begin{tikzcd}[row sep=small, contains/.style = {draw=none,"\in" description,sloped}, contained/.style = {draw=none,"\ni" description,sloped}]
        m\ar[r,mapsto]\ar[d,contains] &\bigl(q_{\cM}(m),\Omega(m)\bigr)\ar[r,mapsto] \ar[d,contains]&\Omega(m)\ar[d,contains]
        \\
        \cM \ar[d, "q_{\cM}"']\ar[r, "\Psi"]& \omega^* (\cN) \ar[d, "\mathrm{pr}_{1}"]\ar[r]& \cN\ar[d, "q_{\cN}"]
        \\
        X \ar[r,equal]& X\ar[r, "\omega"] & Y
      \end{tikzcd}  
    \end{equation}
    We claim that the map $\Psi$ is continuous, which will imply that $\Omega$ is continuous since Diagram~\ref{diag:Omega and Psi} commutes. Since the identity map on~$X$ is a homeomorphism, we are in good shape to invoke \cite[Proposition A.7]{DL:MJM2023} for the square on the left-hand side; because of  Assumption \ref{item:Omega circ f}, it only remains to verify that $\Psi$ is fibrewise linear and bounded.
    But clearly, 
    Assumption \ref{item:ptws linear}
    implies linearity of $\Psi|_{M_{x}}\colon M_{x}\to \omega^*(\cN)_{x} = \{x\}\times \cN_{\omega(x)}$ by the definition of the linear structure on $\omega^*(\cN)$. And likewise, by definition of the norm on $\omega^*(\cN)_{x}$, we have $\norm{\Psi(m)}=\norm{\Omega(m)}$, and hence $\norm{\Psi(m)}\leq K\norm{m}$ for all $m\in M$ by \ref{item:constant}. Using \cite[Proposition A.7]{DL:MJM2023}, we conclude that $\Psi$ and hence $\Omega$ is continuous. 
\end{proof}

\begin{lemma}[cf.\ {\cite[Proposition A.8]{DL:MJM2023}}]\label{lem:uscBb:opem}
Let $\cM=(q_\cM\colon M\to X)$ and $\cN=(q_{\cN}\colon N\to Y)$ be two {\uscBb s}, and suppose we have the commutative Diagram~\eqref{diag:Omega and omega}
    and that the maps $\Omega,\omega $ 
    satisfy the following conditions.
\begin{enumerate}[label=\textup{(\alph*)}]
    \item\label{item:} for each $x\in X$, $\Omega|_{M_x}$ is linear;
    \item there exists a constant $k>0$ such that $\|\Omega(m)\| \geq  k\|m\|$ for all $m\in M$;
    \item\label{item:omega embedding} $\omega$ is an embedding (i.e., a homeomorphism onto its image); and
    \item $\Omega$ is continuous.
\end{enumerate}
Then $\Omega\inv \colon \Omega(M)\to M$ is continuous.
\end{lemma}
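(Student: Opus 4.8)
The plan is to mirror the proof of Lemma~\ref{lem:uscBb:cts}: I will factor $\Omega$ through the pull-back bundle so as to reduce to the situation in which the covering map is the identity on $X$ — a case handled directly by \cite[Proposition A.8]{DL:MJM2023}. Concretely, I would form the pull-back bundle $\omega^*(\cN)$ over $X$, whose total space $\{(x,n)\in X\times N : \omega(x)=q_{\cN}(n)\}$ carries the subspace topology of $X\times N$ and whose fibre norm is $\norm{(x,n)}\coloneqq\norm{n}$, and define
\[
\Psi\colon M \to \omega^*(\cN), \qquad m \mapsto \bigl(q_{\cM}(m),\Omega(m)\bigr).
\]
This $\Psi$ covers $\operatorname{id}_X$, is fibrewise linear by assumption~\ref{item:}, and satisfies $\norm{\Psi(m)}=\norm{\Omega(m)}$, so the lower bound on $\Omega$ gives $\norm{\Psi(m)}\ge k\norm{m}$; moreover continuity of $q_{\cM}$ and of $\Omega$ shows that $m\mapsto(q_{\cM}(m),\Omega(m))$ is continuous into $X\times N$, hence into the subspace $\omega^*(\cN)$.

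Before transferring anything I would record that $\Omega$ is injective on the total space $M$: if $\Omega(m)=\Omega(m')$, then $\omega(q_{\cM}(m))=q_{\cN}(\Omega(m))=q_{\cN}(\Omega(m'))=\omega(q_{\cM}(m'))$, so $q_{\cM}(m)=q_{\cM}(m')$ because $\omega$ is injective by~\ref{item:omega embedding}, and then fibrewise linearity with the bound $\norm{m-m'}\le k^{-1}\norm{\Omega(m-m')}=0$ forces $m=m'$. Thus $\Omega^{-1}\colon\Omega(M)\to M$ is a well-defined function, and likewise $\Psi$ is injective with $\Psi^{-1}\colon\Psi(M)\to M$ well defined. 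Since $\operatorname{id}_X$ is a homeomorphism, $\Psi$ satisfies every hypothesis of \cite[Proposition A.8]{DL:MJM2023}, and I conclude that $\Psi^{-1}$ is continuous.

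It then remains to pass from $\Psi^{-1}$ to $\Omega^{-1}$. Given a net with $\Omega(m_{\lambda})\to\Omega(m)$ in $\Omega(M)$ (subspace topology from $N$), I would apply the continuous map $q_{\cN}$ to get $\omega(q_{\cM}(m_{\lambda}))=q_{\cN}(\Omega(m_{\lambda}))\to q_{\cN}(\Omega(m))=\omega(q_{\cM}(m))$ in $Y$; as all these points lie in $\omega(X)$ and $\omega$ is an embedding, continuity of $\omega^{-1}$ on $\omega(X)$ yields $q_{\cM}(m_{\lambda})\to q_{\cM}(m)$ in $X$. Hence both coordinates of $\Psi(m_{\lambda})=(q_{\cM}(m_{\lambda}),\Omega(m_{\lambda}))$ converge to those of $\Psi(m)$, so $\Psi(m_{\lambda})\to\Psi(m)$ in the subspace topology of $\omega^*(\cN)$, and continuity of $\Psi^{-1}$ gives $m_{\lambda}=\Psi^{-1}(\Psi(m_{\lambda}))\to\Psi^{-1}(\Psi(m))=m$. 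This is precisely the continuity of $\Omega^{-1}$.

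The only genuinely delicate point is the base case, namely being entitled to invoke \cite[Proposition A.8]{DL:MJM2023} for $\Psi$; exactly as in the remark opening the proof of Lemma~\ref{lem:uscBb:cts}, this rests on the observation that the cited proposition only needs its covering map to be a homeomorphism, which $\operatorname{id}_X$ trivially is. Everything else is bookkeeping with the subspace topology on $\omega^*(\cN)$ and with the hypothesis that $\omega$ is an embedding, which is precisely what lets me pull convergence of $\omega(q_{\cM}(m_{\lambda}))$ in $Y$ back to convergence of $q_{\cM}(m_{\lambda})$ in $X$.
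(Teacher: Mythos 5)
Your proposal is correct and follows essentially the same route as the paper's own proof: factor $\Omega$ through the pull-back bundle via $\Psi(m)=(q_{\cM}(m),\Omega(m))$, apply \cite[Proposition A.8]{DL:MJM2023} to $\Psi$ (which covers the identity), and then use the embedding hypothesis on $\omega$ to recover $\Omega\inv$ from $\Psi\inv$ — the paper phrases this last step as the composition $\Omega\inv(n)=\Psi\inv(f(n),n)$ with $f=q_{\cM}\circ\omega\inv\circ q_{\cN}$, which is exactly your net argument in map form. Your explicit injectivity check is a harmless (and welcome) addition that the paper leaves implicit.
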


\begin{proof}
    As in the proof of Lemma~\ref{lem:uscBb:cts}, consider  the map $\Psi$ in Diagram~\eqref{diag:Omega and Psi}. Again, since $\Omega|_{M_x}$ is linear, so is $\Psi|_{M_{x}}$, and since $\Omega$ is bounded below, so is $\Psi$. This time, continuity of $\Omega$ implies continuity of $\Psi$. We can therefore apply \cite[Proposition A.8]{DL:MJM2023} to the square on the left-hand side of Diagram~\eqref{diag:Omega and Psi} to conclude that the map $\Psi\inv\colon \Psi(M)\to M$ is continuous.

    Since \(
    q_{\cN}(\Omega(m)) = \omega(q_{\cM}(m))
    \) by commutativity of Diagram~\eqref{diag:Omega and omega}, it follows from  Assumption~\ref{item:omega embedding} that the map \(f=q_{\cM}\circ\omega\inv\circ q_{\cN}\colon \Omega(M)\to X, \Omega(m)\mapsto q_{\cM}(m)\), is well defined and continuous.
    By definition of $\Psi$, we have 
    \[
        \bigl(f(\Omega(m)),\Omega(m)\bigr) = \bigl(q_{\cM}(m), \Omega(m)\bigr)
        =
        \Psi(m)
        ,
    \]
    so that for $n\in \Omega(M)$,
    \(
        \Psi\inv (f(n),n) =  \Omega\inv (n)
    \). It follows that $\Omega\inv$ is continuous as concatenation of continuous maps.
\end{proof}

\printbibliography

\end{document}